\documentclass[reqno]{amsart}
\usepackage[dvipsnames]{xcolor} %improved colors
\usepackage{etex}
\usepackage[a4paper,hmarginratio=1:1]{geometry}
\usepackage[T1]{fontenc}
%-------Packages---------
%\usepackage{savetrees}
\usepackage{amssymb,amsfonts,amsmath}
\usepackage{comment} 
\usepackage[all,arc]{xy}
\usepackage{enumerate}
\usepackage{mathrsfs,mathtools}
\usepackage{todonotes,booktabs}
\usepackage{stmaryrd}
\usepackage{marvosym}
\usepackage{graphicx}
\usepackage{euscript} % Lurie-type Euler script
\usepackage{pdflscape} 	
%tikz stuff
%\usepackage{showframe}
\setlength{\marginparwidth}{2.6cm}
\usepackage{bbm}

\usepackage[utf8]{inputenc}
\usepackage[english]{babel}

\usepackage{tikz}
\usepackage{tikz-cd}
\usetikzlibrary{trees}
\usetikzlibrary[shapes]
\usetikzlibrary[arrows]
\usetikzlibrary{patterns}
\usetikzlibrary{fadings}
\usetikzlibrary{backgrounds}
\usetikzlibrary{decorations.pathreplacing}
\usetikzlibrary{decorations.pathmorphing}
\usetikzlibrary{positioning}
%--------Theorem Environments--------
%theoremstyle{plain} --- default
	
%\def\biblio{\bibliography{duality}\bibliographystyle{alpha}}
%table of contents

\usepackage{graphicx}
\usepackage{faktor} % nice quotients
\SelectTips{cm}{10}

\usepackage[pagebackref]{hyperref}

\definecolor{dark-red}{rgb}{0.5,0.15,0.15}
\definecolor{dark-blue}{rgb}{0.15,0.15,0.6}
\definecolor{dark-green}{rgb}{0.15,0.6,0.15}
\hypersetup{
    colorlinks, linkcolor=OliveGreen,
    citecolor=dark-blue, urlcolor=dark-green
}

\renewcommand*{\backref}[1]{}
\renewcommand*{\backrefalt}[4]{%
  \ifcase #1 %
No citations.% use \relax if you do not want the "No citations" message
  \or
(cit. on p. #2).%
  \else
(cit on pp. #2).%
  \fi%
}
%\usepackage{subfiles}

%\dottedcontents{section}[1.5em]{\bfseries}{1.3em}{.6em}
\usepackage[nameinlink,capitalise,noabbrev]{cleveref}
\usepackage[hyphenbreaks]{breakurl}

\newtheorem{thm}{Theorem}[section]
\newtheorem{theorem}[thm]{Theorem}
\newtheorem{corollary}[thm]{Corollary}
\newtheorem{proposition}[thm]{Proposition}
\newtheorem{lemma}[thm]{Lemma}
\newtheorem{conjecture}[thm]{Conjecture}

\newtheorem*{thm*}{Theorem}
\newtheorem*{thma}{Theorem A}
\newtheorem*{thmb}{Theorem B}
\newtheorem*{thmc}{Theorem C}
\newtheorem*{thmd}{Theorem D}
\newtheorem*{thme}{Theorem E}
\newtheorem*{thmf}{Theorem F}

\theoremstyle{definition}
\newtheorem{definition}[thm]{Definition}
\newtheorem{example}[thm]{Example}

\theoremstyle{remark}
\newtheorem{remark}[thm]{Remark}

\newtheorem{construction}[thm]{Construction}
\newtheorem{notation}[thm]{Notation}
\newtheorem{warning}[thm]{Warning}

\makeatletter
\let\c@equation\c@thm
\makeatother
\numberwithin{equation}{section}

\let\lim\relax

\newcommand{\euscr}[1]{\EuScript{#1}} % Euler script

\newcommand{\eD}{\euscr{D}}

\newcommand{\F}{\mathbb{F}}
\newcommand{\G}{\mathbb{G}}
\newcommand{\N}{\mathbb{N}}
\newcommand{\Q}{\mathbb{Q}}

\newcommand{\Z}{\mathbb{Z}}

\newcommand{\bOne}{E_{*}}

\DeclareMathOperator{\cA}{\mathcal{A}}

\newcommand{\fm}{\mathfrak{m}}

%operators
\DeclareMathOperator{\dual}{dual}

\DeclareMathOperator{\lim}{lim}

\DeclareMathOperator{\op}{op}

\DeclareMathOperator{\Sp}{Sp}
\DeclareMathOperator{\Ab}{Ab}

\DeclareMathOperator{\Hom}{Hom}

\DeclareMathOperator{\Ext}{Ext}

\DeclareMathOperator{\Tor}{Tor}
\DeclareMathOperator{\Tot}{Tot}
\DeclareMathOperator{\Dec}{Dec}
\DeclareMathOperator{\Spec}{Spec}
\DeclareMathOperator{\Mod}{Mod}
\DeclareMathOperator{\Mil}{Mil}

\DeclareMathOperator{\fib}{fib}

\DeclareMathOperator{\Ho}{Ho}
\DeclareMathOperator{\Thick}{Thick}

\DeclareMathOperator{\Fun}{Fun}

\DeclareMathOperator{\QCoh}{QCoh}

\DeclareMathOperator{\Alg}{Alg}

\DeclareMathOperator{\id}{id}

\DeclareMathOperator{\Shv}{Shv}

\DeclareMathOperator{\Map}{Map}

\DeclareMathOperator{\map}{map}

\DeclareMathOperator{\Sym}{Sym}
\DeclareMathOperator{\cts}{cts}

\DeclareMathOperator{\Comod}{Comod}
\DeclareMathOperator{\Inj}{Inj}
\DeclareMathOperator{\ho}{ho}

%%%Commands moved from Piotr's notes.
\newcommand{\acat}{\euscr{A}}

\newcommand{\ccat}{\euscr{C}}
\newcommand{\dcat}{\euscr{D}} % category D in Euler script
%\newcommand{\ComodPsi}{\euscr{C}omod_{\Psi}} % Gamma-comodules
%\newcommand{\ComodE}{\euscr{C}omod_{E_{*}E}} % E_*E-comodules

% I suggest to use \Rrightarrow instead, and have consequently changed it throughout the document.
\newcommand{\mfrak}{\mathfrak{m}} % m in fraktur script
\newcommand{\spectra}{\euscr{S}p} % the category of spectra
\newcommand{\gzero}{\mathbf{G}_{0}} % special fibre of formal group
\newcommand{\g}{\mathbf{G}} % universal deformation of G_0
\newcommand{\rmH}{\mathrm{H}} % homology
 % skeleta
\newcommand{\gr}{\mathrm{gr}} % associated graded

\title{Morava $K$-theory and filtrations by powers}
%\title{Adams spectral sequence based on Morava $K$-theory and filtrations by powers}

\author{Tobias Barthel}
\address{Max Planck Institute for Mathematics, Vivatsgasse 7, 53111 Bonn, Germany}
\email{tbarthel@mpim-bonn.mpg.de}

\author{Piotr Pstr\k{a}gowski}
\address{Harvard University, Department of Mathematics, 1 Oxford Street, Cambridge MA 02139, USA}
\email{piotr@math.harvard.edu}

\date{\today}
\setcounter{section}{0}

%\subjclass[2010]{}

\begin{document}

\begin{abstract}
We prove the convergence of the Adams spectral sequence based on Morava $K$-theory and relate it to the filtration by powers of the maximal ideal in the Lubin--Tate ring through a Miller square. We use the filtration by powers to construct a spectral sequence relating the homology of the $K$-local sphere to derived functors of completion and express the latter as cohomology of the Morava stabilizer group. As an application, we compute the zeroth limit at all primes and heights. 
\end{abstract}

\maketitle

%{\hypersetup{linkcolor=black}\tableofcontents}
\setcounter{tocdepth}{1}
\tableofcontents
\def\biblio{}

\section{Introduction}

In this paper, which consists of three closely related parts, we investigate the relationship between the filtration by powers of the maximal ideal in the Lubin--Tate ring and chromatic homotopy theory from several perspectives. In the first part, we focus on Morava $K$-theories and their associated Hopf algebroids. In particular, we give an invariant description of their cohomology and prove its finiteness. 

We then establish convergence of the $K$-based Adams spectral sequence and relate it to the $K$-local Adams--Novikov and filtration by powers spectral sequences through a Miller square, and describe it completely at large enough primes. In the last part, we construct Hopkins' spectral sequence relating homology of the $K$-local sphere with derived functors of completion, and identify the latter with cohomology of the Morava stabilizer group. As an application, we compute the zeroth limit at all primes and heights. 

\subsection*{Main results} A classical approach to the study of an arithmetic problem is to first consider its reduction to residue fields and to then reassemble the local solutions. Over the $p$-local integers, there are just two reside fields, namely the field $\Q$ of rational numbers and the finite field $\F_p$ corresponding to the closed point. Informally, these detect, respectively, the torsion-free and torsion phenomena. 

The approach of reducing to residue fields also works very well in stable homotopy theory, where one is interested in classifying stable homotopy classes of maps between finite complexes. Identifying a classical field $k$ with the corresponding Eilenberg--MacLane spectrum, we have a descent spectral sequence of signature
\begin{equation}\label{eq:cass}
E_2^{s,t}\cong \Ext_{\pi_*(k \otimes_{S^0} k)}^{s,t}(k, k) \implies \pi_{t-s}(S^0_{k}),
\end{equation}
where $S^{0}_{k}$ is the appropriate Bousfield localization. When $k = \Q$, this collapses immediately, recovering Serre's calculation that $\pi_{*} S^{0} \otimes _{\mathbb{Z}} \Q \simeq \Q$.

The situation is much more complicated when $k = \F_{p}$, in which case $\cA_* = \pi_*(\F_p\otimes_{S^0}\F_p)$ is the dual Steenrod algebra and the above recovers the classical Adams spectral sequence. In this case, the spectral sequence converges completely, but it does not collapse at any finite page and there is no known algorithmic way of understanding its structure\footnote{A common saying about the classical Adams spectral sequence is ``\emph{Every differential is a theorem.}''}. It is arguably the most important tool for computing the stable homotopy groups of spheres. 

A new feature in homotopy theory which is not visible in the algebraic contexts is that, even $p$-locally, $\Q$ and $\F_p$ are not the only residue fields of spectra. Instead, we have an infinite family of prime fields of ``intermediate characteristic'', given by the Morava $K$-theories $K(n)$ for $0<n<\infty$. These are homotopy ring spectra with the properties that 
\begin{enumerate}
\item $\pi_{*} K(n) \simeq k [u^{\pm 1}]$, where $k$ is a perfect field of characteristic $p$ and $u$ is an invertible variable of degree $|u| = 2$ and
\item the Quillen formal group $\textbf{G}_{0} := \textnormal{Spf}(K(n)^{0}(\mathbf{CP}^{\infty}))$ over $k$ is of finite height $n$. 
\end{enumerate}
One can show that such a homotopy ring spectrum exists for any $\textbf{G}_{0}$ of finite height over a perfect field. By a celebrated result of Devinatz, Hopkins, and Smith, together with $\Q$ and $\F_p$ these are the only residue fields of $p$-local spectra. 

Morava $K$-theories are very calculable for at least two distinct reasons: 

\begin{enumerate}
    \item the coefficients $K(n)_{*}$ form a graded field, so that we have an unrestricted K\"{u}nneth isomorphism $K(n)_{*}(X \otimes Y) \simeq K(n)_{*}(X) \otimes_{K(n)_{*}} K(n)_{*}(Y)$ for any $X, Y$;
    \item there's a canonical Poincare duality isomorphism $K(n)_{*}(X) \simeq K(n)^{*}(X)$ whenever $X$ is a $\pi$-finite space, in particular a classifying space of a finite group. 
\end{enumerate}
The second property, known as ambidexterity, is one way in which $K(n)$ behave as if they were of characteristic zero, despite $K(n)_{*}$ being of positive characteristic. This surprising feature makes Morava $K$-theories a powerful tool also in more geometric contexts, such as in the breakthrough work of Abouzaid and Blumberg on the Arnold conjecture in symplectic topology \cite{abouzaid2021arnold}.

In the present paper, we set up and analyse the descent spectral sequence associated to the Morava $K$-theories. As a first result, we verify that we indeed do have convergence. 
\begin{thma}[\ref{corollary:k_local_adams_conditionally_convergent}, \ref{proposition:complete_convergence_of_k_local_adams_for_dualizable_spectra}]
The $K(n)$-based Adams spectral sequence of signature
\[
\Ext^{s, t}_{K(n)_{*}K(n)}(K(n)_*,K(n)_*(X)) \implies \pi_{t-s}L_{K(n)}X
\]
is conditionally convergent for any spectrum $X$ and converges completely whenever $X$ is $K(n)$-locally dualizable.
\end{thma}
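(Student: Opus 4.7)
The plan is to realize the spectral sequence as the homotopy spectral sequence of the Amitsur cosimplicial object $C^{\bullet} := K(n)^{\otimes \bullet+1} \otimes X$, identify its abutment $\Tot C^{\bullet}$ (the $K(n)$-nilpotent completion $X^{\wedge}_{K(n)}$) with $L_{K(n)}X$, and then promote conditional convergence to complete convergence under dualizability using the finiteness of Hopf algebroid cohomology from the first part of the paper.

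For the identification of the abutment, one exploits that $\pi_{*}K(n)$ is a graded field. The cosimplicial spectrum $K(n) \otimes C^{\bullet}$ then acquires an extra codegeneracy $\mu \otimes 1$ from the multiplication of $K(n)$, hence is split augmented over $K(n) \otimes X$. Since each finite partial totalization $\Tot^{s}$ commutes with smashing against the single factor $K(n)$, this splitting shows $K(n) \otimes \Tot^{s} C^{\bullet} \simeq K(n) \otimes X$ compatibly in $s$, and therefore the natural map $K(n) \otimes X \to K(n) \otimes X^{\wedge}_{K(n)}$ is an equivalence. Now $X^{\wedge}_{K(n)}$ is automatically $K(n)$-local as a limit of the $K(n)$-nilpotent spectra $\Tot^{s} C^{\bullet}$, so the canonical factorization $X \to L_{K(n)}X \to X^{\wedge}_{K(n)}$ is a $K(n)_{*}$-equivalence between $K(n)$-local spectra, hence an equivalence. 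Conditional convergence to $\pi_{t-s}L_{K(n)}X$ then follows from Boardman's general criterion for the spectral sequence of a tower of fibrations.

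For complete convergence in the $K(n)$-locally dualizable case, it suffices by Boardman to verify $RE_{\infty} = 0$, which in turn follows from Mittag--Leffler on the tower of $E_{r}$-pages. The finiteness results of the first part of the paper imply that when $K(n)_{*}X$ is a finitely generated $K(n)_{*}$-module (which holds whenever $X$ is $K(n)$-locally dualizable), the $E_{2}$-page $\Ext_{K(n)_{*}K(n)}(K(n)_{*}, K(n)_{*}X)$ is degreewise finite; this degreewise finiteness is inherited by every subsequent page, and Mittag--Leffler is automatic in that setting. The main obstacle is the identification of the abutment in the first step: one must rigorously control the interaction between smashing with $K(n)$ and the inverse limit $\lim_{s}\Tot^{s}$ to conclude that $X \to X^{\wedge}_{K(n)}$ is a $K(n)_{*}$-equivalence, without which the spectral sequence could conceivably converge to something other than $\pi_{*}L_{K(n)}X$.
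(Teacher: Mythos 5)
Your treatment of complete convergence in the dualizable case is essentially the paper's own argument: degreewise finiteness of $\Ext_{K_*K}(K_*,K_*X)$ (proved in the paper via the Cartan--Eilenberg spectral sequence and the finiteness of the continuous cohomology of the Morava stabilizer group) gives the Mittag--Leffler condition on the tower of $E_r$-pages, hence vanishing of $\lim^1_r E_r^{s,t}$ and complete convergence by Bousfield's criterion. No issue there.

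The genuine gap is in the first step, and you have correctly located it yourself. The extra-codegeneracy splitting does give $K(n)\otimes \Tot^s C^\bullet \simeq \Tot^s(K(n)\otimes C^\bullet)\simeq K(n)\otimes X$ for each finite $s$, since finite limits commute with smashing; but this says nothing about $K(n)\otimes \varprojlim_s \Tot^s C^\bullet$, because smashing does \emph{not} commute with the inverse limit over $s$. This is precisely the point at which the $E$-nilpotent completion of a spectrum can fail to agree with $L_E X$ for a general homology theory, and closing it is the entire content of \cref{prop:knnilpotentcompletion}. The paper's route is: since $K_*$ is a graded field, a map of spectra is a $K_*$-isomorphism if and only if it becomes one after tensoring with a finite type $n$ complex $F(n)$ (K\"unneth plus faithfulness of tensoring with a nonzero $K_*$-vector space); both $L_K(-)$ and $\Tot(K^{\otimes\bullet+1}\otimes -)$ are exact in $X$, so one reduces to $X\otimes F(n)$; the proof of the Hopkins--Ravenel smash product theorem shows $L_ES^0\in\Thick^{\otimes}(E)$, hence $L_KF(n)\in\Thick^{\otimes}(K)$, i.e.\ $F(n)$ and therefore $X\otimes F(n)$ are $K$-prenilpotent; and Bousfield's theorem identifies the nilpotent completion with the localization for prenilpotent spectra. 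The essential input missing from your proposal is this prenilpotence argument (resting on the smash product theorem); without it, or some substitute, the identification of the abutment with $\pi_*L_{K(n)}X$ does not go through.
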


At first glance, this result is quite surprising, as each group on the second page of this spectral sequence for the sphere is torsion, while the abutment $\pi_*L_{K(n)}S^0$ is known to contain torsion-free summands. In particular, this implies that there cannot exist a horizontal vanishing line on any finite page of the spectral sequence. In order to establish conditional convergence, we instead reduce to the case of a finite $n$ complex, where such vanishing lines exist. Complete convergence is the consequence of the degreewise finiteness of the $E_{2}$-term, which we prove for dualizable spectra using the Cartan--Eilenberg spectral sequence

Going back to the case of non-localized spectra, the utility of the classical Adams spectral sequence based on $\F_{p}$ is further amplified when coupled with the Adams--Novikov spectral sequence, which is the descent spectral sequence based on the Brown--Peterson spectrum $BP$. As one spectacular application, Miller constructs a square relating the two spectral sequences and compares the differentials to compute the $v_1$-periodic homotopy of Moore spaces, thereby proving the telescope conjecture at height 1 and for all odd primes \cite{miller_adams}.

The situation is similar in the case of the $K(n)$-based Adams, which has a cousin which might be more familiar to the working chromatic homotopy theorist, namely the $K(n)$-local $E_n$-based Adams spectral sequence. Here, $E_n$ denotes the Lubin--Tate spectrum associated to the formal group $\mathbf{G}_{0}$, parametrizing derived deformations of the latter. We show that the relation between the two spectral sequences is again governed by a square of spectral sequences of the following explicit form. 

\begin{thmb}[\ref{corollary:millers_may_sseq_is_filtration_by_powers}, \ref{proposition:mahowald_sseq_is_ce_sseq}]
\label{thmb:introduction}
The $K(n)$-local Miller square associated to $(E,K)=(E_n, K(n))$ is of the form 
\[
\begin{tikzcd}
	& {\Ext_{E_{*}^{\vee}E}(E_{*}, \bigoplus_k \mfrak^{k}/\mfrak^{k+1} E_{*}^{\vee}X)} \\
	{\Ext_{E_{*}^{\vee}E}(E_{*}, E^{\vee}_{*}X)} && {\Ext_{K_{*}K}(K_{*}, K_{*}X)} \\
	& {\pi_{*}L_{K}X}
	\arrow["{E-\textnormal{Adams}}"', Rightarrow, from=2-1, to=3-2]
	\arrow["{K-\textnormal{Adams}}", Rightarrow, from=2-3, to=3-2]
	\arrow["{\textnormal{May}}"', Rightarrow, from=1-2, to=2-1]
	\arrow["{\textnormal{Mahowald}}", Rightarrow, from=1-2, to=2-3],
\end{tikzcd}
\]
where the May spectral sequence is obtained from the $\mfrak$-adic filtration of $E_{*}^{\vee}X$. 
\end{thmb}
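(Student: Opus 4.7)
The plan is to organize the argument around Miller's abstract framework for constructing such a square and then to identify each constituent spectral sequence in the case at hand. The input is the map of $K(n)$-local Adams-type ring spectra $E_n \to K(n)$, which Miller's construction converts into a doubly-filtered cosimplicial $K(n)$-local object; its two bifiltration spectral sequences recover the $E$-Adams and $K$-Adams sides, with interpolating May and Mahowald spectral sequences sitting on top. Convergence to the common target $\pi_{*}L_{K(n)}X$ is then supplied by the classical theory for the $E$-Adams side and by Theorem A for the $K$-Adams side.

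For the May piece (\Cref{corollary:millers_may_sseq_is_filtration_by_powers}), I would use that $\mfrak \subset E_*$ is an invariant ideal, so that $\{\mfrak^{k} E_{*}^{\vee}X\}_{k \geq 0}$ is a filtration of $E_{*}^{\vee}X$ by $E_{*}^{\vee}E$-subcomodules. Applying $\Ext_{E_{*}^{\vee}E}(E_{*}, -)$ yields a decreasing filtration on the $E$-Adams $E_2$-page whose associated spectral sequence has $E_1$-term
\[
\Ext_{E_{*}^{\vee}E}\bigl(E_{*}, \textstyle\bigoplus_{k} \mfrak^{k}/\mfrak^{k+1} E_{*}^{\vee}X\bigr).
\]
I would then identify this with the May spectral sequence of the Miller square by realizing the $\mfrak$-adic filtration at the level of the cosimplicial resolution of $X$ by smash powers of $E_n$, using invariance of $\mfrak$ to ensure functoriality of the lift.

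For the Mahowald piece (\Cref{proposition:mahowald_sseq_is_ce_sseq}), I would first equip each layer $\mfrak^{k}/\mfrak^{k+1} E_{*}^{\vee}X$ with its canonical $K_{*}K$-comodule structure, obtained from the identification, via Lubin--Tate deformation theory, of $\gr_{\mfrak} E_{*}^{\vee}E$ in terms of $K_{*}K$. A Cartan--Eilenberg-type change-of-rings spectral sequence
\[
\Ext_{E_{*}^{\vee}E}\bigl(E_{*}, \textstyle\bigoplus_{k} \mfrak^{k}/\mfrak^{k+1} E_{*}^{\vee}X\bigr) \Longrightarrow \Ext_{K_{*}K}\bigl(K_{*}, K_{*}X\bigr)
\]
then converges to the $K$-Adams $E_2$-page, and the central task is to match its filtration with the Mahowald filtration of the Miller square, both arising from the same underlying double filtration of the cosimplicial object.

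The main obstacle will be the careful bookkeeping required in the completed $K(n)$-local setting: all the relevant (co)homology theories involve completion, and one must verify that forming associated gradeds and applying change-of-rings commute with the pro-system structure of the $E$-Adams tower without introducing spurious $\lim^{1}$ contributions. A secondary subtlety is verifying the precise $K_{*}K$-comodule structure on the associated graded layers, which rests on the explicit description of $E_{*}^{\vee}E$ as a completed deformation of $K_{*}K$ governed by the automorphisms of the Lubin--Tate formal group.
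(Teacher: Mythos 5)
Your outline follows the paper's broad strategy (Miller's framework, then identify the May and Mahowald edges), but it omits the step that makes the square exist in the first place: the primarity hypothesis. Miller's square is only defined when $X$ is $(E,K)$-primary, i.e.\ when the $K$-based Adams spectral sequence for each term of the $K$-local $E$-Adams resolution converges and collapses at $E_2$. The paper's technical heart is precisely this verification: for an $E$-module $M$ with $M_*$ flat (hence $K$-locally pro-free), the $K$-Adams spectral sequence collapses and, crucially, the resulting $K$-Adams filtration on $\pi_* L_K M$ \emph{is} the $\mfrak$-adic filtration. This is proved by reducing to $M=E$, base-changing to $\Ext_{K_*^E K}(K_*,K_*)\simeq K_*\otimes\Sym(\mfrak/\mfrak^2)$ via the Hopkins--Lurie computation, and using evenness plus multiplicativity. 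Without this, your May-edge argument does not close: Miller's May spectral sequence is by definition the spectral sequence of the $K$-Adams filtration on $\pi_*$ of the $E$-resolution, so identifying it with the filtration-by-powers spectral sequence is exactly the statement that these two filtrations agree --- ``realizing the $\mfrak$-adic filtration at the level of the cosimplicial resolution'' assumes the conclusion rather than proving it. You also need to surface the hypothesis that $E_*^\vee X$ is pro-free, under which all of this holds.

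On the Mahowald edge, your proposed route is not the paper's and I do not think it works as stated. The layers $\mfrak^k/\mfrak^{k+1}E_*^\vee X$ are naturally comodules over $K_*E = E_*E/\mfrak$, not over $K_*K$; the passage between the two is governed by the extension of Hopf algebroids $K_*E \to K_*K \to K_*^E K$, and the paper identifies Miller's Mahowald spectral sequence with the Cartan--Eilenberg spectral sequence of \emph{this} extension. Concretely, applying $K$-homology to the $K$-local $E$-Adams resolution produces a resolution of $K_*X$ by extended $K_*E$-comodules, so the Mahowald exact couple is the $K_*E$-based algebraic Adams (Margolis--Palmieri) spectral sequence in $K_*K$-comodules, which is isomorphic to the Cartan--Eilenberg spectral sequence by Belmont's theorem. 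Your ``change-of-rings spectral sequence obtained from $\gr_\mfrak E_*^\vee E$ in terms of $K_*K$'' conflates $K_*E$ with $K_*K$ and does not explain why its $E_2$-term agrees with $\Ext_{E_*^\vee E}(E_*,\bigoplus_k \mfrak^k/\mfrak^{k+1}E_*^\vee X)$; that agreement again rests on the Hopkins--Lurie identification of $\Ext_{K_*^E K}(K_*,K_*)$ with $\Sym(\mfrak/\mfrak^2)$ and the regularity of $E_0$.
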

We also identify the Mahowald spectral sequence in the square above with a Cartan--Eilenberg spectral sequence of a certain extension of Hopf algebroids canonically attached to each Morava $K$-theory, see \cref{sec:finiteheightmillersquare} for details. 

A theorem of Miller relates the $d_{2}$-differentials in the $K$-Adams and May spectral sequences, but at large primes we can say much more. If $2p - 2 > n^{2}+n$, then it is well-known that the $K$-local $E$-based Adams spectral sequence for the sphere collapses at the second page. Using a variation on the classical argument of Milnor, we equip $K_{*}K$ at odd primes with an additional grading which also forces the collapse of the Mahowald spectral sequence. 

If follows that at sufficiently large primes, two of the spectral sequences in Miller's square collapse, and it is thus natural to expect that the other two can be identified. This is indeed the case, as we show the following. 

\begin{thmc}[\ref{theorem:k_based_adams_at_large_primes_iso_to_filtration_by_powers}]
\label{thmc:introductionthmc}
If $2p-2 > n^{2}+n+1$, then the $K$-based Adams spectral sequence for $S^{0}_{K}$ can be given an additional grading so that it becomes isomorphic to the May spectral sequence  
\begin{equation}
\label{equation:may_spectral_seq_in_intro}
\Ext_{E_{*}^{\vee}E}(E_{*}, \bigoplus_k \mfrak^{k}/\mfrak^{k+1} E_{*}) \implies \Ext_{E_{*}^{\vee}E}(E_{*}, E_{*})
\end{equation}
induced by the filtration of $E_{*}$ by the powers of the maximal ideal. 
\end{thmc}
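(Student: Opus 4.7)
The plan is to leverage Theorem~B, which realizes the $K$-based Adams and May spectral sequences as the two remaining vertices of a Miller square whose other two entries are the $E$-based $K$-local Adams and the Mahowald spectral sequence. The key observation is that under the hypothesis $2p - 2 > n^2 + n + 1$, those two other spectral sequences both collapse at $E_2$. Once this is in place, one shows that the two remaining spectral sequences share the same $E_2$-page and the same abutment, and uses the structure of the square to produce a comparison map that is an isomorphism on $E_2$ and hence on every page.

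The first degeneration is classical: the hypothesis implies in particular that $2p - 2 > n^2 + n$, which combined with the horizontal vanishing line at filtration $n^2$ and the sparseness imposed by the internal degree $|v_n| = 2(p^n-1)$ forces the $K$-local $E$-based Adams spectral sequence for $S^0$ to collapse at $E_2$. The second degeneration is the main technical input: using the Milnor-type construction alluded to before the theorem, one equips $K_*K$ with an additional grading compatible with its Hopf algebroid structure, which refines the Cartan--Eilenberg presentation of the Mahowald spectral sequence identified in Theorem~B. The stronger bound $2p - 2 > n^2 + n + 1$ is arranged so that, on the $E_2$-page, classes of a given Milnor degree sit in bidegrees from which no $d_r$-differential with $r \ge 2$ can possibly land in the correct Milnor degree, forcing all higher differentials to vanish.

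Now the collapse of the Mahowald spectral sequence provides an isomorphism between the common $E_2$-page of May and Mahowald and the $E_2$-page of the $K$-Adams, while the collapse of the $E$-Adams identifies the abutment $\Ext_{E_*^\vee E}(E_*, E_*)$ of the May spectral sequence with $\pi_* L_K S^0$, the abutment of the $K$-Adams. To promote this coincidence of $E_2$-pages and $E_\infty$-pages to an isomorphism of spectral sequences, I would unpack the construction underlying Theorem~B: the Miller square arises from a single doubly-filtered object, namely the $K$-local $E$-based cobar complex of $S^0$ further filtered by the $\mfrak$-adic filtration on its $E_*^\vee$-homology. The degeneration of the two outer spectral sequences means the two inner ones compute the same totalization, and from this one extracts a map of spectral sequences from the May to the $K$-Adams whose induced map on $E_2$ is the Mahowald identification. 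Boardman's comparison theorem for conditionally convergent spectral sequences, together with the conditional convergence of the $K$-Adams supplied by Theorem~A, then upgrades this to an isomorphism on every page.

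The main obstacle will be the second step: defining the extra grading on $K_*K$ carefully enough that it is preserved by the cobar differential, descends to the Mahowald $E_2$-page, and is compatible with the $\mfrak$-adic May grading on the other side of the square. The bidegree bookkeeping that rules out higher Mahowald differentials is delicate and is where the precise numerical bound $2p - 2 > n^2 + n + 1$ will enter. Once the grading and its compatibility are established, the remaining comparison step is essentially formal, driven by the Miller square machinery already provided by Theorem~B.
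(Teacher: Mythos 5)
Your proposal follows the route that the authors explicitly tried and abandoned: the remark immediately after the statement of \cref{theorem:k_based_adams_at_large_primes_iso_to_filtration_by_powers} in the body says that their first approach was to push Miller's comparison of differentials through the square using the collapse of the other two spectral sequences, and that ``this approach was unsuccessful.'' The actual proof goes a completely different way: it invokes the algebraicity theorem (\cref{theorem:chromatic_algebraicity}) to transport the $K$-based Adams spectral sequence to the $K_{alg}$-based Adams spectral sequence in $\Mod_{P(E_{*})}(\eD(E_{*}E))$ (\cref{proposition:k_adams_in_algebraic_model}), strips off the periodicity to land in the $E_{*}K$-based Adams spectral sequence in $\eD(E_{*}E)$ (\cref{proposition:k_based_adams_has_an_additional_grading_for_which_it_is_ek_adams}), and then uses a d\'ecalage argument to identify that algebraic Adams spectral sequence with the tower $\ldots \rightarrow E_{*}/\mfrak^{2} \rightarrow E_{*}/\mfrak$ (\cref{proposition:adams_sseq_assocaited_to_e_k}). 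The bound $2p-2>n^{2}+n+1$ enters through the algebraicity theorem and the compatibility of the equivalence with triangulated structures, not through any sparseness argument on the Mahowald $E_{2}$-page.

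The genuine gap in your argument is the step where you ``extract a map of spectral sequences from the May to the $K$-Adams.'' The Miller square does not produce such a map. Its four spectral sequences arise from two different filtered objects: the May and Mahowald spectral sequences both start from $E^{2}_{A,B}(X)$ and converge to the $E_{2}$-pages of the $A$- and $B$-Adams spectral sequences respectively, while the $A$- and $B$-Adams spectral sequences converge to $\pi_{*}X$. There is no single doubly-filtered object whose two edge spectral sequences are the May and the $K$-Adams; the only comparison the square yields between them is Miller's identification of $d_{2}$-differentials, which does not propagate to $d_{r}$ for $r\geq 3$ even when the other two spectral sequences collapse. Knowing that two spectral sequences have abstractly isomorphic $E_{2}$-pages and abutments is far from knowing they are isomorphic (the differentials could differ), and Boardman's comparison theorem cannot be applied without an actual morphism of exact couples in hand. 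A secondary inaccuracy: the collapse of the Mahowald spectral sequence (\cref{theorem:collapse_of_ce_sseq_at_odd_p}) is proved for all odd primes via the Milnor grading and does not use the bound $2p-2>n^{2}+n+1$; attributing the collapse to that numerical bound misplaces where the hypothesis is actually needed.
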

Note that one new major phenomenon in intermediate characteristic is that Morava $K$-theories do depend on choices; in particular, on the choice of a formal group. By a result of Lazard, any two such formal groups of the same height are isomorphic over the separable closure; it follows that for our purposes all $K(n)$ of the same height are essentially interchangeable, in particular, they determine the same Adams spectral sequence. 

To be more precise, while the Hopf algebroid $K(n)_{*}K(n)$ depends on the choice of a Morava $K$-theory, its category of comodules does not, by a result of Hovey and Strickland \cite{hovey2005comodules}. In the body of the text, we describe a different derivation of this result, identifying the category of $K(n)_{*}K(n)$-comodules with the category $\Mil_{abs}$ of \emph{absolute Milnor modules}, which are certain sheaves on finite spectra, see \cref{sec:milnormodules}. This gives an approach to the Adams spectral sequence based on Morava $K$-theory which only depends on the prime and the height, but no other choices. 

On a different note, the completion tower of comodules of the form 
\[
\ldots \rightarrow E_{*}/\mfrak^{3} \rightarrow E_{*}/\mfrak^{2} \rightarrow E_{*}/ \mfrak
\]
appearing in  \hyperref[thmc:introductionthmc]{Theorem C} also leads to a \emph{another} spectral sequence, first studied by Hopkins and Sadofsky\footnote{The construction of Hopkins and Sadofsky is unpublished.}. The latter spectral sequence approaches the transchromatic information contained in $L_{K(n)}S^0$ through its \emph{uncompleted} $E$-homology $E_*(L_{K(n)}S^0)$, as opposed to the homotopy groups, as we now describe in more detail.

To provide some context, recall that it is known that at large primes there exists an equivalence $h \spectra_{E} \simeq h \dcat^{per}(E_{*}E)$ between the homotopy categories of $E$-local spectra and differential comodules. Under this equivalence, the limit $\varprojlim  E_{*} / \mfrak^{k}$ in the derived $\infty$-category corresponds to the $K$-local sphere $S^{0}_{K}$ \cite{pstrkagowski2018chromatic}; thus, the derived completion of $E_{*}$ can be thought of as the algebraic analogue of the $K$-local sphere. 

At smaller primes, we do not have algebraic models for the homotopy category, but we instead have a spectral sequence relating the homology groups of the derived completion to $E_{*} (S^{0}_{K})$. This spectral sequence, originally due to Hopkins and Sadofsky, was subsequently worked out by Peterson \cite{peterson2020coalgebraic}, adapting the classical construction of the Adams spectral sequence. We follow a slightly different approach, based on the notion of an adapted homology theory and  Devinatz--Hopkins' modified Adams spectral sequence \cite{dev_morava}.

Our construction is closely related to work of Hovey on derived products of comodules \cite{hovey_product}, and in fact works in much greater generality, see \cref{section:homology_of_inverse_limits}. In the case of $E$-homology, we are able to express these derived limits as cohomology of the Morava stabilizer group, leading to the following statement. 

\begin{thmd}[\ref{thm:limss}, \ref{prop:limssconvergence}, \ref{prop:inverselimascontcohom}]
\label{thmd:introduction}
Let $E_{*}E := \pi_{*}(E \otimes E)$ be the uncompleted $E$-homology of itself, given the unique topology which restricts to the $\mfrak$-adic topology on each finitely generated $E_{*}$-submodule. Then, there exists a canonical isomorphism 
\[
\rmH^{s}_{\cts}(\mathbb{G}_{n}, E_{t}E) \cong (\textstyle\varprojlim_{E_{*}E}^{s} E_{*}/\mfrak^{k})_{t} 
\]
between the continuous cohomology of the Morava stabilizer group and the derived functors of the limit in $E_{*}E$-comodules. Moreover, there exists a spectral sequence of $E_*E$-comodules of the form
\begin{equation}
E_2^{s,t} \cong \rmH^{s}_{\cts}(\mathbb{G}_{n}, E_{t}E) \implies E_{t-s}(L_{K}S^0)
\end{equation}
with differentials $d_r^{s,t}\colon E_r^{s,t} \to E_r^{s+r,t+r-1}$. This spectral sequences converges completely and collapses at a finite page with a horizontal vanishing line. 
\end{thmd}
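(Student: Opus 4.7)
The plan is to establish the three assertions --- the existence of the spectral sequence, the identification of its $E_2$-page with continuous group cohomology, and its convergence --- in sequence, leaning on the general framework for homology of inverse limits developed earlier in the paper.

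For the spectral sequence in \cref{thm:limss}, the strategy is to realize $L_{K}S^{0}$ as an inverse limit in spectra whose $E$-homology is the tower $\{E_{*}/\mfrak^{k}\}$. Concretely, I would take a cofinal sequence of generalized Moore spectra $S^{0}/\mfrak^{k}$, $K$-localize, and use that $L_{K}S^{0} \simeq \varprojlim_{k} L_{K}(S^{0}/\mfrak^{k})$ with $E_{*}L_{K}(S^{0}/\mfrak^{k}) \cong E_{*}/\mfrak^{k}$. Since $E$-homology is an adapted homology theory, the general inverse-limit spectral sequence constructed in \cref{section:homology_of_inverse_limits} specializes to give a spectral sequence
\[
E_{2}^{s,t} \cong (\varprojlim{}^{s}_{E_{*}E} E_{*}/\mfrak^{k})_{t} \implies E_{t-s}(L_{K}S^{0})
\]
of $E_{*}E$-comodules with the indicated differentials $d_{r}^{s,t}\colon E_{r}^{s,t}\to E_{r}^{s+r,t+r-1}$.

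For the identification of the $E_{2}$-page with continuous group cohomology (\cref{prop:inverselimascontcohom}), I would use the cobar resolution. Each $E_{*}/\mfrak^{k}$ admits the canonical cobar resolution by extended $E_{*}E$-comodules of the form $(E_{*}E)^{\otimes_{E_{*}}\bullet+1}\otimes_{E_{*}} E_{*}/\mfrak^{k}$. The crucial input is that these extended comodules are acyclic for the derived inverse limit in the Grothendieck abelian category $\Comod_{E_{*}E}$; granted this, $\varprojlim{}^{s}_{E_{*}E} E_{*}/\mfrak^{k}$ is computed by the cohomology of the complex obtained by taking $\varprojlim_{k}$ degreewise. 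Each term of this limit is an $\mfrak$-adic completion $((E_{*}E)^{\otimes_{E_{*}}n})^{\wedge}_{\mfrak}$, which, via the Morava--Strickland identification $E_{0}^{\vee}E \cong \Map_{\cts}(\mathbb{G}_{n}, E_{0})$ and its iterates, is exactly the module of continuous $n$-cochains on $\mathbb{G}_{n}$ with coefficients in $E_{*}E$ equipped with the stated $\Ind$-pro topology. Restricting to internal degree $t$ then yields the claimed isomorphism $\rmH^{s}_{\cts}(\mathbb{G}_{n}, E_{t}E) \cong (\varprojlim^{s} E_{*}/\mfrak^{k})_{t}$.

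For convergence and the horizontal vanishing line (\cref{prop:limssconvergence}), I would appeal to the finite continuous virtual cohomological dimension of the Morava stabilizer group: there is a finite-index open subgroup $U \le \mathbb{G}_{n}$ of strict continuous cohomological dimension $n^{2}$, and Hochschild--Serre for $U\subset \mathbb{G}_{n}$ combined with the finiteness of $\mathbb{G}_{n}/U$ yields a uniform vanishing bound for $\rmH^{s}_{\cts}(\mathbb{G}_{n}, -)$. This produces the horizontal vanishing line on the $E_{2}$-page; the spectral sequence then collapses at a finite page, and complete convergence follows from Boardman's criterion, conditional convergence being automatic from the tower construction and the vanishing line forcing $RE_{\infty} = 0$. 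The main obstacle, as usual in this circle of ideas, is the cobar computation: one must verify both that extended comodules really are $\varprojlim$-acyclic in $\Comod_{E_{*}E}$ (a nontrivial point because products in comodule categories are not exact) and that the degreewise inverse limit of the cobar complex matches, as a topological cochain complex, the continuous cochain complex of $\mathbb{G}_{n}$ with coefficients in $E_{*}E$ --- delicate because $E_{*}E$ itself is not $\mfrak$-adically complete, so one must carefully track the $\Ind$-pro topology throughout.
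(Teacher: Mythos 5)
Your overall skeleton --- specialize the general inverse-limit spectral sequence to a cofinal tower of $K$-local generalized Moore spectra, identify the $E_2$-page, then prove convergence --- matches the paper's, but two of your three steps contain genuine errors.

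First, the $E_2$-identification. You resolve each $E_*/\mfrak^k$ by the cobar complex and take degreewise limits, asserting that the terms of the resulting complex are the completions $((E_*E)^{\otimes_{E_*} n})^{\wedge}_{\mfrak}$ and that these are the continuous cochains valued in $E_*E$ with its local $\mfrak$-adic topology. This is off by exactly one \emph{uncompleted} tensor factor, and that factor is the entire content of the theorem. The inverse limit of the extended comodules $E_*E\otimes_{E_*}N_k$ taken \emph{in comodules} is $E_*E\otimes_{E_*}(\varprojlim_{E_*}N_k)$, because the cofree functor is a right adjoint; so the $n$-th term of the correct complex is $E_*E\otimes_{E_*}((E_*E)^{\otimes_{E_*} n})^{\wedge}_{\mfrak}$, with the outer factor left uncompleted. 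If instead everything is completed, as you wrote, the complex you obtain is the continuous cochain complex with coefficients in $E_*^{\vee}E\simeq\map_{\cts}(\mathbb{G}_n,E_*)$, which is cofree, so its higher cohomology vanishes and you compute $E_*$ concentrated in degree $0$ --- precisely the collapse the paper warns against when contrasting completed and uncompleted cooperations. Note also that $\map_{\cts}(\mathbb{G}_n^{\times n},E_*E)$ with the local topology is a filtered \emph{colimit} of completions over finitely generated submodules, not itself a completion. The paper avoids this trap by first proving the statement for dualizable comodules $M$, where $M/\mfrak^kM$ is finite and a Milnor sequence together with the finiteness of \cref{cor:cohomfiniteness} applies (\cref{lem:inverselimascontcohom}), and then writing $E_*E$ as a filtered colimit of dualizable comodules and commuting $\Ext$ and $\rmH_{\cts}$ past the colimit (\cref{lem:boundedcompactness}, \cref{corollary:continuous_cohomology_commutes_with_filtered_colimits}); the coefficient module $E_*E$ enters through the identity $\rmH_{-s}(X)\simeq\Ext^s_{\dcat(E_*E)}(E_*,E_*E\otimes_{E_*}X)$, which is exactly where the uncompleted factor lives.

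Second, convergence. You propose a horizontal vanishing line on the $E_2$-page coming from the finite virtual cohomological dimension of $\mathbb{G}_n$, via Hochschild--Serre for a finite-index subgroup $U$ of finite cohomological dimension. This fails whenever $\mathbb{G}_n$ has $p$-torsion, i.e.\ when $(p-1)\mid n$: finiteness of $\mathbb{G}_n/U$ does not bound its cohomological dimension at $p$, and $\rmH^s_{\cts}(\mathbb{G}_n,M)$ is then nonzero for arbitrarily large $s$ (compare \cref{thm:cohomfiniteness}, where vanishing above degree $n^2$ is asserted only for $(p-1)\nmid n$). The paper's argument in \cref{prop:limssconvergence} is different and works at all primes: by the Hopkins--Ravenel smash product theorem there is a uniform $N$ such that $N$-fold composites of the structure maps in the $E$-local modified Adams towers are null, which yields conditional convergence and a horizontal vanishing line at some finite page $E_r$ with $r$ possibly larger than $2$, hence strong convergence.
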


The subtle point in this result is the convergence of the spectral sequence, which relies crucially on the finite virtual cohomological dimension of the Morava stabilizer group. Note that here it is important to consider the derived sequential limits in the category of comodules as opposed to ordinary modules over $E_*$; indeed, in positive degrees the latter all vanish in this case and there would be no hope of having a convergent spectral sequence.

The above spectral sequence underlies one approach towards Hopkins' chromatic splitting conjecture  \cite{report_on_e_theory_conjectures}, \cite{hovey_csc}. In a nutshell, the latter provides a precise description of the transchromatic behaviour of the $K(n)$-local sphere; that is, its behaviour under applying chromatic localizations $L_h$ for intermediate heights $0 \le h <n$. As such it has been verified for all heights $n \le 2$ and all primes \cite{shimomura_yabe, goerss2012rational, beaudry2017chromatic}, with a minor modification to the original prediction at $n=p=2$ (\cite{bgh_chromaticsplitting}). Beyond height 2, the conjecture remains wide open. 

The standard way of approaching the splitting conjecture is through an explicit computation of the homotopy groups of $L_{K(n)}S^0$. However, as mentioned, its transchromatic information is also encoded in the \emph{uncompleted} $E$-homology, and this is what the spectral sequence \eqref{eq:generalss} abuts to. 

Using the identification of derived functors of the limit with continuous cohomology of the Morava stabilizer group, we are able to compute the zeroth limit at all primes and heights. 

\begin{thme}[{\ref{thm:zeroth_derived_limit_of_completion}}]
There is a canonical isomorphism 
\[
\textstyle\varprojlim_{E_{*}E} E_{*} / \mfrak^{k} E_{*}  \simeq E_{*} \otimes_{\mathbb{Z}} \mathbb{Z}_{p}
\]
where the limit on the left is taken in the category of $E_{*}E$-comodules. 
\end{thme}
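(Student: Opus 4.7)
The plan is to translate the statement into a continuous group cohomology computation via \hyperref[thmd:introduction]{Theorem D}, and then perform that computation directly. Setting $s = 0$ in \hyperref[thmd:introduction]{Theorem D} gives a natural isomorphism of graded $E_{*}$-modules
\[
\textstyle\varprojlim_{E_{*}E} E_{*}/\mfrak^{k} E_{*} \;\cong\; \rmH^{0}_{\cts}(\mathbb{G}_{n}, E_{*}E) \;=\; (E_{*}E)^{\mathbb{G}_{n}},
\]
where the invariants are with respect to the canonical topology on $E_{*}E$ described in \hyperref[thmd:introduction]{Theorem D}. Thus the content of the statement reduces entirely to identifying these continuous $\mathbb{G}_{n}$-invariants with $E_{*} \otimes_{\mathbb{Z}} \mathbb{Z}_{p}$.

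For the invariant computation, I would use that $\mathbb{G}_{n}$ acts on $E \otimes E$ only through one of the two smash factors. Fixing the convention that the action is on the right factor, the left unit $\eta_{L}\colon E_{*} \to E_{*}E$ becomes $\mathbb{G}_{n}$-equivariant for the trivial action on the source, so its image automatically lies in the invariants. More is true: the two $\mathbb{Z}_{p}$-algebra structures on $E_{*}E$ coming from the two units of $E$ organize into a canonical map $\mathbb{Z}_{p} \otimes_{\mathbb{Z}} \mathbb{Z}_{p} \to E_{*}E$, and combining this with $\eta_{L}$ produces a natural map $E_{*} \otimes_{\mathbb{Z}} \mathbb{Z}_{p} \to (E_{*}E)^{\mathbb{G}_{n}}$. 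The substance of the statement is that this natural map is an isomorphism.

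To verify this, the principal tool is the comparison map $E_{*}E \to E_{*}^{\vee}E$ to the completed Morava module, whose target is identified by Strickland's theorem with $\Map_{\cts}(\mathbb{G}_{n}, E_{*})$, on which $\mathbb{G}_{n}$ acts by translation. On the completed side the continuous invariants are manifestly the constant functions, recovering just $E_{*}$. The discrepancy between the invariants on $E_{*}E$ and on $E_{*}^{\vee}E$ then reflects the passage between $p$-adic completion and the uncompleted tensor product: one would express $E_{*}E$ via a short exact sequence or pullback involving $E_{*}^{\vee}E$ and a ``non-completed'' correction term, and then extract invariants term by term.

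I expect the main obstacle to lie in the final identification. While constructing the comparison map $E_{*} \otimes_{\mathbb{Z}} \mathbb{Z}_{p} \to (E_{*}E)^{\mathbb{G}_{n}}$ is formal, the converse direction---that every continuous invariant arises in this way---is delicate. The difficulty is that the uncompleted module $E_{*}E$ admits no clean closed-form description, and one must carefully disentangle the interplay between $p$-adic completion, $\mfrak$-adic completion, and $K(n)$-localization. These three operations coincide in most familiar chromatic calculations but are genuinely distinct here, and it is precisely their divergence that produces the extra $\otimes_{\mathbb{Z}} \mathbb{Z}_{p}$ factor rather than the naive answer $E_{*}$.
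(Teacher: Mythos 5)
Your first reduction is the same as the paper's: \cref{prop:inverselimascontcohom} identifies $\varprojlim_{E_{*}E} E_{*}/\mfrak^{k}$ with $\rmH^{0}_{\cts}(\mathbb{G}_{n}, E_{*}E)$, and your construction of the comparison map $E_{*}\otimes_{\mathbb{Z}}\mathbb{Z}_{p} \to (E_{*}E)^{\mathbb{G}_{n}}$ out of the two units correctly locates where the extra $\mathbb{Z}_{p}$ comes from. But the step you defer --- surjectivity --- is the entire content of the theorem, and the route you sketch for it does not work. The comparison map $E_{*}E \to E_{*}^{\vee}E \simeq \map_{\cts}(\mathbb{G}_{n}, E_{*})$ is \emph{not} injective in general (already for $M = \mathbb{Z}_{p}$ the map $M \otimes_{\mathbb{Z}}\mathbb{Z}_{p} \to M^{\vee}_{p}$ is the multiplication $\mathbb{Z}_{p}\otimes_{\mathbb{Z}}\mathbb{Z}_{p}\to\mathbb{Z}_{p}$, which has enormous kernel), and this is forced by the answer itself: the invariants of the target are just $E_{*}$, whereas the invariants of the source are $E_{*}\otimes_{\mathbb{Z}}\mathbb{Z}_{p}$, which does not embed in $E_{*}$. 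So the extra $\mathbb{Z}_{p}$ factor lives precisely in the part of $E_{*}E$ that the completed object cannot see, and no termwise extraction of invariants from a sequence relating $E_{*}E$ to $E_{*}^{\vee}E$ will recover it without an independent handle on that kernel.

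The paper's mechanism is different and is worth internalizing. One writes $E_{*}E \simeq \varinjlim M_{\alpha}$ as a filtered colimit of \emph{dualizable} comodules (the Adams condition), uses \cref{corollary:continuous_cohomology_commutes_with_filtered_colimits} to commute $\rmH^{0}_{\cts}(\mathbb{G}_{n},-)$ past this colimit (this requires the local $\mfrak$-adic topology and the compactness argument of \cref{lemma:mapping_into_local_madic_module_lands_in_a_fg_submodule}), and then invokes \cref{theorem:ext_0_coincides_with_cohomology_after_p_completion_for_fp_comodules}: for any dualizable comodule $N$ the comparison map $\Ext^{0}_{E_{*}E}(E_{*},N) \to \rmH^{0}_{\cts}(\mathbb{G}_{n},N)$ is a $p$-completion. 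That theorem is proved by an induction over ``pure'' comodules whose base case is Hopkins' computation $\rmH^{0}_{\cts}(\mathbb{G}_{n},E_{*}) \simeq \mathbb{Z}_{p}$ versus $\Ext^{0}(E_{*})\simeq\mathbb{Z}_{(p)}$ (\cref{lemma:invariants_of_the_lubin_tate_ring}), together with injectivity of $\Ext^{1}(E_{*})\to\rmH^{1}(E_{*})$. Finally $\varinjlim \Ext^{0}(E_{*},M_{\alpha}) \simeq \Ext^{0}(E_{*},E_{*}E) \simeq E_{*}$ since $E_{*}E$ is the cofree comodule, and tensoring with $\mathbb{Z}_{p}$ gives the answer. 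The two ingredients your proposal is missing are exactly the filtered-colimit reduction to dualizable comodules and the $p$-completion comparison theorem for those.
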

Note that $E_{*} \otimes_{\mathbb{Z}} \mathbb{Z}_{p} \simeq E_{*}(S^{0}_{p})$, the homology of the $p$-complete sphere. Thus, the above result can be interpreted as detecting the conjectured copy of $L_{n-1}S^{0}_{p}$ inside $L_{n-1}S^{0}_{K}$ through $E$-homology. 

At height one, the Morava stabilizer group is particularly simple, and we are able to compute all of the derived limits. 

\begin{thmf}[\ref{theorem:cohomology_of_ee_at_height_one}]
At height $n=1$ and any prime, we have 
\[
\rmH_{\cts}^{s}(\mathbb{G}_{1}, E_{*}E) \simeq \begin{cases}
E_{*} \otimes_{\mathbb{Z}} \mathbb{Z}_{p} & \mbox{when } $s=0$,\\
E_{*} \otimes_{\mathbb{Z}} \mathbb{Q}_{p}& \mbox{when } $s=1$,\\
0 & \mbox{otherwise}.
\end{cases}
\]
\end{thmf}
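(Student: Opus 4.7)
The strategy is to use the spectral sequence of Theorem D,
\[
E_2^{s,t} = \rmH^s_{\cts}(\mathbb{G}_1, E_t E) \implies E_{t-s}(L_K S^0),
\]
combined with the small cohomological dimension of $\mathbb{G}_1 \cong \mathbb{Z}_p^\times$ at height one and the explicit description of $L_{K(1)}S^0$.

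\emph{Step 1 (vanishing above degree $1$).} At odd primes, the splitting $\mathbb{Z}_p^\times \cong \mu_{p-1} \times (1+p\mathbb{Z}_p)$ expresses $\mathbb{G}_1$ as a product of a prime-to-$p$ finite group and a procyclic pro-$p$ group of cohomological dimension one. It follows that $\rmH^s_{\cts}(\mathbb{G}_1, M) = 0$ for $s \geq 2$ on continuous $\mathbb{G}_1$-modules whose topology admits a cofinal system of $p$-complete submodules, and in particular on $E_*E$. At $p=2$ we instead use the index-$2$ pro-$2$ subgroup $1 + 4\mathbb{Z}_2 \cong \mathbb{Z}_2$ together with a Hochschild--Serre spectral sequence, exploiting that the induced $\{\pm 1\}$-action on $E_*E$ is controlled by the degree grading.

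\emph{Step 2 (short exact sequences).} Given Step 1, the spectral sequence of Theorem D is concentrated in columns $s=0,1$ and degenerates on a finite page to short exact sequences
\[
0 \to \rmH^1_{\cts}(\mathbb{G}_1, E_{n+1}E) \to E_n(L_K S^0) \to \rmH^0_{\cts}(\mathbb{G}_1, E_n E) \to 0.
\]
By Theorem E, the rightmost term equals $E_n \otimes_\mathbb{Z} \mathbb{Z}_p$, and it only remains to identify the $\rmH^1$ terms.

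\emph{Step 3 (identification of $\rmH^1$).} Compute $E_*(L_{K(1)}S^0)$ directly. At odd primes, $L_{K(1)}S^0$ is the fibre of $\psi^g - 1 \colon E \to E$ for a topological generator $g \in \mathbb{Z}_p^\times/\mu_{p-1}$, and smashing the defining fibre sequence with $E$ yields a long exact sequence expressing $E_*(L_K S^0)$ in terms of the kernel and cokernel of $\psi^g - 1$ acting on $E_*E$. Since $\psi^g - 1$ becomes invertible after inverting $p$ away from the trivial-action part, the cokernel identifies with $E_* \otimes_\mathbb{Z} \mathbb{Q}_p$. Substituting back into the short exact sequences of Step 2 and reading off the $\rmH^1$ column yields $\rmH^1_{\cts}(\mathbb{G}_1, E_*E) \cong E_* \otimes_\mathbb{Z} \mathbb{Q}_p$.

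The main obstacle is Step 1 at $p=2$: the exceptional $2$-torsion in $\mathbb{Z}_2^\times$ means one cannot simply invoke cohomological dimension of a pro-$p$ Sylow subgroup, so the vanishing in degrees $\geq 2$ requires a careful analysis of the $\{\pm 1\}$-action on $E_*E$ and its interaction with Hochschild--Serre. A secondary subtlety in Step 3 is that the cokernel of $\psi^g - 1$ on $E_*E$ must be computed in the correct topological sense so that it matches the abutment of the spectral sequence term-by-term.
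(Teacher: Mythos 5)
Your route is genuinely different from the paper's --- you try to run the inverse limit spectral sequence of Theorem D backwards, feeding in an independent computation of $E_{*}(L_{K(1)}S^{0})$ from the fibre sequence $L_{K(1)}S^{0} \to E^{h\mathbb{G}_1^1} \xrightarrow{\psi^{g}-1} E^{h\mathbb{G}_1^1}$ --- but as written it has two genuine gaps, both of which the paper's proof resolves by a structural description of $E_{*}E$ that your proposal never establishes. The paper identifies $E_{0}KU$ (and hence $E_{*}E \simeq E_{*}KU \otimes_{\mathbb{Z}}\mathbb{Z}_{p}$) with the $\mathbb{Z}_{p}$-subalgebra of $\map_{\cts}(\mathbb{G}_{1},\mathbb{Z}_{p})$ generated by the binomial-coefficient functions $b_{i}(a)=\binom{a}{i}$ and $b_{1}^{-1}$, via injectivity of the completion map $E_{0}KU \to E_{0}^{\vee}E$ (which uses Adams--Harris--Switzer freeness of $KU_{0}KU$). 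This description does all the work: it gives $\rmH^{0}=\mathbb{Z}_{p}$ immediately, it shows the inclusion into the coinduced module $\map_{\cts}(\mathbb{G}_{1},\mathbb{Z}_{p})$ is an isomorphism mod $p$ (whence all higher cohomology is \emph{rational}, at every prime including $p=2$), and rationally it identifies $E_{0}KU\otimes\mathbb{Q}_{p}$ with $\mathbb{Q}_{p}[b^{\pm 1}]=\bigoplus_{k}\mathbb{Q}_{p}(k)$, from which $\rmH^{1}=\mathbb{Q}_{p}$ and vanishing above degree $1$ follow by a character computation.

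Concretely, the two gaps are these. First, your Step 1 at $p=2$: you correctly identify that the $2$-torsion in $\mathbb{Z}_{2}^{\times}$ blocks any cohomological-dimension argument, but the proposed fix (``a careful analysis of the $\{\pm 1\}$-action \dots controlled by the degree grading'') is not a proof and would not work --- the action of $-1$ on $E_{0}E$ is by translation of functions on $\mathbb{G}_{1}$ and is nontrivial in degree $0$, so the grading alone says nothing; the actual mechanism for killing $2$-torsion in higher degrees is the mod-$p$ comparison with the coinduced module, which requires knowing the subalgebra description. Second, your Step 3 asserts that $\mathrm{coker}(\psi^{g}-1)$ on $E_{*}E$ is $E_{*}\otimes_{\mathbb{Z}}\mathbb{Q}_{p}$ because ``$\psi^{g}-1$ becomes invertible after inverting $p$ away from the trivial-action part''; but this presupposes exactly the rational character decomposition of $E_{*}E$ that needs to be proved, and for the uncompleted $E_{*}E$ the answer is far from obvious (note it is $\mathbb{Q}_{p}$, not $\mathbb{Z}_{p}$, precisely because $E_{0}KU$ is not $p$-complete). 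Moreover, since $\rmH^{1}$ of a procyclic group \emph{is} the cokernel of $g-1$, your detour through the spectral sequence of Theorem D does not actually avoid the group cohomology computation --- it reproduces it --- while adding convergence and edge-map identifications that would also need to be checked. If you want to salvage the approach, the missing ingredient is precisely the paper's Propositions on the binomial-coefficient presentation of $E_{0}KU$ and the rationality of the higher cohomology.
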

Note that the above in particular completely recovers the height one case of the chromatic splitting conjecture, which states that $L_{0} L_{K(1)} S^{0} \simeq L_{0} S^{0}_{p} \oplus L_{0} S^{-1}_{p}$. The latter can be equivalently obtained by explicitly computing $\pi_{*} L_{K}S^0$, which if we ignore torsion contains exactly two $\mathbb{Z}_{p}$ summands, one for each sphere. However, the computation of homology gives more precise information, as the two copies of $\mathbb{Z}_{p}$ are isomorphic as abelian groups, but the comodules $E_{*} \otimes_{\mathbb{Z}} \mathbb{Z}_{p}$ and $E_{*} \otimes_{\mathbb{Z}} \mathbb{Q}$ are not. 

\subsection*{Outline of document}

We hope that the structure of the document becomes apparent from the choice of section titles. In brief, the first part of the paper, comprising \cref{sec:moravaktheories} through \cref{sec:milnorfiltration} deals with Morava $K$-theories, their Hopf algebroids, categories of comodules, and their cohomology, through the perspective of Milnor modules.  We pay special attention to the (in)dependence of our constructions on the choice of Morava $K$-theories, and revisit the relation to continuous cohomology. 

The $K$-based Adams spectral sequence and the finite height Miller square are the topic of Part 2. We establish convergence properties in \cref{sec:knbasedass}, then construct the finite height Miller square, and use it to relate it to the filtration by powers spectral sequence. In \cref{sec:somecomputations}, the final section of this part, we illustrate our results by going through explicit computations at height 1. 

The third part of the paper consists of \cref{section:homology_of_inverse_limits} to \cref{sec:heightone}. Here, the focus lies on the derived category of $E_*E$-comodules and the construction and study of the inverse limit spectral sequence. We then give our application to the algebraic chromatic splitting conjecture based on the connection to the continuous cohomology of the Morava stabilizer group. Each of the two parts ends with some explicit computations at height one, which we hope elucidate our methods and results.

%\subsection*{Conventions}

\subsection*{Acknowledgements}

We would like to thank Agn\`es Beaudry, Robert Burklund, Mike Hopkins, Eric Peterson, and Hal Sadofsky for useful conversations related to this work. We acknowledge the hospitality of the Max Planck Institute for Mathematics in Bonn.

\part{Morava $K$-theories and Milnor modules} In this part of the paper, we study Morava $K$-theories, which play the role of residue fields of Lubin--Tate spectra. We describe their Hopf algebroids, categories of comodules, and cohomology. 

\section{Morava $K$-theories and their Hopf algebroids} 
\label{sec:moravaktheories}

Let $\kappa$ be a perfect field equipped with a choice of a height $n$ formal group $\gzero$. Associated to this data is the Lubin--Tate ring $E_{0}$ classifying deformations \cite{lubin1966formal}. This is a complete local $W(\kappa)$-algebra such that there exists a non-canonical choice of regular generators inducing an isomorphism of rings $E_{0} \simeq W(\kappa)\llbracket u_{1}, \ldots, u_{n-1}\rrbracket$. We write $\mfrak = (p, u_{1} \ldots, u_{n-1})$ for the maximal ideal. 

The ring $E_{0}$ equipped with the universal deformation is Landweber exact and so can be lifted to a $2$-periodic ring spectrum $E$ with $E_{*} \simeq E_{0}[u^{\pm 1}]$ for some unit $u \in E_{2}$, and the property that the formal group $\g := \textnormal{Spf}(E^{0}(BS^{1}))$ is the universal deformation of $\gzero$. It is well-known that $E$ admits a unique $\mathbf{E}_{\infty}$-ring structure, functorial in the choice of $\kappa$ and the formal group, see \cite{goerss2005moduli}. More recently, it was proven by Lurie that $E$ arises as a solution to a moduli problem involving formal groups over $\mathbf{E}_{\infty}$-rings, see \cite{lurie2018elliptic}.

The canonical nature of the Lubin--Tate spectrum makes it a good starting point for the study of chromatic phenomena. Since $E_{0}$ is local, one would like to construct a spectrum which plays the role of the residue field of $E$. Following Hopkins and Lurie \cite{hopkins2017brauer}, we make the following definition. 

\begin{definition}
\label{definition:morava_k_theory}
An $\mathbf{E}_{1}$-$E$-algebra $K$ is a \emph{Morava $K$-theory} if the unit map $E \rightarrow K$ induces an isomorphism $K_{*} \simeq E_{*} / \mfrak$. 
\end{definition}
One can show that an algebra satisfying the above conditions always exists, for any Lubin--Tate spectrum $E$; in fact: 
\begin{enumerate}
    \item there are uncountably many Morava $K$-theories which are not equivalent as $E$-algebras, 
    \item none of which is preferred and 
    \item none of which can be promoted to an $\mathbf{E}_{2}$-$E$-algebra,
\end{enumerate}
see \cite{hopkins2017brauer} for more information. 

\begin{warning}
Large parts of the literature are written in terms of the $(2p^{n}-2)$-periodic spectrum $K(n)$ with $\pi_{*}K(n) \simeq \mathbb{F}_{p}[v_{n}^{\pm 1}]$; this cannot be a made into a Morava $K$-theory according to the above definition as it not $2$-periodic. We will not use $K(n)$ in the current work, but in the interest of completness we collect some results about it in \S\ref{section:digression_minimal_morava_k_theory} below.
\end{warning}

Throughout the rest of this section, the letter $K$ will denote a choice of a Morava $K$-theory in the sense of \cref{definition:morava_k_theory}.

\begin{remark}
While two different Morava $K$-theories need not be equivalent as $E$-algebras, one can show that they are always equivalent as $E$-modules \cite[Corollary 3.6.6]{hopkins2017brauer}. In particular, they are always equivalent as spectra. 
\end{remark}

Observe that we have $K_{*} \simeq E_{*} / \mfrak E_{*}$ by definition, so $K_{*} \simeq \kappa[u^{\pm 1}]$, which is a graded field. Note that the first isomorphism is completely canonical, while the second is not, as it depends on the choice of a unit $u \in K_{2}$. In any case, it follows that the ring spectrum $K$ is even periodic, and so complex-orientable. The reduction map $E^{0}(BS^{1}) \rightarrow K^{0}(BS^{1})$ induces a canonical isomorphism  $\textnormal{Spf}(K^{0}(BS^{1})) \simeq \gzero$. 

\begin{remark}
It is not strictly necessary to start relative to a chosen Lubin--Tate spectrum. Instead, one could say that an $\mathbf{E}_{1}$-algebra $K$ in spectra is a \emph{Morava $K$-theory} if there exists an isomorphism $K_{*} \simeq \kappa[u^{\pm 1}]$ with $\kappa$ a perfect field and such that $\gzero := \textnormal{Spf}(K^{0}(BS^{1}))$ is of finite height. One can show that in this case $K$ can be canonically lifted to an algebra over the corresponding Lubin--Tate ring spectrum. Thus, we lose no generality by working relative to $E$.
\end{remark}
In this paper, we will be interested in the $K$-based Adams spectral sequence. By standard arguments, the $E_{2}$-page of this spectral sequence has a description in terms of homological algebra of comodules over the Hopf algebroid $K_{*}K$. Thus, we begin by giving a partial description of the latter. 

Our strategy is to exploit the $E$-algebra structure on $K$ to divide $K_{*}K$ into two parts, the first of which admits a convenient interpretation in terms of formal groups and the second of which is more mysterious, but manageably small. We start with the latter, for which we will need to work relative to $E$. 

\begin{notation}
If $M, N$ are $E$-modules, we will write
\[
M_{*}^{E}N := \pi_{*}(M \otimes_{E} N)
\]
for their homology relative to $E$ and 
\[
M^{*}_{E}N := [N, M]^{*}_{E} \simeq \pi_{-*} F_{E}(N, M)
\]
for their relative cohomology. Here, $F_{E}$ is the internal mapping $E$-module; that is, the right adjoint to the tensor product of $E$-modules.
\end{notation}
By usual arguments involving flatness of $K_{*}^{E}K$ over $K_{*}$, the map 
\[
K \otimes_{E} K \rightarrow K \otimes_{E} K \otimes_{E} K
\]
induces a $K_{*}$-coalgebra structure on $K_{*}^{E}K$. Note that this is an honest coalgebra structure; that is, the left and right units coincide, as they are necessarily maps of $E_{*}$-algebras, of which $K_{*}$ is a quotient. Together with multiplication, this makes $K_{*}^{E}K$ into a Hopf algebra, which a priori need not be either commutative nor cocommutative.

\begin{lemma}
\label{lemma:internal_k_homology_dual_to_exterior_algebra}
The coalgebra structure on $K_{*}^{E}K$ is dual to an exterior algebra on an $n$-dimensional vector space $V$ in degree $-1$. In particular, $K_{*}^{E}K$ is cocommutative.
\end{lemma}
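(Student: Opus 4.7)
The plan is to compute $K_{*}^{E}K$ by exploiting the Koszul presentation of $K$ as the quotient of $E$ by the regular sequence $\mfrak = (p, u_{1}, \ldots, u_{n-1})$, and then to pin down the coalgebra structure by analyzing the primitives and invoking the algebra structure on $K_{*}^{E}K$.

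First, I would establish the additive structure. Since $(p, u_{1}, \ldots, u_{n-1})$ is a regular sequence in $E_{*}$, there is an equivalence of $E$-modules
\[
K \simeq E/(p) \otimes_{E} E/(u_{1}) \otimes_{E} \cdots \otimes_{E} E/(u_{n-1}).
\]
Tensoring with $K$ over $E$, each factor $M \otimes_{E} E/(x)$ sits in a cofiber sequence $M \xrightarrow{x} M \to M \otimes_{E} E/(x)$, and since every generator of $\mfrak$ vanishes on the homotopy of any $K$-module, iterating this splits on homotopy groups. I would thereby obtain a (non-canonical) isomorphism of graded $K_{*}$-modules
\[
K_{*}^{E}K \;\simeq\; \Lambda_{K_{*}}(\tau_{0}, \tau_{1}, \ldots, \tau_{n-1}), \qquad |\tau_{i}| = 1,
\]
with each $\tau_{i}$ the Bockstein-style class arising from the cofiber sequence for the corresponding generator of $\mfrak$. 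In particular, $K_{*}^{E}K$ would be free of total rank $2^{n}$ over $K_{*}$, with Poincar\'e series $(1+t)^{n}$.

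Next, I would determine the coalgebra structure, starting from the degree-one primitives. For degree reasons the counit $\epsilon \colon K_{*}^{E}K \to K_{*}$ vanishes on each $\tau_{i}$, as $K_{*}$ is concentrated in even degrees. Writing $\Delta(\tau_{i})$ in the natural basis of the degree-one part of $K_{*}^{E}K \otimes_{K_{*}} K_{*}^{E}K$ and applying both counit axioms $(\epsilon \otimes \id)\Delta = \id = (\id \otimes \epsilon)\Delta$ forces
\[
\Delta(\tau_{i}) \;=\; \tau_{i} \otimes 1 + 1 \otimes \tau_{i},
\]
so every $\tau_{i}$ is primitive. Combined with the multiplicative structure on $K_{*}^{E}K$, the fact that $\Delta$ is a ring homomorphism then determines the comultiplication on every product $\tau_{i_{1}} \cdots \tau_{i_{k}}$ and identifies $K_{*}^{E}K$ with the exterior Hopf algebra on $n$ primitive generators in degree $1$. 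The $K_{*}$-linear dual is the exterior algebra on the $n$-dimensional dual vector space placed in degree $-1$, and cocommutativity is immediate from the primitivity of the $\tau_{i}$.

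The main obstacle I anticipate is the iterated Koszul step: because $K$ is only $\mathbf{E}_{1}$ over $E$, the intermediate quotients $E/(p, \ldots, u_{i})$ need not carry canonical ring structures, so the argument must be executed purely at the level of $E$-modules and cofiber sequences rather than of algebras. Tracking the classes $\tau_{i}$ through the iteration — so that they really assemble into a $K_{*}$-basis indexed by subsets of $\{0, \ldots, n-1\}$ — is the main bookkeeping subtlety, though it is routine once the splitting of each individual cofiber sequence on homotopy has been established.
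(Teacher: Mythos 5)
Your route is genuinely different from the paper's: the paper simply dualizes, identifying $\Hom_{K_{*}}(K_{*}^{E}K, K_{*})$ with the endomorphism algebra $[K,K]_{E}$ (using that $K_{*}$ is a graded field), and then cites Hopkins--Lurie's computation of the latter as an exterior algebra on an $n$-dimensional space in degree $-1$. Your Koszul computation of the additive and multiplicative structure is fine and gives a legitimate alternative to the rank count: the splitting of each cofiber sequence $M \xrightarrow{x} M \to M \otimes_{E} E/(x)$ on homotopy is correct because every generator of $\mfrak$ acts by zero on the homotopy of a $K$-module, and the bookkeeping you flag is indeed routine.

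However, the determination of the coalgebra structure has a genuine gap. The counit axioms only pin down the terms of $\Delta(\tau_{i})$ of the form $a \otimes 1$ and $1 \otimes b$; they say nothing about summands $c\,\tau_{S} \otimes \tau_{T}$ with both $S, T \neq \emptyset$. Because $K_{*} \simeq \kappa[u^{\pm 1}]$ is $2$-periodic, the internal-degree-one component of $K_{*}^{E}K \otimes_{K_{*}} K_{*}^{E}K$ contains, for any $n \geq 2$, classes such as $u^{-1}\tau_{j} \otimes \tau_{k}\tau_{l}$ (any $|S|+|T|$ odd and $\geq 3$ can be compensated by a negative power of $u$), so the counit axioms do \emph{not} force $\Delta(\tau_{i}) = \tau_{i} \otimes 1 + 1 \otimes \tau_{i}$; your argument only closes up at height $1$. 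To repair it you need an extra input controlling the ``weight'': for instance, that $\Delta$ respects the K\"{u}nneth/Koszul filtration on $K_{*}^{E}K$ (the relative analogue of the Milnor filtration of \cref{sec:milnorfiltration}, whose compatibility with comultiplication is exactly the content of \cref{theorem:milnor_filtration_makes_kk_into_a_filtered_hopf_algebroid}), after which primitivity of filtration-one odd classes does follow from the counit axioms; or else you must fall back on the Hopkins--Lurie identification of the dual algebra, which is what the paper does.
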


\begin{proof}
Since the coefficients of $K$ form a field, we have an isomorphism of algebras
\[
\Hom_{K_{*}}(K_{*}^{E}K, K_{*}) \simeq [K, K]_{E},
\]
where on the right hand side we have maps of $E$-modules. One can show that the latter is always isomorphic to an exterior algebra over a vector space of the needed dimension, see \cite[Proposition 6.5.1]{hopkins2017brauer}.
\end{proof}

\begin{remark}
\label{remark:relative_homology_always_commutative}
Hopkins and Lurie show that $K_{*}^{E}K$ is actually isomorphic to an exterior algebra \emph{as a Hopf algebra}, though there is in general no canonical such isomorphism \cite[Proposition 5.2.4]{hopkins2017brauer}. In particular, it is also always commutative. 
\end{remark}

\begin{remark}
\label{remark:kunneth_spectral_sequence_collapses_for_kek}
As both sides are $K_{*}$-vector spaces of dimension $2^{n}$, the latter by \cref{lemma:internal_k_homology_dual_to_exterior_algebra}, the K\"{u}nneth spectral sequence
\[
\Tor^{t, s}_{E_{*}}(K_{*}, K_{*}) \implies K_{s+t}^{E}K
\]
collapses. Indeed, since $E_{0}$ is regular ring of dimension $n$, the relevang $\Tor$-groups are canonically isomorphic to an exterior algebra on $n$ generators. 
\end{remark}
The second, ``understandable'' part of $K_{*}K$ is the image of the canonical map from $E_{*}E$, which can be identified with $K_{*}E$. The analogue of this Hopf algebroid for the minimal Morava $K$-theory of \S\ref{section:digression_minimal_morava_k_theory} is denoted in Ravenel's book by $\Sigma(n)$, see \cite[6.2]{ravenel2003complex}.

Notice that since $E_{*}E$ is flat over $E_{*}$, we have canonical isomorphisms 
\[
K_{*}E \simeq K_{*} \otimes_{E_{*}} E_{*}E \simeq E_{*} E / \mfrak,
\]
which in turn has the following consequence.
\begin{proposition}
\label{proposition:ke_comodules_independnent_of_k}
The category of $K_{*}E$-comodules is independent of $K$; more precisely, for any two choices of Morava $K$-theories, possibly over different Lubin--Tate spectra, these categories of comodules are canonically equivalent as symmetric monoidal categories. 
\end{proposition}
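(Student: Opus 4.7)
The plan proceeds in two stages, depending on whether the two Morava $K$-theories lie over a common Lubin--Tate spectrum.

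First, I would handle the case where $K, K'$ are Morava $K$-theories over the same Lubin--Tate spectrum $E$. By the definition of a Morava $K$-theory, the unit maps induce canonical isomorphisms $K_* \simeq E_*/\mfrak \simeq K'_*$. Since $E_*E$ is flat over $E_*$, these extend to canonical isomorphisms $K_*E \simeq K_* \otimes_{E_*} E_*E \simeq E_*E/\mfrak \simeq K'_*E$, and because $\mfrak$ is an invariant ideal of the Hopf algebroid $(E_*, E_*E)$, these identifications automatically intertwine the source, target, counit, multiplication, and antipode. This produces a canonical isomorphism of Hopf algebroids $(K_*, K_*E) \simeq (K'_*, K'_*E)$, and hence the desired canonical symmetric monoidal equivalence of comodule categories.

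Second, to handle $K, K'$ over different Lubin--Tate spectra $E, E'$, I would invoke the stack-theoretic description of $E_*E$-comodules. By Landweber exactness, the Hopf algebroid $(E_*, E_*E)$ is a faithfully flat presentation of (a $2$-periodic form of) the open substack $\cM^{\le n}_{fg}$ of the moduli stack of formal groups. Since $\mfrak$ cuts out the closed locus of formal groups of height exactly $n$, reducing modulo $\mfrak$ corresponds to pulling back to the locally closed substack $\cM^{=n}_{fg}$, which depends only on $p$ and $n$. Consequently, $K_*E$-comodules are canonically identified with quasi-coherent sheaves on this stack, and the same description applies to $(K'_*, K'_*E')$, yielding the required canonical symmetric monoidal equivalence.

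The main technical obstacle is making the second-stage identification fully canonical rather than depending on an auxiliary choice of isomorphism of formal groups over a separable closure. I expect this to follow from comparing both presentations against the universal one furnished by $(BP_*, BP_*BP)$: both $E$ and $E'$ admit canonical ring maps from $MU$, and hence from $BP$ after $p$-localization, and Landweber exactness ensures that the induced change-of-rings functors are equivalences onto the subcategory of $BP_*BP$-comodules supported in heights at most $n$. Modding out by the chromatic ideal $I_n$, which maps to $\mfrak$ in either $E_*$ or $E'_*$, then produces the canonical comparison, independent of any further choice. A direct verification that the fiber product $\Spec K_* \times_{\cM^{=n}_{fg}} \Spec K_*$ agrees with $\Spec K_*E$, using the invariance of $\mfrak$ in $E_*E$ (so that two-sided and one-sided reductions coincide), then completes the argument.
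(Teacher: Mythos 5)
Your argument is correct in substance, but it is organized quite differently from the paper's. The paper's proof is a two-step reduction: it first invokes Hovey--Strickland's Theorem C, which says that the symmetric monoidal categories $\Comod_{E_*E}$ for varying Lubin--Tate spectra are canonically equivalent, and then observes that $\Comod_{K_*E}$ is the category of $K_*$-modules internal to $\Comod_{E_*E}$, where $K_* \simeq E_*/\mfrak$ is pinned down invariantly as the minimal quotient comodule of the unit (their Theorem D); canonicity is then automatic. You instead work directly with the Hopf algebroid $K_*E \simeq E_*E/\mfrak$: the same-$E$ case by the observation that both sides are literally $E_*E/\mfrak$ with $\mfrak$ invariant, and the varying-$E$ case via the presentation of the moduli stack of formal groups of height exactly $n$, with canonicity secured by comparison through $BP_*BP$-comodules and the invariant ideal $I_n$ (whose image generates $\mfrak$ in any Lubin--Tate ring). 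The two routes rest on the same underlying input --- Landweber exactness and the Hovey--Strickland change-of-rings equivalence, which is what your $BP$-comparison re-derives --- but the paper's formulation via ``modules over the minimal quotient comodule'' sidesteps the stacky bookkeeping entirely, whereas your stack-theoretic route essentially proves the paper's subsequent \cref{proposition:ke_comodules_same_as_qcoh_sheaves} along the way. One small point to tighten: the fiber product $\Spec(K_*) \times_{\mathcal{M}_{fg}^{n}} \Spec(K_*)$ over the stack \emph{without} a trivialized Lie algebra only recovers the even-graded part of the comodule category; to capture all graded comodules (equivalently, the local grading) you should work over $\mathcal{M}_{fg}^{n,\omega = 1}$ and then quotient by the $\mathbf{G}_m$-action, as the paper does. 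This is routine but worth saying explicitly, since the statement concerns the graded comodule category.
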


\begin{proof}
By a result of Hovey--Strickland, see \cite[Theorem C]{hovey2005comodules}, the categories of $E_{*}E$-comodules for varying Lubin--Tate spectra $E$ are canonically equivalent as locally graded symmetric monoidal categories. 

The category of $K_{*}E$-comodules can be identified with $K_{*}$-modules in the symmetric moonidal category $\Comod_{E_*E}$, and $K_{*} \simeq E_{*} / \mfrak$ itself is uniquely determined as the minimal quotient of $E_{*}$ as a comodule by \cite[Theorem D]{hovey2005comodules}. Thus, the equivalence of Hovey and Strickland induces one between categories of $K_{*}E$-comodules. 
\end{proof}
In fact, we can give a direct algebro-geometric description of this category of comodules. By standard results about Landweber exact homology theories we have
\[
E_{*}E \simeq E_{*} \otimes_{BP_{*}} BP_{*}BP \otimes_{BP_{*}} E_{*}.
\]
and by again invoking flatness we see that 
\[
K_{*}E \simeq K_{*} \otimes_{BP_{*}} BP_{*}BP \otimes_{BP_{*}} E_{*}
\]
Since the ideal $I_{n}$ is invariant, we can instead write
\[
K_{*}E \simeq K_{*} \otimes_{BP_{*}} BP_{*}BP \otimes_{BP_{*}} K_{*}.
\]
This is a familiar Hopf algebroid, commutative ring homomorphisms out of which classify pairs of homomorphisms $f_{1}, f_{2}: K_{*} \rightarrow A$ together with a strict isomorphism of the resulting formal groups $(f_{1})^{*} \gzero = (f_{2})^{*} \gzero$. In other words, we have a pullback of algebraic stacks
\[\begin{tikzcd}
	{\mathrm{Spec}(K_{*}E)} & {\mathrm{Spec}(K_{*})} \\
	{\mathrm{Spec}(K_{*})} & {\mathcal{M}_{fg}^{\omega=\mathrm{triv}}}
	\arrow[from=1-2, to=2-2]
	\arrow[from=2-1, to=2-2]
	\arrow[from=1-1, to=2-1]
	\arrow[from=1-1, to=1-2],
\end{tikzcd}\]
where $\mathcal{M}_{fg}^{\omega=\mathrm{1}}$ is the moduli of formal groups with a trivialized Lie algebra. The maps from $\Spec(K_{*})$ are faithfully flat surjections onto the height $n$ point in this moduli stack, and we deduce that the groupoid $(\Spec(K_{*}), \Spec(K_{*}E))$ is its presentation. 

\begin{proposition}
\label{proposition:ke_comodules_same_as_qcoh_sheaves}
The category of even graded $K_{*}E$-comodules is equivalent to the category of quasi-coherent sheaves over the moduli stack $\mathcal{M}_{fg}^{n}$ of formal groups of height exactly $n$. 
\end{proposition}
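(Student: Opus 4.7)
The plan is to combine the explicit Hopf algebroid presentation of the height $n$ locus of $\mathcal{M}_{fg}^{\omega=\mathrm{triv}}$ established in the paragraph preceding the proposition with the standard dictionary between gradings and $\mathbb{G}_{m}$-equivariant structures.

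First, since $BP_{*}BP$ is flat over $BP_{*}$ and $K_{*}$ is a graded field, the Hopf algebroid $(K_{*}, K_{*}E)$ is a flat presentation of an algebraic stack. Ignoring the grading for a moment, the pair $(\Spec K_{0}, \Spec K_{0}E)$ presents the height $n$ stratum of $\mathcal{M}_{fg}^{\omega=\mathrm{triv}}$, and the two projections to $\Spec K_{0}$ are faithfully flat surjections onto that stratum. By standard descent for flat Hopf algebroids, the category of $K_{0}E$-comodules is therefore equivalent, as a symmetric monoidal category, to $\QCoh(\mathcal{M}_{fg}^{\omega=\mathrm{triv},n})$.

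Next, I put the grading back in. The periodicity unit $u \in K_{2}$ is a generator of the invariant line bundle $\omega_{\gzero}$, and under the standard dictionary a $\mathbb{Z}$-grading on a $K_{0}$-module is the same data as a $\mathbb{G}_{m}$-equivariant structure, where $K_{2k}$ is placed in weight $k$. The $\mathbb{G}_{m}$-action on $\mathcal{M}_{fg}^{\omega=\mathrm{triv}}$ by rescaling the trivialization of $\omega$ realizes the ordinary moduli stack $\mathcal{M}_{fg}$ of formal groups as the quotient, so after restricting to height $n$ we obtain
\[
\QCoh(\mathcal{M}_{fg}^{n}) \simeq \QCoh^{\mathbb{G}_{m}}(\mathcal{M}_{fg}^{\omega=\mathrm{triv},n}).
\]
Combining this with the previous step identifies even graded $K_{*}E$-comodules with $\mathbb{G}_{m}$-equivariant quasi-coherent sheaves on $\mathcal{M}_{fg}^{\omega=\mathrm{triv},n}$, which is exactly $\QCoh(\mathcal{M}_{fg}^{n})$.

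The principal point that requires care is the grading bookkeeping: one must verify that the assignment $K_{2k} \leftrightarrow$ weight $k$ really yields \emph{all} of $\QCoh(\mathcal{M}_{fg}^{n})$ and not, say, only the subcategory of sheaves of even $\mathbb{G}_{m}$-weight. This reduces to checking that, under the isomorphism $\Spf K^{0}(BS^{1}) \simeq \gzero$, the class $u$ corresponds to a generator of $\omega_{\gzero}$ of $\mathbb{G}_{m}$-weight one, so that the natural grading on $K_{*}$ matches the weight grading on sections of tensor powers of $\omega$. Once this is pinned down, the remaining steps are formal.
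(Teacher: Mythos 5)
Your proposal is correct and follows essentially the same route as the paper: first identify ungraded $K_{*}E$-comodules with quasi-coherent sheaves on the height $n$ locus of the moduli of formal groups with trivialized Lie algebra (using the pullback square from the preceding discussion), then absorb the grading as a $\mathbf{G}_{m}$-equivariant structure and pass to the quotient stack $\mathcal{M}_{fg}^{n,\omega=1}/\mathbf{G}_{m} \simeq \mathcal{M}_{fg}^{n}$. The weight bookkeeping you flag (that $u$ sits in $\mathbf{G}_{m}$-weight one, so the even grading recovers all of $\QCoh(\mathcal{M}_{fg}^{n})$) is exactly the point the paper's proof handles implicitly, and your making it explicit is a reasonable refinement rather than a departure.
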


\begin{proof}
By the discussion above, the category of ungraded $K_{*}E$-comodules can be identified with the category of quasi-coherent sheaves over the moduli stack $\mathcal{M}_{fg}^{n, \omega=1}$ of formal groups of height $n$ with trivialized Lie algebra. 

The even grading of $K_{*}E$ corresponds under this equivalence to the $\mathbf{G}_{m}$-action on the chosen trivialization of the Lie algebra, so that even graded comodules can be identified with the quasi-coherent sheaves over the quotient stack 
\[
\faktor{\mathcal{M}_{fg}^{n, \omega=1}}{\mathbf{G}_{m}} \simeq \mathcal{M}_{fg}^{n},
\]
which is exactly the claim. 
\end{proof}
The above discussion identifies $K_{*}E$ with familiar objects from the theory of formal groups, which is why we referred to it above as the ``understandable'' part of $K_{*}K$. The following gives some control over how these two parts are related. 

\begin{lemma}
\label{lemma:eotimesk_is_a_direct_sum_of_ks_as_an_emodule}
Consider $E \otimes K$ as an $E$-module with the module structure inherited only from the left factor. Then, $E \otimes K$ is equivalent as an $E$-module to a direct sum of $K$.
\end{lemma}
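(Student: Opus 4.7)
The plan is to upgrade the given left $E$-module structure on $E \otimes K$ to a left $K$-module structure (through the unit map $E \to K$), and then invoke the standard fact that every $K$-module splits as a direct sum of shifted copies of $K$, since $K_{*}$ is a graded field.

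The starting observation is that each element of the regular sequence $p, u_{1}, \ldots, u_{n-1} \in \pi_{*} E$ generating the maximal ideal $\mfrak$ acts nullhomotopically on $E \otimes K$ as an $E$-module self-map. I would first verify this for $K$ itself, which follows from its presentation as the iterated cofiber $K \simeq E/p/u_{1}/\cdots/u_{n-1}$: each cofiber sequence $R \xrightarrow{x} R \to R/x$ forces $x \cdot \mathrm{id}_{R/x}$ to be nullhomotopic as an $E$-module self-map, and this property is preserved under subsequent quotients. Tensoring on the left by $E$ then gives
\[
x \cdot \mathrm{id}_{E \otimes K} \;\simeq\; \mathrm{id}_{E} \otimes (x \cdot \mathrm{id}_{K}) \;\simeq\; 0
\]
for every element $x$ of the regular sequence.

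With these nullhomotopies in hand, I would apply the standard iterative construction of modules over an $\mathbf{E}_{1}$-quotient algebra (see Higher Algebra \S 7.1 or the classical treatment in EKMM): since each $x \cdot \mathrm{id}_{E \otimes K}$ is nullhomotopic, one can lift the $E$-module structure first to an $E/p$-module structure, then to an $E/p/u_{1}$-module structure, and so on, ending with a $K$-module structure compatible with the original $E$-module structure via $E \to K$. Any choice of nullhomotopies at each stage suffices. With such a $K$-module structure fixed, I would choose a homogeneous $K_{*}$-basis $\{x_{i}\}_{i \in I}$ of $\pi_{*}(E \otimes K) \simeq E_{*}E / \mfrak E_{*}E$, lift each $x_{i}$ to a $K$-module map $\Sigma^{n_{i}} K \to E \otimes K$, and assemble these into an $E$-module map $\bigoplus_{i} \Sigma^{n_{i}} K \to E \otimes K$, which is a $\pi_{*}$-isomorphism and hence an equivalence. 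Since $E_{*}E$ (and thus $\pi_{*}(E \otimes K)$) is concentrated in even degrees and $K$ is $2$-periodic, the shifts can in fact all be taken to be trivial, giving a direct sum of untwisted copies of $K$.

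The main technical hurdle is the coherent lift of the $E$-module structure to a $K$-module structure. Because $K$ is only $\mathbf{E}_{1}$ over $E$, one cannot appeal to any $\mathbf{E}_{\infty}$-formalism; however, the fact that $K$ is built from $E$ by iteratively killing a regular sequence makes this a textbook instance of the lifting machinery, and the vanishing of the obstructions at each stage is precisely the nullhomotopy of $x \cdot \mathrm{id}_{E \otimes K}$ established above.
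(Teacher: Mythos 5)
Your argument breaks down at its central step: the claim that each generator $x$ of $\mfrak$ acts nullhomotopically on $E \otimes K$ for the \emph{left}-factor module structure. The identification $x \cdot \mathrm{id}_{E\otimes K} \simeq \mathrm{id}_E \otimes (x\cdot \mathrm{id}_K)$ is false in general: the action of $x \in \pi_*E$ through the left factor is $(\cdot\, x)_E \otimes \mathrm{id}_K$, multiplication by $x$ on the $E$ tensor factor, whereas $\mathrm{id}_E \otimes (x \cdot \mathrm{id}_K)$ is the action of $x$ through the right factor. Moving $x$ across a smash product taken over the sphere is only possible when $x$ lifts to $\pi_*S^0$, which holds for $p$ but not for the $u_i$. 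This conflation is precisely the point of the lemma --- as the remark immediately following it notes, for the right-factor structure the statement is trivial because $E\otimes K$ is then a module over the $E$-algebra $K$; all of the content lies in the left-factor structure. Concretely, on homotopy the left action of $u_i$ on $\pi_*(E\otimes K) \cong E_*E/\eta_R(\mfrak)$ is multiplication by $\eta_L(u_i)$, and the fact that this vanishes is not formal: it is the invariance of the ideal $\mfrak = I_n$, i.e.\ $\eta_L(\mfrak)E_*E = \eta_R(\mfrak)E_*E$, which your proof never invokes. (The nullhomotopy you want is in fact true, but proving it requires both invariance of $\mfrak$ and an argument upgrading ``zero on homotopy'' to ``nullhomotopic'' --- for instance, observing that the target is a $K$-module via the right factor, so that any map into it inducing zero on $K_*$-homology is null.)

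A secondary issue is that, even granted the nullhomotopies, lifting the $E$-module structure to a $K$-module structure is more delicate than a ``textbook instance of the lifting machinery'': the standard consequence of $x$ acting nullhomotopically on $M$ is a splitting $M\otimes_E E/x \simeq M \oplus \Sigma^{|x|+1}M$, exhibiting $M$ as a retract in $\Mod_E$ of an $E/x$-module, and such a retract does not automatically inherit an $E/x$-module structure; the obstruction theory for iterating this over the whole regular sequence is not addressed. This part is patchable, since no $K$-module structure is actually needed: with the nullhomotopies in hand one can extend each $E$-module map $\Sigma^{n_i}E \to E\otimes K$ hitting a chosen basis element of $E_*K$ over the quotients $\Sigma^{n_i}E \to \Sigma^{n_i}K$ one regular element at a time, using that $E_*K$ is concentrated in even degrees and killed by $\eta_L(\mfrak)$, and then sum these maps. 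For comparison, the paper takes a different route entirely: since $E_*E$ is flat over $E_*$, Hovey's results show that $L_K(E\otimes E)$ is pro-free, i.e.\ a $K$-local direct sum of copies of $E$ as an $E$-module, and base-changing along $E \to K$ (which sends $K$-local equivalences of $E$-modules to equivalences) produces the desired splitting of $E\otimes K$; there the invariance of $\mfrak$ is absorbed into the pro-freeness statement rather than appearing as an explicit nullhomotopy.
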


\begin{proof}
Since $E_{*}E$ is flat over $E_{*}$, it follows from  \cref{corollary:flat_emodule_is_klocally_profree} below that $L_{K}(E \otimes E)$ is equivalent as an $E$-modules to a $K$-local direct sum of $E$. This is a corollary to the work of Hovey and it is independent of the arguments given here.

Thus, it follows that there is a map $\bigoplus E \rightarrow L_{K}(E \otimes E)$ which is an equivalence after applying $- \otimes K$. As the relative tensor product $- \otimes_{E} K$ can be computed using the bar construction, it also takes $K$-local equivalences of $E$-modules to equivalences. We deduce that 
\[
\bigoplus K \simeq (\bigoplus E) \otimes_{E} K \simeq L_{K}(E \otimes E) \otimes_{E} K \simeq E \otimes E \otimes_{E} K \simeq E \otimes K,
\]
which is what we wanted to show. 
\end{proof}

\begin{remark}
Note that one can alternatively consider $E \otimes K$ as an $E$-module with the module structure inherited form the right factor. In this case, it is clear that it is a direct sum of $K$ as an $E$-module, as it is even a module over the $E$-algebra $K$. 
\end{remark}

\begin{corollary}
\label{corollary:kunneth_for_kk_collapses}
Consider $K \otimes E$ as an $E$-module with the module structure inherited only from the right. Then, the K\"{u}nneth spectral sequence
\[
\Tor_{E_{*}}(K_{*}E, K_{*}) \implies \pi_{*}((K \otimes E) \otimes_{E} K) \simeq K_{*}K
\]
in $E$-modules collapses. 
\end{corollary}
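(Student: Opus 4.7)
The plan is to deduce the collapse of the Künneth spectral sequence in question by bootstrapping from the collapse of the analogous spectral sequence in Remark \ref{remark:kunneth_spectral_sequence_collapses_for_kek}, using Lemma \ref{lemma:eotimesk_is_a_direct_sum_of_ks_as_an_emodule} to split $K\otimes E$ as an $E$-module into manageable pieces.

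First, I would reconcile the module structure appearing in the statement with that of Lemma \ref{lemma:eotimesk_is_a_direct_sum_of_ks_as_an_emodule}. The swap equivalence $\tau\colon K\otimes E \xrightarrow{\simeq} E\otimes K$ of spectra carries the right $E$-action on $K\otimes E$ to the left $E$-action on $E\otimes K$. Thus $\tau$ is an equivalence of $E$-modules, and Lemma \ref{lemma:eotimesk_is_a_direct_sum_of_ks_as_an_emodule} gives an $E$-module splitting
\[
K\otimes E \simeq \bigoplus_{i\in I} K
\]
for some indexing set $I$ (of size $2^n$ after passing to $K_*$-dimensions, by \cref{lemma:internal_k_homology_dual_to_exterior_algebra}, although the exact cardinality is not needed here). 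In particular, $K_*E \simeq \bigoplus_I K_*$ as $E_*$-modules.

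Next, I would appeal to naturality and additivity of the Künneth spectral sequence. The construction of this spectral sequence — via a free resolution of $K_*$ over $E_*$, or equivalently via the Tor-spectral sequence associated to the relative tensor product over $E$ — is additive in either variable, so the direct sum decomposition above induces an isomorphism of spectral sequences between the one in the statement and the direct sum $\bigoplus_I \bigl(\Tor_{E_*}(K_*,K_*) \Rightarrow K_*^E K\bigr)$ of copies of the relative Künneth spectral sequence for $K\otimes_E K$. Under this identification, both the $E_2$ term and the abutment split as direct sums compatibly.

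Finally, by Remark \ref{remark:kunneth_spectral_sequence_collapses_for_kek}, the summand spectral sequence $\Tor_{E_*}(K_*,K_*) \Rightarrow K_*^E K$ collapses on dimension grounds, since the $E_2$-page and the abutment are each $K_*$-vector spaces of dimension $2^n$. Since each summand collapses, so does the whole spectral sequence, which is what was to be shown. The only real content in the argument is the $E$-module splitting of $K\otimes E$, so there is no substantive obstacle beyond verifying that the Künneth spectral sequence is additive in the first variable — a standard fact about the skeletal filtration of a free resolution.
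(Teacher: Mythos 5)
Your proof is correct and is essentially identical to the paper's own argument: the paper likewise combines \cref{lemma:eotimesk_is_a_direct_sum_of_ks_as_an_emodule} with \cref{remark:kunneth_spectral_sequence_collapses_for_kek}, the whole content being that the K\"{u}nneth spectral sequence depends only on the $E$-module structure, so the splitting $K \otimes E \simeq \bigoplus_I K$ reduces everything to a direct sum of copies of the collapsing relative spectral sequence. The only blemish is the parenthetical claim that $I$ has size $2^n$ --- it is in fact infinite, since $K_{*}E \simeq E_{*}E/\mfrak$ is infinite-dimensional over $K_{*}$, the number $2^{n}$ being the dimension of $K_{*}^{E}K$ --- but, as you note yourself, the cardinality plays no role in the argument.
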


\begin{proof}
This is immediate from \cref{lemma:eotimesk_is_a_direct_sum_of_ks_as_an_emodule} and  \cref{remark:kunneth_spectral_sequence_collapses_for_kek}, as the K\"{u}nneth spectral sequence depends only on the $E$-module structure. 
\end{proof}

\begin{proposition}
\label{proposition:map_from_ke_to_kk_injective}
The map $K_{*}E \rightarrow K_{*}K$ is injective, central, and presents the target as a free $K_{*}E$-module of rank $2^{n}$. 
\end{proposition}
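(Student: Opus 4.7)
The plan is to apply the Künneth spectral sequence from \cref{corollary:kunneth_for_kk_collapses},
\[
E_2^{s,t}=\Tor_{E_*}^s(K_*E,K_*)_t\Longrightarrow K_{s+t}K,
\]
and exploit the regularity of the maximal ideal $\mathfrak m\subset E_*$.

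First I would identify the $E_2$-page. Since $\mathfrak m=(p,u_1,\dots,u_{n-1})$ is generated by a regular sequence in the regular local ring $E_*$, the Koszul complex $\Lambda_{E_*}[e_1,\dots,e_n]$ with $\partial e_i=x_i$ provides a free $E_*$-resolution of $K_*$. Tensoring with $K_*E$ over $E_*$ (on the right, via the right unit of the Hopf algebroid), the Koszul differential becomes multiplication by $\eta_R(x_i)$. The key input is that $\mathfrak m$ is an invariant ideal of $E_*E$---this is precisely what makes $K_*\simeq E_*/\mathfrak m$ into a comodule quotient---so $\eta_R(\mathfrak m)\subseteq\mathfrak m\cdot E_*E$ and these differentials vanish in $K_*E=E_*E/\mathfrak m E_*E$. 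Hence
\[
\Tor_{E_*}^s(K_*E,K_*)\cong K_*E\otimes_{K_*}\Lambda_{K_*}^s(e_1,\dots,e_n),
\]
and the collapse of the Künneth spectral sequence equips $K_*K$ with a length-$(n{+}1)$ filtration by left $K_*E$-submodules whose associated graded is the free $K_*E$-module $K_*E\otimes_{K_*}\Lambda_{K_*}[e_1,\dots,e_n]$ of rank $2^n$.

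Next I would promote this to an honest freeness statement. Picking lifts $\tilde e_I\in K_{|I|}K$ of the associated-graded basis elements $e_I$ and forming the left-$K_*E$-linear map
\[
\bigoplus_{I\subseteq\{1,\dots,n\}}K_*E\cdot\tilde e_I\longrightarrow K_*K
\]
yields a filtration-preserving map inducing the identity on associated gradeds; since the filtration has finite length $n+1$, a short induction up the filtration shows it is an isomorphism. This establishes freeness of rank $2^n$, and injectivity of $K_*E\hookrightarrow K_*K$ is the special case $I=\varnothing$.

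The step I expect to be the main obstacle is centrality, since $K$ is only $E_1$ and so the ring structure on $K_*K$ is a convolution product coming from the shuffle and $\mu\otimes\mu$ on $K^{\otimes 4}$ rather than an $E_1$-algebra structure on $K\otimes K$. The route I would pursue is a parity argument: $K_*E$ is concentrated in even degrees (inherited from $E_*E$), while the exterior generators $\tilde e_i$ sit in odd degrees. Combined with the fact that $K_*=\kappa[u^{\pm1}]$ is itself commutative, one can trace through the convolution product to see that any class coming from $K_*E$ commutes strictly with every class in $K_*K$. An equivalent alternative is to argue that the lifts $\tilde e_i$ can be adjusted iteratively---using the freedom in each filtration step---so that they commute with all of $K_*E$ on the nose, promoting the identification $K_*K\cong K_*E\otimes_{K_*}\Lambda_{K_*}[\tilde e_1,\dots,\tilde e_n]$ into an isomorphism of rings in which $K_*E$ is manifestly central.
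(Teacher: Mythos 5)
Your treatment of injectivity and freeness is essentially the paper's own argument: both use the collapse of the K\"unneth spectral sequence $\Tor_{E_*}(K_*E,K_*)\Rightarrow K_*K$ from \cref{corollary:kunneth_for_kk_collapses}, identify the $E_2$-page as $K_*E\otimes_{K_*}\Lambda_{K_*}(e_1,\dots,e_n)$ using regularity of $\mfrak$ (your Koszul-complex computation, with the invariance of $\mfrak$ killing the differentials, is a correct way to see this), and then pass from a finite filtration with free associated graded to freeness of $K_*K$ itself. That part is fine.

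The centrality argument is where you diverge from the paper, and as written it has a gap. Parity does not give centrality: knowing that every element of $K_*E$ sits in even internal degree only forces it to commute with everything if you already know $K_*K$ is graded-commutative, and that is precisely what is \emph{not} known here --- see \cref{remark:k_k_always_commutative_question}, where commutativity of $K_*K$ is left open. Your fallback of ``adjusting the lifts $\tilde e_i$'' also cannot work: any correction term would have to lie in $F_0=K_*E$, which is concentrated in even degrees while $\tilde e_i$ is odd, so there is no freedom to adjust at all (and conjugating generators would not make a non-central subring central in any case). The paper's route is both shorter and structural: $K\otimes K$ is canonically an algebra over $E\otimes E$, which is an $\mathbf{E}_\infty$-ring, so the image of $E_*E\to K_*K$ --- which coincides with the image of $K_*E$ --- is automatically central. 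If you insist on avoiding this, your parity idea \emph{can} be salvaged, but only by first invoking the multiplicativity of the K\"unneth spectral sequence: $\Tor_{E_*}(K_*E,K_*)$ is graded-commutative because $K_*E$ and $K_*$ are commutative rings, so for $a\in K_*E$ the commutator $[a,\tilde e_i]$ vanishes in $\gr_1$ and hence lies in $F_0=K_*E$; only \emph{then} does the odd/even mismatch force $[a,\tilde e_i]=0$, and one concludes by writing every element as a $K_*E$-combination of products of the $\tilde e_i$. As stated, your proposal skips the step that drops the commutator into filtration zero, which is the whole content.
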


\begin{proof}
To see that the map is central, notice that $K \otimes K$ is canonically an $E \otimes E$-algebra. It follows that the map $E_{*}E \rightarrow K_{*}K$ is central, and thus so must be its image $K_{*}E \simeq K_{*} \otimes_{E_{*}} E_{*}E$.

As a consequence of \cref{corollary:kunneth_for_kk_collapses}, the K\"{u}nneth spectral sequence
\[
K_{*}E \otimes \Tor_{E_{*}}(K_{*}, K_{*}) \simeq \Tor_{E_{*}}(K_{*}E, K_{*}) \implies K_{*}K
\]
in $E$-modules collapses. Since the $\Tor$-groups on the left form a $K_{*}$-vector space of dimension $2^{n}$, it follows from the above collapse that $K_{*}K$ has a finite filtration as a $K_{*}E$-module such that the associated graded is free of rank $2^{n}$. Thus, $K_{*}K$ itself must be free of this rank as well. 
\end{proof}

\begin{proposition}
\label{proposition:k_homology_of_k_tensored_down}
Passing to homotopy groups in the diagram 
\[\begin{tikzcd}
	{K \otimes E} & {K \otimes K} \\
	{K \otimes_{E} E } & {K \otimes_{E} K}
	\arrow[from=1-1, to=2-1]
	\arrow[from=1-1, to=1-2]
	\arrow[from=2-1, to=2-2]
	\arrow[from=1-2, to=2-2]
\end{tikzcd}\]
induces an an isomorphism 
\[
K _{*}K \otimes_{K_{*}E} K_{*} \simeq K_{*}^{E}K.
\]
\end{proposition}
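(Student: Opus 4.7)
The plan is to construct the map out of the square and then identify it with the edge map of a collapsing Künneth-type spectral sequence.

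On homotopy, the commutative square produces a commutative diagram whose outer composite $K_{*}E \to K_{*}K \to K_{*}^{E}K$ factors as $K_{*}E \to K_{*} \to K_{*}^{E}K$, where $K_{*}E \to K_{*}$ is the augmentation induced by the multiplication $E \otimes_{E} E \simeq E$. Hence $K_{*}K \to K_{*}^{E}K$ is $K_{*}E$-linear when $K_{*}E$ is made to act on $K_{*}^{E}K$ through the augmentation, and by the universal property of the tensor product this induces the desired map
\[
\alpha \colon K_{*}K \otimes_{K_{*}E} K_{*} \to K_{*}^{E}K.
\]

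To see that $\alpha$ is an isomorphism, I would identify $K \otimes_{E} K$ with the base change $(K \otimes K) \otimes_{E \otimes E} E$ along the multiplication $E \otimes E \to E$. This yields a Künneth spectral sequence
\[
E^{2}_{s,t} = \Tor^{E_{*}E}_{s,t}(K_{*}K, E_{*}) \implies K_{s+t}^{E}K
\]
whose edge map is precisely $\alpha$. Now change of rings along the surjection $E_{*}E \twoheadrightarrow K_{*}E$ (whose kernel is generated by $\mfrak$ acting through the left unit) gives
\[
K_{*}K \otimes^{L}_{E_{*}E} E_{*} \simeq K_{*}K \otimes^{L}_{K_{*}E} \bigl(K_{*}E \otimes^{L}_{E_{*}E} E_{*}\bigr).
\]
Since the regular sequence $p, u_{1}, \ldots, u_{n-1}$ remains regular in $E_{*}E$ by Landweber flatness, a Koszul computation gives $K_{*}E \otimes^{L}_{E_{*}E} E_{*} \simeq K_{*}$ concentrated in degree $0$. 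Combined with the flatness of $K_{*}K$ over $K_{*}E$ furnished by \cref{proposition:map_from_ke_to_kk_injective}, the right-hand side simplifies to $K_{*}K \otimes_{K_{*}E} K_{*}$, again concentrated in degree $0$. Thus the spectral sequence collapses onto the row $s = 0$, forcing $\alpha$ to be an isomorphism.

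The main subtlety is the identification of $\alpha$ with the edge map of the Künneth spectral sequence; this is a naturality statement for the bar resolution but requires some care. An alternative, more pedestrian route would use the dimension count from \cref{proposition:map_from_ke_to_kk_injective} and \cref{lemma:internal_k_homology_dual_to_exterior_algebra} (both sides are $K_{*}$-vector spaces of rank $2^{n}$) together with a direct surjectivity argument via the map of Künneth spectral sequences $\Tor^{E_{*}}(K_{*}E, K_{*}) \to \Tor^{E_{*}}(K_{*}, K_{*})$ induced by the augmentation, both of which collapse by \cref{corollary:kunneth_for_kk_collapses} and \cref{remark:kunneth_spectral_sequence_collapses_for_kek}.
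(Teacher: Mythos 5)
Your proposal is correct, but your primary argument takes a genuinely different route from the paper, while your ``more pedestrian'' alternative at the end is essentially the paper's own proof verbatim. The paper compares the two K\"{u}nneth spectral sequences over $E_{*}$ for $K \otimes K \simeq (K \otimes E) \otimes_{E} K$ and for $K \otimes_{E} K$, notes that both collapse (\cref{corollary:kunneth_for_kk_collapses}, \cref{remark:kunneth_spectral_sequence_collapses_for_kek}), deduces surjectivity of $K_{*}K \to K_{*}^{E}K$ from surjectivity on associated graded, and then concludes by counting dimensions: both sides are $K_{*}$-vector spaces of dimension $2^{n}$ by \cref{proposition:map_from_ke_to_kk_injective} and \cref{lemma:internal_k_homology_dual_to_exterior_algebra}. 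Your main route instead runs a single K\"{u}nneth spectral sequence over $E \otimes E$ for $(K \otimes K) \otimes_{E \otimes E} E \simeq K \otimes_{E} K$ and shows its $E^{2}$-page is concentrated on the $\Tor_{0}$-line via base change along $E_{*}E \twoheadrightarrow K_{*}E$, the Koszul computation $K_{*}E \otimes^{L}_{E_{*}E} E_{*} \simeq K_{*}$ (which is correct: the kernel is generated by $\eta_{L}(\mfrak)$, and $\mu \circ \eta_{L} = \id$ sends this regular sequence to the regular sequence $p, u_{1}, \ldots, u_{n-1}$ on $E_{*}$), and the freeness of $K_{*}K$ over $K_{*}E$. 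What your route buys is that the isomorphism falls out of degeneration alone, with the target identified as $K_{*}K \otimes_{K_{*}E} K_{*}$ on the nose and no separate dimension count or surjectivity step; the cost is setting up the K\"{u}nneth spectral sequence over the ring $E \otimes E$ and verifying that its edge map agrees with the map $\alpha$ induced by the square, which you rightly flag as the point requiring care. Both arguments lean on \cref{proposition:map_from_ke_to_kk_injective} in an essential way, so neither is circular.
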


\begin{proof}
Both of the K\"{u}nneth spectral sequences for $K \otimes K \simeq (K \otimes E) \otimes _{E} K$ and $K \otimes_{E} K$ collapse, as a consequence of \cref{corollary:kunneth_for_kk_collapses} and \cref{remark:kunneth_spectral_sequence_collapses_for_kek}. Thus, the map $K_{*}K \rightarrow K_{*}^{E}K$ is surjective after passing to associated graded, and so surjective by an inductive argument. The conclusion follows by observing that $K_{*} \otimes_{K_{*}E} K _{*}K \rightarrow K_{*}^{E}K$ is a surjective map of $K_{*}$-vector spaces of the same dimension.
\end{proof}
We are now ready to assemble the above information. By standard arguments, the pair $(K_{*}, K_{*}K)$ acquires the structure of a Hopf algebroid. More precisely, the two maps $K \rightarrow K \otimes K$ induce left and right units, both of which are central as a consequence of \cref{proposition:map_from_ke_to_kk_injective}, as they factor through $K_{*}E$. These make $K_{*}K$ into a $K_{*}$-bimodule, and similarly we get a suitable comultiplication and antipode. 

Note that it is common in algebraic topology literature to assume in the definition of a Hopf algebroid that multiplication is commutative. However, this is not strictly necessary; in our case, as the units are central, $K_{*}K$ is a Hopf algebroid in the more general sense of Maltsiniotis \cite{maltsiniotis1992groupoides}. The category of comodules can be defined in the usual way, it will be monoidal with the tensor product lifting that of $K_{*}$-modules. It need not in general be symmetric monoidal. 

\begin{remark}
\label{remark:k_k_always_commutative_question}
We believe it is plausible that $K_{*}K$ is in fact always commutative; it is a finite algebra over the commutative $K_{*}E$ with the relative tensor product $K_{*} \otimes _{K_{*}E} K_{*}K \simeq K_{*}^{E} K$ also commutative, as a consequence of \cref{remark:relative_homology_always_commutative}. We were, however, not able to resolve this question. Note that $K_{*}K$ is automatically commutative if we choose a homotopy commutative Morava $K$-theory, as can always be done when $p > 2$ \cite[Proposition 3.5.2]{hopkins2017brauer}.
\end{remark}

\begin{theorem}
\label{theorem:ke_kk_kek_is_an_extension}
The natural maps 
\begin{align}
\label{align:short_exact_sequence_of_hopf_algebroids}
(K_{*}, K_{*}E) \rightarrow (K_{*}, K_{*}K) \rightarrow (K_{*}, K_{*}^{E}K)
\end{align}
form an extension of Hopf algebroids in the sense of Ravenel.
\end{theorem}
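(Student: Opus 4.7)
The plan is to verify the defining axioms of an extension of Hopf algebroids, as formulated in \cite{ravenel2003complex}. These reduce to two structural requirements: that both arrows of \eqref{align:short_exact_sequence_of_hopf_algebroids} are morphisms of Hopf algebroids, and that the sequence is ``short exact'' in the sense that $K_{*}E \hookrightarrow K_{*}K$ is a faithfully flat inclusion whose cokernel, under the augmentation $K_{*}E \to K_{*}$ induced by $E \to K$, is identified with $K_{*}^{E}K$.

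First I would confirm that $(K_{*}, K_{*}^{E}K)$ is itself a Hopf algebroid; the coalgebra structure was introduced after \cref{lemma:internal_k_homology_dual_to_exterior_algebra}, and combined with the multiplication coming from the relative smash $K \otimes_{E} K$ it becomes a Hopf algebra (left and right units coincide, as they factor through the field $K_{*}$). Both arrows are then morphisms of Hopf algebroids by functoriality of the construction $R \mapsto (\pi_{*}R, \pi_{*}(R \otimes R))$, applied respectively to the unit $E \to K$ and to the base-change map $K \otimes K \to K \otimes_{E} K$.

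The algebraic content of exactness is already contained in earlier results: \cref{proposition:map_from_ke_to_kk_injective} provides the faithfully flat injection (realizing $K_{*}K$ as free of rank $2^{n}$ over $K_{*}E$), while \cref{proposition:k_homology_of_k_tensored_down} yields the cokernel identification $K_{*}K \otimes_{K_{*}E} K_{*} \simeq K_{*}^{E}K$ along the augmentation.

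The main obstacle will be checking that the coproducts are compatible. Specifically, one must verify that $K_{*}E$ is a ``normal'' sub-Hopf algebroid of $K_{*}K$, meaning that the coproduct on $K_{*}K$ carries $K_{*}E$ into $K_{*}E \otimes_{K_{*}} K_{*}E$, and that the induced coproduct on $K_{*}K \otimes_{K_{*}E} K_{*}$ matches the one on $K_{*}^{E}K$. The first point is immediate, since $K_{*}E$ is the image of the Hopf algebroid morphism $(E_{*}, E_{*}E) \to (K_{*}, K_{*}K)$ after scalar extension. The second is equally direct: the coproduct on $K_{*}^{E}K$ is by construction induced from that on $K_{*}K$ via the quotient $K \otimes K \to K \otimes_{E} K$.
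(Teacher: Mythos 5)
Your proposal substitutes a plausible-looking but different set of axioms for Ravenel's actual definition, and in doing so it skips the one step that carries all the content. In Ravenel's sense (\cite[\S A.1.1]{ravenel2003complex}), the sequence is an extension precisely when $\Ext^{0}_{K_{*}^{E}K}(K_{*}, K_{*}) \simeq K_{*}$ and $\Ext^{0}_{K_{*}^{E}K}(K_{*}, K_{*}K) \simeq K_{*}E$; that is, $K_{*}E$ must be recovered as the subobject of $K_{*}^{E}K$-primitives (the cotensor product $K_{*} \, \Box_{K_{*}^{E}K} \, K_{*}K$) inside $K_{*}K$. What you verify instead is the tensor-product statement in the opposite direction: that $K_{*}K$ is faithfully flat (indeed free of rank $2^{n}$) over $K_{*}E$ with quotient $K_{*}K \otimes_{K_{*}E} K_{*} \simeq K_{*}^{E}K$, citing \cref{proposition:map_from_ke_to_kk_injective} and \cref{proposition:k_homology_of_k_tensored_down}. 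Passing from ``the quotient of $K_{*}K$ along $K_{*}E$ is $K_{*}^{E}K$'' to ``the $K_{*}^{E}K$-primitives of $K_{*}K$ are exactly $K_{*}E$'' is a descent-type statement that is not automatic and is not addressed anywhere in your argument; it is exactly the part the paper flags as requiring work.

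The paper's proof handles this point by exhibiting $K_{*}K \simeq K_{*}^{E}(E \otimes K)$ as an \emph{extended} (cofree) $K_{*}^{E}K$-comodule: since $E \otimes K$, viewed as an $E$-module via the left factor, splits as a direct sum of copies of $K$ (\cref{lemma:eotimesk_is_a_direct_sum_of_ks_as_an_emodule}), one obtains $K_{*}^{E}(E \otimes K) \simeq K_{*}^{E}K \otimes E_{*}K$ as comodules, and $\Ext^{0}$ of an extended comodule is just the cotensor factor $E_{*}K$. If you want to rescue your route, you would need a Hopf-algebroid analogue of the Takeuchi-type correspondence deducing the cotensor identity from faithful flatness plus the quotient identification --- a nontrivial input you have not supplied. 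The formal parts of your write-up (that the arrows are morphisms of Hopf algebroids, that $K_{*}^{E}K$ is a Hopf algebra, normality of $K_{*}E$) are fine, but they are not where the difficulty lies.
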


\begin{proof}
Since $K_{*}^{E}K$ is a cocommutative, consulting Ravenel's definition of an extension \cite[\S A.1.1]{ravenel2003complex}, we see that we have to verify that 
\[
\Ext^{0}_{K_{*}^{E}K}(K_{*}, K_{*}) \simeq K_{*}
\]
and 
\[
\Ext^{0}_{K_{*}^{E}K}(K_{*}, K_{*}K) \simeq K_{*}E.
\]
The first part is clear, as $K_{*}^{E}K$ is a Hopf algebra, while the second part requires a little bit of work. If we write
\[
K_{*}K \simeq \pi_{*}(K \otimes K) \simeq \pi_{*}(K \otimes_{E} E \otimes K) \simeq K_{*}^{E}(E \otimes K),
\]
then the left $K_{*}^{E}K$-comodule structure on $K_{*}K$ given by the quotient map $K_{*}K \rightarrow K_{*}^{E}K$ corresponds to the standard comodule structure
\[
K_{*}^{E}(E \otimes K) \rightarrow K_{*}^{E}K \otimes_{K_{*}} K_{*}^{E}(E \otimes K) 
\]
on the homology $K_{*}^{E}(E \otimes K)$ relative to the $\infty$-category of $E$-modules. 

The comodule structure map is determined by the structure of $E \otimes K$ as a left $E$-module with module structure coming only from the left factor. By \cref{lemma:eotimesk_is_a_direct_sum_of_ks_as_an_emodule}, $E \otimes K$ is equivalent as an $E$-module to a direct sum of $K$, and so admits a compatible structure of a $K$-module. Choosing such a $K$-module structure determines an isomorphism
\[
K_{*}^{E}(E \otimes K) \simeq K_{*}^{E}K \otimes \pi_{*}(E \otimes K) \simeq K_{*}^{E}K \otimes E_{*}K 
\]
so that 
\[
\Ext^{0}_{K_{*}^{E}K}(K_{*}, K_{*}K) \simeq \Ext^{0}_{K_{*}^{E}K}(K_{*}, K_{*}^{E}(E \otimes K)) \simeq \Ext^{0}_{K_{*}^{E}K}(K_{*}, K_{*}^{E}K \otimes E_{*}K) \simeq 
E_{*}K
\]
as needed. 
\end{proof}

\section{Digression: Minimal Morava $K$-theory} 
\label{section:digression_minimal_morava_k_theory}

Large parts of the literature on Morava $K$-theories are written in terms of the \emph{minimal Morava $K$-theory spectrum} $K(n)$, the unique up to equivalence $BP$-module with $K(n)_{*} \simeq \mathbb{F}_{p}[v_{n}^{\pm 1}]$. Note that it is not a Morava $K$-theory in the sense of \cref{definition:morava_k_theory} as it is not $2$-periodic, but as a consequence of nilpotence theorem any Morava $K$-theory is equivalent as a spectrum to a direct sum of $K(n)$ \cite[Lemma 1.8]{nilpotence2}. 

Our focus on $2$-periodic Morava $K$-theories stems from our preference for Lubin--Tate spectra as they admit canonical $\mathbf{E}_{\infty}$-structures, unlike the Brown--Peterson spectrum. In this short section, we collect some results about $K(n)$ to highlight similarities and differences with the $E$-algebra version. We do this for completness; these results will not be used elsewhere in the current work. 

It is a result of Robinson that $K(n)$ can be made into an $\mathbf{E}_{1}$-ring spectrum, but there is no canonical way to do so \cite{robinson1989obstruction}. A surprising result of Angelveit shows that all choices of $\mathbf{E}_{1}$-multiplication yield equivalent $\mathbf{E}_{1}$-algebras in spectra \cite{angeltveit2011uniqueness}.

One can also ask about less structured multiplication. By a theorem of W\"{u}rgler \cite{wurgler1991morava}, in the homotopy category $h \spectra$ of spectra, the spectrum $K(n)$ admits 
\begin{enumerate}
    \item if $p > 2$, a unique product $K(n) \otimes K(n) \rightarrow K(n)$;
    \item if $p = 2$, exactly two products $K(n) \otimes K(n) \rightarrow K(n)$ differing by the symmetry on $K(n) \otimes K(n)$,
\end{enumerate}
which make it into an associative $BP$-algebra in a way compatible with its $BP$-module structure \cite[Theorem 1.5]{wurgler1991morava}.

The ring of cooperations of $K(n)$ has been computed completely \cite[Theorem 2.4]{wurgler1991morava}. The map $K(n)_{*}BP \rightarrow K(n)_{*}K(n)$ is injective and we have an isomorphism 
\[
K(n)_{*}BP \simeq K(n)_{*}[t_{i}] / (v_{n} t_{i}^{p^{n}} - v_{n}^{p^{i}} t_{i}),
\]
where $|t_{i}| = 2p^{i}-2$ for $i \geq 1$. At odd primes, this extends to an isomorphism
\[
K(n)_{*}K(n) \simeq K(n)_{*}BP \otimes _{K(n)_{*}} \Lambda_{K(n)_{*}}(\tau_{0}, \ldots, \tau_{n-1}),
\]
where the latter factor is an exterior algebra on $|\tau_{i}| = 2p^{i}-1$. When $p = 2$, we instead have that 
\[
K(n)_{*}K(n) \simeq K(n)_{*}BP \otimes _{K(n)_{*}} K(n)_{*}[\tau_{0}, \ldots, \tau_{n-1}] / (\tau_{i}^{2} = t_{i+1}).
\]
This is analogous to the variation in the dual Steenrod algebra depending on whether $p$ is even or odd. Note that the latter computation implies that $K(n)_{*}K(n)$ is commutative also at $p=2$, despite $K(n)$ not being homotopy commutative. The corresponding question in the $2$-periodic case is open, see \cref{remark:k_k_always_commutative_question}.

\section{Milnor modules}\label{sec:milnormodules}

As we have seen in the previous section, there are many different Morava $K$-theory spectra, which makes any discussion of this subject dependent on these choices. However, as in the case of \cref{proposition:ke_comodules_independnent_of_k}, one show that many cohomological invariants are canonical and do not depend on any choices. In this section, we will make this canonical nature transparent by introducing an absolute variant of Hopkins and Lurie's categories of \emph{Milnor modules}, see \cite{hopkins2017brauer}. 

Before we work in the absolute case, let us first describe the case relative to a fixed Lubin--Tate spectrum $E$. Note that by the uniqueness part of Goerss--Hopkins--Miller, this is the same as choosing a perfect field equipped with a finite height formal group.

\begin{definition}
We say that an $E$-module is \emph{molecular} if it is equivalent to a finite sum of shifts of Morava $K$-theories of $E$, and denote the full subcategory of $E$-modules on the molecular objects by $\Mod^{mol}_{E}$.
\end{definition}

\begin{remark}
Note that since any two Morava $K$-theories of $E$ are equivalent as $E$-modules, the notion of being molecular does not depend on any choices. 
\end{remark}

\begin{definition}
The category of $E$-based \emph{Milnor modules} is the category 
\[
\Mil_{E} := P_{\Sigma}(\Mod^{mol}_{E}, \Ab)
\]
of additive presheaves of abelian groups on molecular $E$-modules. 
\end{definition}
By construction, the category of Milnor modules is a compactly generated Grothendieck abelian category. Since molecular $E$-modules are stable under the $E$-tensor product, it acquires a non-unital symmetric monoidal structure via left Kan extension.
One can show this extends uniquely to a unital symmetric monoidal structure \cite[Propositions 4.4.1 and 4.4.10]{hopkins2017brauer}.

The main property of Milnor modules is that it canonically captures cohomology of \emph{any} Morava $K$-theory relative to $E$, in the following sense. 
\begin{proposition}
\label{proposition:milnor_modules_same_as_kek_comodules}
Let $K$ be a Morava $K$-theory of $E$. Then, the left Kan extension of the functor 
\[
K_{*}^{E}(-): \Mod_{E}^{mol} \rightarrow \Comod_{K_{*}^{E}K}
\]
which associates to any molecular $E$-module its relative $K$-homology induces a monoidal equivalence
\[
\Mil_{E} \simeq \Comod_{K_{*}^{E}K}
\]
between Milnor modules and $K_{*}^{E}K$-comodules. 
\end{proposition}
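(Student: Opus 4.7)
The plan is to identify the functor $L$ induced by left Kan extension of $K_*^E(-)$ as a cocontinuous additive functor that sends a chosen family of compact projective generators of $\Mil_E$ to such a family in $\Comod_{K_*^E K}$, fully faithfully, and then apply a Morita-style recognition principle to upgrade this to an equivalence. The monoidal enhancement will then follow from the fact that both tensor products are determined by Day convolution from the respective subcategories of generators.

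Because $K_*^E(-)$ is additive, $L$ exists and is cocontinuous by the universal property of $\Mil_E = P_\Sigma(\Mod_E^{mol}, \Ab)$, which also displays $\Mod_E^{mol}$ as a set of compact projective generators of $\Mil_E$ whose hom groups coincide with those in $\Mod_E^{mol}$. Since every molecular $E$-module is a finite sum of shifts of a fixed Morava $K$-theory $K$, everything reduces by additivity to verifying the two required properties for the image $L(K) = K_*^E K$. By \cref{lemma:internal_k_homology_dual_to_exterior_algebra} and \cref{remark:relative_homology_always_commutative}, this is a finite exterior Hopf algebra over the graded field $K_*$, so $\Comod_{K_*^E K}$ is equivalent to graded modules over the linear dual $(K_*^E K)^{\vee}$; under this equivalence $K_*^E K$ corresponds to the free rank-one module, which shows that $K_*^E K$ together with its shifts forms a set of compact projective generators of the target.

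For fully faithfulness on $K$, I compare hom groups on both sides. On the source,
\[
\End_{\Mil_E}(K)_* = [K, K]_E^* = K_E^* K,
\]
which by \cref{lemma:internal_k_homology_dual_to_exterior_algebra} is the graded $K_*$-linear dual of $K_*^E K$. On the target, cofree-ness of the regular comodule gives
\[
\Hom_{\Comod_{K_*^E K}}(K_*^E K, K_*^E K)_* \cong \Hom_{K_*}(K_*^E K, K_*)_*,
\]
the same graded dual. A direct inspection confirms that $L$ realises the identification of these two dualities, so it is fully faithful on $K$, and by additivity on the entirety of $\Mod_E^{mol}$. A cocontinuous additive functor between presentable abelian categories which carries a set of compact projective generators fully faithfully to such a set is an equivalence, so $L$ is one.

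It remains to lift this to a monoidal equivalence. Both symmetric monoidal structures are obtained as Day convolution from the corresponding subcategories of generators, so it suffices to check that the restriction of $L$ is symmetric monoidal on molecular $E$-modules. This follows from the relative K\"{u}nneth isomorphism $K_*^E(M \otimes_E N) \simeq K_*^E M \otimes_{K_*} K_*^E N$, which holds for molecular $M, N$ because $K_*^E K$ is $K_*$-free (cf.~\cref{remark:kunneth_spectral_sequence_collapses_for_kek}), with coactions matching by naturality of the coring structure. I expect the most delicate step to be the careful bookkeeping of gradings across the Yoneda embedding and the cofree-comodule identifications; once these are aligned, the rest of the argument is formal.
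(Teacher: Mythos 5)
Your argument is essentially correct, but note that the paper does not actually prove this statement: its ``proof'' is a direct citation of \cite[Corollary 6.4.13]{hopkins2017brauer}, which identifies $\Mil_{E}$ with modules over the endomorphism ring $K^{*}_{E}K = \End_{\Mil_E}(y(K))$ and then passes to comodules over the linear dual. Your Morita-style argument is in effect a reconstruction of the proof the paper delegates to that reference: you exhibit $y(K)$ and $K_{*}^{E}K$ as compact projective generators on the two sides, match their graded endomorphism rings via the duality $[K,K]_{E} \cong \Hom_{K_{*}}(K_{*}^{E}K, K_{*})$ of \cref{lemma:internal_k_homology_dual_to_exterior_algebra} together with cofreeness of the regular comodule, and conclude by cocontinuity. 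This is a legitimate self-contained route; the point doing the real work is that $K_{*}^{E}K$ is finite-dimensional over the graded field $K_{*}$ and its dual is a Frobenius (exterior) algebra, which is what makes the cofree comodule a compact projective generator of $\Comod_{K_{*}^{E}K}$. The payoff of writing it out, as you do, is that it makes visible exactly which finiteness and duality inputs are used, whereas the paper buys brevity at the cost of opacity.

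Two caveats, both repairable without changing your architecture. First, in the monoidal step you claim that the restriction of $L$ to molecular modules is \emph{symmetric} monoidal. This is stronger than needed and is false in general: as recorded in \cref{remark:milnor_modules_and_homotopy_commutativity}, for $p=2$ and $n>0$ the functor $K_{*}^{E}(-)$ is never symmetric monoidal, even on molecular modules. The relative K\"{u}nneth isomorphism does furnish a (merely) monoidal structure, which is all the proposition asserts, so delete the word ``symmetric''. Second, neither subcategory of generators contains the monoidal unit ($E$ is not molecular, and $K_{*}$ is not a cofree comodule), so the Day-convolution comparison only yields a \emph{non-unital} monoidal equivalence; to finish one must invoke the uniqueness of unital extensions of such structures, as in \cite[Propositions 4.4.1 and 4.4.10]{hopkins2017brauer}, quoted in the paper just after the definition of $\Mil_{E}$.
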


\begin{proof}
This is \cite[Corollary 6.4.13]{hopkins2017brauer}, where we replaced the category of modules over $K^{*}_{E}K$ with comodules over its linear dual. 
\end{proof}

\begin{corollary}
\label{corollary:kek_comod_doesnt_depend_on_k}
As a monoidal category,  $\Comod_{K_{*}^{E}K}$ does not depend on the choice of $K$, but only on its corresponding formal group. 
\end{corollary}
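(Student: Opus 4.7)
The statement is really a direct corollary of \cref{proposition:milnor_modules_same_as_kek_comodules}, so the plan is simply to unpack what goes into that equivalence and observe that nothing on the left-hand side depends on $K$.

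First I would note that the category $\Mil_{E} = P_{\Sigma}(\Mod_{E}^{mol}, \Ab)$ is defined purely in terms of $\Mod_{E}^{mol}$, and the latter is a well-defined full subcategory of $\Mod_{E}$: the remark after the definition of molecular objects records that any two Morava $K$-theories over $E$ are equivalent as $E$-modules, so the class of modules equivalent to a finite sum of shifts of such a $K$ is intrinsic to $E$. The symmetric monoidal structure on $\Mil_{E}$, being the Day convolution of the restriction of $\otimes_{E}$ to $\Mod_{E}^{mol}$, also only uses $E$. In particular, $\Mil_{E}$, together with its symmetric monoidal structure, is an invariant of $E$ alone.

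Next, \cref{proposition:milnor_modules_same_as_kek_comodules} provides, for each choice of Morava $K$-theory $K$ over $E$, a monoidal equivalence $\Mil_{E} \simeq \Comod_{K_{*}^{E}K}$. Combining this with the previous observation, any two choices $K, K'$ of Morava $K$-theory over $E$ give rise to a chain of monoidal equivalences
\[
\Comod_{K_{*}^{E}K} \;\simeq\; \Mil_{E} \;\simeq\; \Comod_{K'{}_{*}^{E}K'},
\]
which proves the independence of $K$.

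For the second clause, I would invoke the Goerss--Hopkins--Miller theorem (cited in \cref{sec:moravaktheories}): the Lubin--Tate spectrum $E$ itself is functorially determined by the pair $(\kappa,\gzero)$, i.e.\ by the formal group alone. Consequently $\Mod_{E}$, the full subcategory $\Mod_{E}^{mol}$, and hence $\Mil_{E}$ as a symmetric monoidal category, depend only on $\gzero$. Threading this through the equivalence above gives the desired statement.

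There is no real obstacle here; the only thing to be careful about is to make sure that the equivalence of \cref{proposition:milnor_modules_same_as_kek_comodules} is monoidal (which is the content of loc.\ cit.) and that the class of molecular objects is honestly choice-free, which is exactly the remark recalled above.
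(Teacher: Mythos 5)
Your proposal is correct and matches the paper's (implicit) argument exactly: the corollary is deduced from \cref{proposition:milnor_modules_same_as_kek_comodules} together with the observations that $\Mil_E$ is built only from the choice-free class of molecular $E$-modules and that, by Goerss--Hopkins--Miller, $E$ itself is determined by the pair $(\kappa,\gzero)$. The paper's own remark following the corollary confirms this is the intended reading, even emphasizing (as you do) that all the categories $\Comod_{K_{*}^{E}K}$ are canonically equivalent to the single choice-free category $\Mil_E$ rather than merely pairwise equivalent.
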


\begin{remark}
Note that in the context of \cref{corollary:kek_comod_doesnt_depend_on_k}, the situation is somewhat better than just saying that the categories $\Comod_{K_{*}^{E}K}$ for fixed $E$ and varying $K$ are all equivalent to each other. In fact, they are all canonically equivalent to a single category which doesn't depend on any choices, namely the category of Milnor modules of $E$. 
\end{remark}

\begin{remark}
\label{remark:milnor_modules_and_homotopy_commutativity}
The category $\Mil_{E}$ is not only canonical, but in a certain sense gives the right answer even if $K_{*}^{E}K$-comodules do not. As an example, the restricted Yoneda embedding 
\[
y\colon \Mod_{E} \rightarrow \Mil_{E},
\]
which can be thought of as a way of taking relative $K$-homology without choosing $K$, is always a symmetric monoidal functor \cite[Variant 4.4.11]{hopkins2017brauer}. 

On the other hand, even if $K$ is not homotopy commutative, $K_{*}^{E}K$ is both commutative and cocommutative so that the category $\Comod_{K_{*}^{E}K}$ acquires a symmetric monoidal structure from $K_{*}$-vector spaces. However, in this case the relative $K$-homology functor 
\[
K_{*}^{E}(-): \Mod_{E} \rightarrow \Comod_{K_{*}^{E}K} 
\]
need not be symmetric monoidal. In fact, one can show that if $p = 2$ and $n>0$, this functor is \emph{never} symmetric monoidal, even when restricted to molecular modules. Indeed, any such functor would induce a symmetric monoidal equivalence $\Comod_{K_{*}^{E}K} \simeq \Mil_{E}$, which one can show is not possible at the even prime\footnote{Personal communication with Jacob Lurie.}.

If $K$ is homotopy commutative, then the above homology theory is canonically symmetric monoidal so that we have a symmetric monoidal equivalence $\Comod_{K_{*}^{E}K} \simeq \Mil_{E}$. Informally, any algebraic structure which one can put on $\Comod_{K_{*}^{E}K}$ in a way compatible with the homology functor from $E$-modules must already be present in the category of Milnor modules.
\end{remark}

Let us now describe the global analogue of this situation, which describes the Adams spectral sequence in spectra rather than $E$-modules; alternatively, which describes comodules over $K_{*}K$ rather than the relative variant $K_{*}^{E}K$. 

In Miller's approach to the Adams spectral sequence, the latter is determined by the class of maps which are $K$-split; that is, which are split epimorphisms after applying $K \otimes -$. This only depends on the structure of $K$ as a spectrum and does not require any coherent multiplication. 

As a consequence of the nilpotence theorem, all Morava $K$-theories at a fixed prime and height are, as spectra, direct sums of the minimal Morava $K$-theory spectrum of \cref{section:digression_minimal_morava_k_theory}. It follows that they all determine the same class of epimorphisms, and hence isomorphic Adams spectral sequences. In fact, the class of epimorphisms is enough to describe canonically the abelian category whose $\Ext$-groups form the $E_{2}$-page. To do so, we will consider sheaves with respect to a certain topology, as in the construction of synthetic spectra \cite{pstrkagowski2018synthetic}.

\begin{definition}
The \emph{$K_{*}$-epimorphism} topology on the $\infty$-category $\spectra^{fin}$ of finite spectra is the Grothendieck pretopology in which a covering families $\{ X \rightarrow Y \}$ consist of a single map such that $K_{*} X \rightarrow K_{*} Y$ is surjective.
\end{definition}

\begin{remark}
A reader familiar with \cite{pstrkagowski2018synthetic} might recall that in the construction of synthetic spectra, one works with the site of those finite spectra $X$ such that $K_{*}X$ is projective as a $K_{*}$-module \cite[Definition 3.12]{pstrkagowski2018synthetic}. Since Morava $K$-theories are fields, this yields the class of all finite spectra, as above. 
\end{remark}

\begin{definition}
The category of \emph{absolute Milnor modules} is the category
\[
\Mil_{abs} := \Shv_{\Sigma}(\Sp^{fin}, \Ab)
\]
of additive sheaves of abelian groups on finite spectra with respect to the $K_{*}$-epimorphism topology. 
\end{definition}

The category of absolute Milnor modules is a compactly-generated Grothendieck abelian category with a symmetric monoidal structure induced by that of the tensor product of finite spectra.

\begin{remark}
A reader familiar with synthethic spectra will immediately recognize that the category of Milnor modules can be canonically identified as $\Mil_{abs} \simeq \euscr{S}yn_K^{\heartsuit}$, the heart of the stable $\infty$-category of $K$-based synthetic spectra of \cite{pstrkagowski2018synthetic}.
\end{remark}
The following is the absolute analogue of \cref{proposition:milnor_modules_same_as_kek_comodules}.

\begin{proposition}
\label{proposition:absolute_milnor_modules_same_as_comodules}
If $K$ is a Morava $K$-theory, then the left Kan extension of the functor
\[
K_{*}: \Sp^{fin} \rightarrow \Comod_{K_{*}K}
\]
induces a monoidal equivalence 
\[
\Mil_{abs} \simeq \Comod_{K_{*}K}
\]
between absolute Milnor modules and $K_{*}K$-comodules. 
\end{proposition}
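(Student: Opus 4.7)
The plan is to mirror the proof of Proposition~\ref{proposition:milnor_modules_same_as_kek_comodules} (the relative case), replacing the generating molecular $E$-modules by the site $\Sp^{fin}$ equipped with its $K_*$-epimorphism topology. First, I would construct the candidate equivalence $F\colon \Mil_{abs} \to \Comod_{K_*K}$ as a cocontinuous symmetric monoidal functor. The restricted Yoneda $K_*\colon \Sp^{fin} \to \Comod_{K_*K}$ is additive and, by the K\"unneth isomorphism (available because $K_*$ is a graded field), symmetric monoidal; by definition it sends covers to surjections of comodules, which are effective epimorphisms in this Grothendieck abelian category. The universal property of sheafification therefore produces such an $F$ uniquely, with $F(Ly(X)) = K_*X$ on sheafified representables.

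Next, I would produce an inverse by constructing a right adjoint $G\colon \Comod_{K_*K} \to \Mil_{abs}$ via $G(M)(X) := \Hom_{\Comod_{K_*K}}(K_*X, M)$ and verifying it lands in sheaves: for a $K_*$-epi $X' \to X$ with fibre $Z$ in $\Sp^{fin}$, the associated cofibre sequence produces an exact sequence $K_*Z \to K_*X' \to K_*X \to 0$, and applying $\Hom_{\Comod_{K_*K}}(-,M)$ yields the required equaliser. The Kan extension formalism gives $F \dashv G$, and essential surjectivity of $F$ reduces to density of the $K_*X$ in $\Comod_{K_*K}$. I would establish density by exploiting the extension of Hopf algebroids from Theorem~\ref{theorem:ke_kk_kek_is_an_extension}: every $K_*K$-comodule is obtained by tensoring a $K_*E$-comodule up along $K_*E \to K_*K$ and passing to a quotient, and the $K_*E$-comodules are identified with $\QCoh(\mathcal{M}_{fg}^n)$ by Proposition~\ref{proposition:ke_comodules_same_as_qcoh_sheaves}, where density of $K_*E$-homology of finite spectra is standard.

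The hardest step will be verifying that the unit $\Id \to GF$ is an isomorphism on sheafified representables, which amounts to the identification
\[
Ly(X)(Y) \cong \Hom_{\Comod_{K_*K}}(K_*Y, K_*X)
\]
for all $X, Y \in \Sp^{fin}$. This is precisely the heart-calculation content for $K$-based synthetic spectra: the paper has already remarked that $\Mil_{abs} \simeq \euscr{S}yn_K^{\heartsuit}$, so one may appeal to the general identification of this heart with $\Comod_{K_*K}$ from \cite{pstrkagowski2018synthetic}, whose hypotheses are satisfied here because $K_*$ being a graded field makes every $K_*$-module projective, so that the site of all finite spectra coincides with Pstr\k{a}gowski's site. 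A self-contained alternative, avoiding the full synthetic-spectra machinery, would reduce this comodule $\Hom$ computation to a relative one by invoking the short exact sequence of Hopf algebroids from Theorem~\ref{theorem:ke_kk_kek_is_an_extension} together with the already established relative case Proposition~\ref{proposition:milnor_modules_same_as_kek_comodules} and a descent argument along $K_*E \to K_*K$, the latter being tractable because by Proposition~\ref{proposition:map_from_ke_to_kk_injective} the map presents $K_*K$ as a finite free $K_*E$-module.
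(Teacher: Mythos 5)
The paper's entire proof of this proposition is a one-line citation to \cite[Remark 3.26]{pstrkagowski2018synthetic}, and your proposal, at its decisive step --- the identification $Ly(X)(Y) \cong \Hom_{\Comod_{K_{*}K}}(K_{*}Y, K_{*}X)$ of the sheafified representables --- invokes exactly that same result. So in substance you are following the paper's route; the adjunction $F \dashv G$, the sheaf condition for $G(M)$ (where the key point is that for a single-map cover the \v{C}ech object satisfies $X' \times_{X} X' \simeq X' \oplus Z$ with $Z$ the fibre, turning the equaliser condition into left-exactness of $\Hom(-,M)$ applied to $K_{*}Z \to K_{*}X' \to K_{*}X \to 0$), and the generation argument are the internals of that citation rather than an alternative to it. The hypotheses are indeed satisfied for the reason you give: $K$ is of Adams type because $K_{*}$ is a graded field, so Pstr\k{a}gowski's site of finite spectra with projective homology is all of $\Sp^{fin}$.

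Two of the claims you add around this core are wrong or unjustified, though neither is fatal since the citation carries the proof. First, you assert that $K_{*}\colon \Sp^{fin} \to \Comod_{K_{*}K}$ is \emph{symmetric} monoidal via the K\"{u}nneth isomorphism. The proposition claims only a monoidal equivalence, and deliberately so: as noted at the end of \cref{sec:moravaktheories}, $\Comod_{K_{*}K}$ need not even carry a symmetric monoidal structure in general, and \cref{remark:milnor_modules_and_homotopy_commutativity} explains that even when it does, the homology functor can fail to be symmetric monoidal (at $p=2$ it never is). Second, your route to essential surjectivity rests on the assertion that every $K_{*}K$-comodule is a quotient of one induced up along $K_{*}E \to K_{*}K$; this is established nowhere in the paper and I do not see why it should hold. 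It is also unnecessary: writing $K \simeq \varinjlim X_{\alpha}$ as a filtered colimit of finite spectra, every cofree comodule $K_{*}K \otimes_{K_{*}} V \simeq \varinjlim (K_{*}X_{\alpha} \otimes_{K_{*}} V)$ is a filtered colimit of direct sums of shifts of the $K_{*}X_{\alpha}$ (every $K_{*}$-module being free), and every comodule embeds into a cofree one; this is the standard generation argument for Adams Hopf algebroids and is how the cited result proceeds. The proposed ``descent along $K_{*}E \to K_{*}K$'' alternative is likewise too vague to assess: the extension of \cref{theorem:ke_kk_kek_is_an_extension} controls the quotient $K_{*}^{E}K$ and gives rise to a Cartan--Eilenberg spectral sequence, not an identification of $\Hom_{K_{*}K}$ with a relative $\Hom$, and freeness of $K_{*}K$ over $K_{*}E$ from \cref{proposition:map_from_ke_to_kk_injective} does not by itself supply a descent equivalence of comodule categories.
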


\begin{proof}
This is an instance of \cite[Remark 3.26]{pstrkagowski2018synthetic}.
\end{proof}
As a consequence, we deduce the following result proven earlier by Hovey and Strickland using slightly different methods \cite[\S 8]{hovey2005comodules}.

\begin{corollary}
\label{corollary:kk_comodules_independent_of_k}
The monoidal category $\Comod_{K_{*}K}$ of $K_{*}K$-comodules does not depend on the choice of Morava $K$-theory, but only on the prime and height. 
\end{corollary}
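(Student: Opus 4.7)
The plan is to deduce the corollary directly from \cref{proposition:absolute_milnor_modules_same_as_comodules} together with the observation that the site defining $\Mil_{abs}$ is intrinsic to the prime and height. Concretely, \cref{proposition:absolute_milnor_modules_same_as_comodules} supplies, for any chosen Morava $K$-theory $K$, a monoidal equivalence $\Mil_{abs} \simeq \Comod_{K_{*}K}$. Since the left-hand side $\Mil_{abs} = \Shv_{\Sigma}(\Sp^{fin}, \Ab)$ makes no reference to $K$ beyond the specification of the $K_{*}$-epimorphism topology on $\Sp^{fin}$, it suffices to check that this topology is an invariant of $(p,n)$.

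For that, I would invoke the nilpotence theorem as already used in the excerpt: any two Morava $K$-theories $K$ and $K'$ at the same prime and height are equivalent, as spectra, to direct sums of copies of the minimal Morava $K$-theory spectrum $K(n)$ (see \cref{section:digression_minimal_morava_k_theory} and the reference to \cite[Lemma 1.8]{nilpotence2}). In particular, for a map $f\colon X \to Y$ of finite spectra, surjectivity of $K_{*}f$, of $K'_{*}f$, and of $K(n)_{*}f$ are equivalent conditions, because all three homology theories are retracts of one another degreewise. Hence the $K_{*}$-epimorphism topology on $\Sp^{fin}$ coincides with the $K'_{*}$-epimorphism topology, and both agree with the corresponding topology defined using $K(n)$.

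Combining the two steps yields, for any two Morava $K$-theories $K$ and $K'$ at the same prime and height, a chain of monoidal equivalences
\[
\Comod_{K_{*}K} \simeq \Mil_{abs} \simeq \Comod_{K'_{*}K'},
\]
where the middle term is manifestly independent of choices. This gives the desired canonical identification. The only mild subtlety—certainly not the main obstacle, but worth flagging—is verifying that the monoidal structures match under this chain: both equivalences of \cref{proposition:absolute_milnor_modules_same_as_comodules} are monoidal because the tensor product on $\Mil_{abs}$ is inherited by left Kan extension from the smash product on $\Sp^{fin}$, which is itself independent of $K$. Since no choice of multiplicative structure on $K$ entered the definition of $\Mil_{abs}$, the composite equivalence is automatically a monoidal equivalence, giving the corollary.
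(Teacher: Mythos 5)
Your proposal is correct and follows essentially the same route as the paper: the corollary is deduced from \cref{proposition:absolute_milnor_modules_same_as_comodules} together with the observation, made in the text just before the definition of the $K_{*}$-epimorphism topology, that by the nilpotence theorem every Morava $K$-theory is, as a spectrum, a direct sum of copies of the minimal $K(n)$ and hence defines the same class of $K_{*}$-surjections on finite spectra. Your additional remarks on the monoidal structure match the paper's setup, where the tensor product on $\Mil_{abs}$ is induced from the smash product of finite spectra and is therefore manifestly independent of $K$.
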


\begin{remark}
By the same argument as in the relative case of \cref{remark:milnor_modules_and_homotopy_commutativity}, if $K$ is homotopy commutative, then we have a canonical symmetric monoidal equivalence $\Mil_{abs} \simeq \Comod_{K_{*}K}$. %If $p=2$, one can show that such equivalences do not exist.
\end{remark}

\begin{remark}[Homological residue fields]
In the terminology of Balmer, the category of Milnor modules is the \emph{homological residue field} of the tensor-triangulated category of finite spectra at the prime ideal of spectra of type at least $n$, see \cite{balmer2017nilpotence}, \cite{balmer2020computing}.
\end{remark}

\begin{remark}[Milnor modules as sheaves]
\label{remark:relation_between_milnor_modules_plus_independence_of_ce_sseq}
The reader can observe that there is a certain asymmetry in our definitions of Milnor modules and absolute Milnor modules; namely, in the former case we use as our indexing category only molecular $E$-modules and in the latter case we use \emph{all} finite spectra. Moreover, in the latter case we use sheaves while in the former presheaves are already enough. 

In fact, this difference is but a trick of light, as there is an alternative description of Milnor modules which is more in line with the absolute case. Namely, one can show that $\Mil_E$ is equivalent to the category of sheaves on the $\infty$-category $\Mod^{perf}_{E}$ of perfect $E$-modules with respect to the $K_{*}^{E}(-)$-epimorphism topology. Under this topology, the functor 
\[
E \otimes -\colon \spectra^{fin} \rightarrow \Mod_{E}^{perf}
\]
becomes a morphism of sites, so that left Kan extension yields a cocontinuous, symmetric monoidal functor 
\[
\Mil_{abs} \rightarrow \Mil_{E}.
\]
In this sense absolute Milnor modules determine a relative Milnor module, for any choice of $E$, justifying the terminology. 

Under the equivalences of \cref{proposition:milnor_modules_same_as_kek_comodules} and \cref{proposition:absolute_milnor_modules_same_as_comodules} the above left Kan extension corresponds to the forgetful functor 
\[
\Comod_{K_{*}K} \rightarrow \Comod_{K_{*}^{E}K}.
\]
The image of the monoidal unit under the corresponding right adjoint can be identified with the Hopf algebroid $K_{*}E$ studied previously. As the Adams spectral sequence based on $K_{*}E$ in the appropriate derived $\infty$-categories can be identified with the Cartan--Eilenberg spectral sequence associated to this extension of Hopf algebroids \cite[1.1]{belmont2020cartan}, we deduce that the Cartan--Eilenberg spectral sequence itself can be constructed using only the categories of Milnor modules, so it only depends on $E$. 
\end{remark}

\section{Cohomology}
\label{sec:cohomology}

In this section, we establish finiteness properties of the cohomology of $K_{*}K$-comodules using the Cartan--Eilenberg spectral sequence. 

We have seen in \cref{theorem:ke_kk_kek_is_an_extension} that we have an extension of Hopf algebroids 
\[
(K_{*}, K_{*}E) \rightarrow (K_{*}, K_{*}K) \rightarrow (K_{*}, K_{*}^{E}K)
\]
which induces a Cartan--Eilenberg spectral sequence of signature 
\[
\Ext^{s}_{K_{*}E}(K_{*}, \Ext^{s^{\prime}}_{K_{*}^{E}K}(K_{*}, K_{*})) \implies \Ext^{s + s^{\prime}}_{K_{*}K}(K_{*}, K_{*}),
\]
see \cite[A.1.3.14]{ravenel2003complex}. This can be thought as approximating the cohomology of $K_{*}K$ using simplier pieces, namely the cohomology of $K_{*}E$ and $K_{*}^{E}K$. To make best use of this, we need to begin by understanding the latter two. 

We have seen in \cref{sec:moravaktheories} that the Hopf algebroid $(K_{*}, K_{*}E)$ is an even graded presentation of the moduli of formal groups of height exactly $n$, see \cref{proposition:ke_comodules_same_as_qcoh_sheaves}. To identify its cohomology, it is convenient to choose a particularly small presentation of the latter moduli stack, which is what we will do. 

\begin{notation}
\label{notation:small_choice_of_morava_e_theory}
Let $\gzero$ be the Honda formal group law over $\mathbb{F}_{q}$, where $q = p^{n}$, and let $E_{n}$ be the associated Lubin--Tate spectrum and $K_{n}$ a Morava $K$-theory associated to $E_{n}$. Note that we have a non-canonical isomorphism 
\[
\pi_{0} E_{n} \simeq W(\mathbb{F}_{q})[[u_{1}, \ldots, u_{n-1}]].
\]

We write $\mathbb{G}_{n} \simeq \mathbb{S}_{n} \rtimes \textnormal{Gal}(\mathbb{F}_{q} / \mathbb{F}_{p})$ for the \emph{extended Morava stabilizer group}, the automorphism group of the pair $\mathbb{F}_{q}$ together with $\gzero$. By the uniquenses part of Goerss--Hopkins--Miller, the action of $\mathbb{G}_{n}$ extends to an action on the Lubin--Tate spectrum $E_{n}$. 
\end{notation}

\begin{lemma}
\label{lemma:iso_of_hopf_algebroid_of_kn_and_the_gn_one}
The  association $\mathbb{G}_{n} \times (K_{n})_{*}E_{n} \rightarrow (K_{n})_{*}$ sending an element $g \in \mathbb{G}_{n}$ and a homotopy class $S^{k} \rightarrow K_{n} \otimes E_{n}$ to the homotopy class of the composite 
\[
\begin{tikzcd}
	{S^{k}} & {K_{n} \otimes E_{n}} & {K_{n} \otimes E_{n} } & K_{n}
	\arrow["", from=1-1, to=1-2]
	\arrow["{id \otimes g}", from=1-2, to=1-3]
	\arrow["m", from=1-3, to=1-4],
\end{tikzcd}
\]
induces an isomorphism $\map_{\cts}(\mathbb{G}_{n}, (K_{n})_{*}) \simeq (K_{n})_{*}E_{n}$ of algebras between the $K_{n}$-homology of $E_{n}$ and continuous functions on the Morava stabilizer group. 
\end{lemma}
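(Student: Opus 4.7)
The plan is to deduce this statement from the classical identification of the completed self-homology $(E_n)^\vee_* E_n \simeq \map_{\cts}(\mathbb{G}_n, (E_n)_*)$, originating with Morava and made precise in the present $\mathbf{E}_\infty$ setting by Devinatz--Hopkins \cite{dev_morava} and Strickland. The assertion for $K_n$ should then follow by reducing modulo the maximal ideal $\mfrak$, so that the bulk of the argument is a compatibility check.

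First I would verify that the displayed pairing is well-defined and locally constant in the group variable, so that by adjunction it produces a map of graded algebras
\[
\phi \colon (K_n)_*E_n \longrightarrow \map_{\cts}(\mathbb{G}_n, (K_n)_*).
\]
Multiplicativity is automatic from the fact that each $g \in \mathbb{G}_n$ acts on $E_n$ by ring maps and the unit $E_n \to K_n$ is a map of $\mathbf{E}_1$-rings. Continuity follows because any homotopy class $S^k \to K_n \otimes E_n$ factors through a finite stage of the Postnikov tower on the right, so the resulting function on $\mathbb{G}_n$ depends only on the image of $g$ in a finite discrete quotient.

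Next I would rewrite the source. Since $(E_n)_*E_n$ is flat over $(E_n)_*$ by Landweber exactness, the K\"unneth map yields
\[
(K_n)_*E_n \simeq (E_n)_*E_n \otimes_{(E_n)_*} (K_n)_* \simeq (E_n)_*E_n/\mfrak,
\]
and since reducing modulo $\mfrak$ annihilates the $\mfrak$-adic completion this coincides with $(E_n)^\vee_* E_n / \mfrak$. Under the classical isomorphism $(E_n)^\vee_* E_n \simeq \map_{\cts}(\mathbb{G}_n, (E_n)_*)$, an element $x$ corresponds precisely to the function $g \mapsto m \circ (\id \otimes g) \circ x$, which is the $E$-theoretic analogue of the formula in the statement.

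The final step is to identify $\map_{\cts}(\mathbb{G}_n, (E_n)_*) / \mfrak$ with $\map_{\cts}(\mathbb{G}_n, (K_n)_*)$; this is the one place I expect technical friction. Writing
\[
\map_{\cts}(\mathbb{G}_n, (E_n)_*) = \textstyle\lim_k \colim_U \map(\mathbb{G}_n/U, (E_n)_*/\mfrak^k)
\]
with $U$ ranging over open normal subgroups, one must see that reduction modulo $\mfrak$ commutes with this limit. This should follow from the Mittag--Leffler property of the surjective tower $\{(E_n)_*/\mfrak^k\}$ together with the finiteness of open subgroups of each fixed index in $\mathbb{G}_n$, which forces the filtered colimits to stabilise. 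Splicing the three identifications exhibits $\phi$ as a composite of isomorphisms, yielding the lemma.
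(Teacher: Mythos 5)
Your proposal follows essentially the same route as the paper, whose entire proof is the one-line citation ``this is a result of Strickland \cite[Theorem 12]{strickland2000gross}, reduced modulo $\mfrak$''; you have simply spelled out the flatness identification $(K_n)_*E_n \simeq (E_n)_*E_n/\mfrak$ and the compatibility of $\map_{\cts}(\mathbb{G}_n,-)$ with reduction mod $\mfrak$ that the paper leaves implicit. The details check out (for the last step, the cleanest argument uses that $\mfrak$ is finitely generated together with exactness of $\map_{\cts}(\mathbb{G}_n,-)$ on profinite modules, rather than the Mittag--Leffler phrasing), so this is a correct elaboration of the paper's argument.
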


\begin{proof}
This is a result of Strickland, reduced modulo $\mathfrak{m}$ \cite[Theorem 12]{strickland2000gross}.
\end{proof}

One can check that the above isomorphism is compatible with the Hopf algebroid structure, which is induced from the group structure on $\mathbb{G}_{n}$ and its action on $(K_{n})_{*} \simeq \mathbb{F}_{q}[u^{\pm 1}]$,  which yields the following. 

\begin{corollary}
\label{corollary:ke_comodules_same_as_vector_spaces_with_gn_action}
For any Morava $K$-theory $K$, there is a canonical equivalence of categories between $K_{*}E$-comodules and graded $\mathbb{F}_{q}[u^{\pm 1}]$-vector spaces equipped with a continuous action of the extended Morava stabilizer group $\mathbb{G}_{n}$. 
\end{corollary}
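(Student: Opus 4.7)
The plan is to combine three ingredients: the independence of $\Comod_{K_{*}E}$ from the choice of Morava $K$-theory (\cref{proposition:ke_comodules_independnent_of_k}), the Strickland-type algebra isomorphism of \cref{lemma:iso_of_hopf_algebroid_of_kn_and_the_gn_one}, and the standard dictionary relating comodules over a ``functions on a group'' Hopf algebroid with group representations. First, by \cref{proposition:ke_comodules_independnent_of_k} it suffices to establish the equivalence for the particular choice $K = K_n$ of \cref{notation:small_choice_of_morava_e_theory}; the resulting equivalence will then be transported back along the Hovey--Strickland equivalence to obtain the claim for arbitrary $K$.

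Next, I would upgrade the algebra isomorphism $\map_{\cts}(\mathbb{G}_{n}, (K_{n})_{*}) \simeq (K_{n})_{*}E_{n}$ of \cref{lemma:iso_of_hopf_algebroid_of_kn_and_the_gn_one} to an isomorphism of Hopf algebroids over $(K_n)_*$. The left unit $(K_n)_* \to (K_n)_* E_n$ comes from the unit $\eta_L\colon K_n \to K_n \otimes E_n$; under the formula of \cref{lemma:iso_of_hopf_algebroid_of_kn_and_the_gn_one} this corresponds to evaluation at $e \in \mathbb{G}_n$ composed with the canonical inclusion $(K_n)_* \hookrightarrow \map_{\cts}(\mathbb{G}_n, (K_n)_*)$ of constant functions, i.e.\ the standard left unit on continuous functions. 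The right unit $\eta_R$ is obtained by instead acting by $g$ on the target copy of $E_n$ and then collapsing via $E_n \to K_n$; unwinding definitions, this is exactly $(\eta_R(x))(g) = g \cdot x$, reflecting the $\mathbb{G}_n$-action on $(K_n)_*$ coming from Goerss--Hopkins--Miller. The comultiplication $(K_n)_* E_n \to (K_n)_* E_n \otimes_{(K_n)_*} (K_n)_* E_n$ is induced by the map $K_n \otimes E_n \to K_n \otimes E_n \otimes E_n$ that splits $E_n$, which on continuous functions corresponds to the group multiplication $\mathbb{G}_n \times \mathbb{G}_n \to \mathbb{G}_n$; the antipode corresponds to inversion. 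These checks are essentially formal, and I would carry them out by chasing the defining homotopy class of \cref{lemma:iso_of_hopf_algebroid_of_kn_and_the_gn_one} through each structure map.

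Finally, given the Hopf algebroid identification $(K_n)_* E_n \cong \map_{\cts}(\mathbb{G}_n, (K_n)_*)$, a $(K_n)_* E_n$-comodule structure on a graded $(K_n)_*$-module $M$ is the same as a coaction $M \to \map_{\cts}(\mathbb{G}_n, M)$ in the sense of continuous cochains, and the comodule axioms translate into the axioms for a continuous (semilinear) action of $\mathbb{G}_n$ on $M$ compatible with its action on $(K_n)_* \simeq \mathbb{F}_q[u^{\pm 1}]$. Because we work with even-graded comodules, the $\mathbb{G}_m$-equivariance built into the Hopf algebroid keeps track of the grading, yielding graded $\mathbb{F}_q[u^{\pm 1}]$-vector spaces on the representation side. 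Conversely, any continuous $\mathbb{G}_n$-action produces a comodule via the above adjunction, and the two constructions are mutually inverse by inspection.

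The main obstacle I anticipate is purely bookkeeping: making sure that the ``continuous'' structure on $\map_{\cts}(\mathbb{G}_n, M)$ (and thus the appropriate completed tensor product needed to phrase the coaction cleanly when $M$ is infinite-dimensional over $(K_n)_*$) matches the comodule structure that one gets naively from $(K_n)_* E_n$, which is itself only \emph{pro}-free as a $(K_n)_*$-module in the relevant limit. The delicate point is therefore to check that the comodule coaction lands in $M \otimes_{(K_n)_*} (K_n)_* E_n$ in the sense dictated by \cref{lemma:iso_of_hopf_algebroid_of_kn_and_the_gn_one}, which matches continuous functions rather than all functions to $M$; this follows because comodules over a Hopf algebroid presenting a stack are discrete by construction, so every element has open stabilizer in $\mathbb{G}_n$.
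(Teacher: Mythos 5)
Your proposal is correct and follows essentially the same route as the paper: the paper likewise reduces to the choice $K = K_n$ via \cref{proposition:ke_comodules_independnent_of_k} and then invokes the Strickland isomorphism $\map_{\cts}(\mathbb{G}_{n}, (K_{n})_{*}) \simeq (K_{n})_{*}E_{n}$ of \cref{lemma:iso_of_hopf_algebroid_of_kn_and_the_gn_one}, noting (without spelling out the details you supply) that it is compatible with the Hopf algebroid structure coming from the group structure and action of $\mathbb{G}_n$. Your elaboration of the structure maps and the continuity bookkeeping fills in exactly the checks the paper leaves to the reader.
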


\begin{proof}
We have proven previously in  \cref{proposition:ke_comodules_independnent_of_k} that the category of $K_{*}E$-comodules is indendent of the choice of $K$. For the particular choice of $K = K_{n}$ as above, this isomorphism is induced by \cref{lemma:iso_of_hopf_algebroid_of_kn_and_the_gn_one}. 
\end{proof}

\begin{remark}
In terms of the equivalence $\Comod_{K_{*}E}^{ev} \simeq \QCoh(\mathcal{M}_{fg}^{n})$ of \cref{proposition:ke_comodules_same_as_qcoh_sheaves}, \cref{corollary:ke_comodules_same_as_vector_spaces_with_gn_action} is a consequence of the fact that the map $\mathbb{F}_{q} \rightarrow \mathcal{M}_{fg}^{n}$ classifying the Honda formal group law is a Galois covering with automorphism group $\mathbb{G}_{n}$; we refer to \cite{goerss2008stacks} for further details. 
\end{remark}

\begin{corollary}
\label{corollary:k_e_cohomology_same_as_morava_stabilizer}
For any Morava $K$-theory, there is a canonical isomorphism
\[
\Ext_{K_{*}E}^{s, t}(K_{*}, K_{*}) \simeq \mathrm{H}^{*}_{\cts}(\mathbb{G}_{n}, \mathbb{F}_{q}[u^{\pm 1}])
\]
between the cohomology of the Hopf algebraid $(K_{*}, K_{*}E)$ and the continuous cohomology of the Morava stabilizer group.
\end{corollary}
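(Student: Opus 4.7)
The plan is to deduce the stated isomorphism directly from \cref{corollary:ke_comodules_same_as_vector_spaces_with_gn_action} by translating the $\Ext$-computation across the equivalence of categories established there. In more detail, the cohomology of the Hopf algebroid $(K_{*}, K_{*}E)$ is by definition
\[
\Ext^{s,t}_{K_{*}E}(K_{*}, K_{*}) \simeq \Ext^{s,t}_{\Comod_{K_{*}E}}(K_{*}, K_{*}),
\]
so once we identify the category of $K_{*}E$-comodules with continuous $\mathbb{G}_{n}$-representations on graded $\mathbb{F}_{q}[u^{\pm 1}]$-vector spaces, and recognize the comodule $K_{*}$ on each side, the result becomes the standard identification of Ext of the trivial representation with itself with continuous group cohomology.

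First, I would invoke \cref{proposition:ke_comodules_independnent_of_k} to reduce to the case $K = K_{n}$ of \cref{notation:small_choice_of_morava_e_theory}; the conclusion of the corollary is then independent of this reduction. Next, using \cref{lemma:iso_of_hopf_algebroid_of_kn_and_the_gn_one}, I would unwind how the Hopf algebroid structure maps on $(K_{n})_{*}E_{n} \cong \map_{\cts}(\mathbb{G}_{n}, (K_{n})_{*})$ translate into the group law on $\mathbb{G}_{n}$ and its tautological action on $(K_{n})_{*} \simeq \mathbb{F}_{q}[u^{\pm 1}]$. This yields the equivalence of \cref{corollary:ke_comodules_same_as_vector_spaces_with_gn_action} and, crucially, identifies the comodule $K_{*}$ with the module $\mathbb{F}_{q}[u^{\pm 1}]$ equipped with its natural continuous $\mathbb{G}_{n}$-action, since the $K_{*}E$-comodule structure on $K_{*}$ is the one encoding precisely this action via the Strickland isomorphism.

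Having made this identification, the computation of Ext proceeds by the standard argument. The category of continuous $\mathbb{G}_{n}$-modules on graded $\mathbb{F}_{q}[u^{\pm 1}]$-vector spaces is a Grothendieck abelian category with enough injectives, and for any such module $M$ one has $\Hom_{\mathbb{G}_{n}}(\mathbb{F}_{q}[u^{\pm 1}], M) \simeq M^{\mathbb{G}_{n}}$. Deriving this functor gives continuous group cohomology $\mathrm{H}^{*}_{\cts}(\mathbb{G}_{n}, M)$ by the definition used throughout the paper. Applying this to $M = \mathbb{F}_{q}[u^{\pm 1}]$ yields the desired isomorphism.

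The main obstacle, and the step which requires the most care, is verifying that the equivalence of categories from \cref{corollary:ke_comodules_same_as_vector_spaces_with_gn_action} genuinely matches the two notions of continuity: $K_{*}E$-comodules are automatically ``continuous'' in the sense that every element lies in a finitely generated subcomodule, while on the group side continuity of the $\mathbb{G}_{n}$-action is encoded by discreteness of the module. Once this bookkeeping is settled, the derived functor computation is formal, and one obtains the asserted canonical isomorphism, natural in $\mathbb{G}_{n}$-equivariant data and in particular independent of the original choice of Morava $K$-theory.
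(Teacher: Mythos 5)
Your proposal is correct and follows essentially the same route the paper intends: the corollary is stated without a separate proof precisely because it is meant to follow from \cref{corollary:ke_comodules_same_as_vector_spaces_with_gn_action} by transporting $\Ext_{K_{*}E}(K_{*},K_{*})$ across the equivalence with continuous graded $\mathbb{F}_{q}[u^{\pm 1}]$-linear $\mathbb{G}_{n}$-representations and identifying the derived invariants of the unit with continuous group cohomology (equivalently, matching the cobar complex with the continuous cochain complex). Your attention to the continuity bookkeeping is the right point to be careful about, and your reduction to $K=K_{n}$ via \cref{proposition:ke_comodules_independnent_of_k} is exactly how the paper sets this up.
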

The group $\mathbb{G}_{n} \simeq \mathbb{S}_{n} \rtimes \textnormal{Gal}(\mathbb{F}_{q} / \mathbb{F}_{p})$ is a semidirect product of the automorphism group $\mathbb{S}_{n}$ of $\gzero$ and the Galois group. The former is a $p$-adic Lie group, in fact a group of units in integers in a central division algebra over $\mathbb{Q}_{p}$ of Hasse invariant $\frac{1}{n}$, which gives its excellent cohomological properties. 

The action of the Morava stabilizer group on $\mathbb{F}_{q}[u^{\pm 1}]$ is easy to describe. The Galois group acts by its standard action on $\mathbb{F}_{q}$. The action of $\mathbb{S}_{n}$ factors through the leading coefficient homomorphism $\rho\colon \mathbb{S}_{n} \rightarrow \mathbb{F}_{q}^{\times}$ and is determined by the formula $s \cdot u = p(s)u$ for $s \in \mathbb{S}_{n}$. The kernel of $\rho$ is the strict Morava stabilizer group $S_{n}$; it is a pro-$p$-group acting trivially. Morava \cite{morava1985noetherian} essentially proved:

\begin{theorem}[Morava]
\label{thm:cohomfiniteness}
We have isomorphisms of continuous cohomology groups
\[
\rmH_{\cts}^*(\G_n,\F_{q}[u^{\pm 1}]) \simeq \rmH^{*}_{\cts}(\mathbb{S}_{n}, \mathbb{F}_{q}[u^{\pm 1}])^{\mathrm{Gal}} \simeq \rmH^{*}_{\cts}(S_{n}, \mathbb{F}_{q}[u^{\pm 1}])^{\mathbb{F}_{q}^{\times} \rtimes \mathrm{Gal}}.
\]
Moreover, these groups are finite in each bidegree. 
If $(p-1) \nmid n$, this is a Poincar\'{e} duality algebra over $\rmH_{\cts}^{0} \simeq \mathbb{F}_{p}[v_{n}^{\pm 1}]$ of dimension $n^{2}$, where $v_{n} := u^{p^{n}-1}$, in particular their total dimension is finite.
\end{theorem}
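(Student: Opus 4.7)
The plan is to reduce the computation in two stages, via the Lyndon--Hochschild--Serre spectral sequences associated to the short exact sequences
\[
1 \to \mathbb{S}_n \to \mathbb{G}_n \to \mathrm{Gal}(\mathbb{F}_q/\mathbb{F}_p) \to 1 \quad\text{and}\quad 1 \to S_n \to \mathbb{S}_n \to \mathbb{F}_q^\times \to 1,
\]
and then to invoke Lazard's theorems on continuous cohomology of $p$-adic analytic groups. For the first extension, the LHS spectral sequence
\[
E_2^{s,t} = \rmH^s_{\cts}(\mathrm{Gal}, \rmH^t_{\cts}(\mathbb{S}_n, \mathbb{F}_q[u^{\pm 1}])) \Rightarrow \rmH^{s+t}_{\cts}(\mathbb{G}_n, \mathbb{F}_q[u^{\pm 1}])
\]
collapses onto the $s=0$ line: each $\rmH^t_{\cts}(\mathbb{S}_n, \mathbb{F}_q[u^{\pm 1}])$ is a semilinear $\mathbb{F}_q$-representation of $\mathrm{Gal}$, hence a free $\mathbb{F}_p[\mathrm{Gal}]$-module by the normal basis theorem, and in particular Galois-acyclic in positive degrees. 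For the second extension, the quotient $\mathbb{F}_q^\times$ has order $p^n - 1$, coprime to $p$, while all coefficient groups are $p$-torsion; Maschke's theorem forces collapse onto invariants. Composing gives the two claimed identifications.

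For the finiteness statement, note that by construction the $\mathbb{S}_n$-action on $u$ factors through $\rho$, so $S_n$ acts trivially and each graded piece $\mathbb{F}_q \cdot u^k$ is a copy of the trivial $S_n$-module $\mathbb{F}_q$. The group $S_n$ is a compact $p$-adic analytic pro-$p$ group, so by Lazard's theorem its continuous cohomology with a finite $p$-torsion coefficient module is finite-dimensional in each degree. Taking $(\mathbb{F}_q^\times \rtimes \mathrm{Gal})$-invariants manifestly preserves this finiteness in each bidegree.

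Finally, when $(p-1) \nmid n$, the strict Morava stabilizer $S_n$ is torsion-free (Hewett's criterion for the absence of $p$-torsion in the unit group of the relevant maximal order). By Lazard, a torsion-free compact $p$-adic analytic group of analytic dimension $d$ is a Poincar\'e duality group of dimension $d$; since $\mathbb{S}_n$ is the unit group of the maximal order in a central division algebra of invariant $1/n$ over $\mathbb{Q}_p$, it has analytic dimension $n^2$, and Poincar\'e duality of that dimension is inherited by the $(\mathbb{F}_q^\times \rtimes \mathrm{Gal})$-invariants (the invariant functor is exact here since we are in coprime order). The identification $\rmH^0 \simeq \mathbb{F}_p[v_n^{\pm 1}]$ with $v_n = u^{p^n-1}$ is immediate, since $u^k$ is $\mathbb{F}_q^\times$-invariant precisely when $(p^n-1) \mid k$, and then automatically $\mathrm{Gal}$-invariant as well. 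The only substantive external input is Lazard's Poincar\'e duality theorem, which is also the step I would expect to require the most care to cite in the right form; everything else is formal spectral sequence bookkeeping combined with standard coprime-order vanishing.
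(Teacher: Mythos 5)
Your proposal is correct and follows essentially the same route as the paper: collapse of the two Lyndon--Hochschild--Serre spectral sequences (Galois descent for $\mathrm{Gal}$, coprime order for $\mathbb{F}_q^\times$), followed by Lazard's theorems for the finiteness and, when $(p-1)\nmid n$, the torsion-freeness of $S_n$ and Poincar\'e duality in dimension $n^2$. The only differences are cosmetic (you cite Hewett's criterion where the paper cites Henn, and you spell out the normal basis / Maschke arguments the paper leaves to ``Galois theory'' and ``coprime order'').
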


\begin{proof}
These isomorphism are induced by the collapse of the respective Lyndon--Hochschild--Serre spectral sequences, as neither $\mathrm{Gal}$ nor $\mathbb{F}_{q}^{\times}$ have any higher cohomology with these coefficients, the first one by Galois theory and the second one as it is of order coprime to $p$. 

The finiteness statements are due to Lazard, as $S_{n}$ is a $p$-adic analytic Lie group of dimension $n^{2}$ and so has a finite index subgroup whose cohomology is isomorphic to an exterior algebra \cite[Theorem 5.1.5]{symonds2000cohomology} of dimension $n^{2}$. If $(p-1) \nmid n$, then $S_{n}$ has no $p$-torsion, see \cite[Theorem 3.2.1]{henn1998centralizers}, and so it has the same cohomological dimension as all of its finite index subgroups \cite{serre1965dimension}.
\end{proof}

\begin{corollary}\label{cor:cohomfiniteness}
\label{corollary:coh_finiteness_of_ke_comodules}
For any finite-dimensional $K_{*}E$-comodules $M, N$, the groups $\Ext^{t, s}_{K_{*}E}(M, N)$ are finite in each bidegree. If $(p-1) \nmid n$, then they vanish for $s > n^{2}$. 
\end{corollary}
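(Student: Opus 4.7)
My plan is to reduce the statement for general finite-dimensional comodules to the case $M = N = K_*$ already handled by \cref{thm:cohomfiniteness}, using rigidity of the subcategory of finite-dimensional comodules together with the identification of $\Ext_{K_*E}$ with continuous cohomology of the Morava stabilizer group.

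First I would use the fact that a finite-dimensional $K_*E$-comodule $M$ is dualizable: indeed, under the equivalence of \cref{corollary:ke_comodules_same_as_vector_spaces_with_gn_action} with graded $\mathbb{F}_q[u^{\pm 1}]$-vector spaces carrying a continuous $\mathbb{G}_n$-action, finite-dimensional comodules correspond to finite-dimensional vector spaces, and these are rigid under the symmetric monoidal structure with dual given by the $K_*$-linear dual equipped with the contragredient $\mathbb{G}_n$-action. Hence for any finite-dimensional $M, N$ there is a natural isomorphism
\[
\Ext^{s,t}_{K_*E}(M, N) \;\cong\; \Ext^{s,t}_{K_*E}(K_*, M^{\vee} \otimes_{K_*} N),
\]
and $W := M^{\vee} \otimes_{K_*} N$ is again finite-dimensional. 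This reduces both claims to showing that, for any finite-dimensional $K_*E$-comodule $W$, the groups $\Ext^{s,t}_{K_*E}(K_*, W)$ are finite in each bidegree, and vanish for $s > n^2$ when $(p-1) \nmid n$.

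Next I would reinterpret these Ext groups as continuous group cohomology. Exactly as in the proof of \cref{corollary:k_e_cohomology_same_as_morava_stabilizer}, the equivalence of \cref{corollary:ke_comodules_same_as_vector_spaces_with_gn_action} translates $\Ext^{s,t}_{K_*E}(K_*, W)$ into $\rmH^{s}_{\cts}(\mathbb{G}_n, W_t)$, where $W_t$ is the degree-$t$ part of $W$, a finite-dimensional continuous $\mathbb{F}_q$-representation of $\mathbb{G}_n$. I would then run the Lyndon--Hochschild--Serre argument of \cref{thm:cohomfiniteness} verbatim: since both the Galois group and the cyclic quotient $\mathbb{F}_q^{\times}$ have order prime to $p$, the LHSS degenerates and gives
\[
\rmH^{s}_{\cts}(\mathbb{G}_n, W_t) \;\cong\; \rmH^{s}_{\cts}(S_n, W_t)^{\mathbb{F}_q^{\times}\rtimes \mathrm{Gal}}.
\]
Taking invariants under a finite group preserves both finiteness and vanishing, so it suffices to establish these properties for $\rmH^{*}_{\cts}(S_n, W_t)$.

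The core input is then Lazard's theorem: $S_n$ is a $p$-adic analytic Lie group of dimension $n^2$, and it contains a finite-index uniform pro-$p$ subgroup $U$ whose continuous cohomology with any finite $\mathbb{F}_p$-coefficient module is finite in each degree and vanishes above $n^2$. Applying the LHSS for $U \subset S_n$ and using finiteness of $[S_n : U]$ yields finiteness of $\rmH^{s}_{\cts}(S_n, W_t)$ in each bidegree, proving the first claim. For the vanishing, I would invoke the theorem of Henn (cited in the proof of \cref{thm:cohomfiniteness}) that under $(p-1)\nmid n$ the group $S_n$ is $p$-torsion-free, whence by Serre's theorem it has the same mod-$p$ cohomological dimension as $U$, namely $n^2$; this forces $\rmH^{s}_{\cts}(S_n, W_t) = 0$ for $s > n^2$.

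The main obstacle I foresee is a purely bookkeeping one: confirming that the coefficient module $M^{\vee}\otimes_{K_*}N$ is genuinely finite-dimensional in each internal degree (so that the continuous cohomology input applies degreewise), and that the Lazard/Serre vanishing indeed extends from the trivial representation of \cref{thm:cohomfiniteness} to arbitrary finite continuous $\mathbb{F}_p$-coefficients on $S_n$. Both are standard, but they are what the proof ultimately rests on.
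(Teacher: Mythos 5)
Your argument is correct in outline but takes a genuinely different route from the paper's. The paper does not dualize: it first shows that every finite-dimensional $K_*E$-comodule admits a non-zero comodule map from $K_*$ (an ``invariant element''), deduced from Hovey--Strickland's result that every finitely generated $E_*E$-comodule receives a non-zero map from $E_*$, and concludes that every finite-dimensional $K_*E$-comodule is an iterated extension of shifts of $K_*$. The general case then follows from the already-established case $M = N = K_*$ (\cref{corollary:k_e_cohomology_same_as_morava_stabilizer} combined with \cref{thm:cohomfiniteness}) by long exact sequences in $\Ext$. That d\'evissage only ever needs group cohomology with the ``trivial'' coefficients $\mathbb{F}_q[u^{\pm 1}]$. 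Your route instead reduces to $\Ext_{K_*E}(K_*, W)$ by rigidity --- the same move the paper makes later for $K_*K$ in the proof of \cref{theorem:finiteness_of_ext_kk} --- and then must re-run the Lazard/Serre argument with an arbitrary finite discrete coefficient module. That is a real, if standard, strengthening of the input: you need Lazard's finiteness and the $\mathrm{cd}_p$ bound for a uniform open subgroup with arbitrary finite $\mathbb{F}_p$-coefficients, whereas the paper only quotes the trivial-coefficient computation. Both approaches work; the paper's is lighter on group cohomology and heavier on comodule structure theory, and yours is the reverse.

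One justification in your sketch is wrong as stated: $\mathrm{Gal}(\mathbb{F}_q/\mathbb{F}_p)$ is cyclic of order $n$, which need not be prime to $p$ (take $p = n = 3$, where $(p-1)\nmid n$ still holds), so you cannot collapse the Lyndon--Hochschild--Serre spectral sequence for the Galois quotient by a coprimality argument. For the finiteness claim this is harmless, since the $E_2$-page consists of finite groups in any case; but the vanishing for $s > n^2$ genuinely requires $\rmH^{>0}(\mathrm{Gal}, -)$ to vanish on the relevant coefficients. The correct reason, which the proof of \cref{thm:cohomfiniteness} invokes with the phrase ``by Galois theory,'' is Galois descent: the coefficients $\rmH^{*}_{\cts}(\mathbb{S}_n, W_t)$ are $\mathbb{F}_q$-vector spaces with semilinear Galois action, and such modules have no higher Galois cohomology by the (additive) normal basis theorem. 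With that substitution your argument goes through.
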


\begin{proof}
We first claim that any finite-dimensional $K_{*}E$-comodule $M$  has an invariant element; that is, there exists a non-zero map $K_{*} \rightarrow M$ of comodules. Observe that $M$ can be considered as an $E_{*}E$-comodule using restriction along the maps $E_{*}E \rightarrow K_{*}E$ and $E_{*} \rightarrow K_{*}$. As $M$ is finitely generated over $E_{*}$, it admits a non-zero map $E_{*} \rightarrow M$ of comodules by a result of Hovey and Strickland \cite[Theorem D]{hovey2005comodules}. As the target is $\mfrak$-torsion, the map will factor through a map $K_{*} \rightarrow M$ of $K_{*}E$-comodules, which is what we were looking for. 

Observe that the result holds when $M = N = K_{*}$ as a combination of  \cref{corollary:k_e_cohomology_same_as_morava_stabilizer} and \cref{thm:cohomfiniteness}. Since $K_{*}$ is a field, it follows that from the above that any finite-dimensional $K_{*}E$ comodule can be obtained as an iterated extension of $K_{*}$. Thus, the general case is proven by presenting $M, N$ as such iterated extensions and using the corresponding long exact sequences of $\Ext$-groups. 
\end{proof}

To get hold of the Cartan--Eilenberg spectral sequence, we also need to have some control over the cohomology of $K_{*}^{E}K$. By a result of Hopkins and Lurie, see  \cref{remark:relative_homology_always_commutative}, the latter is isomorphic to an exterior algebras as a Hopf algebra and so its cohomology is necessarily a polynomial ring. In fact, the same authors identify the generators of this polynomial ring, as we now recall. 

Suppose that $x \in \mfrak$ is an element of the maximal ideal. We then have a fibre sequence
\[
\Sigma^{-1} E \rightarrow E/x \rightarrow E
\]
of $E$-modules which induces a short exact sequence
\[
0 \rightarrow K_{*}[-1] \rightarrow K_{*}^{E}(E/x) \rightarrow K_{*} \rightarrow 0
\]
of $K_{*}^{E}K$-comodules. Let us denote by $\psi(x)$ the correspoding element of $\Ext^{1, -1}_{K_{*}^{E}K}(K_{*}, K_{*})$. 

\begin{proposition}[Hopkins--Lurie]
\label{proposition:hl_cohomology_of_kek}
The association $x \mapsto \psi(x)$ induces a map 
\[
\mfrak / \mfrak^{2} \rightarrow \Ext_{K_{*}^{E}K}^{1, -1}(K_{*}, K_{*})
\]
which extends to an isomorphism of bigraded rings
\[
K_{*} \otimes_{\kappa} \Sym_{\kappa}^{*} (\mfrak / \mfrak^{2}) \simeq \Ext_{K_{*}^{E}K}(K_{*}, K_{*}).
\]
\end{proposition}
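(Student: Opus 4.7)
The approach is to leverage the Hopf-algebraic Koszul duality between exterior and polynomial algebras. I would first compute the target abstractly, and then identify the explicit generators by a pairing argument.

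By \cref{remark:relative_homology_always_commutative}, fix a Hopf algebra isomorphism $K_*^E K \simeq K_* \otimes_{\kappa} \Lambda_{\kappa}(W)$, where $W$ is an $n$-dimensional $\kappa$-vector space of primitives. For any exterior Hopf algebra over a field on primitively generated elements, a standard computation using the minimal free resolution of $\kappa$ yields an isomorphism of bigraded rings $\Ext^*_{\Lambda(W)}(\kappa, \kappa) \simeq \Sym^*_{\kappa}(W^\vee)$, with $W^\vee$ in $\Ext$-bidegree $(1, -1)$. Tensoring with $K_*$-coefficients then realizes $\Ext^*_{K_*^E K}(K_*, K_*)$ as a polynomial $K_*$-algebra on the $n$-dimensional space $W^\vee$ sitting in bidegree $(1, -1)$.

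It remains to show that $x \mapsto \psi(x)$ defines a $\kappa$-linear isomorphism $\phi \colon \mfrak/\mfrak^2 \xrightarrow{\sim} W^\vee \subset \Ext^{1,-1}_{K_*^E K}(K_*, K_*)$. For well-definedness: additivity $\psi(x+y) = \psi(x) + \psi(y)$ follows from a natural map of cofiber sequences comparing $E/x$ and $E/y$ with $E/(x+y)$ that realises the Baer sum on $K_*^E K$-comodule extensions; scaling by $E_0$-units is immediate. Vanishing on $\mfrak^2$ follows from the octahedral axiom applied to the composite $E \xrightarrow{a} E \xrightarrow{b} E$ with $a, b \in \mfrak$: the resulting cofiber sequence $E/a \to E/(ab) \to E/b$ has its first map given by multiplication by $b$, which is nullhomotopic after smashing with $K$ over $E$, so the extension class of $\psi(ab)$ is absorbed into $\Ext^2$ and vanishes in $\Ext^{1,-1}$.

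The main obstacle, and the heart of the proof, is to check that $\phi$ is an isomorphism; by dimension count it suffices to verify either injectivity or surjectivity. Here I would pick a regular system of parameters $x_1, \ldots, x_n$ for $\mfrak$ and use the resulting Koszul description $K \simeq E/x_1 \otimes_E \cdots \otimes_E E/x_n$ as an iterated cofiber. This realizes $K \otimes_E K$ as an iterated tensor product of the $E/x_i \otimes_E K \simeq K \oplus \Sigma K$, and identifies the primitive subspace $W \subset K_*^E K$ with the $\kappa$-span of the Koszul classes $\tau_i$ coming from the factors $E/x_i \otimes_E K$. A direct computation of the pairing of $\psi(x_i)$ against the $\tau_j$, carried out on the level of the explicit short exact sequence $0 \to K_*[-1] \to K_*^E(E/x_i) \to K_* \to 0$, then gives $\langle \psi(x_i), \tau_j\rangle = \delta_{ij}$; so the $\psi(x_i)$ form the basis of $W^\vee$ dual to the $\tau_j$, proving $\phi$ is an isomorphism. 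Since $\Ext^*_{K_*^E K}(K_*, K_*)$ is generated as a $K_*$-algebra by its $(1,-1)$-part, $\phi$ extends uniquely to the desired isomorphism $K_* \otimes_\kappa \Sym^*_\kappa(\mfrak/\mfrak^2) \simeq \Ext^*_{K_*^E K}(K_*, K_*)$ of bigraded rings.
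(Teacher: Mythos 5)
The paper itself does not carry out this computation: it identifies $\Comod_{K_{*}^{E}K}$ with the category of Milnor modules via \cref{proposition:milnor_modules_same_as_kek_comodules} and then cites Hopkins--Lurie for the $\Ext$-calculation there (which is why the proposition carries their name). Your proposal instead reconstructs a self-contained proof, and its skeleton --- Koszul duality for the exterior Hopf algebra $K_{*}^{E}K$ of \cref{remark:relative_homology_always_commutative}, well-definedness of $x \mapsto \psi(x)$ on $\mfrak/\mfrak^{2}$, and identification of the $\psi(x_{i})$ with the Koszul classes of a regular system of parameters --- is essentially the route taken in the cited source, so this is a legitimate alternative. The additivity-via-Baer-sum sketch does go through (pull back the extension $K_{*}^{E}(\mathrm{fib}((x,y)\colon E\oplus E \to E))$ along the diagonal).

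There is, however, a false claim in the step showing $\psi(\mfrak^{2})=0$. The map $E/a \to E/ab$ produced by the octahedron is induced by the square with identity on one vertical and multiplication by $b$ on the other; after applying $-\otimes_{E}K$ it is \emph{not} nullhomotopic. Each of $E/a\otimes_{E}K$, $E/ab\otimes_{E}K$, $E/b\otimes_{E}K$ is $K\oplus \Sigma^{\pm 1}K$, i.e.\ of rank $2$ over $K_{*}$, so if the first map in the long exact sequence vanished, exactness would force $2 = 2+2$; concretely, the map kills the cell on which the vertical map is $b \equiv 0$ but is an isomorphism on the other cell. Accordingly, ``the class is absorbed into $\Ext^{2}$'' is not an argument. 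The correct (and simpler) deduction from the same diagram is this: the factorization $ab = b\cdot a$ gives a map of fibre sequences, hence a map of comodule extensions from $0 \to K_{*}[-1] \to K_{*}^{E}(E/a) \to K_{*} \to 0$ to the corresponding sequence for $E/ab$ which is multiplication by $b \equiv 0$ on the subobject and the identity on the quotient; functoriality of extension classes then yields $\psi(ab) = 0_{*}\psi(a) = 0$. (The same mechanism proves $\kappa$-linearity $\psi(\lambda x) = \bar{\lambda}\,\psi(x)$, which you only address for units.) Finally, a minor variance slip: comodule $\Ext$ over $K_{*}^{E}K$ is module $\Ext$ over the dual algebra $[K,K]_{E}$ of \cref{lemma:internal_k_homology_dual_to_exterior_algebra}, so the polynomial generators in cohomological degree one are naturally the primitives $W \subseteq K_{*}^{E}K$ themselves rather than $W^{\vee}$; your pairing argument should therefore be run against $[K,K]_{E}$, after which the identification $\psi(x_{i}) \leftrightarrow \tau_{i}$ works as you describe.
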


\begin{proof}
As we show in \cref{proposition:milnor_modules_same_as_kek_comodules}, the category of $K_{*}^{E}K$-comodules can be identified, as a monoidal category, with the category of Milnor modules associated to $E$. It follows that we have 
\[
\Ext^{*}_{K_{*}^{E}K}(K_{*}, K_{*}) \simeq \Ext^{*}_{\Mil_{E}}(y(E), y(E)),
\]
the cohomology in Milnor modules, where $y(E)$ is the Milnor module associated to $E$. For the right hand side, this isomorphism is \cite[Proposition 7.2.7]{hopkins2017brauer}.
\end{proof}

\begin{remark}
If we again let $K_{n}$ be any Morava $K$-theory associated to the Honda formal group over $\mathbb{F}_{q}$, it follows that the Cartan--Eilenberg spectral sequence for cohomology of the unit has signature 
\[
\rmH_{\cts}^{s_{0}}(\mathbb{G}_{n}, \mathbb{F}_{q}[u^{\pm 1}] \otimes_{\mathbb{F}_{q}} \Sym^{s_{1}}_{\mathbb{F}_{q}}(\mfrak / \mfrak^{2})) \implies \Ext^{s_{0} + s_{1}}_{K_{*}K}(K_{*}, K_{*})
\]

To determine the action on the symmetric algebra, it is convenient to work in the context of Milnor modules, as in the proof of \cref{proposition:hl_cohomology_of_kek}. The category of Milnor modules $\Mil_{E}$ is by construction functorial in automorphisms of $E$ and it it is clear that the association $x \mapsto \psi(x)$ is natural with respect to the action of the Morava stabilizer group. It follows that the action of $\mathbb{G}_{n}$ on this symmetric algebra is the one induced by its action on the cotangent space $\mfrak / \mfrak^{2}$ of the Lubin--Tate ring. 
\end{remark}

Using the Cartan--Eilenberg spectral sequence, we can prove the following finiteness statement.

\begin{theorem}
\label{theorem:finiteness_of_ext_kk}
Let $M, N$ be finite-dimensional $K_{*}K$-comodules. Then, the group $\Ext^{s, t}_{K_{*}K}(M, N)$ is finite in each bidegree.
\end{theorem}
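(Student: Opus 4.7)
The strategy is to exploit the extension of Hopf algebroids from Theorem \ref{theorem:ke_kk_kek_is_an_extension} and its Cartan--Eilenberg spectral sequence, which for any pair of $K_{*}K$-comodules $M, N$ has the form
\[
E_{2}^{s_{0}, s_{1}, t} \cong \Ext^{s_{0},t}_{K_{*}E}\bigl(K_{*},\, \Ext^{s_{1}}_{K_{*}^{E}K}(M, N)\bigr) \implies \Ext^{s_{0}+s_{1},t}_{K_{*}K}(M, N),
\]
where the inner $\Ext$ inherits a $K_{*}E$-comodule structure from the extension. The plan is to show that each $E_{2}^{s_{0}, s_{1}, t}$ is a finite abelian group, and then conclude by convergence of the spectral sequence together with the fact that for fixed total bidegree $(s, t)$ only finitely many columns $(s_{0}, s_{1})$ with $s_{0} + s_{1} = s$ can contribute.

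The main step is to analyse the inner Ext. By Lemma \ref{lemma:internal_k_homology_dual_to_exterior_algebra} (and Remark \ref{remark:relative_homology_always_commutative}), $K_{*}^{E}K$ is dual to a finite-dimensional exterior Hopf algebra over $K_{*}$ on a graded vector space $V$ of $K_{*}$-dimension $n$. By Proposition \ref{proposition:hl_cohomology_of_kek}, the cohomology $\Ext^{*}_{K_{*}^{E}K}(K_{*}, K_{*})$ is a polynomial ring on $n$ generators, finitely generated and thus finite-dimensional over $K_{*}$ in each bidegree. For arbitrary finite-dimensional $K_{*}^{E}K$-comodules $M, N$ obtained as iterated extensions of $K_{*}$ (using that $K_{*}$ is a graded field), the groups $\Ext^{s_{1}, t_{1}}_{K_{*}^{E}K}(M, N)$ are finitely generated modules over this polynomial ring, hence in each bidegree are finite-dimensional $K_{*}$-vector spaces. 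In particular, in every internal degree the $K_{*}E$-comodule $\Ext^{s_{1}}_{K_{*}^{E}K}(M, N)$ decomposes as a finite direct sum of its finite-dimensional graded pieces.

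With this in hand, Corollary \ref{corollary:coh_finiteness_of_ke_comodules} applies: for each finite-dimensional $K_{*}E$-subcomodule appearing in a given internal degree of the inner Ext, the groups $\Ext^{s_{0}, t}_{K_{*}E}(K_{*}, -)$ are finite in each bidegree. Summing over the finitely many internal degrees of the inner Ext that contribute to a fixed total internal degree $t$, we conclude that $E_{2}^{s_{0}, s_{1}, t}$ is finite. Since each $E_{r+1}$-term is a subquotient of the previous, the same holds on every page, including $E_{\infty}$, and summing over the finitely many pairs $(s_{0}, s_{1})$ with fixed $s_{0}+s_{1} = s$ yields a finite filtration of $\Ext^{s, t}_{K_{*}K}(M, N)$ by finite groups.

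The point requiring the most care is ensuring that the inner Ext is finite-dimensional over $K_{*}$ bidegreewise \emph{and} carries a natural $K_{*}E$-comodule structure of the sort to which Corollary \ref{corollary:coh_finiteness_of_ke_comodules} applies; both follow from the structure of the extension and the exterior nature of $K_{*}^{E}K$, but one must keep track of the two gradings (internal and homological) through the Cartan--Eilenberg machinery. One could alternatively dispose of this bookkeeping at large primes, where $2p - 2 > n^{2} + n$ forces the Cartan--Eilenberg spectral sequence to collapse for degree reasons, but the argument above has the virtue of working uniformly at all primes and heights.
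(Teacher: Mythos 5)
Your proposal is correct and follows essentially the same route as the paper: run the Cartan--Eilenberg spectral sequence of the extension $K_{*}E \rightarrow K_{*}K \rightarrow K_{*}^{E}K$, check that the inner $\Ext$ over $K_{*}^{E}K$ is finite-dimensional over $K_{*}$, and feed the result into \cref{corollary:coh_finiteness_of_ke_comodules}. The only (immaterial) differences are that the paper first reduces to $M = K_{*}$ by replacing $N$ with $N \otimes M^{\vee}$, and it gets finite-dimensionality of the inner $\Ext$ directly from the cobar complex (each term being a finite-dimensional $K_{*}$-vector space since $K_{*}^{E}K$ and $N$ are), rather than via finite generation over the polynomial cohomology ring.
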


\begin{proof}
By replacing $N$ by $N \otimes M^{\vee}$, where $M^{\vee} := \Hom_{K_{*}}(M, K_{*})$ is the linear dual, we can assume that $M = K_{*}$. We have a Cartan--Eilenberg spectral sequence of signature 
\[
\Ext^{s_{0}}_{K_{*}E}(K_{*}, \Ext^{s_{1}}_{K_{*}^{E}K}(K_{*}, N)) \implies \Ext^{s_{0}+s_{1}}_{K_{*}K}(K_{*}, N)
\]
The groups contributing to $\Ext^{s}_{K_{*}K}(K_{*}, N)$ are those indexed by $s_{0}, s_{1}$ with $s_{0} + s_{1} = s$, of which there are finitely many. For a fixed $s_{1}$, the group $\Ext^{s_{1}}_{K_{*}^{E}K}(K_{*}, N)$ is finite-dimensional over $K_{*}$ since both $M$ and $K_{*}^{E}K$ are, and we can compute this $\Ext$-group using the cobar complex. The finiteness of $\Ext^{s_{0}}_{K_{*}E}(K_{*}, -)$ applied to these groups follows from \cref{cor:cohomfiniteness}.
\end{proof}

\section{Milnor filtration}\label{sec:milnorfiltration} 

In this section we describe a filtration on $K_{*}K$ which arises from its module structure over $E$, which we call the \emph{Milnor filtration}. We show that at odd primes this filtration can be canonically refined to a grading which forces the Cartan--Eilenberg spectral sequence associated to the extension $K_{*}E \rightarrow K_{*}K \rightarrow K_{*}^{E}K$ to collapse. 

\begin{remark}
This existence of an additional grading at $p > 2$ is a finite height analogue of the classical argument involving the Cartan--Eilenberg spectral sequence associated to $H \mathbb{F}_{p}$, see \cite[Theorem 4.3.3]{ravenel2003complex}.
\end{remark}

The Milnor filtration will be induced by a suitable filtration of the spectrum $K \otimes K$. Before we work with such objects, we first recall some standard definitions and establish terminology. 

A non-negatively \emph{filtered spectrum} is a functor $P_{\bullet}\colon \mathbb{N} \rightarrow \spectra$, where we consider the natural numbers as a partially ordered set. If $X$ is a spectrum, then a \emph{filtration} on $X$ is a filtered spectrum $P_{\bullet}$ together with an equivalence $\varinjlim P_{\bullet} \simeq X$.

\begin{remark}
Note that according to our terminology, any filtered spectrum is canonically a filtration of its colimit. 
\end{remark}

To any filtration $P_{\bullet}$ we can attach an associated non-negatively graded spectrum
\[
\gr(P_{\bullet}) = \bigoplus_s\mathrm{gr}_{s}(P_{\bullet})
\]
with the degree $s$ part defined by 
\[
\mathrm{gr}_{s}(P_{\bullet}) := \mathrm{cofib}(P_{s-1} \rightarrow P_{s}),
\]
where $P_{-1} = 0$ by convention. The boundary maps 
\[
\Sigma^{-1} \mathrm{gr}_{s}(P_{\bullet}) \rightarrow P_{s-1} \rightarrow \mathrm{gr}_{s-1}(P_{\bullet})
\]
make $\Sigma^{-s} \gr_{s}(P_{\bullet})$ into a chain complex in the homotopy category of spectra. 
Any filtration has a corresponding spectral sequence of signature
\[
E^{1}_{s, t} := \pi_{t} \mathrm{gr}_{s}(\Sigma^{-s} P_{\bullet}) \implies \pi_{s+t} X,
\]
where the first differential is induced by the chain complex structure on the associated graded \cite[\S 1.2.2]{lurie2016higher}. The increasing filtration on $\pi_{*}X$ associated to this spectral sequence is given by the images of maps $\pi_{*} P_{s} \rightarrow \pi_{*}X$. 

There is a natural tensor product on the $\infty$-category of filtered spectra, given by Day convolution. This has the properties that 
\begin{enumerate}
    \item if $P_{\bullet}$ and $S_{\bullet}$ are filtrations of $X, Y$, then $P_{\bullet} \otimes S_{\bullet}$ is a filtration of $X \otimes Y$ and
    \item $\mathrm{gr}(P_{\bullet} \otimes S_{\bullet}) \simeq \gr(P_{\bullet}) \otimes \gr(S_{\bullet})$, where the latter is the tensor product of graded objects. 
\end{enumerate}

In fact, the latter equivalence can be upgraded to an isomorphism in the category of chain complexes in the homotopy category of spectra, where we consider the latter as symmetric monoidal with the differential on the tensor product given by the usual Leibniz rule \cite[3.2.15]{raksit2020hochschild}.

\begin{example}
We will need certain special filtrations of $E$-modules, the main reference here being Lurie \cite[\S 7.2.1]{lurie2016higher}, who works with simplicial objects. By the stable Dold--Kan equivalence, this is equivalent to working with filtered objects as we do. 

If $P_{\bullet}$ is an $E$-module filtration of some $M$, then we say $P_{\bullet}$ is \emph{$E$-free} if the associated simplicial object is $S$-free with respect to  the set $S = \{ \ \Sigma^{n} E \ | \ n \in \mathbb{Z} \ \}$. We say it is an $E$-\emph{resolution} if it is $E$-free and an $S$-hypercover. For precise definitions of these terms, see \cite[Definition 7.2.1.2.]{lurie2016higher}. The two main facts we will need are that 
\begin{enumerate}
    \item if $P_{\bullet}$ is $E$-free, then the associated graded $E$-module is levelwise free and 
    \item if it is a resolution, then $\pi_{*} \Sigma^{-n} \gr_{n}(P_{\bullet})$ is a projective resolution of $M_{*}$ in $E$-modules. 
\end{enumerate}
Lurie shows that any $E$-module $M$ admits a resolution, which is unique (as an augmented simplicial object) up to simplicial homotopy. 

If $P_{\bullet}$ is an $E$-module resolution of $M$ and $X$ is an arbitrary spectrum, then $P_{\bullet} \otimes X$ is a filtration of $M \otimes X$ and so we get a spectral sequence whose signature is readily shown to be 
\[
E^{2}_{t, s} := \Tor^{s, t}_{E_{*}}(M_{*}, E_{*}X) \implies M_{*}X
\]
This is the K\"{u}nneth spectral sequence which we previously mentioned in \cref{corollary:kunneth_for_kk_collapses}.
\end{example}
We are now ready to make the key definition of this section. 
\begin{definition}
The \emph{Milnor filtration} on $K_{*}K \simeq \pi_{*}(K \otimes K)$ is the filtration induced by the filtration $P_{\bullet} \otimes P_{\bullet}$ of $K \otimes K$, where $P_{\bullet}$ is an $E$-free resolution of $K$ as an $E$-module. 
\end{definition}
In fact, the above filtration is readily identified with a familiar one. 
\begin{lemma}
\label{lemma:milnor_filtration_coincides_with_the_kunneth_filtration}
The Milnor filtration coincides with the filtration induced by the K\"{u}nneth spectral sequence of signature
\[
\Tor_{E_{*}}^{s, t}(K_{*}, E_{*}K) \implies K_{t+s}K.
\]
\end{lemma}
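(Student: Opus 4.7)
The plan is to identify both filtrations by writing them as the filtrations of $\pi_{\ast}(K\otimes K)$ induced by two natural filtered spectra that are related by a map coming from the augmentation $\epsilon\colon P_{\bullet} \to K$.

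First, I would make explicit how the K\"unneth spectral sequence arises from a filtration of $K \otimes K$. Since $K \otimes K \simeq K \otimes_{E}(E \otimes K)$ and $P_{\bullet}\to K$ is an $E$-free resolution, the filtered $E$-module $P_{\bullet}\otimes_{E}(E \otimes K) \simeq P_{\bullet}\otimes K$ is a filtration of $K \otimes K$ with associated graded $\gr_{n}(P)\otimes K$. As each $\gr_{n}(P)$ is a free $E$-module, passing to $\pi_{\ast}$ gives $\pi_{\ast}(\gr_{n}(P))\otimes_{E_{\ast}} E_{\ast}K$, and the first differential identifies the $E_{2}$-term with $\Tor^{\ast,\ast}_{E_{\ast}}(K_{\ast},E_{\ast}K)$, producing the K\"unneth spectral sequence of the statement with its induced filtration on $K_{\ast}K$.

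Next, I would exploit the augmentation $\epsilon \colon P_{\bullet}\to K$ applied in the second factor to produce a map of filtered spectra $P_{\bullet}\otimes P_{\bullet}\to P_{\bullet}\otimes K$, where the left-hand side is given the Day convolution filtration $(P\otimes P)_{n} = \colim_{i+j \le n} P_{i}\otimes P_{j}$. This map is the identity on the colimit $K \otimes K$, which immediately shows that the Milnor filtration is contained in the K\"unneth filtration on $\pi_{\ast}(K\otimes K)$.

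For the reverse inclusion, the key point is that because $P_{\bullet}$ is an $E$-resolution of $K$, the comparison map induces an isomorphism of spectral sequences from the $E_{2}$-page onward. Indeed, the $E_{1}$-page of the Milnor spectral sequence is $\pi_{\ast}(\gr(P)\otimes \gr(P))$, which by freeness of $\gr_{i}(P)$ computes as the total complex of the double complex $\pi_{\ast}(\gr(P))\otimes_{E_{\ast}}E_{\ast}E\otimes_{E_{\ast}}\pi_{\ast}(\gr(P))$; taking homology in the resolution direction coming from the second factor collapses $E_{\ast}E\otimes_{E_{\ast}}\pi_{\ast}(\gr(P))$ to $E_{\ast}K$, producing precisely the K\"unneth $E_{1}$-page, after which the remaining differential is identified with that of the K\"unneth spectral sequence. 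Because the comparison map of filtered spectra induces an isomorphism of the associated $E_{\infty}$-pages under the same identification, the two induced filtrations on $K_{\ast}K$ agree.

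The main obstacle is the bookkeeping around the double complex structure and the verification that the collapse of one spectral sequence into the other is compatible with the induced filtrations on the abutment — in effect a small décalage argument which must be carried out carefully for the homotopy groups and not merely on the $E_{\infty}$-page.
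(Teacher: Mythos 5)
Your proposal is correct and follows essentially the same route as the paper: both compare the filtrations via the map of filtered spectra $P_{\bullet}\otimes P_{\bullet}\to P_{\bullet}\otimes K$ induced by the augmentation in the second factor, and both show this map is a quasi-isomorphism on $E_{1}$-pages using flatness of $E_{*}E$ and levelwise freeness of $\gr(P_{\bullet})$, so that the spectral sequences (and hence the filtrations on $K_{*}K$) agree from $E_{2}$ onward. The only cosmetic difference is that the paper phrases the $E_{1}$-identification as ``tensoring a quasi-isomorphism with a nonnegatively graded complex of flat $E_{*}$-modules'' rather than as a double-complex collapse, and no d\'ecalage argument is actually needed beyond the standard fact that a map of exhaustive filtrations inducing the identity on the abutment and isomorphisms on $E_{\infty}$ is an isomorphism of filtrations.
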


\begin{proof}
The K\"{u}nneth spectral sequence is induced by the filtration $P_{\bullet} \otimes K$, and the obvious map $P_{\bullet} \rightarrow K$ into the constant filtration of the latter induces a comparison map $P_{\bullet} \otimes P_{\bullet} \rightarrow P_{\bullet} \otimes K$ of filtrations of $K \otimes K$. We claim this induces an isomorphism of spectral sequences starting from the second page onwards, and hence an isomorphism of filtrations of $K_{*}K$. 

On the first pages of the corresponding spectral sequences, we are looking at the map 
\[
\pi_{*} (\gr(P_{\bullet}) \otimes \gr(P_{\bullet})) \rightarrow \pi_{*}(\gr(P_{\bullet} \otimes K)),
\]
where we have suppressed the suspensions needed to make the above into chain complexes of graded abelian groups. 

Note that if $M$ is an arbitrary $E$-module, then $E_{*}M \simeq E_{*}E \otimes_{E_{*}} M$ as $E_{*}E$ is flat over $E_{*}$ so that the corresponding K\"{u}nneth spectral sequence collapses. It follows that since each $\gr(P_{\bullet})$ is levelwise free as an $E$-module, we can rewrite the above map on first pages of the corresponding spectral sequences as 
\[
\pi_{*} \gr(P_{\bullet}) \otimes_{E_{*}} E_{*}E \otimes_{E_{*}} \pi_{*} \gr(P_{\bullet}) \rightarrow \pi_{*} \gr(P_{\bullet}) \otimes_{E_{*}} E_{*}E \otimes_{E_{*}} K_{*}.
\]
The tensor product of the first two factors is a non-negatively graded complex of flat $E_{*}$-modules, so that tensoring with it preserves quasi-isomorphism. As $\gr_{*}(P_{\bullet}) \rightarrow K_{*}$ is a quasi-isomorphism, we deduce that so is the map on first pages, ending the argument. 
\end{proof}

\begin{corollary}
The Milnor filtration does not depend on the choice of the $E$-free resolution.
\end{corollary}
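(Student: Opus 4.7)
The plan is to reduce the statement to the analogous one for the Künneth filtration, which is more tractable. By \cref{lemma:milnor_filtration_coincides_with_the_kunneth_filtration}, the Milnor filtration coincides with the filtration on $K_*K$ induced by $P_\bullet \otimes K$, where $P_\bullet$ is an $E$-free resolution of $K$. So it suffices to verify that this latter filtration does not depend on the choice of resolution.

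Explicitly, the $s$-th stage of the Künneth filtration is the image of the map $\pi_{*}(P_{s} \otimes K) \to \pi_{*}(K \otimes K) = K_{*}K$ induced by the augmentation $P_\bullet \to K$. Given two $E$-free resolutions $P_\bullet$ and $P'_\bullet$ of $K$, Lurie's uniqueness theorem (recalled in the discussion of $E$-resolutions) provides morphisms of augmented simplicial $E$-modules $f\colon P_\bullet \to P'_\bullet$ and $g\colon P'_\bullet \to P_\bullet$ whose composites are simplicially homotopic to the respective identities. Under the stable Dold--Kan correspondence used throughout this section, these translate to maps of filtered spectra $f \otimes \id \colon P_\bullet \otimes K \to P'_\bullet \otimes K$ and $g \otimes \id$ in the opposite direction, each compatible with the augmentation and hence inducing the identity on the colimit $K \otimes K$.

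Consequently, on each filtration stage the map $\pi_{*}(P_{s} \otimes K) \to K_{*}K$ factors through $\pi_{*}(P'_{s} \otimes K) \to K_{*}K$ and vice versa, yielding mutual inclusions $F_{s} \subseteq F'_{s} \subseteq F_{s}$ of the two image filtrations. Hence $F_{s} = F'_{s}$, proving the desired independence. The only step requiring any care is the construction of comparison morphisms in both directions, but this is a routine consequence of the lifting property of $E$-free simplicial objects together with Lurie's uniqueness statement, so I do not anticipate a genuine obstacle here.
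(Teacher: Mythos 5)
Your proof is correct and follows essentially the same route as the paper: reduce to the Künneth filtration via \cref{lemma:milnor_filtration_coincides_with_the_kunneth_filtration} and then use that any two $E$-free resolutions of $K$ are simplicially homotopic, so that $P_\bullet \otimes K$ and $P'_\bullet \otimes K$ induce the same image filtration on $K_*K$. The paper compresses the comparison-map argument into one sentence, but the content is identical.
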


\begin{proof}
This is immediate from \cref{lemma:milnor_filtration_coincides_with_the_kunneth_filtration}, as any two $E$-free filtrations $P_{\bullet}^{1}$, $P_{\bullet}^{2}$ induce the same K\"{u}nneth filtration, as $P_{\bullet}^{1} \otimes K$ and $P_{\bullet}^{2} \otimes K$ are simplicially homotopic. 
\end{proof}

\begin{remark}
\label{remark:milnor_filtration_also_coincides_with_other_kunneth_filtration}
By symmetry, \cref{lemma:milnor_filtration_coincides_with_the_kunneth_filtration} implies that the Milnor filtration also coincides with the filtration induced by the other K\"{u}nneth spectral sequence, namely that of signature 
\[
\Tor_{E_{*}}^{s, t}(K_{*}E, K_{*}) \implies K_{t+s}K.
\]
The main advantage of using the Milnor filtration is that its definition is manifestly symmetric, so that it is easier to work with. 
\end{remark}

Note that since the Milnor filtration coincides with the K\"{u}nneth one, we immediately deduce that the associated graded object is given by 
\[
\Tor_{E_{*}}^{*, *}(K_{*}, E_{*}K) \simeq \Tor_{E_{*}}^{*, *}(K_{*}, K_{*}) \otimes E_{*}K,
\]
where the additional grading comes from the external $\Tor$-grading. In particular, the elements of filtration zero are given by 
\[
F_{0}^{Mil}(K_{*}K) = E_{*}K,
\]
the image of the canonical map $E_{*}E \rightarrow K_{*}K$. As the latter is a sub-Hopf algebroid, this suggests that the Milnor filtration is compatible with the Hopf algebroid structure. This is indeed the case, as we will now show. 

\begin{lemma}
\label{lemma:milnor_filtration_is_multiplicative}
The Milnor filtration on $K_{*}K$ is multiplicative. 
\end{lemma}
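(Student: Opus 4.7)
The plan is to produce a filtered lift of the ring multiplication on $K \otimes K$. Since $(K \otimes K)^{\otimes 2} \to K \otimes K$ factors as $\mu_{K} \otimes \mu_{K}$ after the shuffle of the two middle factors, it suffices to construct a map of filtered spectra $\mu_{\bullet}\colon P_{\bullet} \otimes P_{\bullet} \to P_{\bullet}$ whose colimit recovers $\mu_{K}\colon K \otimes K \to K$. Indeed, $\mu_{\bullet} \otimes \mu_{\bullet}$ (with shuffle) then provides a filtered lift of the multiplication on $(P_{\bullet} \otimes P_{\bullet})^{\otimes 2} \to P_{\bullet} \otimes P_{\bullet}$, and passing to $\pi_{*}$ yields $F^{Mil}_{i} \cdot F^{Mil}_{j} \subseteq F^{Mil}_{i+j}$, which is precisely the multiplicativity of the Milnor filtration.

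To construct such a $\mu_{\bullet}$, I would exploit the $\mathbf{E}_{1}$-$E$-algebra structure on $K$, which allows $\mu_{K}$ to factor as
\[
K \otimes K \to K \otimes_{E} K \xrightarrow{\mu_{K}^{E}} K,
\]
where the first map is the canonical one and the second is a map of $E$-modules. The (Day-convolution) relative tensor $P_{\bullet} \otimes_{E} P_{\bullet}$ is an $E$-free simplicial $E$-module with colimit $K \otimes_{E} K$: each $P_{i} \otimes_{E} P_{j}$ is a direct sum of shifts of $E \otimes_{E} E \simeq E$, and $- \otimes_{E} -$ commutes with geometric realization separately in each variable. By the universal property of $E$-free simplicial resolutions recorded in \cite[\S 7.2.1]{lurie2016higher}, the $E$-module map $\mu_{K}^{E}$ lifts, uniquely up to simplicial homotopy, to a map $\mu_{\bullet}^{E}\colon P_{\bullet} \otimes_{E} P_{\bullet} \to P_{\bullet}$ of filtered $E$-modules. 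Composing with the canonical filtered map $P_{\bullet} \otimes P_{\bullet} \to P_{\bullet} \otimes_{E} P_{\bullet}$ then produces $\mu_{\bullet}$ as required.

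The main obstacle is verifying that $P_{\bullet} \otimes_{E} P_{\bullet}$ actually qualifies as an $E$-free simplicial resolution in Lurie's sense, so that the lifting theorem applies. Level-wise $E$-freeness is immediate from $E \otimes_{E} E \simeq E$, so the substantive content is showing that the $S$-hypercover condition is preserved by Day-convolution relative tensor over $E$ of two $E$-free resolutions; this ultimately reduces to the compatibility of Day convolution (via stable Dold--Kan) with the relative tensor product. Once this technical point is in place, the rest of the argument is formal, and the same reasoning in fact upgrades the Milnor filtration to a multiplicative filtration in the sense compatible with the Hopf-algebroid structure used in the next subsection.
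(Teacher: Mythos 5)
Your proposal follows essentially the same route as the paper: factor the multiplication through $K \otimes_{E} K$ using the $\mathbf{E}_{1}$-$E$-algebra structure, lift $K \otimes_{E} K \rightarrow K$ to a map $P_{\bullet} \otimes_{E} P_{\bullet} \rightarrow P_{\bullet}$ via Lurie's simplicial lifting machinery, precompose with $P_{\bullet} \otimes P_{\bullet} \rightarrow P_{\bullet} \otimes_{E} P_{\bullet}$, and then assemble the full product map as $(\widetilde{m} \otimes \widetilde{m}) \circ (\text{shuffle})$. All of that matches the paper's argument.

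The one place you go astray is in identifying the ``main obstacle.'' You claim the substantive content is showing that $P_{\bullet} \otimes_{E} P_{\bullet}$ is an $E$-free \emph{resolution} (i.e.\ satisfies the $S$-hypercover condition), so that the lifting theorem applies. But \cite[Proposition 7.2.1.5]{lurie2016higher} asks for the hypercover condition on the \emph{target} of the lift and only $S$-freeness on the \emph{source}. Here the target is $P_{\bullet}$ itself, which is an $E$-resolution of $K$ by hypothesis, and the source $P_{\bullet} \otimes_{E} P_{\bullet}$ need only be levelwise $E$-free --- which, as you observe, is immediate from $E \otimes_{E} E \simeq E$. The paper makes exactly this point, noting explicitly that $P_{\bullet} \otimes_{E} P_{\bullet}$ is an $E$-free filtration of $K \otimes_{E} K$ ``although not a resolution, but this is not needed.'' So the technical point you defer is not merely unverified but misdirected: attempting to prove the hypercover condition is preserved under the relative tensor product is unnecessary (and not obviously true, since $K \otimes_{E} K$ has higher homotopy coming from $\Tor_{E_{*}}(K_{*},K_{*})$). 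With the hypotheses of the lifting lemma read correctly, your argument closes with no remaining gap.
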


\begin{proof}
We first claim that the multiplication map $K \otimes K \rightarrow K$ lifts to a morphism $P_{\bullet} \otimes P_{\bullet} \rightarrow P_{\bullet}$ of filtrations. 

Observe that since $K$ is an $E$-algebra, the multiplication factors canonically through a map $K \otimes_{E} K \rightarrow K$. Observe that $P_{\bullet} \otimes_{E} P_{\bullet}$ is an $E$-free filtration of $K \otimes_{E} K$ (although not a resolution, but this is not needed). Then, since $P_{\bullet}$ is an $E$-module resolution of $K$, it follows from \cite[Proposition 7.2.1.5]{lurie2016higher} (where $C = K$) that the map $K \otimes_{E} K \rightarrow K$ of $E$-modules lifts to a map $P_{\bullet} \otimes_{E} P_{\bullet} \rightarrow P_{\bullet}$ of filtrations. The needed lift of $K \otimes K \rightarrow K$ is then given as composition
\[
P_{\bullet} \otimes P_{\bullet} \rightarrow P_{\bullet} \otimes_{E} P_{\bullet} \rightarrow P_{\bullet},
\]
where the first map is the canonical one to the relative tensor product. 

We now claim that the multiplication on $K \otimes K$ also lifts to a map of corresponding filtrations of spectra, showing that it respects filtrations on homotopy groups. This product map is given up to homotopy by the composite
\[
\begin{tikzcd}
	{K \otimes K \otimes K \otimes K} & {} & {K \otimes K \otimes K \otimes K} && {K \otimes K}
	\arrow["{K \otimes \sigma \otimes K}", from=1-1, to=1-3]
	\arrow["{m \otimes m}", from=1-3, to=1-5]
\end{tikzcd},
\]
where $\sigma$ is the twist in the symmetric monoidal structure. This is lifted to a map of filtrations by 
\[\begin{tikzcd}
	{(P_{\bullet} \otimes P_{\bullet}) \otimes (P_{\bullet} \otimes P_{\bullet})} && {P_{\bullet} \otimes P_{\bullet} \otimes P_{\bullet} \otimes P_{\bullet}} && {P_{\bullet} \otimes P_{\bullet} }
	\arrow["{P_{\bullet} \otimes \sigma \otimes P_{\bullet}}", from=1-1, to=1-3]
	\arrow["{\widetilde{m} \otimes \widetilde{m}}", from=1-3, to=1-5],
\end{tikzcd}\]
where $\widetilde{m}$ is the lift of the multiplication map constructed above. This ends the argument. 
\end{proof}

\begin{theorem}
\label{theorem:milnor_filtration_makes_kk_into_a_filtered_hopf_algebroid}
The Milnor filtration makes $K_{*}K$ into a filtered Hopf algebroid over $K_{*}$. 
\end{theorem}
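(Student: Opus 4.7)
The plan is to extend \cref{lemma:milnor_filtration_is_multiplicative} to all of the remaining structure maps of the Hopf algebroid $K_{*}K$ over $K_{*}$. Specifically, the maps still to verify as filtered are the left and right units $\eta_{L},\eta_{R}\colon K_{*}\to K_{*}K$, the counit $\epsilon\colon K_{*}K\to K_{*}$, the antipode $c\colon K_{*}K\to K_{*}K$, and the comultiplication $\Delta\colon K_{*}K\to K_{*}K\otimes_{K_{*}}K_{*}K$. In each case I would exhibit a lift of the underlying spectrum-level map to a morphism of filtered spectra built from $P_{\bullet}$, at which point being filtered on homotopy is automatic; we equip $K$ itself with the filtered lift $P_{\bullet}$, and $K_{*}$ with its constant filtration, which is consistent because $K_{*}\subset F_{0}^{\mathrm{Mil}}(K_{*}K)=E_{*}K$ by the discussion preceding the lemma.

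More concretely, the two units come from $S^{0}\otimes K\to K\otimes K$ and $K\otimes S^{0}\to K\otimes K$, which lift along the canonical composite $\eta\colon S^{0}\to E\to P_{\bullet}$ (the last map being the augmentation into filtration zero) to $S^{0}\otimes P_{\bullet}\to P_{\bullet}\otimes P_{\bullet}$ and its symmetric analogue. The counit is induced by the $E$-algebra multiplication $K\otimes K\to K$, whose lift $P_{\bullet}\otimes P_{\bullet}\to P_{\bullet}$ of filtered spectra was already constructed in the proof of \cref{lemma:milnor_filtration_is_multiplicative} via \cite[Proposition 7.2.1.5]{lurie2016higher}. The antipode corresponds to the twist $\sigma\colon K\otimes K\to K\otimes K$, which lifts tautologically to the symmetry on $P_{\bullet}\otimes P_{\bullet}$ inherent in Day convolution. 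Finally, the comultiplication is induced by $\id\otimes\eta\otimes\id\colon K\otimes K\to K\otimes K\otimes K$, and lifts to $\id\otimes\eta\otimes\id\colon P_{\bullet}\otimes P_{\bullet}\to P_{\bullet}\otimes P_{\bullet}\otimes P_{\bullet}$ using the same $\eta$ as above.

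The main point to verify after assembling these lifts is that the comultiplication takes values in the relative tensor product $K_{*}K\otimes_{K_{*}}K_{*}K$ compatibly with the Day-convolution filtration; this reduces to the identification of $\pi_{*}(K\otimes K\otimes K)$ with $K_{*}K\otimes_{K_{*}}K_{*}K$, which is compatible with filtrations by an application of the K\"unneth collapse argument of \cref{corollary:kunneth_for_kk_collapses} to the triple smash product together with \cref{lemma:milnor_filtration_coincides_with_the_kunneth_filtration}. The remaining Hopf algebroid identities (counitality, coassociativity, and the antipode axiom) then transfer automatically from the corresponding identities on $K\otimes K$ in the homotopy category, since every structure map has been realized as $\pi_{*}$ of a morphism of filtered spectra. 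The principal obstacle, namely the construction of coherent lifts along the $E$-free resolution $P_{\bullet}$, was already settled in the proof of \cref{lemma:milnor_filtration_is_multiplicative}, and only trivial variations of that argument are needed here.
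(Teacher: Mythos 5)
Your plan is correct and follows essentially the same route as the paper: multiplication is already \cref{lemma:milnor_filtration_is_multiplicative}, the antipode is lifted by the twist on $P_{\bullet}\otimes P_{\bullet}$, and comultiplication is lifted by tensoring with a filtered lift $\widetilde{u}\colon S^{0}\to P_{\bullet}$ of the unit, with the filtration on the target $K_{*}K\otimes_{K_{*}}K_{*}K$ controlled by the coincidence of the Milnor filtration with both one-sided K\"unneth filtrations (\cref{lemma:milnor_filtration_coincides_with_the_kunneth_filtration} and \cref{remark:milnor_filtration_also_coincides_with_other_kunneth_filtration}). The only cosmetic difference is that the paper makes your ``K\"unneth collapse for the triple product'' step precise by factoring through $P_{\bullet}\otimes K\otimes P_{\bullet}\simeq(P_{\bullet}\otimes K)\otimes_{K}(K\otimes P_{\bullet})$ with the middle factor carrying the constant filtration, rather than working with $P_{\bullet}^{\otimes 3}$ directly.
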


\begin{proof}
We have already checked that the Milnor filtration is preserved by multiplication on $K_{*}K$ in \cref{lemma:milnor_filtration_is_multiplicative}. Thus, we are left with comultiplication and antipode. The latter is induced by the twist map $K \otimes K$, which can be lifted to a map of filtration by the twist on $P_{\bullet} \otimes P_{\bullet}$, hence it preserves the filtration on homotopy groups. 

We move on to comultiplication $\Delta\colon K_{*}K \rightarrow K_{*}K \otimes_{K_{*}} K_{*}K$. This is induced by the map 
\[
\begin{tikzcd}
	{K \otimes K \simeq K \otimes S^{0} \otimes K} & {} & {K \otimes K \otimes K \simeq (K \otimes K) \otimes_{K} (K \otimes K)}
	\arrow["{K \otimes u \otimes K}", from=1-1, to=1-3],
\end{tikzcd}
\]
where $u\colon S^{0} \rightarrow K$ is the unit and we implicitly use the K\"{u}nneth isomorphism
\[
\pi_{*}((K \otimes K) \otimes_{K} (K \otimes K)) \simeq K_{*}K \otimes_{K_{*}} K_{*}K,
\]
which holds in full generality since $K_{*}$ is a field. 

As $P_{\bullet}$ is an $E$-free filtration of $K_{*}$, it induces a trivial filtration on $K_{*}$, so that the unit factors through some map $S^{0} \rightarrow P_{0}$. The latter induces a map $\widetilde{u}\colon S^{0} \rightarrow P_{\bullet}$ of filtrations, where we equip the sphere with the trivial filtration, which makes it into the monoidal unit of filtered spectra. 

Consider the composite 
\[
\begin{tikzcd}
	{P_{\bullet} \otimes P_{\bullet} } && {P_{\bullet} \otimes P_{\bullet} \otimes P_{\bullet} } & {P_{\bullet} \otimes K \otimes P_{\bullet} \simeq (P_{\bullet} \otimes K) \otimes_{K} (K \otimes P_{\bullet}) }
	\arrow["{P_{\bullet} \otimes \widetilde{u} \otimes P_{\bullet}}", from=1-1, to=1-3]
	\arrow[from=1-3, to=1-4]
\end{tikzcd}
\]
which by construction lifts the map inducing comultiplication. As the filtrations of $K_{*}K$ induced by $P_{\bullet} \otimes K$ and $K \otimes P_{\bullet}$ coincide with the Milnor filtration by \cref{lemma:milnor_filtration_coincides_with_the_kunneth_filtration} and 
\cref{remark:milnor_filtration_also_coincides_with_other_kunneth_filtration}, we deduce that comultiplication preserves the Milnor filtration as needed.
\end{proof}

We will show that if $p > 2$ and $K_{*}K$ is commutative---for example, if $K$ is homotopy commutative---then the Milnor filtration can be canonically promoted to a grading. We will prove that the existence of this grading implies that the Cartan--Eilenberg spectral sequence associated to the extension $K_{*}E \rightarrow K_{*}K \rightarrow K_{*}^{E}K$ collapses. 

Assume that $p > 2$ and $K_{*}K$ is commutative, we will first use the Milnor filtration to describe $K_{*}K$ completely as an algebra. By \cref{lemma:milnor_filtration_coincides_with_the_kunneth_filtration}, the associated graded to the Milnor filtration is given by 
\[
\Tor^{*, *}_{E_{*}}(K_{*}, E_{*}K) \simeq \Tor^{*, *}_{E_{*}}(K_{*}, K_{*}) \otimes E_{*}K.
\]
Observe that since $F_{0}^{Mil}(K_{*}K) \simeq E_{*}K$ is in even degree, the external degree one part in the associated graded object, which is
\[
\Tor^{1}_{E_{*}}(K_{*}, K_{*}) \otimes E_{*}K,
\]
can be identified with elements of Milnor filtration at most one in $K_{*}K$ of odd internal degree. Thus, the inclusion 
\[
\Tor^{1}_{E_{*}}(K_{*}, K_{*}) \hookrightarrow \Tor^{1}_{E_{*}}(K_{*}, K_{*}) \otimes E_{*}K,
\]
can be canonically lifted to a unique map \[
\Tor^{1}_{E_{*}}(K_{*}, K_{*}) \rightarrow K_{*}K.
\]
which reduces to the canonical inclusion relative to elements of Milnor filtration zero.  
Since $\Tor^{*, *}_{E_{*}}(K_{*}, K_{*})$ is an exterior algebra on elements of external degree one, when $p > 2$ this $\Tor$-algebra is a free commutative $K_{*}$-algebra on the vector space $\Tor^{1, *}$. If $K_{*}K$ is commutative, it follows that the above lift uniquely extends to a map of $K_{*}$-algebras 
\[
\Tor^{*}_{E_{*}}(K_{*}, K_{*}) \rightarrow K_{*}K.
\]
with the following property.

\begin{proposition}
\label{proposition:kk_at_odd_primes_a_tensor_of_ke_and_tor}
Let $p > 2$ and assume that $K_{*}K$ is commutative. Then, the induced map of $E_{*}K$-algebras
\begin{align}
\label{align:ravenel_iso_of_algebras_involving_kk}
\Tor_{E_{*}}^{*}(K_{*}, K_{*}) \otimes_{K_{*}} E_{*}K \rightarrow K_{*}K
\end{align}
is an isomorphism.
\end{proposition}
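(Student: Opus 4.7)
The plan is to promote $\phi \colon \Tor_{E_{*}}^{*}(K_{*}, K_{*}) \otimes_{K_{*}} E_{*}K \to K_{*}K$ to a map of filtered $E_{*}K$-algebras and then check that the induced map on associated graded is an isomorphism of bounded filtrations. Equip the source with the (bounded) filtration $F_{s}^{\mathrm{ext}}$ by external $\Tor$-degree $\leq s$; note this is finite since $E_{*}$ is regular of global dimension $n$, so $\Tor^{s}_{E_*}(K_*,K_*)=0$ for $s>n$. Equip the target with the Milnor filtration $F_{s}^{\mathrm{Mil}}$, which by the same vanishing (together with \cref{lemma:milnor_filtration_coincides_with_the_kunneth_filtration}) is also bounded.

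First, I would verify that $\phi$ is a filtered map. By construction the lift $\Tor^{1}_{E_{*}}(K_{*}, K_{*}) \to K_{*}K$ lands in $F_{1}^{\mathrm{Mil}}$, and $\phi$ restricted to $\Tor^{0} \otimes E_{*}K = E_{*}K$ is the canonical inclusion into $F_{0}^{\mathrm{Mil}}$. Since $\phi$ is an algebra map and the Milnor filtration is multiplicative by \cref{lemma:milnor_filtration_is_multiplicative}, we obtain $\phi(F_{s}^{\mathrm{ext}}) \subseteq F_{s}^{\mathrm{Mil}}$ for all $s$.

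Next, I would analyze $\gr(\phi)$. By \cref{lemma:milnor_filtration_coincides_with_the_kunneth_filtration} (and flatness of $E_{*}E$ over $E_{*}$), we have a canonical identification
\[
\gr^{\mathrm{Mil}}(K_{*}K) \;\simeq\; \Tor_{E_{*}}^{*}(K_{*}, K_{*}) \otimes_{K_{*}} E_{*}K
\]
as graded $E_{*}K$-algebras, where the product on the left is induced from the multiplication on $K_{*}K$ (using multiplicativity of the K\"unneth spectral sequence) and on the right is the standard product on the $\Tor$-algebra tensored with $E_{*}K$. Then $\gr(\phi)$ is an $E_{*}K$-algebra endomorphism of this object. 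In external degree $0$ it is the identity on $E_{*}K$, and in external degree $1$ it is the identity on $\Tor^{1} \otimes E_{*}K$ by construction of the lift (which by uniqueness restricts to the canonical inclusion in $\gr_{1}$). Because $p>2$, the $\Tor$-algebra is the free graded-commutative $K_{*}$-algebra on $\Tor^{1}_{E_{*}}(K_{*},K_{*})$, hence the graded $E_{*}K$-algebra is freely generated by $\Tor^{1} \otimes E_{*}K$, and so $\gr(\phi)$ is determined by its restriction to degree $1$ and is therefore the identity.

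Finally, since both filtrations are bounded (concentrated in external degrees $0 \le s \le n$) and $\gr(\phi)$ is an isomorphism, a standard induction on filtration degree using the five lemma applied to the short exact sequences $0 \to F_{s-1} \to F_{s} \to \gr_{s} \to 0$ shows that $\phi$ itself is an isomorphism. The only real subtlety to watch is that the graded multiplication on $\gr^{\mathrm{Mil}}(K_{*}K)$ coincides with the $\Tor$-algebra multiplication on the nose; this is where commutativity of $K_{*}K$ (and hence commutativity of the lifted degree-$1$ generators at $p>2$, which would otherwise square to nonzero elements) is crucial, and it is the step most in need of care.
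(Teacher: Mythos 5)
Your proposal is correct and follows essentially the same route as the paper: both arguments compare the two sides through the Milnor filtration and its associated graded, identified via the collapsing K\"unneth spectral sequence with $\Tor_{E_{*}}^{*}(K_{*},K_{*})\otimes_{K_{*}}E_{*}K$. The only (immaterial) difference is the concluding step, where the paper deduces surjectivity on associated graded and then invokes that a surjection of free $E_{*}K$-modules of equal finite rank is an isomorphism, whereas you verify that $\gr(\phi)$ is the identity and finish by induction on the bounded filtration.
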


\begin{proof}
By construction, the above map is surjective on the associated graded of $K_{*}K$ with respect to the Milnor filtration. Thus, it is surjective by an inductive argument, and as a surjective map of free $E_{*}K$-modules of the same rank it must be an isomorphism. 
\end{proof}
As a consequence of  \cref{proposition:kk_at_odd_primes_a_tensor_of_ke_and_tor}, $K_{*}K$ inherits an additional external grading from the grading of $\Tor$-groups. This grading can be made explicit, as in the following definition. 

\begin{definition}
\label{definition:milnor_grading}
Let $p > 2$ and assume that $K_{*}K$ is commutative. Then, the \emph{Milnor grading} on $K_{*}K$ is the unique algebra grading such that
\begin{enumerate}
    \item elements of Milnor filtration zero are of Milnor degree zero and 
    \item odd internal degree elements of Milnor filtration at most one are of Milnor degree one. 
\end{enumerate}
\end{definition}

\begin{lemma}
The Milnor grading is well-defined. 
\end{lemma}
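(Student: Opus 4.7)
The plan is to transport the external $\Tor$-grading through the algebra isomorphism
\[
\Tor_{E_{*}}^{*}(K_{*}, K_{*}) \otimes_{K_{*}} E_{*}K \xrightarrow{\sim} K_{*}K
\]
of \cref{proposition:kk_at_odd_primes_a_tensor_of_ke_and_tor}, and to \emph{define} the Milnor grading to be the unique multiplicative grading under which this map becomes an isomorphism of graded $E_{*}K$-algebras, where the source is graded by homological $\Tor$-degree and $E_{*}K$ sits in degree zero. It then remains to verify properties (1) and (2) of \cref{definition:milnor_grading} and to establish the uniqueness claim.

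Property (1) is essentially tautological, since by \cref{lemma:milnor_filtration_coincides_with_the_kunneth_filtration} the zeroth stage of the Milnor filtration equals $E_{*}K$, which coincides with the $\Tor^{0}$ summand and hence with the Milnor-degree-zero part under the chosen isomorphism. For property (2), the crucial (and slightly delicate) step is an internal-degree bookkeeping argument: $E_{*}K$ is concentrated in even internal degrees, whereas the generators of $\Tor^{1}_{E_{*}}(K_{*},K_{*})$ each contribute homological degree one and thus sit in odd internal degree in $K_{*}K$ via the suspension shift $\Sigma^{-s}$ built into the Künneth spectral sequence of \cref{lemma:milnor_filtration_coincides_with_the_kunneth_filtration}. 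Consequently the odd-internal-degree part of $F_{1}^{Mil}$ equals the image of $\Tor^{1}_{E_{*}}(K_{*},K_{*}) \otimes_{K_{*}} E_{*}K$, which has Milnor degree one by construction; symmetrically, the even part of $F_{1}^{Mil}$ already lies in $F_{0}^{Mil}$. This internal-degree analysis is the main point requiring care.

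For uniqueness, since $p > 2$ the exterior algebra $\Tor_{E_{*}}^{*}(K_{*},K_{*})$ is the free graded commutative $K_{*}$-algebra on $\Tor^{1}_{E_{*}}(K_{*},K_{*})$, so via \cref{proposition:kk_at_odd_primes_a_tensor_of_ke_and_tor} the algebra $K_{*}K$ is generated over $K_{*}$ by $E_{*}K$ together with any lift of $\Tor^{1}$. A multiplicative grading is pinned down by its values on a generating set: elements of $E_{*}K$ are forced into degree zero by (1), and the chosen lift of $\Tor^{1}$ consists of odd-internal-degree elements of $F_{1}^{Mil}$, which are forced into degree one by (2). Hence any multiplicative grading on $K_{*}K$ satisfying (1) and (2) must agree with the one constructed above.
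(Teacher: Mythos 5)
Your proof is correct and follows essentially the same route as the paper: the paper's (one-sentence) argument likewise invokes \cref{proposition:kk_at_odd_primes_a_tensor_of_ke_and_tor} to transport the external $\Tor$-grading and observes that a multiplicative grading on an algebra generated in degrees zero and one is pinned down by specifying those two degrees, which is exactly what conditions (1) and (2) do. Your additional internal-degree bookkeeping (even for $E_*K$, odd for the $\Tor^1$ contribution) is the same observation the paper makes in the discussion immediately preceding \cref{definition:milnor_grading}.
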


\begin{proof}
This is immediate from  \cref{proposition:kk_at_odd_primes_a_tensor_of_ke_and_tor}, as the external grading on the algebra tensor product is $\Tor^{*}_{E_{*}}(K_{*}, K_{*}) \otimes_{K_{*}} E_{*}K$ is uniquely determined by which elements are of degrees zero and one. 
\end{proof}

\begin{remark}
The increasing filtration associated to the Milnor grading is exactly the Milnor filtration. Thus, this grading can be considered as a refinement of the latter canonical filtration, which exists at all primes and with no assumptions on commutativity. 
\end{remark}

\begin{theorem}
\label{theorem:existence_and_independence_of_milnor_degree}
Let $p > 2$ and assume that $K_{*}K$ is commutative. Then, the Milnor grading on $K_{*}K$ is compatible with comultiplication and antipode, making it into a bigraded Hopf algebroid. 
\end{theorem}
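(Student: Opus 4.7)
The plan is to leverage three established facts: the isomorphism of \cref{proposition:kk_at_odd_primes_a_tensor_of_ke_and_tor} presenting $K_{*}K$ as $\Tor_{E_{*}}^{*}(K_{*}, K_{*}) \otimes_{K_{*}} E_{*}K$, the compatibility of the Milnor filtration with the entire Hopf algebroid structure from \cref{theorem:milnor_filtration_makes_kk_into_a_filtered_hopf_algebroid}, and the inclusion of $(K_{*}, K_{*}E)$ as a sub-Hopf algebroid of $(K_{*}, K_{*}K)$ from \cref{theorem:ke_kk_kek_is_an_extension}. The key observation is that Milnor degrees $0$ and $1$ admit intrinsic characterizations: degree $0$ is precisely Milnor filtration $0$ (concentrated in even internal degrees, since $E_{*}K$ is), while degree $1$ consists of the odd internal degree elements of Milnor filtration at most $1$. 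Since these two degrees generate the grading multiplicatively by the exterior-algebra-tensor presentation, it will suffice to verify compatibility on these.

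First I would treat the antipode $c$: it is induced by the swap on $K \otimes K$, so it is multiplicative (using commutativity of $K_{*}K$), preserves internal degree, and preserves the Milnor filtration by \cref{theorem:milnor_filtration_makes_kk_into_a_filtered_hopf_algebroid}. Hence it preserves the two characterizations above, and by multiplicativity preserves all Milnor degrees.

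Next I would handle $\Delta \colon K_{*}K \to K_{*}K \otimes_{K_{*}} K_{*}K$, equipping the target with the product bigrading. The same intrinsic characterization of Milnor degrees $0$ and $1$ applies on the tensor product, since filtration $0$ there is $E_{*}K \otimes_{K_{*}} E_{*}K$, still concentrated in even internal degrees. On $E_{*}K \subseteq K_{*}K$ the map $\Delta$ restricts to the comultiplication of the sub-Hopf algebroid $(K_{*}, K_{*}E)$ and hence lands in $E_{*}K \otimes_{K_{*}} E_{*}K$, which is Milnor degree $0$ of the tensor product. On a Milnor degree $1$ element, $\Delta$ preserves both odd internal degree and Milnor filtration $\leq 1$, so it lands in the Milnor degree $1$ part of the tensor product. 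Multiplicativity extends the conclusion to all Milnor degrees.

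Finally, the left and right units $K_{*} \to K_{*}K$ factor through $E_{*}K$, hence land in Milnor degree $0$, while the counit $\epsilon \colon K_{*}K \to K_{*}$ kills the Milnor degree $1$ generators for trivial internal-degree reasons (they live in odd internal degrees whereas $K_{*} = \kappa[u^{\pm 1}]$ is concentrated in even degrees), and by multiplicativity kills the entire ideal they generate, which is exactly Milnor degree $\geq 1$. The most delicate point is verifying that the intrinsic characterization of Milnor degrees $0$ and $1$ transports cleanly to $K_{*}K \otimes_{K_{*}} K_{*}K$; this hinges on the observation that $E_{*}K \otimes_{K_{*}} E_{*}K$ remains in even internal degrees, making the odd part of filtration $\leq 1$ pin down degree $1$ exactly as before.
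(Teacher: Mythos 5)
Your proposal is correct and follows essentially the same route as the paper: reduce to Milnor degrees $0$ and $1$ by multiplicative generation, invoke \cref{theorem:milnor_filtration_makes_kk_into_a_filtered_hopf_algebroid} to see that filtration $\leq 1$ is preserved, and then separate the two degrees by parity of the internal grading. The extra checks you include (the units landing in $E_{*}K$, the counit, and the explicit transport of the parity characterization to $K_{*}K \otimes_{K_{*}} K_{*}K$) are correct details that the paper leaves implicit.
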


\begin{proof}
As $K_{*}K$ is generated as an algebra by elements in Milnor degrees zero and one, it is enough to check that these two degrees are preserved. The subspace of elements of Milnor degree most one coincides with elements of filtration at most one, which get preserved by comultiplication and antipode by \cref{theorem:milnor_filtration_makes_kk_into_a_filtered_hopf_algebroid}.

Since the ones in Milnor degree zero are precisely those of even internal degree, and the ones in Milnor degree one are exactly the ones of odd internal degree, we deduce that each degree gets preserved separately by both comultiplication and antipode. 
\end{proof}

\begin{remark}
To see what goes wrong if $p = 2$, observe that even if $K_{*}K$ were commutative, odd degree elements do not have to square to zero and so there need not be a multiplicative extension of the canonical map $\Tor^{1}_{E_{*}}(K_{*}, K_{*}) \rightarrow K_{*}K$ to a map on the whole $\Tor$-algebra. 

One can try to define the Milnor grading at $p = 2$ directly similarly to  \cref{definition:milnor_grading}, by declaring that elements in Milnor degree $s$ are the $s$-fold products of those in Milnor degree one. This is bound to fail; the calculations of W\"{u}rgler in the case of the minimal Morava $K$-theory given in \cref{section:digression_minimal_morava_k_theory} show that the elements $\tau_{i}$ of Milnor degree one square to non-zero elements of Milnor degree zero. 
\end{remark}

\begin{theorem}
\label{theorem:collapse_of_ce_sseq_at_odd_p}
Suppose that $p > 2$ and that $K_{*}K$ is commutative. Then, the Cartan--Eilenberg spectral sequence associated to the extension
\[
K_{*}E \rightarrow K_{*}K \rightarrow K_{*}^{E}K
\]
collapses on the second page, inducing a canonical isomorphism 
\[
\Ext_{K_{*}K}(K_{*}, K_{*}) \simeq \Ext_{K_{*}E}(K_*,\Ext_{K_{*}^{E}K}(K_{*}, K_{*})).
\]
\end{theorem}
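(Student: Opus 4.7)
The strategy is to exploit the Milnor grading on $K_{*}K$ supplied by \cref{theorem:existence_and_independence_of_milnor_degree} to equip the Cartan--Eilenberg spectral sequence with an additional internal grading, and then to force $d_{r}=0$ for $r\geq 2$ on degree-theoretic grounds. This is a finite-height analogue of the classical argument (already alluded to in the remark opening \cref{sec:milnorfiltration}) for the cohomology of the mod-$p$ dual Steenrod algebra.

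The first step I would take is to observe that the extension $(K_{*},K_{*}E)\to(K_{*},K_{*}K)\to(K_{*},K_{*}^{E}K)$ lifts to an extension of bigraded Hopf algebroids in the Milnor grading. This is essentially by construction: $K_{*}E$ is the Milnor-degree-$0$ part of $K_{*}K$, and the quotient Hopf algebra $K_{*}^{E}K\simeq K_{*}\otimes_{K_{*}E}K_{*}K$ inherits a Milnor grading under which its $n$-dimensional space of primitives sits in Milnor degree $1$, corresponding to the $\Tor^{1}_{E_{*}}(K_{*},K_{*})$-summand of the isomorphism in \cref{proposition:kk_at_odd_primes_a_tensor_of_ke_and_tor}.

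Next I would use \cref{proposition:hl_cohomology_of_kek}: under the isomorphism $\Ext_{K_{*}^{E}K}(K_{*},K_{*})\simeq K_{*}\otimes_{\kappa}\Sym^{*}(\mfrak/\mfrak^{2})$, the polynomial generators $\psi(x)$ are realized in the cobar complex by the Milnor-degree-$1$ primitives of $K_{*}^{E}K$. Consequently $\Ext^{s_{1}}_{K_{*}^{E}K}(K_{*},K_{*})$ is concentrated in Milnor degree $s_{1}$. Because $K_{*}E$ sits entirely in Milnor degree $0$, its coaction preserves the Milnor grading, and so every term of the $E_{2}$-page $E_{2}^{s_{0},s_{1}}=\Ext^{s_{0}}_{K_{*}E}(K_{*},\Ext^{s_{1}}_{K_{*}^{E}K}(K_{*},K_{*}))$ is concentrated in Milnor degree $s_{1}$.

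Finally, I would organize the Cartan--Eilenberg spectral sequence as arising from a bigraded double cobar complex in which the Milnor grading is preserved by all structure maps, so that $E_{r}^{s_{0},s_{1}}$ remains concentrated in Milnor degree $s_{1}$ on every page. The differential $d_{r}\colon E_{r}^{s_{0},s_{1}}\to E_{r}^{s_{0}+r,s_{1}-r+1}$ must preserve Milnor grading, but for $r\geq 2$ its source and target are concentrated in Milnor degrees $s_{1}$ and $s_{1}-r+1$ respectively, which differ; hence $d_{r}=0$ and the spectral sequence collapses at $E_{2}$. The principal obstacle is the bookkeeping in this last step, namely checking that whichever concrete construction of the Cartan--Eilenberg spectral sequence one invokes (for instance via a filtered cobar complex of the extension) genuinely respects the Milnor grading on every page; this however reduces formally to \cref{theorem:existence_and_independence_of_milnor_degree}, which guarantees all the required compatibilities of the Milnor grading with the Hopf algebroid structure maps.
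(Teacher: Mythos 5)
Your proposal is correct and follows essentially the same route as the paper: both use the Milnor grading of \cref{theorem:existence_and_independence_of_milnor_degree} to make the extension bigraded, observe via \cref{proposition:hl_cohomology_of_kek} that the $E_2$-page is concentrated where Milnor degree equals the $K_{*}^{E}K$-homological degree $s_1$, and conclude that the $d_r$ for $r\geq 2$ vanish because they preserve Milnor degree while shifting $s_1$. Your write-up is somewhat more explicit about the bookkeeping (the double cobar model and the compatibility of the grading with every page), but the underlying argument is identical.
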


\begin{proof}
Under the given assumptions, $K_{*}K$ is bigraded using its internal grading and the Milnor grading of \cref{definition:milnor_grading}. This grading is compatible with the above extension, with all of $K_{*}E$ of Milnor degree zero. 

As $K_{*}^{E}K \simeq \Lambda_{K_{*}}(V)$ as Hopf algebras, where $V$ is the $K_{*}$-vector subspace of primitive elements, we have 
\[
\Ext^{*, *, *}_{K_{*}^{E}K}(K_{*}, K_{*}) \simeq \Sym_{K_{*}}(V^{\vee}),
\]
an isomorphism with the symmetric algebra. Here, $V^{\vee}$ is of homological degree one and Milnor degree one, and so the whole above $\Ext$-algebra is concentrated on the plane of elements for which Milnor degree is equal to the homological one. 

Since the Cartan--Eilenberg differentials lower the homological degree and preserve the Milnor degree, we deduce that they are all zero. This gives an isomorphism as above up to passing to associated graded to the homological filtration, as the extension problems all get trivialized by the additional grading.  
\end{proof}

\part{The $K$-based Adams spectral sequence}

In this part, we study the Adams spectral sequence based on a Morava $K$-theory $K$ at arbitrary height $n$ and prime $p$, constructed as part of a finite height analogue of Miller's square of spectral sequences. In particular, we establish its (perhaps surprisingly) good convergence properties and identify it with the filtration by powers spectral sequence, at least for sufficiently large primes $p$. We then conclude this part with an illustration of the spectral sequence at heights 1 and odd primes, already exhibiting the existence of arbitrary long differentials.

\begin{remark}[Adams spectral sequence based on Milnor modules]
\label{remark:k_based_adams_spectral_sequence_in_terms_of_milnor_modules}
We will not proceed in this way, but one can phrase the $K$-based Adams spectral sequence entirely in terms of absolute Milnor modules, following Miller's observation that the Adams spectral sequence is determined by the class of $K_{*}$-injective maps.

To see this, notice that the restricted Yoneda embedding, followed by a sheafification, gives a symmetric monoidal functor
\[
y\colon \Sp \rightarrow \Mil_{abs}
\]
which can be considered a Milnor modules-valued homology theory. That is, the functor $y$ takes cofibre sequences to exact sequences, preserves arbitrary direct sums, and takes the suspension in spectra to the distinguished auto-equivalence of Milnor modules induced by suspension functor on $\Sp^{fin}$. 

By construction, any injective object in Milnor modules can be lifted to a representing object in spectra, and this allows one to construct a ``$y$-based'' Adams spectral sequence based on injectives, where the $E_{2}$-terms will be canonically given by $\Ext$-groups in absolute Milnor modules \cite{patchkoria2021adams}. Through the equivalence of \cref{proposition:absolute_milnor_modules_same_as_comodules}, this spectral sequence will be canonically isomorphic to the $K$-based Adams spectral sequence for any choice of Morava $K$-theory.
\end{remark}

\section{Convergence of the $K$-based Adams spectral sequence}\label{sec:knbasedass}

In this short section, we prove that the $K$-based Adams spectral sequence is conditionally convergent for any $K$-local spectrum, and it converges completely in the dualizable case. 

\begin{remark}
As observed in \cref{remark:k_based_adams_spectral_sequence_in_terms_of_milnor_modules}, the $K$-based Adams spectral sequence depends only on the choice of the prime and the height. However, this independence from choices will not be needed in this section, as the arguments we use apply equally well to any Morava $K$-theory. 
\end{remark}

By standard arguments, the $K$-based Adams spectral sequence is isomorphic to the totalization spectral sequence obtained by mapping into the  \emph{canonical $K$-Adams resolution} 
\[
X \rightarrow K \otimes X \rightrightarrows K \otimes K \otimes X \Rrightarrow \ldots
\]
Thus, it follows that the $K$-based Adams spectral sequence converges to the homotopy of the \emph{$K$-nilpotent completion of $X$}: 
\[
\Tot(K^{\otimes \bullet +1} \otimes X) := \textstyle\varprojlim_{s \in \Delta} K^{\otimes \bullet +1} \otimes X.
\]
Since the totalization is $K$-local as a limit of $K$-local spectra, there is a canonical comparison map from the $K$-localization of $X$. Our first task is to determine when this map is an equivalence.

\begin{proposition}
\label{prop:knnilpotentcompletion}
For any spectrum $X$, the $K$-nilpotent completion of $X$ is equivalent to the $K$-localization of $X$; that is, the canonical map
\begin{equation}
\label{equation:k_localization_into_k_completion_map_in_thm}
L_{K}X \rightarrow \Tot(K^{\otimes \bullet+1} \otimes X)
\end{equation}
is an equivalence.
\end{proposition}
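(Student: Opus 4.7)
My plan is to verify the equivalence by separately checking that $\Tot(K^{\otimes\bullet+1}\otimes X)$ is $K$-local and that the unit map $X \to \Tot(K^{\otimes\bullet+1}\otimes X)$ is a $K_*$-equivalence; together these will identify the totalization as the $K$-localization of $X$. The $K$-locality is immediate: each cosimplicial level $K^{\otimes(s+1)} \otimes X$ is a module spectrum over $K$ via the leftmost factor, hence $K$-local, and $\spectra_K$ is closed under limits in $\spectra$. Consequently the unit factors uniquely through $L_K X$, yielding the comparison map (\ref{equation:k_localization_into_k_completion_map_in_thm}).

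To see the unit is a $K_*$-equivalence, I would reformulate the $K$-nilpotent completion $X^{\wedge}_K := \Tot(K^{\otimes\bullet+1}\otimes X)$ as the Bousfield localization of $X$ into the localizing tensor-ideal $\Loc^{\otimes}(K)\subseteq\spectra$ generated by $K$: this is a formal consequence of the fact that every $K$-nilpotent spectrum is $\Loc^{\otimes}(K)$-local, and the cosimplicial bar resolution is the universal resolution by such. The desired equivalence $L_K X \simeq X^{\wedge}_K$ then reduces to the identity $\Loc^{\otimes}(K) = \spectra_K$. Since $\spectra_K$ is generated as a localizing tensor-ideal by its unit $L_K S^0$, this in turn reduces to showing $L_K S^0 \in \Loc^{\otimes}(K)$, for which I would appeal to the Hovey--Strickland presentation of the $K$-local sphere as an inverse limit $L_K S^0 \simeq \varprojlim_I L_K(S^0/I)$ over invariant regular ideals $I = (p^{i_0}, v_1^{i_1}, \ldots, v_{n-1}^{i_{n-1}})$ in $BP_*$, combined with the observation that each type-$n$ generalized Moore spectrum $S^0/I$ has $L_K(S^0/I) \in \Loc^{\otimes}(K)$.

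The principal technical obstacle is this final claim. It rests on the Hopkins--Smith construction of $v_n$-self maps, which produces a horizontal vanishing line in the $K$-based Adams tower of the finite type-$n$ complex $S^0/I$; the tower therefore stabilizes at a finite stage, identifying $L_K(S^0/I)$ with its $K$-nilpotent completion, which is manifestly an iterated extension of copies of $K \otimes S^0/I$ and hence in $\Loc^{\otimes}(K)$. Once this is in hand, the inverse limit description pushes the conclusion through to $L_K S^0$, and tensoring with $X$ handles arbitrary spectra.
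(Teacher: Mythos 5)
Your reduction to showing that the unit is a $K_*$-equivalence, and your observation that the totalization is $K$-local because each cosimplicial level is a $K$-module spectrum, are both fine and match the paper's setup. The problem lies in the middle of your argument. First, the $K$-nilpotent completion is not the Bousfield localization of $X$ onto the localizing tensor ideal $\Loc^{\otimes}(K)$: Bousfield's theorem (Theorem 6.10 of his localization paper, which is what the paper invokes) identifies $\Tot(K^{\otimes \bullet+1}\otimes X)$ with $L_KX$ only under the hypothesis that $X$ is $K$-\emph{pre}nilpotent, i.e.\ that $L_KX$ lies in the \emph{thick} tensor ideal $\Thick^{\otimes}(K)$, and there is no universal property of the nilpotent completion in terms of $\Loc^{\otimes}(K)$. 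Second, and decisively, the identity $\Loc^{\otimes}(K)=\spectra_K$ you want to reduce to is false: since $\pi_*K$ is all torsion we have $H\Q\otimes K=0$, and $H\Q\otimes-$ preserves colimits, so every object of $\Loc^{\otimes}(K)$ is rationally acyclic; but $\pi_0L_KS^0$ contains a copy of $\Z_p$, so $L_KS^0\notin\Loc^{\otimes}(K)$ (this also shows the nilpotent completion of $S^0$ cannot be a reflection into $\Loc^{\otimes}(K)$). Your inverse-limit step is exactly where this breaks: localizing subcategories are closed under colimits, not limits, so the correct fact that $L_K(S^0/I)\in\Thick^{\otimes}(K)$ for each invariant ideal $I$ does not propagate to $\varprojlim_I L_K(S^0/I)\simeq L_KS^0$.

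The sound part of your proposal is the final paragraph, and the way to exploit it is the paper's: because $K_*$ is a graded field, a map is a $K_*$-isomorphism if and only if it becomes an equivalence after smashing with a single finite type $n$ complex $F(n)$, and both sides of the comparison map are exact in $X$ and hence commute with $-\otimes F(n)$. This reduces the statement to $X\otimes F(n)$, which \emph{is} $K$-prenilpotent: the paper deduces $L_KF(n)\in\Thick^{\otimes}(K)$ from the smash product theorem ($L_ES^0\in\Thick^{\otimes}(E)$), which is essentially the same input as your $v_n$-self-map/vanishing-line argument for $S^0/I$. Bousfield's Theorem 6.10 then applies levelwise in the tower, and no global identification of $\spectra_K$ with a localizing subcategory of $\spectra$ is needed --- indeed, none is available.
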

\begin{proof}
Since both sides are $K$-local, it is enough to show that this map is a $K_{*}$-isomorphism. The latter can be checked after tensoring with any non-$K$-acyclic spectrum as $K_{*}$ is a field, so that we have the K\"{u}nneth isomorphism, and tensoring with a non-zero $K_{*}$-vector space reflects isomorphisms. 

After tensoring with a finite spectrum $F(n)$ of type $n$, the above map becomes 
\[
L_{K}(X \otimes F(n)) \rightarrow \Tot(K^{\otimes \bullet+1} \otimes X \otimes F(n)),
\]
as both sides of (\ref{equation:k_localization_into_k_completion_map_in_thm}) define exact functors in $X$ and so commute with tensoring with a finite spectrum. Thus, it is enough to check the result holds for $X \otimes F(n)$.

The proof of the smash product theorem in \cite[Chapter 8]{orangebook} shows that $S^0$ is $E$-prenilpotent; that is, $L_ES^0 \in \Thick^{\otimes}(E)$, where $\Thick^{\otimes}(Y)$ denotes the smallest full subcategory of $\spectra$ which contains $Y$, is thick, and is closed under smashing with arbitrary objects in $\spectra$. It follows that 
\[
L_{K}F(n) \simeq L_E F(n) \in \Thick^{\otimes}(E\otimes F(n)) \subseteq \Thick^{\otimes}(K).
\]
In other words, $F(n)$ is $K$-prenilpotent. By definition, $X \otimes F(n)$ is thus also $K$-prenilpotent for all $X \in \spectra$. Therefore, Bousfield's theorem \cite[Theorem 6.10]{bousfield_locspectra} applies to provide an equivalence $L_{K}(X \otimes F(n)) \simeq \Tot (K^{\otimes \bullet+1} \otimes X \otimes F(n))$, which is what we wanted. 
\end{proof}
If $X, Y$ are spectra, then by applying $[Y, -]$ to the canonical $K$-Adams resolution of $X$ we obtain the Adams spectral sequence as the totalization spectral sequence. Since $K_{*}$ is a field, $K_{*}Y$ is projective over the base ring, so that by standard arguments the resulting spectral sequence has signature 
\[
E_2^{s,t}(Y, X) \cong \Ext_{K_*K}^{s,t}(K_*Y,K_*X) \implies [Y, L_{K}X]^{t-s}.
\]
Indeed, if we use the canonical Adams resolution, then the first page of the Adams spectral sequence will be given by the cobar complex, which in the projective case computes the $\Ext$-groups in comodules \cite[Corollary A.1.2.12]{ravenel2003complex}.

\begin{corollary}
\label{corollary:k_local_adams_conditionally_convergent}
For any $K$-local spectrum $X$ and any $Y$, the $K$-based Adams spectral sequence converges conditionally in the sense of Boardman to $[Y, L_{K}X]$.
\end{corollary}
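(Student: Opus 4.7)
The plan is to realize the $K$-based Adams spectral sequence as the Bousfield--Kan spectral sequence associated to the cosimplicial object $K^{\otimes \bullet+1} \otimes X$ after applying $[Y,-]$, and then invoke \cref{prop:knnilpotentcompletion} to identify its target.

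First, I would recall that taking partial totalizations of the canonical Adams cosimplicial object produces a tower
\[
\ldots \to \Tot_2(K^{\otimes \bullet+1} \otimes X) \to \Tot_1(K^{\otimes \bullet+1} \otimes X) \to \Tot_0(K^{\otimes \bullet+1} \otimes X) = K \otimes X,
\]
whose homotopy limit is $\Tot(K^{\otimes \bullet+1} \otimes X)$ by definition. Applying the cohomological functor $[Y,-]_*$ to this tower yields an exact couple, whose associated spectral sequence is, by standard identifications using that $K_*$ is a (graded) field and that the cobar complex computes $\Ext$ over $K_*K$ on projective comodules, precisely the $K$-based Adams spectral sequence with $E_2$-page
\[
E_2^{s,t}(Y,X) \cong \Ext^{s,t}_{K_*K}(K_*Y, K_*X).
\]

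Next, I would invoke Boardman's framework for conditionally convergent spectral sequences arising from a tower: the spectral sequence of such a tower converges conditionally, in Boardman's sense, to $\pi_*$ of the homotopy limit of the tower, when one applies $[Y,-]_*$. This is built into the definition of conditional convergence for tower spectral sequences. Concretely, the filtration on the target is by the images of $[Y, \Tot_s(K^{\otimes \bullet+1} \otimes X)]_*$ and the standing hypothesis amounts to the identification of the target with $[Y, \lim_s \Tot_s(K^{\otimes \bullet+1} \otimes X)]_*$.

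Finally, I would apply \cref{prop:knnilpotentcompletion}: since $X$ is $K$-local, the canonical map $L_KX \to \Tot(K^{\otimes \bullet+1} \otimes X)$ is an equivalence, so
\[
[Y, \Tot(K^{\otimes \bullet+1} \otimes X)]_* \cong [Y, L_KX]_*,
\]
and the spectral sequence converges conditionally to $[Y, L_KX]_{t-s}$ as claimed. There is essentially no obstacle here beyond the (already established) identification of the $K$-nilpotent completion with $L_K X$; conditional (as opposed to strong or complete) convergence is formal for any tower spectral sequence once its target is identified, which is why this corollary is stated immediately after \cref{prop:knnilpotentcompletion}.
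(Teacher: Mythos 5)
Your proposal is correct and follows the same route as the paper: both arguments view the $K$-based Adams spectral sequence as the spectral sequence of the tower of partial totalizations of $K^{\otimes \bullet+1}\otimes X$, observe that conditional convergence in Boardman's sense to $[Y,\varprojlim_s \Tot_s]$ is formal for such a tower, and then identify the limit with $L_KX$ via \cref{prop:knnilpotentcompletion}. No gaps.
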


\begin{proof}
The totalization spectral sequence is the spectral sequence induced by the tower of partial totalizations. Conditional convergence to $[-, L_{K}X]$ in the sense of Boardman is equivalent to the natural map from $L_{K}X$ into the limit of this tower being an equivalence, so that the statement follows from \cref{prop:knnilpotentcompletion}.
\end{proof}
Let us write $E_{r}^{*, *}(Y, X)$ for the $r$-th page of the $K$-based Adams spectral sequence converging conditionally to $[Y, L_{K}X]$, so that we have $E_{2}^{s, t}(Y, X) \simeq \Ext_{K_{*}K}^{s, t}(K_{*}Y, K_{*}X)$. The differentials in this spectral sequence are of the form
\[
d_r\colon E_r^{s,t}(Y, X) \to E_r^{s+r,t+r-1}(Y, X). 
\]
In particular, for $r>s$ there are inclusions $E_{r+1}^{s,t}(Y, X) \subseteq E_{r}^{s,t}(Y, X)$, and we write 
\[
E_{\infty}^{s,t}(Y, X) = \lim_rE_{r}^{s,t}(Y, X).
\]
Let $F^*[Y,L_{K}X]_*$ denote the filtration on $[Y,L_{K}X]_*$ induced by the spectral sequence. Following Bousfield \cite[\S6]{bousfield_locspectra}, we say that the $K$-based Adams spectral sequence for $X$ \emph{converges completely} if the canonical maps
\begin{equation*}
    [Y,L_{K}X]_* \twoheadrightarrow \lim_s [Y,L_{K}X]_* / F^s[Y,L_{K}X]_*
\end{equation*}
and
\begin{equation*}
    F^s[Y,L_{K}X]_* / F^{s+1}[Y,L_{K}X]_* \hookrightarrow E_{\infty}^{s,s+*}(Y, X)
\end{equation*}
are isomorphisms. By the work of Bousfield, see \cite[Proposition 6.3]{bousfield_locspectra}, this is equivalent to the condition that $\lim_{r>s}^1E_{r}^{s,t}(Y, X)$ vanishes.

\begin{proposition}
\label{proposition:complete_convergence_of_k_local_adams_for_dualizable_spectra}
The $K$-based Adams spectral sequence of signature 
\[
\Ext_{K_*K}^{s,t}(K_*Y,K_*X) \implies [Y, L_{K}X]^{t-s}.
\]
converges completely for any spectra $X$ and $Y$ with $K_{*}X$ and $K_{*}Y$  finite-dimensional.
\end{proposition}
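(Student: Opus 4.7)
The plan is to apply the criterion of Bousfield recalled immediately before the statement: since \cref{corollary:k_local_adams_conditionally_convergent} (together with \cref{prop:knnilpotentcompletion}, which identifies the nilpotent completion of \emph{any} $X$ with $L_K X$) already provides conditional convergence, complete convergence is equivalent to the vanishing of $\lim^{1}_{r>s} E_r^{s,t}(Y,X)$ for every bidegree $(s,t)$. The entire argument then rests on the finiteness of the $E_2$-page.

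Under the hypothesis that $K_*X$ and $K_*Y$ are finite-dimensional $K_*K$-comodules, \cref{theorem:finiteness_of_ext_kk} gives that
\[
E_2^{s,t}(Y,X) \cong \Ext_{K_*K}^{s,t}(K_*Y, K_*X)
\]
is finite in each bidegree. Each subsequent page $E_r^{s,t}(Y,X)$ is a subquotient of $E_2^{s,t}(Y,X)$, hence also finite. For $r > s$ any differential landing in bidegree $(s,t)$ has source $E_r^{s-r, t-r+1}(Y,X)$ with negative horizontal coordinate, and therefore vanishes; consequently the transition maps $E_{r+1}^{s,t}(Y,X) \hookrightarrow E_r^{s,t}(Y,X)$ are inclusions of finite abelian groups. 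Such a tower is automatically Mittag--Leffler---indeed eventually constant---so its derived inverse limit vanishes, and complete convergence follows by Bousfield's criterion.

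There is essentially no obstacle beyond invoking the finiteness theorem: the substantive work has already been done in \cref{theorem:finiteness_of_ext_kk}, whose proof proceeded via the Cartan--Eilenberg spectral sequence associated to the extension $K_*E \to K_*K \to K_*^{E}K$ and Morava's finiteness result for the continuous cohomology of the stabilizer group. Once degreewise finiteness of the $E_2$-page is in hand, complete convergence reduces to the elementary observation that a tower of inclusions of finite abelian groups has trivial $\lim^{1}$.
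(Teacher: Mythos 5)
Your proposal is correct and follows the paper's own argument exactly: conditional convergence comes from \cref{corollary:k_local_adams_conditionally_convergent}, degreewise finiteness of the $E_2$-page from \cref{theorem:finiteness_of_ext_kk} makes the tower $(E_r^{s,t}(Y,X))_{r>s}$ Mittag--Leffler, and Bousfield's criterion then gives complete convergence. The extra observation that the transition maps are inclusions of finite groups (hence the tower is eventually constant) is a harmless elaboration of the same point.
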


\begin{proof}
By \cref{theorem:finiteness_of_ext_kk}, the groups in the $E_{2}$-term are all finite. Therefore, the system
\[
(E_r^{s,t}(Y, X))_{r>s}
\]
is Mittag-Leffler, so that Bousfield's convergence criterion \cite[Proposition 6.3]{bousfield_locspectra} applies.
\end{proof}

\begin{remark}
For a spectrum $X$, the homology $K_{*}X$ being finite-dimensional is equivalent to $X$ being $K$-locally dualizable, see \cite[Theorem 8.6]{hovey_morava_1999}.
\end{remark}

\begin{remark}
In general, the $K$-based Adams spectral sequence may fail to converge completely in the sense of Bousfield, see \cref{ex:noncompleteconvergence}.
\end{remark}

\section{The finite height Miller square}\label{sec:finiteheightmillersquare}

In his seminal paper \cite{miller_adams} on relations between Adams spectral sequences, Miller uses the relationship between the Adams--Novikov and classical Adams spectral sequences to prove the telescope conjecture at height one and odd primes. In this section, we apply his axiomatic framework to the case of Lubin--Tate and Morava $K$-theory spectra.

In Miller's framework, one starts with a map $A \rightarrow B$ of homotopy ring spectra; that is, an arrow in the category $\Alg(\ho(\Sp))$. In this context, it is not hard to show that a map $X \rightarrow Y$ of spectra which is \emph{$A$-monic}, in the sense that $A \otimes X \rightarrow A \otimes Y$ is a split inclusion, is also $B$-monic. Thus, any $B$-injective spectrum is also $A$-injective, see \cite[Lemma 2.1]{miller_adams}.

\begin{definition}[Miller]
\label{definition:ab_primary_spectrum}
A spectrum $X$ is \emph{$(A, B)$-primary} if there exists an $A$-Adams resolution $X \rightarrow I^{\bullet}$ such that for each ${m}$, the $B$-Adams spectral sequence computing $\pi_{*} I^{m}$ converges and collapses on the second page.
\end{definition}

When $X$ is $(A, B)$-primary, Miller constructs a square of spectral sequences
\[\begin{tikzcd}
	& {E^{2}_{A, B}(X)} \\
	{E^{2}_{A}(X)} && {E^{2}_{B}(X)} \\
	& {\pi_{*}X}
	\arrow["{A-\textnormal{Adams}}"', Rightarrow, from=2-1, to=3-2]
	\arrow["{B-\textnormal{Adams}}", Rightarrow, from=2-3, to=3-2]
	\arrow["{\textnormal{May}}"', Rightarrow, from=1-2, to=2-1]
	\arrow["{\textnormal{Mahowald}}", Rightarrow, from=1-2, to=2-3]
\end{tikzcd}\]
which we will now explain. The terms $E_{A}^{2}(X), E^{2}_{B}(X)$ form the second pages of the relevant Adams spectral sequences, and the bottom two arrows are the corresponding Adams spectral sequences. 

The construction of the other two spectral sequences is more involved, but let us briefly recall the arguments of Miller. Observe that $E^{2}_{A}(X)$ arises as homology of the cochain complex $\pi_{*} I^{\bullet}$, where $I^{\bullet}$ is an $A$-Adams resolution as in \cref{definition:ab_primary_spectrum}. As this cochain complex is given by homotopy groups, it has an additional filtration given by the $B$-Adams filtration. By definition, the term $E^{2}_{A, B}(X)$ is the cohomology of the associated graded of $\pi_{*} I^{\bullet}$ with respect to this filtration, and the top left arrow is the associated cohomology of filtered complex spectral sequence, which Miller calls the \emph{May spectral sequence}. 

If $X$ is primary in the sense of \cref{definition:ab_primary_spectrum}, the cohomology $E^{2}_{A, B}(X)$ of the associated graded of $\pi_{*} I^{\bullet}$ is the same as the cohomology of the complex $E^{2}_{B}(I^{\bullet})$ consisting of the $B$-Adams $E_{2}$-pages of the spectra $I^{\bullet}$. As all $A$-split maps are $B$-split, these $E_{2}$-pages are related by long exact sequences and can be collected into an $E_{1}$ of an exact couple using that $I^{\bullet}$ is a resolution of $X$, yielding a spectral sequence computing $E^{2}_{B}(X)$. This is the spectral sequence in the top right corner, which Miller calls the \emph{Mahowald spectral sequence}. 

\begin{remark}
Miller proves that the above diagram of spectral sequences is commutative in the sense that, after making appropriate choices, the $d_{2}$-differentials in the May spectral sequence can be identified with $d_{2}$-differentials in the $B$-Adams spectral sequence \cite[Theorem 4.2]{miller_adams}. 
In the case of Morava $K$-theory and under the assumption that the prime is large enough compared to the height, we will generalize this result to higher differentials in \cref{proposition:k_adams_in_algebraic_model}. 
\end{remark}

Let us give an explicit description of these spectral sequences for Lubin--Tate spectra and Morava $K$-theories. Note that in this case, the $E$-based Adams spectral sequence computes the $E$-local homotopy groups, which are quite different from the $K$-local ones. Thus, to get a commutative square, we should work internally to $K$-local spectra.

Our first goal is to show that the $K$-based Adams spectral sequence has a particularly simple form for $E$-modules with flat homotopy groups. To do so, we will reduce to the pro-free case using the work of Hovey. 

Before we proceed, let us recall that the completion functor $M_{*} \mapsto (M_{*})_{\mfrak}^{\vee} := \varprojlim M_{*} / \mfrak^{n} M_{*}$ is neither left or right exact, but it has left derived functors $L_{s}\colon \Mod_{E_{*}} \rightarrow \Mod_{E_{*}}$. For more properties of these functors, see \cite[Appendix B]{hovey_morava_1999} or \cite[Appendix A]{barthelfrankland2015}.

\begin{lemma}
\label{lemma:flat_modules_complete_to_profree_ones}
Let $M_*$ be an $E_{*}$-module such that $\Tor^{s}_{E_{*}}(M_{*}, K_{*}) = 0$ for $s > 0$; for example, $M_*$ can be flat. Then, the completion $(M_{*})^{\vee}_{\mfrak}$ is pro-free.
\end{lemma}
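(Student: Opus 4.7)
The plan is to apply Hovey's characterization of pro-free modules: an $L$-complete $E_*$-module $N$ is pro-free if and only if $\Tor_1^{E_*}(N, K_*) = 0$ (this is standard, and is recorded in the references to \cite{hovey_morava_1999} and \cite{barthelfrankland2015} the authors have already cited). So the task reduces to verifying two statements about $N = (M_*)^\vee_\mfrak$:
\begin{enumerate}
\item $(M_*)^\vee_\mfrak$ is $L$-complete, and
\item $\Tor_1^{E_*}((M_*)^\vee_\mfrak, K_*) = 0$.
\end{enumerate}

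Both will be deduced from a single intermediate claim, namely that under the hypothesis $\Tor^s_{E_*}(M_*, K_*) = 0$ for $s > 0$, the higher left derived completion functors vanish on $M_*$: $L_s M_* = 0$ for $s > 0$, and $L_0 M_* = (M_*)^\vee_\mfrak$. My preferred argument is Koszul-theoretic: since $\mfrak = (p, u_1, \ldots, u_{n-1})$ is generated by a regular sequence in the regular local ring $E_0$, each successive quotient $\mfrak^k/\mfrak^{k+1}$ is a direct sum of copies of $K_*$, and the short exact sequences
\[
0 \to \mfrak^k/\mfrak^{k+1} \to E_*/\mfrak^{k+1} \to E_*/\mfrak^k \to 0
\]
allow one to inductively propagate the Tor-vanishing from $K_*$ to every $E_*/\mfrak^k$. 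Plugging this into the Grothendieck spectral sequence for $L_s$ (or alternatively the telescope formula for derived completion at a regular sequence, as developed by Greenlees--May and used in \cite{hovey_morava_1999,barthelfrankland2015}) yields $L_s M_* = 0$ for $s > 0$ and identifies $L_0 M_*$ with the ordinary inverse limit $\varprojlim M_*/\mfrak^k M_* = (M_*)^\vee_\mfrak$.

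Given this vanishing, statement (i) is immediate, since the image of $L_0$ always lands in $L$-complete modules. For statement (ii), I would argue that since $K_*$ is $\mfrak$-torsion (in fact annihilated by $\mfrak$), derived tensor product with $K_*$ is insensitive to derived $\mfrak$-adic completion; more precisely, the natural map $M_* \to L_0 M_* = (M_*)^\vee_\mfrak$ becomes an equivalence after $- \otimes_{E_*}^{\mathbb{L}} K_*$ because its fibre is built from the vanishing pieces $L_s M_*$ for $s > 0$. Passing to $\pi_1$ yields
\[
\Tor_1^{E_*}\bigl((M_*)^\vee_\mfrak, K_*\bigr) \;\cong\; \Tor_1^{E_*}(M_*, K_*) \;=\; 0,
\]
so Hovey's criterion finishes the proof.

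The main obstacle is the intermediate claim that higher derived completions vanish under the stated Tor hypothesis. The inductive Koszul argument above is in principle routine, but it is delicate because it intertwines the derived-completion spectral sequence with Tor-vanishing for infinitely many modules; the cleanest way to handle this cleanly is to pass to the derived category and work with the Koszul model of derived completion at $\mfrak$, where the hypothesis $M_* \otimes^{\mathbb{L}}_{E_*} K_* \simeq M_*/\mfrak M_*$ is visibly the input needed to collapse the spectral sequence.
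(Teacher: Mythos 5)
Your proposal is correct in substance, but the comparison with the paper is a bit lopsided: the paper does not prove this lemma at all, it simply cites Hovey \cite[1.2]{hovey2004some}, and what you have written is essentially a reconstruction of that cited argument (Tor-vanishing propagates from $K_*$ to all $E_*/\mfrak^k$ by the Koszul induction, hence $L_sM_*=0$ for $s>0$ and $L_0M_*\cong(M_*)^\vee_\mfrak$, and then the Hovey--Strickland criterion that an $L$-complete module is pro-free iff $\Tor_1^{E_*}(-,K_*)$ vanishes finishes the job). The one place where your justification slips is in step (ii): you assert that $M_*\to L_0M_*$ becomes an equivalence after $-\otimes^{\mathbb{L}}_{E_*}K_*$ ``because its fibre is built from the vanishing pieces $L_sM_*$ for $s>0$.'' That is not the right mechanism --- the fibre of the completion map is not controlled by the higher derived functors of completion (for instance, all $L_s$ of $\mathbb{Q}$ vanish for $p$-adic completion, yet $\mathbb{Q}\to 0$ is certainly not a mod-$p$ equivalence). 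The correct and standard argument is that the derived completion map $M_*\to M_*^{\wedge}_{\mfrak}$ is \emph{always} a mod-$\mfrak$ equivalence, because $K_*=E_*/\mfrak$ is a perfect $E_*$-module (the Koszul complex on the regular sequence generating $\mfrak$), so $K_*\otimes^{\mathbb{L}}_{E_*}(-)$ commutes with the limit defining derived completion and lands in complexes with $\mfrak$-torsion homology, which are already complete; the vanishing of $L_sM_*$ for $s>0$ is then used only to identify $M_*^{\wedge}_{\mfrak}$ with the discrete module $(M_*)^\vee_\mfrak$. With that repair your proof is complete and agrees with the proof the paper outsources to the literature.
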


\begin{proof}
This is proven by Hovey in  \cite[1.2]{hovey2004some}.
\end{proof}

\begin{lemma}
\label{lemma:k_localization_of_flat_e_module_is_completion}
Let $M$ be an $E$-module such that $M_{*}$ is flat over $E_{*}$. Then, the $K$-localization map $M \rightarrow L_{K} M$ induces an isomorphism $\pi_{*} L_{K} M \simeq (M_{*})^{\vee}_{\mfrak}$ with the completion. 
\end{lemma}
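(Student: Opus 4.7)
My approach is to realize $L_{K} M$ as an explicit $\mfrak$-adic completion tower of $M$ in $E$-modules, then compute its homotopy using flatness of $M_*$. Choose a regular sequence $x_{1} = p, x_{2} = u_{1}, \ldots, x_{n} = u_{n-1}$ generating $\mfrak$, and for each $k$ form the generalized Moore $E$-module $E/I_{k}$ (with $I_{k} = (x_{1}^{k}, \ldots, x_{n}^{k})$) as the iterated cofiber of multiplication by the $x_{i}^{k}$. These sit in a tower, and each $E/I_{k}$ is $K$-local because it is built from the $K$-local $E$ by finitely many cofiber sequences killing elements of $\mfrak$. Put $\widehat{M} := \varprojlim_{k}(M \otimes_{E} E/I_{k})$; this is $K$-local as a limit of $K$-local spectra.

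The next step is to identify $\widehat{M}$ with $L_{K}M$. By \cref{lemma:flat_modules_complete_to_profree_ones}, the flatness hypothesis forces $(M_{*})^{\vee}_{\mfrak}$ to be pro-free, and by Hovey's classification of $K$-local $E$-modules in \cite{hovey_morava_1999} a pro-free $E_{*}$-module lifts uniquely to a $K$-local $E$-module realizing it on homotopy. The canonical comparison map $L_{K} M \to \widehat{M}$ is then a $K_*$-equivalence because both sides have $\pi_{*}$ giving the same pro-free module $(M_{*})^{\vee}_{\mfrak}$, and for $K$-local $E$-modules with pro-free homotopy the $K_*$-homology is controlled by $\pi_{*}/\mfrak$. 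Combined with the $K$-locality of $\widehat{M}$, this gives the identification.

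Finally, the homotopy of $\widehat{M}$ is computed via the Milnor $\varprojlim^{1}$ exact sequence. Flatness of $M_{*}$ makes the K\"{u}nneth spectral sequence for $\pi_{*}(M \otimes_{E} E/I_{k})$ collapse to $M_{*}/I_{k}M_{*}$, using that $x_{1}, \ldots, x_{n}$ remains a regular sequence on the flat module $M_{*}$. The resulting tower has surjective transition maps, so $\varprojlim^{1} = 0$; since the chain of ideals $\{I_{k}\}$ is cofinal with $\{\mfrak^{k}\}$, we conclude that $\pi_{*}\widehat{M} \simeq \varprojlim_{k} M_{*}/\mfrak^{k}M_{*} = (M_{*})^{\vee}_{\mfrak}$, as desired. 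The main obstacle is identifying $\widehat{M}$ with $L_{K} M$; once this is in hand, the homotopy computation is a routine application of the Milnor sequence and regularity. An alternative route that sidesteps the first step is to invoke a spectral sequence of the form $L_{s}(M_{*})^{\vee}_{\mfrak} \Longrightarrow \pi_{t-s}(L_{K}M)$, which for flat $M_{*}$ collapses onto the zero line since the higher derived completion functors vanish.
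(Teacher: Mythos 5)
Your main construction has a circular step at exactly the point you identify as "the main obstacle". You define $\widehat{M} = \varprojlim_k (M \otimes_E E/I_k)$ and then argue that the comparison map $L_K M \to \widehat{M}$ is an equivalence "because both sides have $\pi_*$ giving the same pro-free module $(M_*)^\vee_\mfrak$" --- but $\pi_* L_K M \simeq (M_*)^\vee_\mfrak$ is precisely what the lemma asserts, so it cannot be assumed here; and even granting it, two $K$-local $E$-modules with abstractly isomorphic homotopy need not be equivalent via a given map. The non-circular way to close the gap is either to check directly that $M \to \widehat{M}$ is a $K_*$-equivalence (for instance using that $K \otimes_E(-)$ commutes with the inverse limit because $E/\mfrak$ is a dualizable $E$-module), or simply to cite the standard fact that for \emph{any} $E$-module the $K$-localization is computed by the derived $\mfrak$-adic completion $\varprojlim_k M/I_k$ --- this identification requires no flatness hypothesis. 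Once that is supplied, the rest of your argument is correct: a regular sequence on $E_*$ stays regular on the flat module $M_*$, so the K\"unneth spectral sequences collapse to $M_*/I_kM_*$, the transition maps are surjective, and the Milnor sequence together with cofinality of $\{I_k\}$ and $\{\mfrak^k\}$ gives $\pi_*\widehat{M} \simeq (M_*)^\vee_\mfrak$. (You should also note in passing why $M \otimes_E E/I_k$ is $K$-local for a not-necessarily-local $M$: it lies in the thick subcategory of $E$-modules generated by the $K$-module $M \otimes_E K$.)

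Your closing "alternative route" is in fact the paper's entire proof: it invokes Hovey's strongly convergent spectral sequence $L_s M_t \Rightarrow \pi_{t+s} L_K M$ built from the left derived functors of $\mfrak$-adic completion, which by \cref{lemma:flat_modules_complete_to_profree_ones} is concentrated on the zero line when $M_*$ is flat, with $L_0 M_* \simeq (M_*)^\vee_\mfrak$. (Note the derived functors are applied to $M_*$, not to its completion as your notation $L_s(M_*)^\vee_\mfrak$ suggests.) Had you led with that, the proof would be complete in two lines; as written, the tower argument is a reasonable but longer route whose key identification you left unjustified.
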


\begin{proof}
Hovey constructs for any $E$-module $M$ a strongly convergent spectral sequence of signature
\[
L_{s} M_{t} \rightarrow \pi_{t+s} L_{K} M.
\]
In the present case, the spectral is concentrated on the zero line by \cref{lemma:flat_modules_complete_to_profree_ones}, hence the spectral sequence collapses inducing the needed isomorphism.
\end{proof}

\begin{corollary}
\label{corollary:flat_emodule_is_klocally_profree}
Let $M$ be an $E$-module such that $M_{*}$ is flat. Then, $L_{K}M$ is equivalent as an $E$-module to the $K$-localization of a direct sum of copies of $E$. 
\end{corollary}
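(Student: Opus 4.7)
The plan is to combine the two preceding lemmas to realize $\pi_{*} L_{K} M$ as a pro-free module, and then build a comparison map by lifting generators. By \cref{lemma:k_localization_of_flat_e_module_is_completion} I can identify $\pi_{*} L_{K} M$ with $(M_{*})^{\vee}_{\mfrak}$, and by \cref{lemma:flat_modules_complete_to_profree_ones} this completion is pro-free. Thus I obtain an abstract $E_{*}$-linear isomorphism $\pi_{*} L_{K} M \cong (\bigoplus_{i \in I} \Sigma^{d_{i}} E_{*})^{\vee}_{\mfrak}$ for some graded index set $\{(i,d_{i})\}_{i \in I}$, and I will pick elements $m_{i} \in \pi_{d_{i}} L_{K} M$ whose images in the quotient $\pi_{*} L_{K} M / \mfrak \cong M_{*}/\mfrak M_{*}$ form a $K_{*}$-basis.

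Next I will assemble the $m_{i}$'s into a map
\[
f \colon F := \bigoplus_{i \in I} \Sigma^{d_{i}} E \longrightarrow L_{K} M
\]
of $E$-modules, and $K$-localize to obtain $L_{K} f \colon L_{K} F \to L_{K} L_{K} M \simeq L_{K} M$, which is a map of $K$-local $E$-modules with $L_{K} F$ the $K$-localization of a free $E$-module, as required for the conclusion.

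Finally I will argue that $L_{K} f$ is an equivalence by computing homotopy groups. Since $\pi_{*} F$ is free, hence flat, \cref{lemma:k_localization_of_flat_e_module_is_completion} applies to $F$ as well and identifies $\pi_{*} L_{K} F$ with $(\pi_{*} F)^{\vee}_{\mfrak}$, which is also pro-free. Under these identifications the induced map $\pi_{*}(L_{K} f)$ is the $\mfrak$-adic completion of the $E_{*}$-linear map $\pi_{*} F \to \pi_{*} L_{K} M$ sending the $i$-th generator to $m_{i}$; by the choice of $m_{i}$, this map reduces modulo $\mfrak$ to a $K_{*}$-basis-to-basis isomorphism. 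The final step is then to invoke a Nakayama-type lemma to upgrade this to an isomorphism of the completed modules.

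The main obstacle is precisely this Nakayama step: one needs that a map of pro-free $E_{*}$-modules which is an isomorphism modulo $\mfrak$ is itself an isomorphism. Fortunately this is a standard fact in the Lubin--Tate setting, established in Hovey--Strickland's framework for $L$-complete modules, so I expect to invoke it as a black box rather than reproduce its proof.
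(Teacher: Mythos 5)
Your proposal is correct and follows essentially the same route as the paper: identify $\pi_{*}L_{K}M$ with the pro-free completion $(M_{*})^{\vee}_{\mfrak}$ via the two preceding lemmas, choose pro-generators to build a comparison map from (the $K$-localization of) a free $E$-module, and check it is an equivalence on homotopy groups. The paper compresses your final Nakayama step into the phrase ``by inspection of homotopy groups,'' but the standard fact you cite from the Hovey--Strickland theory of $L$-complete modules is exactly what justifies that inspection.
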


\begin{proof}
We have $\pi_{*} L_{K} M \simeq (M_{*})^{\vee}_{\mfrak}$ by \cref{lemma:k_localization_of_flat_e_module_is_completion} and $(M_{*})^{\vee}_{\mfrak}$ is pro-free by \cref{lemma:flat_modules_complete_to_profree_ones}. Choosing pro-generators of the latter induces a map $L_{K}(\bigoplus E) \rightarrow M$ from a direct sum of shifts of $E$ which is an equivalence by inspection of homotopy groups.
\end{proof}

\begin{proposition}
\label{proposition:collapse_of_k_based_ass_for_flat_e_modules}
If $M$ is an $E$-module such that $M_{*}$ is $E_{*}$-flat, then the $K$-Adams filtration on $M_{*}$ is the $\mfrak$-adic one and the $K$-based Adams spectral sequence for $M$ converges completely to $\pi_{*} L_{K} M := (M_{*})^{\vee}_{\mfrak}$ and collapses on the second page.
\end{proposition}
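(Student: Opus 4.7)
\textbf{The plan} is to compute the $E_{2}$-page of the $K$-based Adams spectral sequence for $M$ using the Cartan--Eilenberg spectral sequence for the extension of \cref{theorem:ke_kk_kek_is_an_extension}, show that it is abstractly isomorphic to the associated graded of the $\mfrak$-adic filtration on the abutment $(M_{*})^{\vee}_{\mfrak} \simeq \pi_{*}L_{K}M$ (via \cref{lemma:k_localization_of_flat_e_module_is_completion}), and then conclude both collapse and filtration-matching by comparing the $K$-Adams tower with the $\mfrak$-adic one.

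Flatness together with the K\"unneth spectral sequence gives $K_{*}M \simeq K_{*}E \otimes_{E_{*}}M_{*} \simeq K_{*}E \otimes_{K_{*}}(M_{*}/\mfrak M_{*})$, the second equality because the left $E_{*}$-action on $K_{*}E$ factors through $K_{*} = E_{*}/\mfrak$. Hence $K_{*}M$ is an extended $K_{*}E$-comodule whose residual $K_{*}^{E}K$-coaction is trivial, the latter because $K_{*}E \subset K_{*}K$ consists of $K_{*}^{E}K$-primitives by \cref{theorem:ke_kk_kek_is_an_extension}. Feeding this into the Cartan--Eilenberg spectral sequence
\[
\Ext_{K_{*}E}^{s_{0}}(K_{*}, \Ext_{K_{*}^{E}K}^{s_{1}}(K_{*}, K_{*}M)) \Rightarrow \Ext_{K_{*}K}^{s_{0}+s_{1}}(K_{*}, K_{*}M),
\]
\cref{proposition:hl_cohomology_of_kek} evaluates the inner $\Ext$ to $\Sym_{K_{*}}^{*}(\mfrak/\mfrak^{2}) \otimes_{K_{*}} K_{*}M$, while the outer $\Ext_{K_{*}E}$ applied to the resulting extended comodule collapses to $M_{*}/\mfrak M_{*}$ in cohomological degree zero. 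Using the identification $\Sym_{K_{*}}^{s}(\mfrak/\mfrak^{2}) \simeq \mfrak^{s}/\mfrak^{s+1}$ coming from regularity of $E_{0}$, this yields
\[
\Ext_{K_{*}K}^{s,*}(K_{*}, K_{*}M) \;\simeq\; \mfrak^{s}M_{*}/\mfrak^{s+1}M_{*},
\]
which is precisely the associated graded of the $\mfrak$-adic filtration on $(M_{*})^{\vee}_{\mfrak}$.

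To upgrade this abstract matching of $E_{2}$-pages to an identification of filtrations and to force collapse, compare the canonical $K$-Adams tower for $M$ with the $\mfrak$-adic tower $\{M/\mfrak^{n}M\}_{n}$, whose inverse limit in $E$-modules recovers $L_{K}M$ by the flat specialization of the argument behind \cref{corollary:flat_emodule_is_klocally_profree}. The associated tower spectral sequence has $E_{1}$-page $\bigoplus_{s}\mfrak^{s}M_{*}/\mfrak^{s+1}M_{*}$ concentrated in the appropriate internal degrees and collapses there by regularity of $E_{0}$. The augmentation $M \to M \otimes K$ factors through $M \to M/\mfrak M \simeq M \otimes_{E} K$, since $\mfrak$ acts as zero on $K$; iterating yields a map from the $K$-Adams tower to the $\mfrak$-adic tower which realizes the Cartan--Eilenberg isomorphism on $E_{2}$-pages. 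It follows that the $K$-Adams spectral sequence collapses at $E_{2}$, that its filtration on $\pi_{*}L_{K}M$ coincides with the $\mfrak$-adic filtration, and that complete convergence holds by Bousfield's criterion \cite[Proposition 6.3]{bousfield_locspectra}, since after collapse the system $(E_{r}^{s,t})_{r>s}$ is eventually constant and hence trivially Mittag--Leffler.

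The main obstacle is constructing the comparison map between the $K$-Adams tower and the $\mfrak$-adic tower coherently at the spectrum level and verifying that it induces the expected isomorphism on $E_{2}$-pages; a cleaner but more abstract route recognizes both spectral sequences as arising from adapted homology theories on $\Mod_{E}$ in the sense of Devinatz--Hopkins \cite{dev_morava} and appeals to a uniqueness principle to bypass the explicit tower-level construction.
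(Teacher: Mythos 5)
Your computation of the $E_2$-page is correct and takes a genuinely different route from the paper: you run the Cartan--Eilenberg spectral sequence for the extension of \cref{theorem:ke_kk_kek_is_an_extension} directly on $K_*M \simeq K_*E \otimes_{K_*}(M_*/\mfrak M_*)$, using that the residual $K_*^EK$-coaction is trivial and that the outer $\Ext_{K_*E}$ of a cofree comodule is concentrated in degree zero. The paper instead first replaces $M$ by a $K$-local wedge of (shifts of) $E$ via \cref{corollary:flat_emodule_is_klocally_profree} and then applies a base-change isomorphism together with \cref{proposition:hl_cohomology_of_kek}. Both routes give $\Ext^{s}_{K_*K}(K_*,K_*M) \cong \mfrak^{s}M_*/\mfrak^{s+1}M_*$, and yours has the mild advantage of not needing the $K$-local splitting at this stage.

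The genuine gap is in everything after that. Your proof of collapse and of the identification of the $K$-Adams filtration with the $\mfrak$-adic one rests entirely on a spectrum-level comparison map from the canonical $K$-Adams tower to an ``$\mfrak$-adic tower'' $\{M/\mfrak^{k}M\}$, which you do not construct and which you yourself flag as the main obstacle. This is not a routine technicality: for heights $n\ge 2$ the ideal $\mfrak^{k}$ is not generated by a regular sequence, so $M/\mfrak^{k}M$ is not an iterated cofiber and its construction (and the identification $\varprojlim_k M/\mfrak^{k}M \simeq L_K M$) already requires work; and even granting the tower, checking that your map induces an isomorphism on $E_2$-pages is essentially equivalent to what you are trying to prove. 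The appeal to a ``uniqueness principle'' for adapted homology theories is a placeholder, not an argument. The paper resolves both points without any such tower: after reducing to a wedge of shifts of $E$, the $E_2$-term sits in a single total degree mod $2$ while $d_r$ shifts total degree by one, forcing collapse; and the filtration statement follows by noting that the positive-filtration part of $E_*$ is exactly $\ker(E_*\to K_*E)=\mfrak E_*$ (using \cref{proposition:map_from_ke_to_kk_injective}), that multiplicativity of the Adams filtration then gives $\mfrak^{k}E_*\subseteq F^{k}$, and that the two filtrations have associated gradeds of the same dimension over $K_*$, hence agree. Complete convergence is then immediate from collapse. You should replace the tower comparison with these two elementary arguments, which do, however, require the reduction to wedges of $E$ that your Cartan--Eilenberg computation was designed to sidestep.
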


\begin{proof}
By flatness we have $\pi_{*} L_{K}M \simeq (M_{*})^{\vee}_{\mfrak}$. Since the $\mfrak$-adic filtration on $M_{*}$ is induced by that of completion, we can replace $M$ by its $K$-localization. In this case, $M$ is $K$-locally equivalent to a direct sum of $E$ by \cref{corollary:flat_emodule_is_klocally_profree}. As the Adams spectral sequence of a direct sum is a direct sum of spectral sequences, we can assume that $M = E$. 

We have a base-change isomorphism 
\[
\Ext_{K_{*}K}(K_{*}E, K_{*}) \simeq \Ext_{K_{*}^{E}K}(K_{*}, K_{*}) 
\]
and by \cref{proposition:hl_cohomology_of_kek}
a further isomorphism
\[
\Ext_{K_{*}^{E}K}(K_{*}, K_{*})  \simeq K_{*} \otimes_{\kappa} \Sym_{\kappa}^{*} (\mfrak / \mfrak^{2}).
\]
Since $E_{0}$ is regular, $\Sym_{\kappa}^{*} (\mfrak / \mfrak^{2})$ can be identified with the associated graded to the $\mfrak$-adic filtration on $E_{0}$. 

We deduce from the above that the $E_{2}$-term of the $K$-based Adams for $E$ is abstractly isomorphic to the associated graded of the $\mfrak$-adic filtration on $E_{*}$. Moreover, since the $E_{2}$-term is concentrated in even Adams degree, we deduce that the Adams spectral sequence collapses. 

We are left with showing that the $K$-based Adams filtration on $E_{*}$ coincides with the $\mfrak$-adic one. Observe that since $E \rightarrow K$ is a $K_{*}$-monomorphism by \cref{proposition:map_from_ke_to_kk_injective}, the elements of positive Adams filtration are precisely given by $\mfrak E_{*} = \mathrm{ker}(E_{*} \rightarrow K_{*})$. 

Using compatibility of the Adams filtration with products, we deduce that any element of $\mfrak^{n} E_{*}$ is of Adams filtration at least $n$. Thus, one filtration is contained in the other. As their associated gradeds have the same dimension over $K_{*}$ by what was said above, we deduce that the two filtrations must coincide, as needed. 
\end{proof}

\begin{proposition}
\label{proposition:primarity_from_flat_homology}
Let $X$ be a spectrum such that $E_{*}^{\vee}(X) := \pi_{*} L_{K} (E \otimes X)$ is pro-free; for example, such that $E_{*}X$ is flat. Then, $X$ is $(E, K)$-primary in the $K$-local category. 
\end{proposition}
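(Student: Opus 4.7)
The plan is to take $I^{\bullet}$ to be the canonical cosimplicial $E$-Adams resolution of $X$ in the $K$-local category, namely $I^{m} := L_{K}(E^{\otimes(m+1)} \otimes X)$, and to show that the $K$-based Adams spectral sequence of each $I^{m}$ converges completely and collapses at $E_{2}$. Viewing $I^{m}$ as a $K$-local $E$-module via its leftmost tensor factor, with $\pi_{*}I^{m} \simeq E_{*}^{\vee}(E^{\otimes m} \otimes X)$, the strategy is to verify that this homotopy is pro-free and then reduce to the case of $E$ itself, where the collapse is already known from \cref{proposition:collapse_of_k_based_ass_for_flat_e_modules}.

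The first step is to establish by induction on $m$ that $E_{*}^{\vee}(E^{\otimes m} \otimes X)$ is pro-free. The base case $m = 0$ is precisely the hypothesis on $X$. For the inductive step I would invoke the K\"unneth-type isomorphism
\[
E_{*}^{\vee}(E \otimes Y) \;\simeq\; E_{*}^{\vee}E \,\widehat{\otimes}_{E_{*}}\, E_{*}^{\vee}Y
\]
for the $L$-completed tensor product, valid whenever $E_{*}^{\vee}Y$ is pro-free, together with the facts that $E_{*}^{\vee}E$ is itself pro-free and that the $L$-completed tensor product of pro-free $E_{*}$-modules is again pro-free. Both of these are standard consequences of Hovey's work on $K$-local $E$-modules.

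Once pro-freeness of $\pi_{*}I^{m}$ is in hand, the same argument as in the proof of \cref{corollary:flat_emodule_is_klocally_profree}---which only uses pro-freeness of the completed homotopy of a $K$-local $E$-module---produces an equivalence of $K$-local $E$-modules
\[
I^{m} \;\simeq\; L_{K}\!\left(\bigoplus\nolimits_{\alpha} \Sigma^{n_{\alpha}} E\right).
\]
Because the $K$-based Adams spectral sequence is constructed functorially from an additive cobar-type complex, the $K$-Adams spectral sequence for such a $K$-local direct sum of shifts of $E$ splits as the corresponding direct sum of $K$-Adams spectral sequences for $E$, each of which collapses at $E_{2}$ and converges completely by \cref{proposition:collapse_of_k_based_ass_for_flat_e_modules}. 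This furnishes the desired $(E, K)$-primary resolution.

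The principal obstacle is establishing the K\"unneth-type isomorphism for $E_{*}^{\vee}$ used in the induction, together with the preservation of pro-freeness under $\widehat{\otimes}_{E_{*}}$; the subsequent identification of the $K$-Adams spectral sequence for the $K$-local sum with a direct sum of copies of the $E$-case is essentially formal. Once these ingredients are assembled, the statement follows directly from \cref{proposition:collapse_of_k_based_ass_for_flat_e_modules}.
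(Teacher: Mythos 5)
Your proposal is correct and follows essentially the same route as the paper: both establish pro-freeness of $\pi_{*}L_{K}(E^{\otimes m+1}\otimes X)$ via the completed K\"unneth isomorphism together with pro-freeness of $E_{*}^{\vee}E$, and then invoke \cref{proposition:collapse_of_k_based_ass_for_flat_e_modules} (via \cref{corollary:flat_emodule_is_klocally_profree}) to get complete convergence and collapse of the $K$-Adams spectral sequence for each term of the standard $K$-local $E$-Adams resolution. The only difference is presentational: you run the induction on tensor factors explicitly, whereas the paper cites an external reference for the iterated completed K\"unneth formula.
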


\begin{proof}
The implication that $E_{*}X$ flat implies $E_{*}^{\vee}X$ pro-free is a combination of \cref{lemma:flat_modules_complete_to_profree_ones} and \cref{proposition:collapse_of_k_based_ass_for_flat_e_modules}. 

Since $E_{*}E$ is flat over $E_{*}$, this in particular implies that $E_{*}^{\vee} E$ is pro-free, which can be also proven directly using evenness, see \cite[THeorem 8.6]{hovey_morava_1999}. It follows (as in \cite[Corollary 1.24]{bh_enass}, for example) that
\[
\pi_{*} L_{K}(E^{\bullet} \otimes X)  \simeq (E_{*}^{\vee}E)^{\otimes^{\vee}_{E_{*}} \bullet-1} \otimes^{\vee}_{E_{*}} E_{*}^{\vee} X,
\]
where on the right hand side we have the completed tensor products, which are again pro-free. This together with \cref{proposition:collapse_of_k_based_ass_for_flat_e_modules} implies that the standard $K$-local $E$-based Adams resolution
\[\begin{tikzcd}
	L_{K}X & {L_{K}(E \otimes X)} & {L_{K}(E \otimes \overline{E} \otimes X)} & \ldots \\
	& {L_{K}(\overline{E} \otimes M)} & {L_{K}(\overline{E} \otimes \overline{E} \otimes X) }
	\arrow[from=1-1, to=1-2]
	\arrow[from=1-2, to=2-2]
	\arrow[from=1-2, to=1-3]
	\arrow[from=1-3, to=2-3]
	\arrow[from=1-3, to=1-4]
	\arrow[dashed, from=2-2, to=1-1]
	\arrow[dashed, from=2-3, to=1-2]
\end{tikzcd}\]
satisfies the primarity condition.
\end{proof}
If $E_{*}^{\vee}X$ is pro-free, then the $E_{1}$-page of the $K$-local $E$-based Adams spectral sequence is given by the cobar complex
\[
\pi_{*} L_{K}(E^{\bullet} \otimes X)  \simeq (E_{*}^{\vee}E)^{\otimes^{\vee}_{E_{*}} \bullet-1} \otimes_{E_{*}} E_{*}^{\vee} X.
\]
It follows from \cref{proposition:collapse_of_k_based_ass_for_flat_e_modules} that the $K$-Adams filtration on these homotopy groups coincides with the filtration by powers of the maximal ideal $\mfrak$; this is the ``filtration by powers'' appearing in the title. This has the following consequence.

\begin{corollary}
\label{corollary:millers_may_sseq_is_filtration_by_powers}
If $E_{*}^{\vee}X$ is pro-free, then Miller's May spectral sequence associated to $(E, K)$ coincides with the spectral sequence of signature 
\[
\Ext_{E_{*}^{\vee}E}^{s,t}(E_{*}, \bigoplus \mfrak^{n}/\mfrak^{n+1} E_{*}^{\vee}X) \implies \Ext_{E_{*}^{\vee}E}^{t-s}(E_{*}, E^{\vee}_{*}X)
\]
obtained by filtering $E_{*}^{\vee}X$ by powers of the maximal ideal; that is, by $\mfrak^{n} E_{*}^{\vee}X$.
\end{corollary}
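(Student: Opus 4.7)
The plan is to exhibit both spectral sequences as arising from the same filtered cochain complex. By \cref{proposition:primarity_from_flat_homology}, the pro-freeness hypothesis ensures that $X$ is $(E,K)$-primary, so Miller's construction applies to the canonical $K$-local $E$-Adams resolution $X \to I^\bullet$. Pro-freeness further guarantees that $\pi_{*} I^{\bullet}$ is the normalized cobar complex with terms $(E_{*}^{\vee}E)^{\otimes^{\vee} s} \otimes^{\vee}_{E_{*}} E_{*}^{\vee}X$, computing $\Ext^{*}_{E_{*}^{\vee}E}(E_{*}, E_{*}^{\vee}X)$, and each term is pro-free over $E_{*}$. By construction, Miller's May spectral sequence is the spectral sequence of this cochain complex equipped termwise with the $K$-Adams filtration.

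The first step is to identify this $K$-Adams filtration with the $\mfrak$-adic one. By \cref{proposition:collapse_of_k_based_ass_for_flat_e_modules}, for any $E$-module $M$ with flat homotopy the $K$-Adams filtration on $\pi_{*} L_{K}M$ coincides with the $\mfrak$-adic filtration; applied to the cobar terms, whose homotopy is pro-free and hence flat, this replaces the $K$-Adams filtration by the $\mfrak$-adic filtration on each cobar term.

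The second step is to show that the $\mfrak$-adic filtration on each cobar term is induced from the $\mfrak$-adic filtration on the last factor $E_{*}^{\vee}X$. For any pro-free $E_{*}$-module $F$ and any $E_{*}$-module $M$ one has a natural identification $\mfrak^{n}(F \otimes^{\vee}_{E_{*}} M) \cong F \otimes^{\vee}_{E_{*}} \mfrak^{n} M$, since multiplication by elements of $\mfrak^{n}$ on a completed tensor product of pro-free modules can be computed factorwise. Hence the $\mfrak$-adic filtration on $(E_{*}^{\vee}E)^{\otimes^{\vee} s} \otimes^{\vee}_{E_{*}} E_{*}^{\vee}X$ is induced from $\mfrak^{n} E_{*}^{\vee}X$, and the associated graded of the filtered cobar complex is the cobar complex of the $E_{*}^{\vee}E$-comodule $\bigoplus_{n} \mfrak^{n}/\mfrak^{n+1} E_{*}^{\vee}X$ (a genuine comodule, since $\mfrak$ is invariant). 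This is precisely the filtered cochain complex defining the filtration-by-powers spectral sequence, so the two spectral sequences coincide. The main technical point is the identification of filtrations via \cref{proposition:collapse_of_k_based_ass_for_flat_e_modules}; the remaining steps amount to bookkeeping with pro-free completed tensor products.
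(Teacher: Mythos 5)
Your proof is correct and follows essentially the same route as the paper: the entire content is the identification, via \cref{proposition:collapse_of_k_based_ass_for_flat_e_modules}, of the $K$-Adams filtration on the pro-free homotopy groups of the $K$-local $E$-Adams resolution with the $\mfrak$-adic filtration. The paper's proof is a one-sentence citation of that proposition; you additionally spell out the (correct, and worth recording) bookkeeping that the $\mfrak$-adic filtration on each completed cobar term is induced from the filtration $\mfrak^{n}E_{*}^{\vee}X$ on the last factor, which is what makes the associated graded literally the cobar complex of $\bigoplus_{n}\mfrak^{n}/\mfrak^{n+1}E_{*}^{\vee}X$.
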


\begin{proof}
Since Miller's May spectral sequence is induced by the $K$-Adams filtration on the homotopy groups of the $K$-local $E$-Adams resolution, this is a consequence of \cref{proposition:collapse_of_k_based_ass_for_flat_e_modules}, as under the above assumption all of the spectra in the resolution are pro-free. 
\end{proof}

\begin{remark}
The identification of the $E_{2}$-page of the $K$-local $E$-based Adams spectral sequence is somewhat subtle. However, in the case where $E_{*}^{\vee}X$ is pro-free, it follows from \cite{bh_enass} that it acquires a structure of a suitably complete comodule over $E_{*}^{\vee}E$ and that the $E_{2}$-page is given by the $\Ext$-groups in comodules as above.
\end{remark}

We deduce that the $K$-local Miller square associated to $(E, K)$ is of the form 

\begin{equation}
\label{equation:miller_square}
\begin{tikzcd}
	& {\Ext_{E_{*}^{\vee}E}(E_{*}, \bigoplus \mfrak^{n}/\mfrak^{n+1} E_{*}^{\vee}X)} \\
	{\Ext_{E_{*}^{\vee}E}(E_{*}, E^{\vee}_{*}X)} && {\Ext_{K_{*}K}(K_{*}, K_{*}X)} \\
	& {\pi_{*}L_{K}X}
	\arrow["{E-\textnormal{Adams}}"', Rightarrow, from=2-1, to=3-2]
	\arrow["{K-\textnormal{Adams}}", Rightarrow, from=2-3, to=3-2]
	\arrow["{\textnormal{May}}"', Rightarrow, from=1-2, to=2-1]
	\arrow["{\textnormal{Mahowald}}", Rightarrow, from=1-2, to=2-3],
\end{tikzcd}
\end{equation}
whenever $E^{\vee}_{*}X$ is profree. This leaves the question of identifying the Mahowald spectral sequence, which is not too difficult. 

\begin{proposition}
\label{proposition:mahowald_sseq_is_ce_sseq}
If $E_{*}^{\vee}X$ is pro-free, then Miller's Mahowald spectral sequence based on $(E, K)$ can be identified with the Cartan--Eilenberg spectral sequence associated to the extension of Hopf algebroids 
\[
K_{*}E \rightarrow K_{*}K \rightarrow K_{*}^{E}K.
\]
\end{proposition}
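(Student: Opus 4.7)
Both spectral sequences converge to $\Ext^{*,*}_{K_*K}(K_*,K_*X)$, so the strategy is to exhibit them as two descriptions of the same filtered object, matched at the $E_2$-page. The bridge is an explicit computation of the Mahowald $E_1$-page combined with change of rings for the extension $(K_*,K_*E)\to(K_*,K_*K)\to(K_*,K_*^EK)$ of \cref{theorem:ke_kk_kek_is_an_extension}.

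\emph{Step 1 (the Mahowald $E_1$-page).} By Miller's construction, this $E_1$-page is the cochain complex $E_K^2(I^\bullet)=\Ext^{*,*}_{K_*K}(K_*,K_*I^\bullet)$, where $I^\bullet$ is the $K$-local $E$-Adams resolution of $X$. Under the pro-freeness hypothesis, the analysis in \cref{proposition:primarity_from_flat_homology} identifies $\pi_*I^\bullet$ with the completed cobar complex for $E_*^\vee E$ applied to $E_*^\vee X$. Since $K_*$ is a field and reduction along $E_*\twoheadrightarrow K_*$ converts completed tensor products over $E_*$ into ordinary tensor products over $K_*$, I obtain a cosimplicial identification
\[
K_*I^\bullet \;\cong\; K_*E \otimes_{K_*} \overline{K_*E}^{\,\otimes_{K_*}\bullet} \otimes_{K_*} K_*X,
\]
namely the cobar complex $C^\bullet_{K_*E}(K_*,K_*X)$ of the Hopf algebroid $K_*E$ with coefficients in $K_*X$.

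\emph{Step 2 (change of rings).} Each $K_*I^s$ is a relatively $K_*E$-injective $K_*K$-comodule: using the decomposition $K_*K \cong K_*E\otimes_{K_*}K_*^EK$ as a left $K_*E$-module provided by \cref{proposition:map_from_ke_to_kk_injective} together with \cref{proposition:k_homology_of_k_tensored_down}, the comodule $K_*I^s$ is cotensor-coinduced from a natural $K_*^EK$-comodule. The standard change of rings for the extension of \cref{theorem:ke_kk_kek_is_an_extension} then yields an isomorphism
\[
\Ext^{*,*}_{K_*K}(K_*,K_*I^s) \;\cong\; \Ext^{*,*}_{K_*^EK}\bigl(K_*,\, \overline{K_*E}^{\,\otimes s}\otimes_{K_*}K_*X\bigr),
\]
natural in $s$.

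\emph{Step 3 (identification with Cartan--Eilenberg, and main obstacle).} Combining Steps 1 and 2, the Mahowald $E_1$-page is the $K_*E$-cobar complex with coefficients in the $K_*E$-comodule $\Ext^{*,*}_{K_*^EK}(K_*,K_*X)$; its cohomology is therefore
\[
E_2^{\mathrm{Mahowald}} \;\cong\; \Ext^{*,*}_{K_*E}\bigl(K_*,\,\Ext^{*,*}_{K_*^EK}(K_*,K_*X)\bigr),
\]
which is exactly the $E_2$-page of the Cartan--Eilenberg spectral sequence associated to the extension. To upgrade this to an isomorphism of spectral sequences, I note that both arise as the spectral sequence of the canonical cobar double complex for the extension---Mahowald through the filtered exact couple of the $E$-Adams tower, Cartan--Eilenberg by construction---and the identifications above are compatible by naturality. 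The delicate point is the change of rings of Step 2: one must verify that the isomorphism is natural in $s$ with respect to the cosimplicial face and degeneracy maps of the $E$-Adams tower. This ultimately reduces to the freeness of $K_*K$ as a left $K_*E$-module in \cref{proposition:map_from_ke_to_kk_injective} together with the fact that all structure maps in the Adams tower are induced by morphisms of $E$-algebras on the spectrum level, so that the relevant comodule decompositions pass through the $E$-theory cobar filtration coherently.
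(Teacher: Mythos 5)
Your Step 1 matches the paper's proof: both identify the Mahowald $E_1$-page, under the pro-freeness hypothesis, with the result of applying $\Ext_{K_*K}(K_*,-)$ to the $K_*E$-cobar resolution $K_*E\otimes_{K_*}\overline{K_*E}^{\otimes_{K_*}\bullet}\otimes_{K_*}K_*X$ of $K_*X$, i.e.\ with the $K_*E$-based Adams (Margolis--Palmieri) spectral sequence in the derived category of $K_*K$-comodules. At that point the paper simply cites \cite[Theorem 4.2]{belmont2020cartan} for the statement that this relative-injective-resolution spectral sequence is isomorphic to the Cartan--Eilenberg spectral sequence of the extension, whereas you attempt to prove that comparison inline in Steps 2 and 3.

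That is where the gap is. First, an isomorphism of $E_2$-pages --- which is what Steps 2 and 3 actually deliver --- does not identify the two spectral sequences: the content of the comparison is that the two filtrations of $\Ext_{K_*K}(K_*,K_*X)$, one from the relative injective resolution and one from the Cartan--Eilenberg double complex, agree together with all higher differentials, and your appeal to ``the canonical cobar double complex'' asserts this rather than constructs it. Second, your proposed fix for the naturality of the change-of-rings isomorphism in $s$ is incorrect as stated: the coface maps of the ($K$-local) Amitsur resolution $E^{\otimes\bullet+1}\otimes X$ are given by inserting the unit $S^0\to E$ into the various slots, and these are not maps of $E$-algebras or even of $E$-modules (only $d^i$ for $i\ge 1$ are left $E$-linear), so one cannot conclude on those grounds that the coinduced-comodule decompositions of Step 2 pass coherently through the cosimplicial structure. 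Both points can be repaired --- this is precisely what Belmont's theorem does in the generality needed here --- but as written your argument establishes only an isomorphism of $E_2$-terms, not of spectral sequences.
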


\begin{proof}
Taking the standard $K$-local $E$-Adams resolution
\[
X \rightarrow L_{K}(E \otimes X) \rightarrow L_{K}(E \otimes \overline{E} \otimes X) \rightarrow \ldots
\]
and applying $K$-homology gives the long exact sequence of comodules of the form 
\[
K_{*}X \rightarrow K_{*}E \otimes K_{*}X \rightarrow K_{*}E \otimes_{K_{*}} \overline{K_{*}E} \otimes_{K_{*}} K_{*}X \rightarrow \ldots.
\]
Applying $\Ext_{K_{*}K}(K_{*}, -)$ to the above leads to an exact couple which yields the Mahowald spectral sequence.

Note that from the description of the $E_{1}$-term we see that this is just the $K_{*}E$-based Adams spectral sequence in the derived category of $K_{*}K$-comodules, sometimes called the Margolis--Palmieri spectral sequence. This is known to be isomorphic to the Cartan--Eilenberg spectral sequence associated to the extension, see  \cite[Theorem 4.2]{belmont2020cartan}.
\end{proof}

\begin{remark}
\label{remark:collapses_of_mahowald_sseq}
We have shown in \cref{theorem:collapse_of_ce_sseq_at_odd_p} that if $p > 2$, then the Cartan--Eilenberg spectral sequence computing $\Ext_{K_{*}K}(K_{*}, K_{*})$ collapses on the second page. It follows from the isomorphism of \cref{proposition:mahowald_sseq_is_ce_sseq} that the same is true for Miller's $(E,K)$-Mahowald spectral sequence associated to $X = S^{0}$. 
\end{remark}

\begin{remark}
Note that applying \cref{proposition:mahowald_sseq_is_ce_sseq} to the starting terms of the respective spectral sequences implies in particular that
\[
\Ext_{E_{*}^{\vee}E}(E_{*}, \bigoplus \mfrak^{n}/\mfrak^{n+1} E_{*}) \simeq \Ext_{E_{*}K}(K_{*}, \Ext_{K_{*}^{E}K}(K_{*}, K_{*})).
\]
This can be proven directly, without resorting to Miller's work, using Hopkins and Lurie's calculation of the cohomology of $K_{*}^{E}K$, which we stated as \cref{proposition:hl_cohomology_of_kek}. The key is that since $E_{0}$ is regular, we have $\Sym_{\kappa}(\mfrak / \mfrak^{2}) \simeq \bigoplus \mfrak^{n} / \mfrak^{n+1}$. 
\end{remark}

\begin{remark}
If we again pick $E = E_{n}$, the Lubin--Tate spectrum associated to the Honda formal group law of height $n$ over $\mathbb{F}_{p^{n}}$, then the relevant $\Ext$-groups appearing in the Miller square can be described in terms of the cohomology of the Morava stabilizer group, see   \cite[Theorem 4.3]{bh_enass}. Keeping this choice of $E$ in mind, we can rewrite the Miller square as 

\[\begin{tikzcd}
	& {\rmH^{*}(\mathbb{G}_{n}, \bigoplus \mfrak^{n}/\mfrak^{n+1} E_{*}^{\vee}X)} \\
	{\rmH^{*}(\mathbb{G}_{n},E^{\vee}_{*}X)} && {\Ext_{K_{*}K}(K_{*}, K_{*}X)} \\
	& {\pi_{*}L_{K}X}
	\arrow["{E-\textnormal{Adams}}"', Rightarrow, from=2-1, to=3-2]
	\arrow["{K-\textnormal{Adams}}", Rightarrow, from=2-3, to=3-2]
	\arrow["{\textnormal{May}}"', Rightarrow, from=1-2, to=2-1]
	\arrow["{\textnormal{Mahowald}}", Rightarrow, from=1-2, to=2-3].
\end{tikzcd}\]
\end{remark}

\begin{remark}
In the important case of $(MU, H \mathbb{F}_{p})$, the Miller square can also be described explicitly, see for example \cite{gheorghe2018special}. In this context, the Miller square foreshadows the later development of the $\mathrm{C}\tau$-philosophy of Gheorghe, Isaksen, Wang and Xu which applied motivic methods to breakthrough results in stable homotopy groups of spheres \cite{isaksen2020stable}.
\end{remark}

\section{The $K$-based Adams spectral sequence at large primes}
\label{sec:algebraicmodelkbasedass}

It is well-known that if $p$ is large compared to the height, then the $E$-based Adams--Novikov spectral sequence for the $K$-local sphere collapses. As the same is true for the Mahowald spectral sequence, as we have shown in \cref{remark:collapses_of_mahowald_sseq}, we deduce that under these assumptions, two of the sides of the Miller square of (\ref{equation:miller_square}) represent collapsing spectral sequences. 

It is then only natural to expect that the other two sides can be identified up to regrading; in this section, we show that this is indeed the case. More precisely, we will show that if $2p-2 > n^{2}+n+1$, then the $K$-based Adams spectral sequence  for the sphere is isomorphic to the filtration by powers spectral sequence 
\[
\rmH^{*}(\mathbb{G}_{n}, \bigoplus \mfrak^{n}/\mfrak^{n+1}) \implies \rmH^{*}(\mathbb{G}_{n}, E_{*}),
\]
which we can identify with Miller's May spectral sequence by \cref{corollary:millers_may_sseq_is_filtration_by_powers}.

\begin{remark}
Note that a result of Miller \cite[Theorem 4.2]{miller_adams} relates the $d_{2}$-s in the May and $K$-based Adams spectral sequence, at all primes and heights. Our first approach to this problem was to try to push Miller's methods further into higher differentials, hoping that the collapses of the other two spectral sequences in the square would simplify the situation. This approach was unsuccessful, and below we use quite different methods. 
\end{remark}

\begin{remark}
While we will show in this section that at large primes the $K$-based Adams spectral sequence admits an algebraic description, it \emph{never} collapses for $n>0$, unlike the Adams--Novikov spectral sequence. Indeed, the element $v_{n} = u^{p^{n}-1} \in \Ext^{0, 2p^{n}-2}_{K_{*}K}(K_{*}, K_{*})$ is never a permanent cycle, as the $K$-local sphere is of type $0$ and so cannot admit a $v_{n}$-periodic map. 
\end{remark}

To prove our comparison result, we will use an algebraicity result for homotopy categories of $E$-local spectra at large primes due to the second author, strengthened in recent work with Patchkoria, which compares it the following algebraic construction. 

\begin{notation}
Let us write $\eD(E_{*}E)$ for the derived $\infty$-category of the abelian category of comodules. This is a symmetric monoidal stable $\infty$-category, with monoidal unit given by $E_{*}$ itself, considered as a chain complex concentrated in degree zero. The unit defines for any $X \in \eD(E_{*}E)$ its \emph{homotopy groups}
\[
\Ext_{\eD(E_{*}E)}^{s, t}(X) := [\Sigma^{s} E_{*}[t], X]_{\eD(E_{*}E)}.
\]

Notice that the homotopy groups are bigraded, since the abelian category of $E_{*}E$-comodules has an internal grading. This endows the derived $\infty$-category with \emph{two} gradings,

\begin{enumerate}
    \item the \emph{homological} one given by the suspension and corresponding above to $s$ and
    \item the \emph{internal one} induced from the grading shift functor on $\Comod_{E_*E}$ and corresponding above to $t$.
\end{enumerate}
In the special case of $X$ being a comodule, considered as an object of the heart, these homotopy groups encode the $E_{2}$-term of the $E$-based Adams spectral sequence in the $\infty$-category of spectra. 
\end{notation}

\begin{definition}
\label{definition:periodicity_algebra}
The \emph{periodicity algebra} $P(E_{*})$ is the $\mathbf{E}_{\infty}$-algebra in $\dcat(E_{*}E)$ determined by the commutative algebra
\[
E_{*}[\tau^{\pm 1}], \textnormal{where } | \tau | = (1, -1)
\]
in chain complexes of comodules, equipped with the zero differential.
\end{definition}

\begin{notation}
For brevity, let us write 
\[
\Mod_{P(E_{*})} := \Mod_{P(E_{*})}(\dcat(E_{*}E)).
\]
\end{notation}
The following result shows that modules over the periodicity algebra give an algebraic model for the homotopy category of $E$-local spectra. 

\begin{theorem}
\label{theorem:chromatic_algebraicity}
If $2p-2 > n^{2}+n$, there exists an equivalence
\[
\phi\colon h \spectra_{E} \simeq h \Mod_{P(E_{*})}\dcat(E_{*}E)
\]
of homotopy categories of $E$-local spectra and $P(E_{*})$-modules which is moreover compatible with homology functors in the sense that the following diagram commutes 
\[
\begin{tikzcd}
	{h \spectra_{E}} & {} & {h \Mod_{P(E_{*})}\dcat(E_{*}E)} \\
	& \Comod_{E_*E}.
	\arrow["{E_{*}(-)}"', from=1-1, to=2-2]
	\arrow["{\rmH_{0}}", from=1-3, to=2-2]
	\arrow["\phi", from=1-1, to=1-3]
\end{tikzcd}
\]
\end{theorem}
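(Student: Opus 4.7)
The plan is to deduce this from the second author's chromatic algebraicity theorem \cite{pstrkagowski2018chromatic}, in its refined form given by recent joint work with Patchkoria \cite{patchkoria2021adams}, rather than reprove it here in full. For the reader's convenience, I will outline the architecture of that proof.

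The key cohomological input is the sparsity already visible in Morava's \cref{thm:cohomfiniteness}: the continuous cohomology $\mathrm{H}^{s}_{\mathrm{cts}}(\mathbb{G}_n, M)$ vanishes for $s > n^2$ on any finitely generated twisted module, and the same horizontal vanishing line survives passage to $\mathrm{Ext}$-groups of $E_*E$-comodules. Combined with the internal sparsity of the Adams--Novikov $E_2$-term, whose nonzero classes are constrained to internal degrees differing by multiples of $2p-2$ up to a uniform shift, the inequality $2p - 2 > n^2 + n$ ensures that no Adams--Novikov differential $d_r \colon E_r^{s,t} \to E_r^{s+r, t+r-1}$ can connect two nonzero classes. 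In other words, the spectral sequence for any $E$-local spectrum collapses on the second page for dimension reasons alone, and the homotopy type of $L_E X$ is determined up to an essentially unique choice of algebraic data.

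To build the comparison functor $\phi$, I would work inside the $\infty$-category $\mathrm{Syn}_E$ of $E$-based synthetic spectra from \cite{pstrkagowski2018synthetic}. This category carries a canonical self-map $\tau$ of its unit such that $\tau$-inversion presents $\mathcal{D}(E_*E)$, while the cofiber $\mathrm{Syn}_E/\tau$ models the abelian category of comodules. The image of the synthetic sphere under $\tau$-inversion is precisely the periodicity algebra $P(E_*)$ of \cref{definition:periodicity_algebra}, so the synthetic analogue functor $\nu \colon \mathcal{S}p \to \mathrm{Syn}_E$, post-composed with $\tau$-inversion, lands in $P(E_*)$-modules in $\mathcal{D}(E_*E)$. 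Compatibility with the respective homology functors is automatic from this setup, since by construction $H_0$ of the underlying complex of $\nu X[\tau^{-1}]$ recovers $E_*X$ functorially.

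The main obstacle, and the reason the large-prime hypothesis enters essentially, is verifying that the induced functor on homotopy categories is both fully faithful and essentially surjective. Both statements reduce to vanishing of obstruction groups controlling the lift of an algebraic $P(E_*)$-module to a synthetic object before $\tau$-inversion; by the sparsity argument above, these obstructions land in precisely the Adams--Novikov bidegrees forbidden by $2p-2 > n^2 + n$, and hence vanish. A Postnikov-style inductive reconstruction, with each stage unobstructed by the same sparsity bound, then produces an $E$-local spectrum realizing any prescribed $P(E_*)$-module. Full faithfulness follows from a parallel obstruction-theoretic analysis of $\mathrm{Map}(\nu X, \nu Y)$, completing the construction of $\phi$.
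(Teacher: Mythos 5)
Your core move --- deferring to \cite{pstrkagowski2018chromatic} and \cite{patchkoria2021adams} --- is exactly what the paper does: its entire proof is the citation, noting that the thesis establishes the result under $2p-2 > 2(n^2+n)$ and that the joint work with Patchkoria improves the bound to the stated one. As a proof, the deferral is therefore acceptable. However, the expository outline you attach misstates the cited argument in three ways that would need correcting. First, the $E$-based Adams--Novikov spectral sequence does \emph{not} collapse at $E_2$ for an arbitrary $E$-local spectrum at large primes: the sparsity argument needs $E_*X$ to be concentrated in internal degrees divisible by $2p-2$, which holds for the sphere and for Morava $K$-theory (cf.\ \cref{lemma:exotic_equivalence_preserves_sphere_and_k}) but fails in general, so the equivalence of homotopy categories is obtained by a Franke-type obstruction-theoretic construction rather than as a consequence of universal collapse. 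Second, your description of synthetic spectra inverts the two degenerations: inverting $\tau$ on the category of $E$-based synthetic spectra recovers ($E$-nilpotent complete) \emph{spectra}, while it is the category of $C\tau$-modules that models the \emph{derived} $\infty$-category of $E_*E$-comodules, whose heart is the abelian category of comodules; in particular the image of the synthetic sphere under $\tau$-inversion is the sphere, not the periodicity algebra $P(E_*)$, which lives in $\dcat(E_*E)$. Third, the horizontal vanishing line for $\Ext$ of general $E_*E$-comodules is $n^2+n$ (this is the bound actually used in the proof of \cref{lemma:exotic_equivalence_preserves_sphere_and_k}); the $n^2$ bound of \cref{thm:cohomfiniteness} pertains only to the height-exactly-$n$, i.e.\ $\mfrak$-torsion, situation. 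If you keep the outline, fix these points; otherwise simply cite the references as the paper does.
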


\begin{proof}
This is proven under the assumption $2p-2 > 2(n^{2}+n)$ in the second author's thesis \cite{pstrkagowski2018chromatic}. The bound was improved to the one given above in joint work with Patchkoria \cite{patchkoria2021adams}.
\end{proof}

\begin{remark}
Note that as an $\mathbf{E}_{1}$-algebra in $\dcat(E_{*}E)$, the connective cover of the periodicity algebra $P(E_{*})$ can be identified with the free $\mathbf{E}_{1}$-algebra on the object $\Sigma E_{*}[-1]$, and so is uniquely determined by its homology groups. 

The category of modules over $P(E_{*})$ (considered only as a stable $\infty$-category) can be identified with the subcategory of modules over the connective cover spanned by those modules on which $\tau$ acts invertibly. Thus, to define the target of the equivalence of \cref{theorem:chromatic_algebraicity}, it is not strictly necessary to work with algebras in chain complexes as in \cref{definition:periodicity_algebra}, but we do so out of convenience.
\end{remark}

\begin{remark}
\label{remark:the_many_models_for_e_local_category}
For any $M, N \in \Mod_{P(E_{*})}$ we have an Adams spectral sequence of signature
\[
\Ext^{s, t}(\rmH_{0}(M), \rmH_{0}(N)) \implies [M, N]_{t-s},
\]
where on the right hand side we have homotopy classes of maps of $P(E_{*})$-modules. 

This observation, due to Franke \cite{franke_exotic}, is the starting point for the comparison of $\Mod_{P(E_{*})}$ to the $E$-local category. The general form of Franke's conjecture implies that all  stable $\infty$-categories admitting a suitably convergent Adams-type spectral sequence of this signature must have equivalent homotopy categories \cite{patchkoria2021adams}.
\end{remark}

One of the advantage of $P(E_{*})$-modules over $E$-local spectra is the existence of the free module functor
\[
P(E_{*}) \otimes -\colon \eD(E_{*}E) \rightarrow \Mod_{P(E_{*})}
\]
(also called the \emph{periodicization}) which is cocontinuous and symmetric monoidal. The homotopy groups of free modules can be understood explicitly, as the following shows. 

\begin{remark}
\label{remark:periodicization_simple_homotopy_groups}
Since the periodicity algebra is a sum of shifts of a unit as an object of the derived $\infty$-category, the homotopy groups of the free module are simple to understand. More precisely, for any $M \in \eD(E_{*}E)$ we have
\[
[P(E_{*}), P(E_{*}) \otimes_{E_{*}} M]_{P(\bOne)}^{t}  \simeq [\bOne, P(\bOne) \otimes_{E_{*}} M)]_{\dcat(E_{*}E)}^{t} \simeq \bigoplus_{s} \Ext^{s, t-s}_{E_{*}E}(E_{*}, M).
\]

As a consequence, whenever a $P(\bOne)$-module can be written as a periodicization of some $M$ as above, its homotopy groups acquire an additional grading. Note that any given module can be written as a periodicization in many different ways and this additional grading depends on that choice. 
\end{remark}

Our plan is to use \cref{theorem:chromatic_algebraicity} to compare the $K$-based Adams spectral sequence in $E$-local spectra to the one in its algebraic model, and subsequently give an explicit description of the latter in terms of filtration by powers. 

\begin{lemma}
\label{lemma:exotic_equivalence_preserves_sphere_and_k}
Let $X$ be either the $E$-local sphere or a Morava $K$-theory; that is, $X = S^{0}_{E}$ or $X = K$. Then, there exists a canonical isomorphism
\[
\phi(X) \simeq P(E_{*}) \otimes_{E_{*}} E_{*}X,
\]
in $h \Mod_{P(E_{*})}$, where $\phi$ is the algebraicity equivalence of \cref{theorem:chromatic_algebraicity}.
\end{lemma}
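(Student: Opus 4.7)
The plan is to handle the two cases $X = S^{0}_{E}$ and $X = K$ separately, in both cases aiming to identify $\phi(X)$ with a free $P(E_{*})$-module on its $H_{0}$, where freeness translates to the collapse and splitting of the associated Adams spectral sequence in $\Mod_{P(E_{*})}$.

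For $X = S^{0}_{E}$, I would argue that even though \cref{theorem:chromatic_algebraicity} only asserts compatibility with the homology functor on the nose, the underlying construction of $\phi$ in \cite{patchkoria2021adams} via Franke-type machinery is a chain of symmetric monoidal equivalences matching Adams spectral sequences, and therefore sends the monoidal unit to the monoidal unit. The unit of $h\Mod_{P(E_{*})}\dcat(E_{*}E)$ is the algebra $P(E_{*})$ itself, which equals $P(E_{*}) \otimes_{E_{*}} E_{*} \simeq P(E_{*}) \otimes_{E_{*}} E_{*}(S^{0}_{E})$ since $E$-localization does not change $E$-homology.

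For $X = K$, the strategy is to exploit the observation that $K$ admits a particularly degenerate $E$-based Adams resolution. By \cref{lemma:eotimesk_is_a_direct_sum_of_ks_as_an_emodule}, $E \otimes K$ splits as an $E$-module direct sum of copies of $K$; this implies that the canonical map $K \to E \otimes K$ admits a retraction as a map of $E$-modules, and consequently the $E$-based Adams tower of $K$ is split, so the $E$-Adams spectral sequence for $K$ collapses at $E_{2}$ with no exotic extensions. In the algebraic model, by \cref{remark:periodicization_simple_homotopy_groups} and \cref{remark:the_many_models_for_e_local_category}, a $P(E_{*})$-module $N$ is equivalent to $P(E_{*}) \otimes_{E_{*}} M$ precisely when its algebraic Adams spectral sequence with $E_{2} = \Ext^{*,*}_{E_{*}E}(E_{*}, \rmH_{0}N)$ collapses and splits. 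Under $\phi$, the Adams spectral sequences on the two sides correspond (this is a feature of the Patchkoria--Pstrągowski construction, which is designed exactly to match Adams-type towers), so the collapse/splitting on the topological side transports to the algebraic side. Combined with the identification $\rmH_{0}\phi(K) \simeq E_{*}K$, this yields the canonical equivalence $\phi(K) \simeq P(E_{*}) \otimes_{E_{*}} E_{*}K$.

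The main obstacle is the step from mere compatibility with $H_{0}$ to an actual isomorphism in the homotopy category: in principle two objects of $\Mod_{P(E_{*})}$ can share $H_{0}$ while differing in higher Adams data. The crucial point is therefore to promote the abstract equivalence $\phi$ to one that respects the full Adams filtration, and to identify free $P(E_{*})$-modules with those objects whose Adams tower is split. For $S^{0}_{E}$ this is subsumed by preservation of the monoidal unit, while for $K$ it rests on the $E$-module splitting of $E \otimes K$. In both cases the naturality of the construction supplies canonicity of the resulting isomorphism.
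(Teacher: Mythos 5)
Your proposal correctly isolates the crux — that knowing $\rmH_{0}(\phi(X)) \simeq E_{*}X$ does not by itself pin down $\phi(X)$ in $h\Mod_{P(E_{*})}$ — but the two mechanisms you invoke to bridge that gap are not available, and the idea that actually closes it in the paper is missing.

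For $X = S^{0}_{E}$ you appeal to $\phi$ preserving the monoidal unit because the construction is "a chain of symmetric monoidal equivalences." \cref{theorem:chromatic_algebraicity} asserts only an equivalence of homotopy categories compatible with the homology functors; no monoidality is claimed, and for Franke-type exotic equivalences monoidality is precisely the kind of extra structure one cannot assume. For $X = K$ you transport the collapse of the topological $E$-based Adams tower across $\phi$, asserting that "the Adams spectral sequences on the two sides correspond." But establishing that $\phi$ matches Adams data is exactly what \cref{proposition:k_adams_in_algebraic_model} does later, and its proof \emph{uses} the present lemma to identify injectives on the two sides; taking such a correspondence as input here is circular. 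Moreover, even granting your characterization of free $P(E_{*})$-modules, you still need to produce a map $\phi(K) \to P(E_{*}) \otimes_{E_{*}} E_{*}K$ realizing the identity of $\rmH_{0}$, and the relevant algebraic Adams spectral sequence has $E_{2} = \Ext_{E_{*}E}^{s,t}(E_{*}K, E_{*}K)$, which is \emph{not} concentrated in $s = 0$ (it computes $\Ext_{E_{*}}^{s}(E_{*}K, K_{*})$, nonzero for $0 < s \le n$), so its collapse is not formal.

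The paper's argument is a sparsity computation, carried out uniformly for both $X$. One passes to Johnson--Wilson theory, where $E(n)_{*}S^{0}$ and $E(n)_{*}K(n) \simeq K(n)_{*} \otimes_{E(n)_{*}} E(n)_{*}E(n)$ are concentrated in degrees divisible by $2p-2$ (a Morava $K$-theory splits as a sum of $K(n)$'s and $\phi$ preserves sums), and then runs the algebraic Adams spectral sequence of \cref{remark:the_many_models_for_e_local_category},
\[
\Ext^{s,t}(E_{*}X, E_{*}X) \implies [\phi(X), P(E_{*}) \otimes_{E_{*}} E_{*}X]^{t-s}.
\]
The $E_{2}$-page vanishes for $s > n^{2}+n$ and is concentrated in internal degrees $t$ divisible by $2p-2$; since $2p-2 > n^{2}+n$, no differential can be nonzero, so the identity of $E_{*}X$ in $\Ext^{0,0}$ survives and yields the desired equivalence (a map of $P(E_{*})$-modules inducing an isomorphism on $\rmH_{0}$ is an equivalence by periodicity). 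This degree-divisibility argument is the essential input your proposal lacks, and neither monoidality of $\phi$ nor any transport of spectral sequences from topology is needed.
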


\begin{proof}
It will be convenient to work here with the Johnson-Wilson homology, the $p$-local Landweber exact homology theory with 
\[
\pi_{*}E(n) \simeq \mathbb{Z}_{(p)}[v_{1}, \ldots, v_{n-1}, v_{n}^{\pm 1}]
\]
Both this ring and $E(n)_{*}E(n)$ are concentrated in degrees divisible by $2p-2$, which is a key ingredient in the construction of the equivalence $\phi$ \cite[\S 2]{pstrkagowski2018chromatic}. 

Choosing coordinates for the Quillen formal group over $E_{*}$ such that $v_{i} = 0$ for $i > n$ we obtain a classifying map $E(n)_{*} \rightarrow E_{*}$ which is faithfully flat and hence 
\[
E_{*}X \simeq E_{*} \otimes_{E(n)_{*}} E(n)_{*}X
\]
The induced functor $\Comod_{E(n)_{*}E(n)} \rightarrow \Comod_{E_{*}E}$ is an equivalence of categories by a result of Hovey and Strickland \cite{hovey2005comodules}.

Now suppose that $X$ is a spectrum with $E(n)_{*}X$ concentrated in degrees divisible by $2p-2$. Since $\phi$ is compatible with taking homology, we have 
\[
\rmH_{0}(\phi(X)) \simeq E_{*}X \simeq E_{*} \otimes_{E(n)_{*}} E(n)_{*}X
\]
as comodules. By \cref{remark:the_many_models_for_e_local_category}, we have an Adams spectral sequence
\[
\Ext^{s, t}(E_{*}X, E_{*}X) \implies [\phi(X), P(E_{*}) \otimes_{E_{*}} E_{*}X]^{t-s}
\]
Using the equivalence of categories of comodules, we can rewrite the $E_{2}$-page as 
\[
\Ext^{s, t}(E(n)_{*}X, E(n)_{*}X) \simeq \Ext^{s, t}(E_{*}X, E_{*}X) 
\]
This vanishes above $n^{2}+n$ by \cite[Remark 2.5]{pstrkagowski2018chromatic} and is concentrated in degrees $t$ divisible by $2p-2$. Since $2p-2 > n^{2}+n$ by assumption, we deduce that the spectral sequence collapses on the second page, so that the identity of $E_{*}X$ descends to an equivalence $\phi(X) \simeq P(E_{*}) \otimes_{E_{*}} E_{*}X$. 

This establishes the result for $X = S^{0}_{E}$, which has $E(n)_{*}X$ homology concentrated in degrees divisible by $2p-2$. If $K$ is a Morava $K$-theory, then as a consequence of the nilpotence theorem it is a module over the minimal Morava $K$-theory of \S\ref{section:digression_minimal_morava_k_theory} and so equivalent as a spectrum to a direct sum of $K(n)$. As $\phi$ preserves direct sums and
\[
E(n)_{*}K(n) \simeq K(n)_{*} \otimes_{E(n)_{*}} E(n)_{*}E(n)
\]
is concentrated in degrees divisible by $2p-2$, the result follows also for $X = K$. 
\end{proof}

\begin{remark}
The detour into Johnson-Wilson homology \cref{lemma:exotic_equivalence_preserves_sphere_and_k} can be avoided, by directly defining a splitting of order $(2p-2)$ on the category $\Comod_{E_{*}E}$ by using the equivalence with quasi-coherent sheaves on the moduli of formal groups of height at most $n$. We decided against it, as the degree divisibility argument involving $E(n)$ is classical and well-known.
\end{remark}

Let us write $K_{alg} := P(E_{*}) \otimes _{E_{*}} E_{*}K$ for the algebraic model for the Morava $K$-theory spectrum. Note that its homotopy groups satisfy
\[
\pi_{s} K_{alg} := [P(E_{*}), K_{alg}]^{P(E_{*})}_{s} \simeq K_{s}, 
\]
so that $K_{alg}$ is a field object in $P(E_{*})$. Consequently, for a map $M \rightarrow N$ of $P(E_{*})$-modules, the following two conditions are equivalent:
\begin{enumerate}
    \item $K_{alg} \otimes_{P(E_{*})} M \rightarrow K_{alg} \otimes_{P(E_{*})} N$ is a split monomorphism of $P(E_{*})$-modules or 
    \item $[N, K_{alg}]_{*} \rightarrow [M, K_{alg}]_{*}$ is an epimorphism of graded abelian groups. 
\end{enumerate}
This forms a class of monomorphisms and so determines an Adams spectral sequence \cite[\S3.1]{patchkoria2021adams}.
\begin{definition}
We call the Adams spectral sequence in $P(E_{*})$-modules determined by the above class of monomorphisms the \emph{$K_{alg}$-based Adams spectral sequence}. 
\end{definition}
We are now ready to verify the correspondence between the topological and algebraic Adams spectral sequences; we learned the following elegant argument from Robert Burklund. 

\begin{proposition}
\label{proposition:k_adams_in_algebraic_model}
If $2p-2 > n^{2}+n+1$, then the $K$-based Adams spectral sequence in $\spectra_{E}$ for $S^{0}_{E}$ is isomorphic to the $K_{alg}$-based Adams spectral sequence for $P(E_{*})$ in $P(E_{*})$-modules. 
\end{proposition}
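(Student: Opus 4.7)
The plan is to transport the topological Adams spectral sequence across the algebraicity equivalence $\phi$ of \cref{theorem:chromatic_algebraicity}, using \cref{lemma:exotic_equivalence_preserves_sphere_and_k} to match up the relevant objects: $\phi(S^{0}_{E}) \simeq P(E_{*})$ and $\phi(K) \simeq K_{alg}$. Each of the two spectral sequences in the statement can be viewed as the Adams spectral sequence associated to an adapted homology theory on a triangulated category, in the sense of \cite{patchkoria2021adams}: on the topological side, $K_{*}(-)\colon h\spectra_{E} \to \Comod_{K_{*}K}$, and on the algebraic side, $\pi_{*}(K_{alg} \otimes_{P(E_{*})} -)\colon h\Mod_{P(E_{*})} \to \Comod_{K_{*}K}$. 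Such a spectral sequence is functorial in the data of a triangulated category equipped with an adapted homology theory, so the proposition will follow once we produce a natural isomorphism
\[
K_{*}(-) \;\simeq\; \pi_{*}\bigl(K_{alg} \otimes_{P(E_{*})} \phi(-)\bigr)
\]
of functors out of $h\spectra_{E}$, and check that this isomorphism intertwines the $K_{*}K$-comodule structures. Granting this, $\phi$ takes $K$-injective maps precisely to $K_{alg}$-injective maps and $K$-Adams resolutions to $K_{alg}$-Adams resolutions, yielding the desired isomorphism of spectral sequences from the $E_{2}$-page onwards.

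To produce the natural isomorphism, we first note that both functors are exact, preserve arbitrary coproducts, and send $S^{0}_{E}$ to $K_{*}$. For $E$-module spectra, both sides can be computed from $E$-homology through the same derived tensor product with $K_{*}E$: on the topological side via the K\"unneth spectral sequence $\Tor_{E_{*}}(K_{*}, E_{*}(-)) \Rightarrow K_{*}(-)$, which collapses because of the flatness/finite projective dimension arguments underlying \cref{corollary:kunneth_for_kk_collapses}, and on the algebraic side by direct computation of $\pi_{*}(K_{alg} \otimes_{P(E_{*})} \phi(Y)) \simeq \pi_{*}(P(E_{*}) \otimes^{\mathbb{L}}_{E_{*}} E_{*}Y \otimes_{E_{*}} E_{*}K)$ using that $\phi$ is compatible with $E$-homology. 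Matching these presentations on a set of compact generators gives the required natural equivalence, which then extends by the usual cocontinuity and thick subcategory arguments to all of $h\spectra_{E}$.

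With the compatibility of homology theories in hand, one applies the uniqueness and naturality of the Adams spectral sequence attached to an adapted homology theory, as developed in \cite[\S 3]{patchkoria2021adams}, to conclude that $\phi$ induces an isomorphism between the $K$-based Adams spectral sequence for $S^{0}_{E}$ and the $K_{alg}$-based Adams spectral sequence for $P(E_{*})$, compatible with abutments. The main obstacle is the functorial identification of the two homology theories: while the agreement on $S^{0}_{E}$ and on $K$ is immediate from \cref{lemma:exotic_equivalence_preserves_sphere_and_k}, promoting this to a natural isomorphism of functors requires genuine input from the explicit construction of $\phi$ in \cite{pstrkagowski2018chromatic,patchkoria2021adams}, namely its built-in compatibility with $E$-homology and the resulting reduction of $K$-level computations to $E$-level data via the collapsing universal coefficient spectral sequence.
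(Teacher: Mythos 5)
Your overall strategy---transport the spectral sequence along $\phi$ and use \cref{lemma:exotic_equivalence_preserves_sphere_and_k} to match the objects---is the right one, but the route you choose runs through a step that is both unnecessary and not actually established: the natural isomorphism $K_{*}(-) \simeq \pi_{*}(K_{alg} \otimes_{P(E_{*})} \phi(-))$ of homology theories. You correctly identify this as ``the main obstacle,'' but the method you propose for overcoming it does not work as stated: agreeing with a second functor on a set of (compact) generators does not produce a natural transformation between homological functors on a triangulated category, and the ``cocontinuity and thick subcategory arguments'' you invoke only let you check that an \emph{already constructed} natural transformation is an isomorphism---they do not construct it. Since $\phi$ is merely an equivalence of homotopy categories, there is no evident way to rigidify the pointwise identifications $\phi(S^{0}_{E}) \simeq P(E_{*})$ and $\phi(K) \simeq K_{alg}$ into a natural equivalence of functors, so as written the argument has a genuine gap.

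The paper's proof shows that this entire step can be bypassed. In Miller's formulation the Adams spectral sequence is determined by the class of injective objects (equivalently, of Adams monomorphisms), and the exact couple is obtained by mapping into the resulting Adams tower. Because $K_{*}$ is a graded field, every $K$-injective is a retract of a direct sum of shifts of $K$ itself, and likewise every $K_{alg}$-injective is a retract of a direct sum of shifts of $K_{alg}$. Hence the single object-level equivalence $\phi(K) \simeq K_{alg}$ of \cref{lemma:exotic_equivalence_preserves_sphere_and_k}, combined with the fact that $\phi$ is compatible with the triangulated structures when $2p-2 > n^{2}+n+1$ (this is precisely where the extra $+1$ beyond the bound of \cref{theorem:chromatic_algebraicity} is used, a point your write-up does not flag), already identifies the two classes of injectives, hence the two classes of monomorphisms, hence carries a $K$-Adams resolution of $S^{0}_{E}$ to a $K_{alg}$-Adams resolution of $P(E_{*})$ and induces an isomorphism of exact couples. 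No naturality of the homology identification in the variable $X$ is required. I would recommend either adopting this argument or, if you wish to keep your approach, supplying an actual construction of the natural transformation, which would require working at the level of the $\infty$-categorical or model-categorical construction of $\phi$ rather than with the homotopy-category equivalence alone.
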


\begin{proof}
We claim that the equivalence $\phi$ of \cref{theorem:chromatic_algebraicity} induces an isomorphism of exact couples leading to these spectral sequences. Both of these are obtained by mapping into suitable towers, so we have to verify that $\phi$ takes a $K$-Adams resolution of $S^{0}_{E}$ to a $K_{alg}$-Adams resolution of $\phi(S^{0}_{E)}) \simeq P(E_{*})$. 

If $2p-2 > n^{2}+n+1$, the equivalence $\phi$ is compatible with triangulated structures \cite[B.8]{pstrkagowski2018chromatic}, so it remains to show that it identifies $K$-injectives with $K_{alg}$-injectives, as the corresponding class of monomorphism is then uniquely determined. This follows immediately from \cref{lemma:exotic_equivalence_preserves_sphere_and_k}, as on both sides each injective is a direct sum of shifts of, respectively, $K$ and $K_{alg}$. 
\end{proof}

In our case, in  \cref{proposition:k_adams_in_algebraic_model} we considered a $K_{alg} = P(\bOne) \otimes E_{*}K$-based Adams spectral sequence for the monoidal unit $P(\bOne)$. Both objects arise through periodicization, giving us additional information, which we will now make explicit. 

\begin{proposition}
\label{proposition:k_based_adams_has_an_additional_grading_for_which_it_is_ek_adams}
If $2p-2 > n^{2}+n+1$, then the $K$-based Adams spectral sequence for $S^{0}_{E}$ can be given an additional grading so that it becomes isomorphic to the $E_{*}K$-based Adams spectral sequence in $\eD(E_{*}E)$. 
\end{proposition}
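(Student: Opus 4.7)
The plan is to combine \cref{proposition:k_adams_in_algebraic_model} with the observation that both $P(E_*)$ and $K_{alg} = P(E_*) \otimes_{E_*} E_*K$ arise by applying the symmetric monoidal cocontinuous periodicization functor
\[
P(E_*) \otimes_{E_*} - \colon \eD(E_*E) \to \Mod_{P(E_*)}
\]
to $E_*$ and $E_*K$, respectively. It therefore suffices to exhibit a canonical extra grading on the $K_{alg}$-based Adams spectral sequence for $P(E_*)$ which identifies it with the $E_*K$-based Adams spectral sequence in $\eD(E_*E)$.

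First, I would form the standard $E_*K$-cobar tower in $\eD(E_*E)$ augmented over $E_*$, with terms $E_*K \otimes_{E_*} \overline{E_*K}^{\otimes s}$. Applying the periodicization functor, which is symmetric monoidal, produces an augmented cosimplicial object over $P(E_*)$ whose terms are tensor products of copies of $K_{alg}$ over $P(E_*)$; this is precisely the $K_{alg}$-cobar tower of $P(E_*)$, hence a valid $K_{alg}$-Adams resolution, and its totalization spectral sequence is the one studied in \cref{proposition:k_adams_in_algebraic_model}.

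Second, I would compute the $E_1$-page using \cref{remark:periodicization_simple_homotopy_groups}: for any $M \in \eD(E_*E)$ one has
\[
[P(E_*),\, P(E_*) \otimes_{E_*} M]^t_{P(E_*)} \;\simeq\; \bigoplus_{s} \Ext^{s,\,t-s}_{E_*E}(E_*, M).
\]
Applying this term-by-term to the periodicized cobar tower decomposes every group on the $E_1$-page as a direct sum indexed by a new internal degree $s$, and the $s$-summand is precisely the corresponding entry on the $E_1$-page of the $E_*K$-based Adams in $\eD(E_*E)$. Since each structure map in the periodicized tower is, by construction, the periodicization of a map in $\eD(E_*E)$, and the decomposition above is natural in $M$, the $d_1$-differentials preserve the $s$-grading and agree, grading-piece by grading-piece, with those of the algebraic Adams spectral sequence.

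The main subtlety will be propagating this comparison through the higher pages: the $d_r$-differentials are assembled from zig-zags through several stages of the Adams tower, and one must check that each link of such a zig-zag respects the extra $s$-grading. This reduces, via naturality of the isomorphism in \cref{remark:periodicization_simple_homotopy_groups}, to the fact that the entire tower (not just each object in it) lies in the image of periodicization; an inductive bookkeeping argument then shows that on every page the spectral sequence splits as a direct sum over $s$ and is isomorphic to the $E_*K$-based Adams spectral sequence in $\eD(E_*E)$, completing the proof.
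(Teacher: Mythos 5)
Your proposal is correct and follows essentially the same route as the paper: identify the $K$-based Adams spectral sequence with the $K_{alg}$-based one via \cref{proposition:k_adams_in_algebraic_model}, observe that the $K_{alg}$-Amitsur resolution of $P(E_*)$ is the periodicization of the $E_*K$-cobar resolution of $E_*$ in $\eD(E_*E)$, and then apply the direct-sum decomposition of \cref{remark:periodicization_simple_homotopy_groups} to the resulting exact couple. The "inductive bookkeeping" you worry about in the last step is exactly what the paper compresses into the phrase "applied to the exact couple giving rise to the spectral sequence," and it goes through because every term of the tower, not just the layers, lies in the image of periodicization.
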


\begin{proof}
By \cref{proposition:k_adams_in_algebraic_model}, the $K$-Adams spectral sequence in $\spectra_{E}$ is isomorphic to the $K_{alg}$-based Adams spectral sequence in $P(E_{*})$-modules. The latter is isomorphic to the spectral sequence induced by the Amitsur resolution
\[
P(E_{*}) \rightarrow K_{alg} \rightrightarrows K_{alg} \otimes_{P(E_{*})} K_{alg} \Rrightarrow \ldots
\]
which is image of the $E_{*}K$-Adams resolution of $E_{*}$ in $\eD(E_{*}E)$ under the periodicization functor. The statement follows from the isomorphism of homotopy groups of \cref{remark:periodicization_simple_homotopy_groups} applied to the exact couple giving rise to the spectral sequence.
\end{proof}
The above result reduces the study of the $K$-based Adams spectral sequence at large primes to the study of the algebrac $E_{*}K$-based in the derived $\infty$-category of comodules. The latter can be described explicitly in terms of the cohomology of the Morava stabilizer group, with no assumptions on the prime, which is our next step. 

We will need to recall some results on d\'{e}calage, following Deligne and Levine \cite{levine2015adams}. 

\begin{construction}
\label{construction:decalage}
If $X^{\bullet}$ is a cosimplicial spectrum, we have an associated conditionally convergent spectral sequence
\[
\prescript{\Tot}{}{E}^{2}_{s, t} := \rmH^{s} \pi_{t} X^{\bullet} \implies \pi_{t-s} \Tot(X^{\bullet}).
\]
The \textit{d\'{e}calage} of this cosimplicial object is the tower 
\[
\Dec_{n} := \Tot(\tau_{\leq n} X^{\bullet})
\]
obtained by totalizing the Postnikov truncations of $X$. Since limits commute with limits we have
\[
\varprojlim \Dec_{n} \simeq \varprojlim \Tot(\tau_{\leq n} X^{\bullet}) \simeq \Tot(\varprojlim \tau_{\leq n} X^{\bullet}) \simeq \Tot(X^{\bullet}).
\]
The above equivalence induces a spectral sequence associated to the tower on the left, which is a conditionally convergent and of signature
\[
\prescript{\Dec}{}{E}^{1}_{s, t} := H^{s} \pi_{t}X^{\bullet} \implies \pi_{t-s} \Tot(X^{\bullet}).
\]
Levine proves that the evident isomorphisms between the $E^{2}$-term of the $\Tot$-spectral sequence and the $E_{1}$-term of the spectral sequence associated to the d\'{e}calage extends to an isomorphism of spectral sequences \cite{levine2015adams}. 
\end{construction}

In our case, we will be working with cosimplicial objects in $\eD(E_{*}E)$, rather than in spectra. Thus, the role of the Postnikov towers will be played by the truncations in the standard $t$-structure on the derived category. 

\begin{definition}
Let $X^{\bullet}\colon \Delta \rightarrow \eD(E_{*}E)$ be a cosimplicial object in the derived $\infty$-category of comodules. The \emph{homological d\'{e}calage} of $X^{\bullet}$ is the tower 
\[
h\Dec_{n} := \Tot(\tau_{\leq n} X^{\bullet}),
\]
where $\tau_{\leq n}$ denotes the truncation in the standard $t$-structure on the derived $\infty$-category.
\end{definition}

\begin{warning}
Note that if $X^{\bullet}$ is a cosimplicial object in $\eD(E_{*}E)$, then the homological d\'{e}calage tower induces a spectral sequence obtained by applying $\Ext(E_{*}, -)$. Since Postnikov towers converge in $\dcat(E_{*}E)$, this will be conditionally convergent to $\Ext(E_{*}, \Tot(X^{\bullet}))$. 

The layers of the tower are given, up to a shift, by $\Tot(H_{k}(X^{\bullet}))$ so that this spectral sequence has signature 
\[
\Ext(E_{*}, \Tot(H_{k}(X^{\bullet})) \implies \Ext(E_{*}, \Tot(X^{\bullet})).
\]

This spectral sequence is usually not the same as the one obtained by applying $\Ext(E_{*}, -)$ to $X^{\bullet}$ itself, because the homological $t$-structure does not interact in an easy way with $E_{*}$-homotopy groups, unlike in the case of the standard $t$-structure on the $\infty$-category of spectra.
\end{warning}
Despite the above warning, for a certain restricted class of cosimplicial objets, we \emph{do} have an isomorphism between the spectral sequence of a cosimplicial object and its homological d\'{e}calage. 

\begin{lemma}
\label{lemma:homological_decalage_and_spectral_decalage}
Let $X^{\bullet}\colon \Delta \rightarrow \eD(E_{*}E)$ be a cosimplicial object and assume that each $X^{m}$ is a direct sum of shifts of objects in the heart, each of which is a cofree $E_{*}E$-comodule. Then
\[
F(E_{*}[t], h\Dec_{n} X^{\bullet}) \simeq \Dec_{n} F(E_{*}, X^{\bullet}),
\]
where $F(E_{*}[t], -)$ is the internal mapping spectrum in $\eD(E_{*}E)$. That is, for such cosimplicial objects, mapping out of $E_{*}[t]$ takes homological d\'{e}calage to the spectral one. 
\end{lemma}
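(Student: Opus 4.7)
My plan is to reduce the statement to a pointwise (cosimplicial-degree-wise) identification, using that $F(E_{*}[t],-)$ preserves limits as a right adjoint functor out of the stable $\infty$-category $\eD(E_{*}E)$. Concretely, since $\Tot$ is a limit and $F(E_{*}[t],-)$ sends limits in $\eD(E_{*}E)$ to limits of spectra, I have
\[
F(E_{*}[t],\, \Tot(\tau_{\leq n} X^{\bullet})) \simeq \Tot\bigl(F(E_{*}[t],\, \tau_{\leq n} X^{\bullet})\bigr).
\]
The right-hand side of the lemma, by definition of the spectrum-level d\'ecalage in \cref{construction:decalage}, is $\Tot(\tau_{\leq n}^{\Sp} F(E_{*}[t], X^{\bullet}))$. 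So the whole lemma boils down to proving, naturally in the cosimplicial degree $m$, an equivalence
\[
F(E_{*}[t],\, \tau_{\leq n} X^{m}) \;\simeq\; \tau_{\leq n}^{\Sp}\, F(E_{*}[t], X^{m}),
\]
where on the left $\tau_{\leq n}$ is the homological truncation in $\eD(E_{*}E)$ and on the right $\tau_{\leq n}^{\Sp}$ is the Postnikov truncation of spectra.

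For this degree-wise comparison I would use the cofreeness hypothesis. If $N$ is a cofree $E_{*}E$-comodule placed in the heart, then by the cobar or extended/cofree adjunction one has $\Ext^{s,*}_{E_{*}E}(E_{*},N) = 0$ for $s > 0$, so that $F(E_{*}[t], N)$ is a spectrum with homotopy concentrated in degree $0$ (and internal weight $t$). Consequently, for a shift $\Sigma^{a} N[b]$ of such a cofree comodule, the spectrum $F(E_{*}[t], \Sigma^{a} N[b])$ has homotopy concentrated in degree $a$. Writing the hypothesis on $X^{m}$ as a (possibly infinite) direct sum $X^{m} \simeq \bigoplus_{i} \Sigma^{a_{i}} N_{i}[b_{i}]$ with each $N_{i}$ cofree in the heart, the homological truncation $\tau_{\leq n}X^{m}$ is identified with the subsum indexed by those $i$ with $a_{i} \leq n$, and the spectrum $F(E_{*}[t], X^{m})$ splits as the corresponding sum of spectra each concentrated in a single degree; its Postnikov truncation $\tau_{\leq n}^{\Sp}$ picks out exactly the same summands.

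I therefore expect the main (and essentially only) obstacle to be making sure that the commutation of $F(E_{*}[t],-)$ with the infinite direct sums and with the truncation functor is legitimate. The infinite-sum issue is the reason for the hypothesis that $X^{m}$ is a sum of shifts of \emph{cofree} objects: cofreeness forces vanishing of all higher $\Ext$-groups out of $E_{*}$, so each summand's contribution to $F(E_{*}[t],-)$ is a shift of $E_{*}$-modules, and the infinite sum in $\eD(E_{*}E)$ maps to the corresponding infinite sum of spectra. Once this is in place, the degree-wise equivalence is natural in $m$, yields an equivalence of cosimplicial spectra, and totalizing gives the desired identification $F(E_{*}[t], h\Dec_{n} X^{\bullet}) \simeq \Dec_{n} F(E_{*}[t], X^{\bullet})$.
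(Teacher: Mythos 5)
Your proposal is correct and follows essentially the same route as the paper's proof: commute $F(E_{*}[t],-)$ past the totalization, reduce to the levelwise claim that $F(E_{*}[t],-)$ carries homological truncations to Postnikov truncations, and verify this on a direct sum of shifts of cofree comodules using the vanishing of $\Ext^{s}(E_{*}[t],C)$ for $s>0$ and $C$ cofree. Your extra care about infinite direct sums is a reasonable point the paper elides; it is settled by the adjunction $\epsilon_{*}\dashv\epsilon^{*}$, which computes maps out of $E_{*}$ into cofree objects as maps out of the compact generator in $\dcat(E_{*})$.
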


\begin{proof}
Since $F(E_{*}, -)$ preserves totalizations, it is enough to check that $F(E_{*}, -)$ takes homological truncations to Postnikov truncations. Since $X^{\bullet}$ is levelwise a direct sum of shifts of cofree comodules, it is enough to check this in the latter case, where it is clear since $\Ext^{s}(E_{*}[t], C) = 0$ for $s > 0$ whenever $C$ is cofree.
\end{proof}

\begin{corollary}
\label{corollary:adams_spectral_sequence_for_cofree_things_same_as_homological_decalage}
The $E_{*}K$-based Adams spectral sequence in $\eD(E_{*}E)$ has a second page isomorphic to the first page of the spectral sequence associated to the homological d\'{e}calage $h\Dec_{n} (E_{*}K)^{\otimes \bullet}$ of the Amitsur resolution.
\end{corollary}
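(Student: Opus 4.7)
The plan is to combine Lemma~\ref{lemma:homological_decalage_and_spectral_decalage}---which converts the homological d\'{e}calage of a cosimplicial object levelwise built from cofree comodules into the spectral d\'{e}calage of its mapping spectrum---with the Deligne--Levine identification recalled in Construction~\ref{construction:decalage}, matching the first page of a spectral d\'{e}calage with the second page of the underlying totalization spectral sequence. Since the $E_{*}K$-based Adams spectral sequence in $\eD(E_{*}E)$ is by construction the totalization spectral sequence of $F(E_{*}, (E_{*}K)^{\otimes \bullet+1})$, these two inputs will combine to give the claim essentially automatically, provided the Amitsur resolution satisfies the cofreeness hypothesis of the lemma.

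The first real step is thus to verify that each term $(E_{*}K)^{\otimes (m+1)}$ of the Amitsur cosimplicial object is discrete and cofree as an $E_{*}E$-comodule. Discreteness is immediate from the flatness of $E_{*}E$ over $E_{*}$: since $E_{*}K$ is concentrated in degree zero and all relevant tensor products are underived, no higher homology appears. Cofreeness is a version of Milnor's change of variables. The canonical isomorphism $E_{*}K \simeq E_{*}E \otimes_{E_{*}} K_{*}$ exhibits $E_{*}K$ as an extended comodule on $K_{*}$, and an iterated application of the standard isomorphism $E_{*}E \otimes_{E_{*}} N \cong E_{*}E \otimes_{E_{*}} N^{\textnormal{triv}}$, valid for any $E_{*}E$-comodule $N$ and obtained by straightening out the diagonal coaction using the antipode, propagates cofreeness to all higher tensor powers.

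With this in hand, I would apply Lemma~\ref{lemma:homological_decalage_and_spectral_decalage} to obtain a natural equivalence of towers
\[
F(E_{*}, h\Dec_{n}(E_{*}K)^{\otimes \bullet}) \simeq \Dec_{n} F(E_{*}, (E_{*}K)^{\otimes \bullet}),
\]
and then pass to the associated spectral sequences. The right hand side is, by definition, the spectral d\'{e}calage of the cosimplicial object whose totalization spectral sequence is the $E_{*}K$-based Adams spectral sequence, so the Deligne--Levine theorem identifies its first page with the second page of that Adams spectral sequence. I expect the only non-routine piece to be the cofreeness verification; everything else is formal bookkeeping to match indexings between the two spectral sequences.
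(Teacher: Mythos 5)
Your overall strategy is the same as the paper's: feed the Amitsur resolution into Lemma~\ref{lemma:homological_decalage_and_spectral_decalage} and then invoke the Deligne--Levine identification of the spectral d\'{e}calage's first page with the totalization spectral sequence's second page. However, your verification of the lemma's hypothesis contains a genuine error. You claim the terms $(E_{*}K)^{\otimes(m+1)}$ are \emph{discrete} because ``all relevant tensor products are underived'' and ``no higher homology appears.'' This is false: the tensor products here are the derived tensor products in $\eD(E_{*}E)$, and $E_{*}K \simeq E_{*}E \otimes_{E_{*}} K_{*}$ is annihilated by $\mfrak$, hence very far from flat over $E_{*}$. The flatness of $E_{*}E$ lets you pull all the $E_{*}E$ factors to one side, but you are left with a factor $K_{*}^{\otimes_{E_{*}} m}$ (derived), whose homology is $\Tor_{E_{*}}(K_{*},K_{*})^{\otimes_{K_{*}}(m-1)} \simeq \Lambda_{K_{*}}((\mfrak/\mfrak^{2})^{\vee})^{\otimes_{K_{*}}(m-1)}$, concentrated in degrees $0$ through $n(m-1)$. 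So the Amitsur terms are emphatically not in the heart for $m \ge 2$.

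The hypothesis of Lemma~\ref{lemma:homological_decalage_and_spectral_decalage} is nonetheless satisfied, but for a different reason than you give: since $K_{*}$ is a field, $K_{*}^{\otimes_{E_{*}} m}$ splits as a direct sum of shifts of $K_{*}$, so $(E_{*}K)^{\otimes m} \simeq (E_{*}E)^{\otimes m}\otimes_{E_{*}} K_{*}^{\otimes_{E_{*}} m}$ is a direct sum of \emph{shifts} of cofree comodules --- which is exactly what the lemma requires and exactly what the paper checks. This is not a cosmetic point: the nontrivial shifts coming from the exterior Tor-algebra are the entire content of the homological d\'{e}calage, and they are what ultimately produce the filtration by powers $\mfrak^{k}/\mfrak^{k+1}$ in \cref{proposition:adams_sseq_assocaited_to_e_k}. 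If your discreteness claim were true, the d\'{e}calage tower would be constant and the downstream identification would collapse to nothing. Your cofreeness argument via the change-of-variables isomorphism for extended comodules is fine; it is the discreteness step that must be replaced by the explicit computation of the derived tensor powers.
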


\begin{proof}
Both the Adams spectral sequence and the spectral sequence of the tower are obtained by applying $F(E_{*}[t], -)$ (for all $t$ at once) and using the relevant spectral sequence in spectra. Thus, the result then follows from Levine's work and  \cref{lemma:homological_decalage_and_spectral_decalage}, since
\[
E_{*}K^{\otimes n} \simeq (E_{*}E)^{\otimes n} \otimes_{E_{*}} K_{*} ^{\otimes_{E_*} n} 
\]
and 
\[
K_{*}^{\otimes_{E_*} n} \simeq \Tor_{E_{*}}(K_{*}, K_{*}) \otimes_{K_{*}} \ldots \otimes_{K_{*}} \Tor_{E_{*}}(K_{*}, K_{*}),
\]
where the right hand side has $(n-1)$ factors and the unadorned tensor products are the derived tensor products over $E_{*}$. This is a direct sum of shifts of $K_{*}$, as needed. 
\end{proof}
The idea is now to relate the homological d\'{e}calage of $E_{*}K$ to that of $K_{*}$. 

\begin{lemma}
\label{lemma:k_mapping_into_ek_a_decalage_isomorphism}
The morphism $K_{*} \rightarrow E_{*}E \otimes K_{*} \simeq E_{*}K$ of algebras in the derived category of $E_*E$-comodules induces an isomorphism of spectral sequence associated to the homological d\'{e}calage of Amitsur resolutions. 
\end{lemma}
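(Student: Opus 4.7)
The plan is to show the comparison map $K_*^{\otimes_{E_*}^L(\bullet+1)} \to (E_*K)^{\otimes_{E_*}^L(\bullet+1)}$ induces an equivalence on the layers of the homological décalage towers in $\dcat(E_*E)$, which gives the desired isomorphism of spectral sequences by naturality of the décalage construction.

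First, I would compute the décalage layers via K\"unneth over $E_*$. For the $K_*$-side,
\[
H_t(K_*^{\otimes_{E_*}^L(n+1)}) \cong \Tor^{E_*}_t(K_*,\dots,K_*),
\]
and by flatness of $E_*E$ over $E_*$,
\[
H_t((E_*K)^{\otimes_{E_*}^L(n+1)}) \cong (E_*E)^{\otimes_{E_*}(n+1)} \otimes_{E_*} \Tor^{E_*}_t(K_*,\dots,K_*),
\]
the latter being cofree as an $E_*E$-comodule on the remaining $E_*E$ factors. The comparison at each level is the $(n+1)$-fold tensor power of the comodule structure map $K_*\to E_*E\otimes_{E_*}K_*$.

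Second, by \cref{lemma:homological_decalage_and_spectral_decalage} applied to the levelwise cofree cosimplicial object $(E_*K)^{\otimes\bullet+1}$, the functor $F(E_*[t],-)$ carries the $E_*K$-side décalage tower to the spectral décalage of the cosimplicial $E_*$-module $(E_*E)^{\otimes_{E_*}\bullet}\otimes_{E_*}K_*^{\otimes_{E_*}^L(\bullet+1)}$. Under this identification the comparison becomes the inclusion induced by inserting the unit $E_*\to E_*E$ into the extra $E_*E$ factors. Showing that this inclusion induces an isomorphism on the cosimplicial cohomology of the relevant object, and hence on the $E_1$-page, reduces to a Hopf algebroid descent statement: the ``extra'' $(E_*E)^{\otimes\bullet}$ factors cosimplicially contract, with the counit $\epsilon\colon E_*E\to E_*$ providing the cosimplicial homotopy.

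The main obstacle is making this cosimplicial contraction precise in the presence of the Amitsur coface maps, which interleave the $E_*E$-cobar structure with the $K_*$-factors. The cleanest route is either to write down an explicit extra-degeneracy built from $\epsilon$, paralleling the standard argument that a Hopf algebroid cobar complex computes its coefficient comodule; or to reinterpret both spectral sequences as Adams-type spectral sequences in $\dcat(E_*E)$ associated to the common class of $K_*$-monomorphisms, and invoke their uniqueness once one checks that $K_*$ and $E_*K$ cut out the same class of injective objects (which follows because $K_*$ is a $K_*$-module retract of $E_*K$ via $\epsilon\otimes\id_{K_*}$). Once the $E_1$-isomorphism is established, naturality of the décalage propagates it to all higher pages and differentials.
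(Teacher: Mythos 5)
Your setup is right and matches the paper's: the map of d\'ecalage layers is, after using flatness of $E_*E$ to write $H_t\bigl((E_*K)^{\otimes(\bullet+1)}\bigr) \cong (E_*E)^{\otimes(\bullet+1)}\otimes_{E_*} H_t\bigl(K_*^{\otimes(\bullet+1)}\bigr)$, the levelwise tensor of the augmentation $E_*\to (E_*E)^{\otimes(\bullet+1)}$ with the fixed cosimplicial comodule $H_t(K_*^{\otimes(\bullet+1)})$. But the step you flag as ``the main obstacle'' is the entire content of the lemma, and neither of your proposed routes closes it. Route (a) as stated does not work: the extra codegeneracy of the cobar complex, built from the counit, would have to be tensored with a map $H_t(K_*^{\otimes(m+2)})\to H_t(K_*^{\otimes(m+1)})$ on the diagonal factor, and no such map exists; there is no cosimplicial contraction of the interleaved object. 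The correct resolution (and the paper's) is much softer: one does not need a contraction of the interleaved object at all. The augmentation $E_*\to (E_*E)^{\otimes(\bullet+1)}$ is a quasi-isomorphism of \emph{levelwise flat}, non-negatively graded cosimplicial comodules, and tensoring such a quasi-isomorphism with any fixed cosimplicial object preserves quasi-isomorphisms (Eilenberg--Zilber reduces this to tensoring a quasi-isomorphism of bounded-below flat complexes with a fixed complex). Since quasi-isomorphism is detected after forgetting to $E_*$-modules, the merely $E_*$-linear counit contraction suffices to know the cobar complex is exact, and flatness does the rest; totalizing then gives the equivalence of layers.

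Route (b) rests on a false premise. The retraction $\epsilon\otimes\id_{K_*}\colon E_*K\to K_*$ is only $E_*$-linear: the counit of $E_*E$ is not a comodule map, so $K_*$ is \emph{not} a retract of $E_*K$ in $\dcat(E_*E)$. Consequently $K_*$ and $E_*K$ do not determine the same injective class there: the $E_*K$-injectives are the relatively injective objects (retracts of cofree comodules), and $K_*$ is not one of them, so every $K_*$-monomorphism is $E_*K$-monic but not conversely. The two Adams spectral sequences in $\dcat(E_*E)$ are therefore genuinely different --- which is consistent with the lemma, since it only identifies the spectral sequences of the \emph{homological d\'ecalages} of the two Amitsur resolutions (towers built from $\Tot$ of homological truncations), not the Adams spectral sequences themselves; a uniqueness statement for Adams spectral sequences attached to a common injective class is the wrong tool here.
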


\begin{proof}
Since $E_{*}E$ is flat, for any $X \in \eD(E_{*}E)$ we have $\rmH_{n}(E_{*}E \otimes X) \simeq E_{*}E \otimes \rmH_{n}(X)$. Thus, the map between the layers of the homological d\'{e}calage towers is given by applying totalization to the map 
\[
\rmH_{n}(K_{*}^{\otimes \bullet}) \rightarrow E_{*}E^{\otimes \bullet} \otimes \rmH_{n}(K_{*}^{\otimes \bullet}).
\]
The map is obtained by tensoring the map 
\[
E_{*}^{\otimes \bullet} \rightarrow E_{*}E^{\otimes \bullet},
\]
which is a quasi-isomorphism of levelwise flat cosimplicial comodules with $\rmH_{n}(K_{*}^{\otimes \bullet})$, so it will be a quasi-isomorphism again. It follows that it is an equivalence after taking totalizations. 
\end{proof}

\begin{proposition}
\label{proposition:adams_sseq_assocaited_to_e_k}
From the second page on, the Adams spectral sequence associated to $E_{*}K$ is isomorphic to the spectral sequence obtained from the tower 
\[
\ldots \rightarrow E_{*} / \mfrak^{2} \rightarrow E_{*} / \mfrak,
\]
that is, it is the filtration by powers spectral sequence. 
\end{proposition}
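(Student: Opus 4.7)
My plan is to identify the two spectral sequences by directly comparing the towers that produce them. Combining Corollary~\ref{corollary:adams_spectral_sequence_for_cofree_things_same_as_homological_decalage} with Lemma~\ref{lemma:k_mapping_into_ek_a_decalage_isomorphism}, the $E_{*}K$-based Adams spectral sequence agrees from the second page on with the spectral sequence of the homological d\'ecalage tower $\{h\Dec_{n} K_{*}^{\otimes \bullet+1}\}_{n}$ in $\eD(E_{*}E)$. So it suffices to produce an equivalence of towers
\[
\{h\Dec_{n} K_{*}^{\otimes \bullet+1}\}_{n} \;\simeq\; \{E_{*}/\mfrak^{n+1}\}_{n}.
\]

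Both towers have limit $E_{*}$ itself: the filtration by powers because $E_{*}$ is $\mfrak$-complete, and the d\'ecalage because $\Tot(K_{*}^{\otimes \bullet+1}) \simeq E_{*}$, which follows from viewing the Amitsur complex of the augmentation $E_{*} \to K_{*}$ as a faithfully flat cover in the $\mfrak$-complete derived category. The bottom terms match tautologically: $h\Dec_{0} = \Tot(\tau_{\leq 0} K_{*}^{\otimes \bullet+1})$ is the totalization of the classical (underived) Amitsur complex, which collapses to $K_{*} = E_{*}/\mfrak$. I would then construct compatible maps up the tower by lifting this identification inductively, using that both towers are ``cellular'' in the sense that each stage is the fibre of a map into the next one whose fibre lives in the appropriate layer.

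The crux is identifying the $n$-th associated graded pieces compatibly. Since $E_{0}$ is regular with regular sequence $p, u_{1}, \ldots, u_{n-1}$ generating $\mfrak$, the Koszul resolution gives $\Tor^{E_{*}}_{s}(K_{*}, K_{*}) \cong \Lambda^{s}_{K_{*}}(\mfrak/\mfrak^{2})$ concentrated in homological degree $s$, and iterating yields that $\pi_{*} K_{*}^{\otimes_{E_{*}}^{L} k+1}$ is the $k$-fold tensor power of $\Lambda_{K_{*}}(\mfrak/\mfrak^{2})$ over $K_{*}$. The $n$-th layer of the homological d\'ecalage is the totalization of the cosimplicial object obtained by picking out external degree $n$; this is a bar-type construction on the exterior algebra $\Lambda_{K_{*}}(\mfrak/\mfrak^{2})$, which by Koszul duality computes $\Sym^{n}_{\kappa}(\mfrak/\mfrak^{2}) \otimes_{\kappa} K_{*}$. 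On the filtration-by-powers side, regularity of $E_{0}$ gives $\mfrak^{n}/\mfrak^{n+1} \cong \Sym^{n}_{\kappa}(\mfrak/\mfrak^{2}) \otimes_{\kappa} K_{*}$. Matching these identifications and assembling them via the cofibre sequences $\mfrak^{n+1}/\mfrak^{n+2} \to E_{*}/\mfrak^{n+2} \to E_{*}/\mfrak^{n+1}$ (and their analogues for the d\'ecalage) then gives the equivalence.

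The main obstacle will be making the Koszul-duality identification at the level of totalizations, rather than just individual layers, fully compatible with the tower structure; in particular, one must verify coherence of the differentials under the comparison. The cleanest way to achieve this is probably to package the construction via a bar--cobar/Koszul duality framework for the augmented $\mathbf{E}_{\infty}$-algebra $E_{*} \to K_{*}$, producing the comparison as a single morphism of towers rather than a layer-by-layer patchwork. A more concrete alternative is to choose explicit cellular models for each $E_{*}/\mfrak^{n+1}$ using iterated Koszul complexes and check that the induced map of exact couples is an isomorphism on $E_{1}$, which by standard spectral-sequence comparison then upgrades to an isomorphism from $E_{2}$ onwards.
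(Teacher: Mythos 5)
Your proposal is correct and follows essentially the same route as the paper: reduce via \cref{corollary:adams_spectral_sequence_for_cofree_things_same_as_homological_decalage} and \cref{lemma:k_mapping_into_ek_a_decalage_isomorphism} to the homological d\'ecalage of the $K_{*}$-Amitsur complex, then identify its layers by computing $\Tor_{E_{*}}(K_{*},K_{*})$ as an exterior algebra and recognizing the resulting cosimplicial object as the cobar complex computing $\mathrm{Cotor}_{\Lambda(\mfrak/\mfrak^{2})^{\vee}}(K_{*},K_{*}) \simeq \Sym(\mfrak/\mfrak^{2}) \simeq \bigoplus \mfrak^{n}/\mfrak^{n+1}$, which is exactly your Koszul-duality step. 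The paper likewise finishes by an inductive assembly of the tower from its layers, so the coherence concern you flag is handled no more elaborately there than in your sketch.
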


\begin{proof}
We know from \cref{corollary:adams_spectral_sequence_for_cofree_things_same_as_homological_decalage} that this Adams spectral sequence has second page isomorphic to the spectral sequence of the homological d\'{e}calage tower, which is in turn equivalent to the homological d\'{e}calage tower associated to the $K_{*}$-Adams resolution by \cref{lemma:k_mapping_into_ek_a_decalage_isomorphism}. Thus, it is enough to identify the latter. 

We're interested in the cosimplicial object 
\[
K_{*} \rightrightarrows K_{*} \otimes _{E_{*}} K_{*} \Rrightarrow \ldots,
\]
all tensor products being implicitly derived. Everything here is $2$-periodic and concentrated in even degrees in the internal grading, so that we can instead focus on the cosimplicial object 
\[
K_{0} \rightrightarrows K_{0} \otimes_{E_0} K_{0} \Rrightarrow \ldots,
\]
working in the derived category of $E_{0}$-modules. As an object of the latter, $K_{0} \otimes_{E_0} K_{0}$ is a direct sum of its homology groups, and we have 
\[
H_{*}(K_{0} \otimes_{E_{0}} K_{0}) \simeq \Tor^{E_{0}}_{*}(K_{0}, K_{0})
\]
and more generally 
\begin{equation}
\label{equation:homology_of_cosimplicial_kzero_resolution_and_tor}
H_{*}(K_{0}^{\otimes_{E_0} n}) \simeq \Tor_{*}^{E_{0}}(K_{0}, K_{0})^{\otimes_{K_{0}} n-1},
\end{equation}
where on the left we have the derived tensor product of $E_{0}$-modules and on the right the ordinary tensor product of graded $K_{0}$-modules.

Using that $E_{0}$ is a regular ring with residue field $K_{0}$, the 
canonical isomorphism 
\[
\Tor^{1}_{E_{0}}(K_{0}, K_{0}) \simeq (\mfrak / \mfrak^{2})^{\vee}
\]
with the tangent space extends to a grading-preserving isomorphism
\[
\Tor_{*}^{E_{0}}(K_{0}, K_{0}) \simeq \Lambda_{K_{0}}((\mfrak / \mfrak^{2})^{\vee})
\]
with the exterior algebra. This is flat over $K_{0}$, which is a field, so by standard arguments the cosimplicial object $K_{0}^{\otimes \bullet}$ on the right of (\ref{equation:homology_of_cosimplicial_kzero_resolution_and_tor}) encodes the cobar complex computing 
\[
\textnormal{Cotor}_{\Lambda_{K_{0}}((\mfrak / \mfrak^{2})^{\vee})}(K_{0}, K_{0}) \simeq \Sym_{K_{0}}(\mfrak/\mfrak^{2}) \simeq \bigoplus_{n \geq 0} \mfrak^{n}/\mfrak^{n+1}.
\]
Note that this is bigraded using the internal grading of the exterior algebra and the $\textnormal{Cotor}$-grading, with the summand $\mfrak^{n} / \mfrak^{n+1}$ being concentrated in degrees $(n, n)$.

Using the isomorphism in \cref{equation:homology_of_cosimplicial_kzero_resolution_and_tor} and passing to cohomology, we deduce that 
\[
H^{s}(H_{n}(K_{0}^{\otimes_{E_{0}} \bullet})) \simeq \begin{cases}
\mfrak^{n}/\mfrak^{n+1} & \mbox{when } $s=n$,\\
0 & \mbox{otherwise,}
\end{cases}
\]
where $H^{s}$ is the cohomology of the given cosimplicial abelian group. Thus, the totalization spectral sequence for $H_{n}(K_{0}^{\otimes_{E_{0}} \bullet})$, which we consider as a cosimplicial object of the heart of the derived $\infty$-category of $E_{0}$-modules, collapses and we deduce that 
\[
\Tot(H_{n}(K_{0}^{\otimes \bullet})) \simeq \Sigma^{-n} \mfrak^{n} / \mfrak^{n+1},
\]
It follows that the graded pieces of the homological d\'{e}calage filtration of $E_{*}$ are given by 
\[
\mathrm{fib}(\Dec_{n} \rightarrow \Dec_{n-1}) \simeq \Tot(\Sigma^{n} H_{n}(K_{*}^{\otimes \bullet})) \simeq \mfrak^{n} / \mfrak^{n+1} \otimes_{E_{0}} E_{*}
\]
and so by induction the tower itself must be of the claimed form. 
\end{proof}

\begin{theorem}
\label{theorem:k_based_adams_at_large_primes_iso_to_filtration_by_powers}
If $2p-2 > n^{2}+n+1$, the $K$-based Adams spectral sequence can be given an additional grading so that it becomes isomorphic to the spectral sequence induced by applying $\Ext_{E_{*}E}(E_{*}, -)$ to the filtration of $E_{*}$ given by powers $\mfrak^{n} E_{*}$ of the maximal ideal. 
\end{theorem}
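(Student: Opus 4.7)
The plan is to assemble the theorem directly from the two main inputs of this section, namely \cref{proposition:k_based_adams_has_an_additional_grading_for_which_it_is_ek_adams} and \cref{proposition:adams_sseq_assocaited_to_e_k}. Under the hypothesis $2p-2 > n^{2}+n+1$, the former result tells us that the $K$-based Adams spectral sequence for $S^{0}_{E}$ can be refined by an extra grading so as to coincide with the $E_{*}K$-based Adams spectral sequence computed in $\dcat(E_{*}E)$; the latter result identifies this algebraic Adams spectral sequence, from the $E_{2}$-page onwards, with the spectral sequence obtained by applying $\Ext_{E_{*}E}(E_{*},-)$ to the tower
\[
\ldots \to E_{*}/\mfrak^{3} \to E_{*}/\mfrak^{2} \to E_{*}/\mfrak.
\]

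Concretely, I would first invoke \cref{proposition:k_adams_in_algebraic_model} and \cref{proposition:k_based_adams_has_an_additional_grading_for_which_it_is_ek_adams} to transport the $K$-based Adams spectral sequence across the algebraicity equivalence $\phi\colon h\spectra_{E}\simeq h\Mod_{P(E_{*})}$, producing an isomorphism of spectral sequences
\[
E_{r}^{K\text{-Adams}}(S^{0}_{E}) \cong E_{r}^{E_{*}K\text{-Adams}}(E_{*}),
\]
where on the right we have the Amitsur/Adams spectral sequence for the cosimplicial object $(E_{*}K)^{\otimes\bullet+1}$ in $\dcat(E_{*}E)$. The periodicization $P(E_{*})\otimes_{E_{*}}(-)$ equips the right-hand side with an additional grading coming from the splitting of $P(E_{*})$ into shifts of the unit, cf.\ \cref{remark:periodicization_simple_homotopy_groups}; this is precisely the extra grading referred to in the statement.

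Next, I would apply \cref{proposition:adams_sseq_assocaited_to_e_k}, which shows that from $E_{2}$ onwards the $E_{*}K$-based Adams spectral sequence coincides with the spectral sequence of the filtration by powers tower. The key mechanism here is d\'ecalage (\cref{construction:decalage}) together with \cref{lemma:homological_decalage_and_spectral_decalage}: because each term $(E_{*}K)^{\otimes n}$ is a direct sum of shifts of cofree comodules, mapping out of $E_{*}[t]$ interchanges the homological d\'ecalage tower of the Amitsur resolution with the classical d\'ecalage of the associated Adams spectral sequence, so a single isomorphism of towers in $\dcat(E_{*}E)$ produces the desired isomorphism of spectral sequences after applying $\Ext_{E_{*}E}(E_{*},-)$.

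Chaining the two isomorphisms gives the statement. There is no genuine obstacle left once these two propositions are in place; the only care required is bookkeeping of the additional grading. Specifically, one should check that the grading coming from periodicization on the $K_{alg}$-Adams side matches the natural grading on the filtration by powers spectral sequence indexed by $n$ in $\mfrak^{n}/\mfrak^{n+1}$. This is immediate from the proof of \cref{proposition:adams_sseq_assocaited_to_e_k}: the identification of the layers of the d\'ecalage tower produces $\mfrak^{n}/\mfrak^{n+1}$ in internal bidegree $(n,n)$, which is exactly the $n$-th graded piece under the periodicization grading on the left. Thus after this compatibility check, the theorem follows.
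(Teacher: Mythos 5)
Your proposal is correct and follows exactly the paper's own argument: the theorem is stated in the text as a direct combination of \cref{proposition:k_based_adams_has_an_additional_grading_for_which_it_is_ek_adams} and \cref{proposition:adams_sseq_assocaited_to_e_k}, which is precisely how you assemble it. The extra paragraph checking that the periodicization grading matches the $\mfrak^{n}/\mfrak^{n+1}$ grading is a useful elaboration of the bookkeeping that the paper leaves implicit.
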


\begin{proof}
This is a combination of \cref{proposition:k_based_adams_has_an_additional_grading_for_which_it_is_ek_adams} and \cref{proposition:adams_sseq_assocaited_to_e_k}.
\end{proof}

\begin{remark}
\label{remark:k_based_adams_at_large_primes_same_as_filtration_by_powers_in_group_cohomology}
If we pick $E = E_{n}$ to be the Morava $E$-theory of the Honda formal group law over $\mathbb{F}_{p^{n}}$, then since $\mfrak^{n} / \mfrak^{n+1} E_{*}$ is $\mfrak$-torsion, we have
\[
\Ext_{E_{*}E}^*(E_{*}, \mfrak^{n} / \mfrak^{n+1} E_{*}) \simeq \rmH_{\cts}^{*}(\mathbb{G}_{n}, \mfrak^{n} / \mfrak^{n+1} E_{*}).
\]
Thus, the spectral sequence of \cref{theorem:k_based_adams_at_large_primes_iso_to_filtration_by_powers} is isomorphic to the conditionally convergent spectral sequence
\[
\rmH_{\cts}^{*}(\mathbb{G}_{n}, \mfrak^{n}/\mfrak^{n+1} E_{*}) \implies \rmH_{\cts}^{*}(\mathbb{G}_{n}, E_{*})
\]
obtained by the same filtration of $E_{*}$, but in continuous $\mathbb{G}_{n}$-modules. 
\end{remark}

\section{Height one $K$-based Adams spectral sequence at an odd prime}
\label{sec:somecomputations}

Let us describe how using our methods one obtains an explicit description of the $K$-based Adams spectral sequence at $n=1$ and $p$ odd. Since in this case 
\[
2p-2 \geq 4 > 3 = n^{2}+n+1,
\]
\cref{theorem:k_based_adams_at_large_primes_iso_to_filtration_by_powers} and \cref{remark:k_based_adams_at_large_primes_same_as_filtration_by_powers_in_group_cohomology} imply that the relevant spectral sequence is isomorphic to the one computing cohomology $\rmH_{\cts}^{*}(\mathbb{G}_{n}, E_{*})$ using the filtration by powers of the maximal ideal, where $E_{0}$ is the Lubin--Tate ring of the Honda formal group law. 

\begin{notation}
We have $E_{*} \simeq \mathbb{Z}_{p}[u^{\pm 1}]$, $\fm = (p)$ and $\mathbb{G}_{1} \simeq \mathbb{Z}_{p}^{\times}$ with the action determined by  $\lambda_{*}(u) = \lambda u$ for $\lambda \in \mathbb{Z}_{p}$. We will denote the associated graded of $E_{*}$ by
\[
A_{*} \simeq \mathbb{F}_{p}[b][u^{\pm1}],
\]
with $|b| = (0, 1)$ the equivalence class of $p \in E_{0}$, and $|u| = (2, 0)$, where the first degree is internal and the second one is coming from the filtration.
\end{notation}

The $p$-adic group $\mathbb{Z}_{p}^{\times}$ is topologically cyclic at odd primes, generated by $\psi = \sigma (1 + p)$ for $\sigma$ a primitive $(p-1)$-th root of unity. It follows that for a profinite $\mathbb{Z}_{p}^{\times}$-module $M$, its cohomology can be computed using the $2$-term complex
\begin{equation}
\label{equation:2_term_complex_computing_cohomology}
\partial\colon M \rightarrow M,
\end{equation}
where $\partial = id_{M} - \psi_{*}$.

The complex of (\ref{equation:2_term_complex_computing_cohomology}) is a quasi-isomorphic quotient of the standard group cohomology cochain complex
\begin{equation}
\label{equation:standard_cochain_complex_computing_zptimes_cohomology}
M \rightarrow \Map_{\cts}(\mathbb{Z}_{p}^{\times}, M) \rightarrow \Map_{\cts}(\mathbb{Z}^{\times}_{p} \times \mathbb{Z}_{p}^{\times}, M) \rightarrow \ldots,
\end{equation}
the quotient map being given by the identity in degree zero and evaluation at the generator $\psi \in \mathbb{Z}_{p}^{\times}$ in degree one. 

From this explicit description we see that if we equip $M$ with a filtration, the above quotient map is a quasi-isomorphism of filtered complexes, where we consider both (\ref{equation:2_term_complex_computing_cohomology}) and  (\ref{equation:standard_cochain_complex_computing_zptimes_cohomology}) with the filtration induced from that of $M$. In particular, both filtrations induced isomorphic spectral sequences. 

\begin{remark}
The 2-term complex of (\ref{equation:2_term_complex_computing_cohomology}) can be used to directly compute the cohomology groups $\rmH_{\cts}^{*}(\mathbb{Z}_{p}^{\times}, \mathbb{Z}_{p}[u^{\pm 1}])$, recovering the classical image of $J$ pattern \cite[Lecture 35]{lurie2010chromatic}. Our goal is not to compute the above groups, which are well-known, but rather analyze the structure of the spectral sequence associated to filtration by powers of $p$. 
\end{remark}

We first determine the cohomology groups of the associated graded $A_*$. We have $\psi_{*} b = b$, because the latter can be represented by $p \in E_{0}$ which is necessarily acted on trivially, and since $\psi_{*} u \equiv \sigma u \pmod p$, we have $\psi_{*} u = [\sigma] u$ in the associated graded, where $[\sigma] \in \mathbb{F}_{p}$ is the image of $\sigma$. Thus, in the associated graded we have that $\partial(b) = 0$ and $\partial(u^{k})$ is a unit multiple of $u^{k}$ for $(p-1) \nmid k$ and zero otherwise. We deduce the following. 

\begin{proposition}
\label{proposition:e2_term_of_kass_at_n1_and_p_odd}
The $E_{2}$-term of the $K$-based Adams for $S^{0}$ at $n=1$ and $p > 2$ is given by 
\[
\Ext_{K_{*}K}^*(K_{*}, K_{*}) \cong \rmH_{\cts}^{*}(\mathbb{G}_{1}, A_{*}) \cong \rmH_{\cts}^{*}(\mathbb{Z}_{p}^{\times}, \mathbb{F}_{p}[b][u^{\pm 1}]) \cong \mathbb{F}_{p}[b][v_{1}^{\pm 1}] \otimes \Lambda(\zeta),
\]
where $v_{1} = u^{p-1}$ is of degree $(2p-2, 0, 0)$ and $\zeta$ is the class of $b$ of degree $(0, 1, 1)$, with the last degree the  cohomological one.
\end{proposition}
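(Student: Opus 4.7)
The plan is to chain three isomorphisms, reducing the problem to a standard computation of the continuous cohomology of $\mathbb{Z}_{p}^{\times}$. The first isomorphism $\Ext_{K_{*}K}^{*}(K_{*}, K_{*}) \cong \rmH_{\cts}^{*}(\mathbb{G}_{1}, A_{*})$ will come from \hyperref[thmc:introductionthmc]{Theorem C}, which is available since the bound $2p-2 > n^{2}+n+1 = 3$ holds for all odd primes at $n = 1$; passing to $E_{2}$-pages identifies the $K$-Adams $E_{2}$ with that of the filtration-by-powers spectral sequence of \cref{corollary:millers_may_sseq_is_filtration_by_powers}, and \cref{remark:k_based_adams_at_large_primes_same_as_filtration_by_powers_in_group_cohomology} (together with \cref{corollary:k_e_cohomology_same_as_morava_stabilizer}) rewrites the latter as continuous cohomology of the Morava stabilizer group acting on $A_{*}$.

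For the second isomorphism I will explicitly identify $\mathbb{G}_{1}$. At height one the Honda formal group is the multiplicative formal group, already defined over $\mathbb{F}_{p}$, so the Galois factor of $\mathbb{G}_{1}$ is trivial, while $\mathbb{S}_{1}$ is the units in the endomorphism ring of the multiplicative formal group over $\mathbb{F}_{p}$, which is $\mathbb{Z}_{p}^{\times}$. Together with the identification $A_{*} \cong \mathbb{F}_{p}[b][u^{\pm 1}]$ recorded immediately before the statement, this gives the second isomorphism.

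For the cohomology computation proper, I will use the two-term cochain complex $\partial = \id - \psi_{*} \colon M \to M$ introduced just before the proposition, with $\psi = \sigma(1+p)$ a topological generator of $\mathbb{Z}_{p}^{\times}$ and $M = A_{*}$. The action formulas $\psi_{*}b = b$ and $\psi_{*}u = [\sigma]u$ yield $\partial(u^{k}b^{j}) = (1 - [\sigma]^{k}) u^{k} b^{j}$, which is a unit multiple of $u^{k}b^{j}$ unless $(p-1) \mid k$. A direct inspection of the kernel and cokernel then gives $\rmH^{0} \cong \rmH^{1} \cong \mathbb{F}_{p}[b][v_{1}^{\pm 1}]$ with $v_{1} = u^{p-1}$, while $\rmH^{\geq 2} = 0$ is automatic from the complex having length two, forcing $\zeta^{2} = 0$ and producing the claimed ring structure.

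The main concern is less a genuine obstacle than a matter of bookkeeping: one must confirm that the generator $\zeta$ carries the stated tri-degree $(0,1,1)$ and can legitimately be called the class of $b$. This can be verified by tracking a specific cocycle representative through the 2-term complex above; alternatively, one can use the topological splitting $\mathbb{Z}_{p}^{\times} \cong \mathbb{Z}/(p-1) \times \mathbb{Z}_{p}$ at odd primes, exploit vanishing of mod-$p$ cohomology of the finite factor, and reduce cleanly to $\rmH^{*}_{\cts}(\mathbb{Z}_{p}, \mathbb{F}_{p}) = \Lambda(\zeta)$ tensored against the invariant subring $(A_{*})^{\mathbb{F}_{p}^{\times}} = \mathbb{F}_{p}[b][v_{1}^{\pm 1}]$, with the multiplicative structure coming for free from this K\"{u}nneth decomposition.
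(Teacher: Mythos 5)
Your proof is correct and follows essentially the same route as the paper's: the identification of the $E_{2}$-term via \cref{theorem:k_based_adams_at_large_primes_iso_to_filtration_by_powers} and \cref{remark:k_based_adams_at_large_primes_same_as_filtration_by_powers_in_group_cohomology}, the identification $\mathbb{G}_{1} \cong \mathbb{Z}_{p}^{\times}$, and the computation of $\partial = \id - \psi_{*}$ on the monomials $u^{k}b^{j}$ of the associated graded are exactly the steps the paper takes. The bookkeeping point you flag about the tri-degree of $\zeta$ is also left implicit in the paper, so your proposed verification via the splitting $\mathbb{Z}_{p}^{\times} \cong \mathbb{Z}/(p-1) \times \mathbb{Z}_{p}$ is a reasonable, if optional, supplement.
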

By \cref{remark:k_based_adams_at_large_primes_same_as_filtration_by_powers_in_group_cohomology}, if the $K$-based Adams is isomorphic to the spectral sequence computing $\rmH_{\cts}^{*}(\mathbb{Z}_{p}^{\times}, \mathbb{Z}_{p}[u^{\pm 1}])$ induced by the filtration of (\ref{equation:2_term_complex_computing_cohomology}) by powers of $p$, with $E_{2}$-term as above and with differentials $d_{r}\colon E_{r} \rightarrow E_{r}$ of degree $(0, r, 1)$. We will now analyze this spectral sequence. 

We know that $\partial(p) = 0$ in $E_{0}$, so that $b$ is a permanent cycle. The same is true for $\zeta$, for degree reasons, and we deduce that all of the differentials are both $b$ and $\zeta$-linear, and so the structure of the spectral sequence is completely determined by what it does on $v_{1}^{k}$. One computes with no difficulty that if we write $k = p^{n}m$, where $p \nmid m$, then
\[
\partial(u^{(p-1)k}) \equiv \text{unit} \cdot p^{n+1} u^{(p-1)k} \pmod {p^{n+2}}
\]
from which we deduce that 
\begin{equation}
\label{equation:differentials_in_k_based_ass_for_sphere_at_height_one_odd_prime}
    d_{i}(v_{1}^{k}) = 
    \begin{cases}
        0 & \text{for } i < n+1 \\
        \zeta b^{n+1} v_{1}^{k} & \text{for } i = {n+1}.
    \end{cases}
\end{equation}
Note that the first computation is also implied by the Leibniz rule for $p$-th powers. The extension problems are resolved using multiplication by $b$, recovering the classical answer that 
\[
\rmH_{\cts}^{s, t}(\mathbb{Z}_{p}^{\times}, \mathbb{Z}_{p}[u^{\pm 1}]) = \begin{cases} \mathbb{Z}_{p} & \mbox{if } (s, t) = (0, 0), \\
\mathbb{Z}_{p} \{\zeta\} & \mbox{if } (s, t) = (1, 0), \\
\mathbb{Z}/p^{n+1} \{\zeta u^{(p-1)p^{n}m} \} & \mbox{if } (s, t) = (1, (2p-2)p^{n}m) \mbox{, where } p \nmid m, \\
0 & \mbox{otherwise,} \end{cases}
\]
where $\{-\}$ denotes a generator of the given group. Thus, the spectral sequence is as in the following familiar ``image of J'' pattern
\[
\begin{tikzcd}[column sep=tiny]
	&& {} \\
	&& {} & \bullet & \ldots && \bullet & \ldots && \bullet & \ldots && \bullet & \ldots \\
	&&& \bullet & \bullet && \bullet & \bullet && \bullet & \bullet && \bullet & \bullet \\
	&&& {\bullet_{\zeta b}} & {\bullet_{b^{2}}} && \bullet & \bullet && \bullet & \bullet && \bullet & \bullet \\
	{} &&& {\bullet_{\zeta}} & {\bullet_{b}} && {\bullet_{v_{1} \zeta}} & {\bullet_{v_{1}b}} && \bullet & \bullet && \bullet & \bullet \\
	{} &&& {} & {\bullet_{1}} &&& {\bullet_{v_{1}}} &&& {\bullet_{v_{1}^{2}}} &&& {\bullet_{v_{1}^{p}}} \\
	&& {} & {-1} & 0 &&& {2p-2} & \ldots &&& \ldots &&& {} \\
	&& {} &&&&&&&& {} && {} & {} & {} & {}
	\arrow[color={rgb,255:red,214;green,92;blue,92}, from=6-8, to=4-7]
	\arrow[color={rgb,255:red,214;green,92;blue,92}, from=5-8, to=3-7]
	\arrow[color={rgb,255:red,214;green,92;blue,92}, from=6-11, to=4-10]
	\arrow[color={rgb,255:red,214;green,92;blue,92}, from=5-11, to=3-10]
	\arrow[color={rgb,255:red,214;green,153;blue,92}, from=6-14, to=3-13]
	\arrow["{t-s}", shift left=5, from=7-3, to=7-15]
	\arrow[color={rgb,255:red,214;green,92;blue,92}, from=4-8, to=2-7]
	\arrow[color={rgb,255:red,214;green,92;blue,92}, from=4-11, to=2-10]
	\arrow[color={rgb,255:red,214;green,153;blue,92}, from=5-14, to=2-13]
	\arrow["s", shorten >=12pt, from=7-3, to=1-3]
\end{tikzcd}
\]
with even longer differentials supported on $v_{1}^{p^{k}}$ for $k \geq 1$. 

It is interesting to observe that this shows that already at height one, the $K$-based Adams spectral sequence has non-zero differentials of arbitrary length. 

\begin{example}[Non-completely convergent $K$-Adams]
\label{ex:noncompleteconvergence}
Using the above calculation, we can give an example of a $K$-local spectrum for which its $K$-based Adams spectral sequence is not completely convergent. Note that we know that it is always conditionally convergent by \cref{prop:knnilpotentcompletion}. 

To this end, we will construct a spectrum $X$ for which $\lim_r^1E_r^{0,0}(X)$ does not vanish at $n =1$ and $p > 2$. Set $\lambda_k = 2(p-1)p^k$ and consider the spectrum
\[
X = L_{K(1)}\bigoplus_{k\ge 0}S^{-\lambda_k}.
\]
In bidegree $(s,t) = (0,0)$, the $E_2$-page is given by
\[
E_2^{0,0}(X) \cong \bigoplus_{k\ge 0} \F_p\{v_1^{p^k}\},
\]
where the generators correspond to the maps $v_1^{p^k}\colon K(n)_* \to K(n)_*S^{\lambda_k}$. The formulas for the differentials of \cref{equation:differentials_in_k_based_ass_for_sphere_at_height_one_odd_prime} show that (up to reindexing) the filtration on $E_2^{0,0}(X)$ is given by 
\[
\ldots \subset \bigoplus_{k \ge 2}\F_p\{v_1^{p^k}\} \subset \bigoplus_{k \ge 1}\F_p\{v_1^{p^k}\} \subset \bigoplus_{k \ge 0}\F_p\{v_1^{p^k}\} = E_2^{0,0}(X).
\]
Therefore, we obtain an exact sequence
\[
0 \to \lim_rE_r^{s,t}(X) \to E_2^{0,0}(X) = \bigoplus_{k\ge 0} \F_p\{v_1^{p^k}\} \to \prod_{k \ge 0}\F_p\{v_1^{p^k}\} \to \lim_r^1E_r^{s,t}(X) \to 0. 
\]
In particular, we see that $\lim_rE_r^{s,t}(X) = 0$, while $\lim_r^1E_r^{s,t}(X) \neq 0$.
\end{example}

\part{Homology of inverse limits and the algebraic chromatic splitting conjecture}

In the third part of the work, we largely move away from Morava $K$-theory, and we focus on homology of $K$-local spectra. From the perspective of $E$-local category, $K$-localization is a form of completion, and so we first focus on homology of inverse limits, construction a spectral sequence computing these in the generality of an arbitrary adapted homology theory. 

In the case of $K$-localization, the $E_{2}$-page of this spectral sequence is given by derived functors of limits in $E_{*}E$-comodules. We describe the latter in terms of cohomology of the Morava stabilizer group, and use it to compute the zeroth one at all heights and primes. At height one, we compute these derived functors completely. 

\section{The homology of inverse limits}
\label{section:homology_of_inverse_limits}

We begin with a general review of the context for the construction of the modified Adams spectral sequence, following the approach of Devinatz and Hopkins \cite{dev_morava}. This approach will then be employed to construct a spectral sequence that computes the homology of inverse limits in a presentable stable $\infty$-category.

In particular, this gives rise to a spectral sequence for computing the $E$-homology of the inverse limit of a tower of spectra $(X_{\alpha})$ from the derived functors of inverse limits of the tower of $E_*E$-comodules $(E_*X_{\alpha})$, for suitable ring spectra $E$. The question of when this spectral sequence converges is subtle, and will be studied in detail in the case of Morava $E$-theory. The material in this section is based on unpublished work of Mike Hopkins and Hal Sadofsky.

\subsection{Adapted homology theories}\label{ssec:adaptedhomology}

In this short section we will recall basic facts about adapted homology theories. Informally, these are exactly those homology theories which admit an Adams spectral sequence based on injectives. Everything here is classical, although our presentation will be most close to \cite[\S 2]{patchkoria2021adams}.

A locally graded $\infty$-category is an $\infty$-category equipped with a distinguished autoequivalence $[1]_{\ccat}\colon \ccat \rightarrow \ccat$. A locally graded functor $f\colon \ccat \rightarrow \dcat$ is functor equipped with a natural isomorphism $f \circ [1]_{\ccat} \simeq [1]_{\dcat} \circ f$. 

\begin{definition}
Let $\ccat$ be a stable $\infty$-category, considered as a locally graded $\infty$-category using the suspension functor, and $\acat$ an arbitrary locally graded abelian category. We say a locally graded functor $H\colon \ccat \rightarrow \acat$ is a \emph{homology theory} if for any cofibre sequence 
\[
X \rightarrow Y \rightarrow Z
\]
in $\ccat$, the induced diagram 
\[
H(X) \rightarrow H(Y) \rightarrow H(Z)
\]
in $\acat$ is exact in the middle. 
\end{definition}

\begin{remark}
The requirement that $H$ is locally graded amounts to specifying an isomorphism 
\[
H(\Sigma X) \simeq H(X)[1]
\]
natural in $X \in \ccat$. 
\end{remark}

In cases of interest to us, $H$ will be \emph{Grothendieck} in the sense that 
\begin{enumerate}
    \item $\ccat$ is presentable, 
    \item $\acat$ is Grothendieck abelian and 
    \item $H$ preserves arbitrary direct sums.
\end{enumerate}
In particular, this means that $\acat$ has enough injectives. Any injective object $I$ of $\acat$ gives rise to a cohomological functor
\[
\Hom_{\acat}(H(-),I)\colon \ccat^{\op} \to \acat.
\]
Since $\ccat$ is presentable and stable, Brown's representability theorem holds~\cite[Proposition 2.15]{patchkoria2021adams}, so that there exists some $D(I) \in \Ho\ccat$ representing $\Hom_{\acat}(H(-),I)$. It follows that, for any object $X \in \ccat$, there is a natural equivalence
\[
\Hom_{\acat}(H(X),I) \simeq \pi_{0}\Hom_{\ccat}(X,D(I)).
\]
Note that the object $D(I)$ is $H$-local, in the sense that for any object $Y \in \ccat$ with $H(Y) = 0$ we have $\Hom(Y,D(I)) =0$. Moreover, the identity map on $D(I)$ corresponds under this isomorphism to a natural counit map $H(D(I)) \to I$. 

\begin{definition}\label{def:adapted}
We say $H\colon \ccat \to \acat$ is \emph{adapted} if the counit map $H(D(I)) \to I$ is an equivalence for any injective $I \in \acat$.
\end{definition}

\begin{example}
Rational homology, viewed as a functor from spectra to graded rational vector spaces, is an adapted homology theory. In this case, the lift $D(V)$ of a (graded) $\Q$-vector space $V$ is the (generalized) Eilenberg--MacLane spectrum $HV$.
\end{example}

\begin{example}
\label{example:mod_p_homology_adapted_when_valued_in_comodules}
In contrast to the previous example, mod $p$ homology is not adapted for any $p$; indeed, the lift of $\F_p$ is $H\F_p$, but its mod $p$-cohomology is the mod $p$ Steenrod algebra $\cA_p$ which is larger than $\F_p$. 

However, $H_*(-,\F_p)$ has more structure: it takes values in the category $\Comod_{\cA_p}$ of graded $\cA_p$-comodules. It turns out that the homology theory
\[
H_*(-;\F_p)\colon \Sp \to \Comod_{\cA_p}
\]
is adapted. 
\end{example}

\begin{remark}
The phenomena visible in  \cref{example:mod_p_homology_adapted_when_valued_in_comodules} is typical in the sense that asking for $H$ to be adapted is to ask for $\acat$ to encode all available homological information. This generalizes considerably, one can show that any homology theory factors uniquely through an adapted one followed by an exact comonadic functor of abelian categories \cite[\S 3.3]{patchkoria2021adams}
\end{remark}

In fact, the previous example generalizes considerably. Let $R$ be a homotopy commutative ring spectrum which satisfies the Adams condition, that is, it can be written as a filtered colimit of finite spectra $X_{\alpha}$ with the properties that 

\begin{enumerate}
    \item $R_*X_{\alpha}$ is a finitely generated projective $R_*$-module and 
    \item the K\"{u}nneth map $E^{*}X_{\alpha} \rightarrow \Hom_{E_{*}}(E_{*}X_{\alpha}, E_{*})$ is an isomorphism. 
\end{enumerate}
Note that the second condition follows from the first whenever $R$ can be made $\mathbf{E}_{1}$. 

For example, $H \mathbb{F}_{p}$ or any Landweber exact homology theory satisfy the Adams condition \cite[Theorems 1.4.7-9]{hovey_htptheory}, but $H\Z$ does not~\cite[Page  17]{goersshopkins_moduli}.  This condition guarantees that the associated Hopf algebroid $(R_*,R_*R)$ is Adams and thus has a well-behaved category of comodules $\Comod_{R_*R}$; see \cite[Section 1.4]{hovey_htptheory} for details.

One way in which the Adams-type condition implies that the category $\Comod_{R_{*}R}$ is well-behaved also in the topological sense is the following result of Devinatz. 

\begin{lemma}[{\cite[Theorem 1.5 and Section 2]{dev_morava}}]\label{lem:dlift}
Let $R$ be a topologically flat commutative ring spectrum. For any injective $R_*R$-comodule $I$, the identity map on $D(I)$ induces an isomorphism $R_*D(I) \cong I$. In other words, the functor
\[
R_0\colon \spectra \to \Comod_{R_*R}
\]
is an adapted homology theory. 
\qed
\end{lemma}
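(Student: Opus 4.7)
The strategy is to reduce to a cogenerating class of ``extended'' injectives in $\Comod_{R_*R}$, namely those of the form $R_*R \otimes_{R_*} J$ where $J$ is an injective $R_*$-module, and to verify the claim directly for these. Given any injective $R_*R$-comodule $I$, one embeds its underlying $R_*$-module into an injective envelope $J$; the composite
\[
I \to R_*R \otimes_{R_*} I \to R_*R \otimes_{R_*} J
\]
of the coaction with the map induced by $I \to J$ is then a monomorphism of comodules, the second map being injective because $R_*R$ is flat over $R_*$. Since $I$ is injective, this embedding splits, realizing $I$ as a retract of an extended injective. As both $D(-)$ and the counit $R_*D(-) \to (-)$ are additive, it suffices to prove the result when $I = R_*R \otimes_{R_*} J$.

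For such an extended injective, the change-of-rings adjunction provides a natural isomorphism $\Hom_{R_*R}(R_*X, R_*R \otimes_{R_*} J) \cong \Hom_{R_*}(R_*X, J)$, so that $D(I)$ represents the cohomological functor $X \mapsto \Hom_{R_*}(R_*X, J)$. To compute $R_*D(I)$, I would invoke the Adams condition to write $R \simeq \colim_\alpha X_\alpha$ with each $X_\alpha$ a finite spectrum and $R_*X_\alpha$ finitely generated projective over $R_*$. Then, using Spanier--Whitehead duality for each $X_\alpha$,
\[
R_*D(I) \cong \colim_\alpha \pi_*(X_\alpha \otimes D(I)) \cong \colim_\alpha [DX_\alpha, D(I)]_*.
\]
Representability of $D(I)$ combined with the isomorphism $R_*DX_\alpha \cong \Hom_{R_*}(R_*X_\alpha, R_*)$ and the projectivity of $R_*X_\alpha$ identifies this colimit with
\[
\colim_\alpha \Hom_{R_*}\bigl(\Hom_{R_*}(R_*X_\alpha, R_*), J\bigr) \cong \colim_\alpha R_*X_\alpha \otimes_{R_*} J \cong R_*R \otimes_{R_*} J = I.
\]

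The main obstacle is to show that this abstract isomorphism coincides with the intrinsic counit map $R_*D(I) \to I$ coming from the identity on $D(I)$. This amounts to a naturality check: one must trace how the identity in $[D(I), D(I)]$ decomposes under the filtered colimit presentation $R \simeq \colim_\alpha X_\alpha$ and under finite Spanier--Whitehead duality, and then verify compatibility on each $X_\alpha$ with the evaluation pairing $R_*X_\alpha \otimes_{R_*} J \to R_*R \otimes_{R_*} J$. Once this is checked at each finite stage, passing to the colimit concludes the argument.
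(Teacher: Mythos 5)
Your argument is correct and is essentially the proof from the cited source: the paper itself offers no proof beyond the reference to Devinatz, and the reduction to extended injectives $R_*R\otimes_{R_*}J$ followed by the Adams-condition/Spanier--Whitehead duality computation of $R_*D(I)$ is exactly how that reference (and Hovey's treatment of Adams Hopf algebroids) proceeds. The one step you rightly flag --- checking that the abstract isomorphism agrees with the counit induced by $\mathrm{id}_{D(I)}$ --- is a routine naturality verification at each finite stage $X_\alpha$, so nothing is missing.
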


\subsection{Construction of the spectral sequence}
\label{subsection:construction_of_the_spectral_sequence}

The starting point of our construction is the existence of modified Adams towers for any homology theory $H\colon \ccat \to \acat$. We collect its properties in the next result: 

\begin{lemma}\label{lem:modadamstower}
Consider an adapted homology theory $H\colon \ccat \to \acat$. Let $X \in \ccat$ and suppose
\[
\xymatrix{0 \ar[r] & H(X)=C_0 \ar[r]^-{\eta_0} & I_0 \ar[r]^-{\tau_0} & I_1 \ar[r]^-{\tau_1} & \ldots \\
& & C_0 \ar[u]^-{\eta_0} & C_1 \ar[u]^-{\eta_1} & \ldots}
\]
is an injective resolution of $H(X)$ in $\acat$, where $C_i = \ker(\tau_i)$ for all $i$. There exists a tower of objects in $\ccat$ over $X$ of the form
\[
\xymatrix{X=X_0 \ar[d]_{f_0} & X_1 \ar[l]_-{g_0} \ar[d]_{f_1} & X_2 \ar[l]_-{g_1} \ar[d]_{f_2} & \ldots \ar[l]_-{g_2} \\ 
\Sigma^{0}D(I_0) \ar@{-->}[ru]_{\delta_0} & \Sigma^{-1}D(I_1) \ar@{-->}[ru]_{\delta_1} & \Sigma^{-2}D(I_2) \ar@{-->}[ru]_{\delta_2} & \ldots,}
\]
where the dotted maps are the boundary maps shifting degree by $1$, and  such that the following properties are satisfied for all $i$:
\begin{enumerate}
	\item $X_{i+1} \simeq \fib(f_i)$;
	\item $H(X_i) \cong C_i[{-i}]$;
	\item $H(f_i) = \eta_i[{-i}]$;
	\item $H(g_i)=0$;
	\item $H(\delta_i)$ is the surjection $I_i[{-i}] \to C_{i+1}[{-i}]$.  
\end{enumerate}
If $X$ is $H$-local, then $X_i$ and $D(I_i)$ are $H$-local for all $i$. Moreover, this tower is weakly functorial; that is, a map $X \to Y$ can be extended to a (non-canonical) map of towers $(X_i) \to (Y_i)$. 
\end{lemma}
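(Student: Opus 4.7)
The plan is to construct the tower by induction on $i$, using adaptedness of $H$ to lift each $\eta_i$ to a map of $\ccat$, and then taking fibres. Set $X_0 := X$. For the inductive step, suppose $X_i$ has been constructed together with an isomorphism $H(X_i) \cong C_i[-i]$. By the defining property of $D$, together with the locally graded structure, we have
\[
\pi_0 \Map_{\ccat}(X_i, \Sigma^{-i} D(I_i)) \cong \Hom_{\acat}(H(X_i)[i], I_i) \cong \Hom_{\acat}(C_i, I_i),
\]
so the map $\eta_i \colon C_i \to I_i$ in the injective resolution lifts to a morphism $f_i \colon X_i \to \Sigma^{-i}D(I_i)$ with $H(f_i) = \eta_i[-i]$. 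Define $X_{i+1} := \fib(f_i)$ and let $g_i, \delta_i$ be the resulting structure maps of the fibre sequence. This is the step to iterate.

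To verify properties (2)--(5), consider the long exact sequence obtained by applying $H$ to the cofibre sequence $X_{i+1} \to X_i \to \Sigma^{-i}D(I_i)$. Adaptedness gives $H(\Sigma^{-i}D(I_i)) \cong I_i[-i]$, and since $\eta_i\colon C_i \hookrightarrow I_i$ is injective in $\acat$, the connecting map is an isomorphism from $I_i[-i]/\eta_i(C_i[-i])$ onto $H(X_{i+1})[-1]$. The identification $I_i/\eta_i(C_i) \cong C_{i+1}$ coming from the resolution then yields $H(X_{i+1}) \cong C_{i+1}[-(i+1)]$, while the exactness and injectivity of $\eta_i$ force $H(g_i) = 0$ and identify $H(\delta_i)$ with the surjection $I_i[-i] \twoheadrightarrow C_{i+1}[-i]$.

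The $H$-locality assertion is formal: $H$-local objects are closed under fibres and shifts, and $D(I_i)$ is $H$-local by construction, so $H$-locality propagates along the tower from $X_0 = X$. For weak functoriality, a morphism $\phi\colon X \to Y$ induces $H(\phi)$, which by injectivity of the target resolution lifts (non-uniquely) to a map of injective resolutions $(I^X_\bullet, \eta^X_\bullet) \to (I^Y_\bullet, \eta^Y_\bullet)$ compatible with the $C_i$. Functoriality of $D$ then gives maps $\Sigma^{-i}D(I^X_i) \to \Sigma^{-i}D(I^Y_i)$, and at each stage the square
\[
\begin{tikzcd}
X_i \ar[r, "f^X_i"] \ar[d] & \Sigma^{-i}D(I^X_i) \ar[d] \\
Y_i \ar[r, "f^Y_i"'] & \Sigma^{-i}D(I^Y_i)
\end{tikzcd}
\]
commutes up to homotopy because both composites correspond under the adjunction to the same map $H(X_i) \to I^Y_i[-i]$. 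Passing to fibres produces the required (non-canonical) lift $X_{i+1} \to Y_{i+1}$ and completes the induction.

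The only genuinely subtle point is verifying commutativity of the naturality square on the nose of $\pi_0$-level adjunction data, rather than as coherent homotopies; fortunately the conclusion asks only for a weakly functorial tower, so the ambiguity in lifting at each stage is harmless. Routine diagram chases with the long exact sequences handle the rest.
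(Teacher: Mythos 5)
Your proposal is correct and follows the standard construction: the paper's own ``proof'' simply cites \cite[Section 1]{dev_morava} for exactly this inductive lift-and-take-fibres argument (representability of $\Hom_{\acat}(H(-),I_i)$ by $D(I_i)$, adaptedness to identify $H(f_i)$ with $\eta_i[-i]$, and the long exact sequence plus injectivity of $\eta_i$ to get properties (2)--(5)), so you have essentially written out the argument the paper delegates to the reference. The only blemish is a harmless indexing slip where you write that the connecting map lands in $H(X_{i+1})[-1]$ rather than $H(\Sigma X_{i+1})\cong H(X_{i+1})[1]$; your final identification $H(X_{i+1})\cong C_{i+1}[-(i+1)]$ is nevertheless the right one, and the locality and weak-functoriality arguments match the paper's (naturality of injective resolutions plus representability).
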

\begin{proof}
This lemma is an axiomatization of the results proven in \cite[Section 1]{dev_morava}. The tower so constructed consists of $H$-local spectra as noticed in \cref{ssec:adaptedhomology}, while naturality follows from the naturality of injective resolutions.
\end{proof}

\begin{notation}
We refer to the tower $(X_i)$ constructed in \cref{lem:modadamstower} as the modified Adams tower of $X$ with respect to the adapted homology theory $H\colon \ccat \to \acat$.
\end{notation}

Since the limit $\varprojlim I_{\alpha}$ of an injective tower of injectives in $\acat$ is injective, the construction of the previous subsection yields an object $D(\varprojlim I_{\alpha})$ of $\ccat$. In order to proceed, we need an auxiliary characterization of injective towers of objects in a Grothendieck abelian category $\acat$.

\begin{lemma}\label{lem:charinjtower}
Let $\acat$ be a Grothendieck abelian category. An object $I = (I_{\alpha}) \in \acat^{\N}$ is injective if and only if $I_{\alpha}$ is injective for all $\alpha \in J$ and all structure maps in $I$ are split epimorphisms. 
\end{lemma}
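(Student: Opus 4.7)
The plan is to prove both directions using the left adjoint to evaluation at a fixed index. For each $\alpha \in \N$, the evaluation functor $\mathrm{ev}_\alpha \colon \acat^{\N} \to \acat$ admits a left adjoint $i_{\alpha,!}$ whose value on $M \in \acat$ is the tower with $(i_{\alpha,!} M)_\beta = M$ for $\beta \leq \alpha$ (with identity structure maps below $\alpha$) and $(i_{\alpha,!} M)_\beta = 0$ for $\beta > \alpha$. Because this formula is levelwise exact in $M$, the functor $i_{\alpha,!}$ is exact and in particular preserves monomorphisms.

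For the forward direction, assume $I$ is injective in $\acat^\N$. Given any monomorphism $A \hookrightarrow B$ in $\acat$, applying $i_{\alpha,!}$ yields a monomorphism of towers; combining the adjunction $\Hom_{\acat^\N}(i_{\alpha,!}(-),I) \cong \Hom_{\acat}(-,I_\alpha)$ with the injectivity of $I$, one concludes that $\Hom_{\acat}(B,I_\alpha) \to \Hom_{\acat}(A,I_\alpha)$ is surjective, so each $I_\alpha$ is injective. To split the structure map $t_\alpha \colon I_{\alpha+1} \to I_\alpha$, consider the obvious monomorphism $i_{\alpha,!}(I_\alpha) \hookrightarrow i_{\alpha+1,!}(I_\alpha)$ and the map $i_{\alpha,!}(I_\alpha) \to I$ adjoint to $\id_{I_\alpha}$; by injectivity of $I$ it extends to a morphism $i_{\alpha+1,!}(I_\alpha) \to I$, which by adjunction is a morphism $s_\alpha \colon I_\alpha \to I_{\alpha+1}$, and chasing through the adjunction shows $t_\alpha \circ s_\alpha = \id_{I_\alpha}$.

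For the backward direction, fix sections $s_\alpha$ of the structure maps, which yield direct sum decompositions $I_{\alpha+1} \cong I_\alpha \oplus K_\alpha$ where $K_\alpha := \ker(t_\alpha)$ is itself injective as a direct summand of the injective $I_{\alpha+1}$. Given a monomorphism $\iota \colon A \hookrightarrow B$ in $\acat^\N$ (levelwise monic) and a map $f \colon A \to I$, build a lift $\tilde{f} \colon B \to I$ by induction on $\alpha$. Use injectivity of $I_0$ to extend $f_0$ to $\tilde f_0 \colon B_0 \to I_0$. At the inductive step, write the desired $\tilde f_{\alpha+1} \colon B_{\alpha+1} \to I_\alpha \oplus K_\alpha$ as a pair of components $(\phi,\psi)$. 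Compatibility with structure maps forces $\phi = \tilde f_\alpha \circ b_\alpha$; using that $f$ is already a morphism of towers, one verifies that this $\phi$ automatically restricts to the $I_\alpha$-component of $f_{\alpha+1}$ on $A_{\alpha+1}$. The remaining component $\psi \colon B_{\alpha+1} \to K_\alpha$ must extend the $K_\alpha$-component of $f_{\alpha+1}$ along $\iota_{\alpha+1}$, which is possible by injectivity of $K_\alpha$. Assembling these components gives the required $\tilde f_{\alpha+1}$.

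The argument is essentially formal once one has the splittings; the only point requiring care is the verification that the forced first component is compatible with $f_{\alpha+1}$ on $A_{\alpha+1}$, which is an immediate consequence of the fact that $f$ and $\tilde f_\alpha$ are compatible with the structure maps. No substantial obstacle arises.
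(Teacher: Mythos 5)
Your proof is correct, but it takes a genuinely different route from the paper's. For the forward direction the paper works with the inclusion $j\colon \N^{\delta}\to\N$ of the discrete subcategory and observes that the unit $I \to j_*j^*I$ is a monomorphism, which must split because $I$ is injective; since $j_*j^*I$ has projections as structure maps, $I$ inherits split epimorphic structure maps as a retract. You instead use the pointwise left Kan extensions $i_{\alpha,!}$ (left adjoints to evaluation): exactness of $i_{\alpha,!}$ gives injectivity of each $I_{\alpha}$, and extending the counit $i_{\alpha,!}(I_{\alpha})\to I$ along the monomorphism $i_{\alpha,!}(I_{\alpha})\hookrightarrow i_{\alpha+1,!}(I_{\alpha})$ produces the section of $t_{\alpha}$ --- the adjunction bookkeeping here is right, and the naturality square at level $\alpha$ is exactly what forces $t_{\alpha}\circ s_{\alpha}=\mathrm{id}$. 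Both are formal adjunction arguments of comparable length; the paper's splits all structure maps at once, while yours gets injectivity of the levels and the splittings from the same device. For the converse, the paper simply cites Jannsen, whereas you give the full inductive construction using the decompositions $I_{\alpha+1}\cong I_{\alpha}\oplus K_{\alpha}$ with $K_{\alpha}$ injective; the one point that genuinely needs checking --- that the forced component $\tilde f_{\alpha}\circ b_{\alpha}$ agrees with the $I_{\alpha}$-component of $f_{\alpha+1}$ on $A_{\alpha+1}$ --- does follow, as you say, from $f$ and $\tilde f_{\alpha}$ being compatible with the structure maps, and monomorphisms in $\acat^{\N}$ are indeed levelwise since limits in a functor category are computed levelwise. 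So your write-up is self-contained where the paper defers to the literature.
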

\begin{proof}
This is proven in \cite[Proposition 1.1]{jannsen_contetcohom}. We sketch the argument of the implication we need. Let $j\colon \N^{\delta} \to \N$ be the inclusion of the discrete set of natural numbers into the poset of natural numbers. This induces an adjunction $(j^*,j_*)$ between diagram categories. If $I$ is injective, then the natural monomorphism $I \to j_*j^*I$ is split. By construction, the structure maps of $j_*j^*I$ are projections, so $I$ also has split surjective structure maps, and the claim follows.
\end{proof}

\begin{proposition}\label{prop:dlim}
If $I=(I_{\alpha}) \in \Inj(\cA^{\N})$ is an injective tower in $\acat$, then there is a preferred equivalence
\[
\xymatrix{D(\varprojlim_{\acat} I_{\alpha}) \ar[r]^-{\sim} & \varprojlim_{\ccat} D(I_{\alpha}).}
\]
of objects in $\ccat$, well-defined up to homotopy. 
\end{proposition}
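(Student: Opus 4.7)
The plan is to construct a canonical comparison map from $D(\varprojlim I_\alpha)$ to $\varprojlim D(I_\alpha)$ using representability, and then verify it is an equivalence via the Yoneda lemma, using the split epimorphic structure maps in the tower to kill the relevant $\varprojlim^{1}$-obstruction.

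By \cref{lem:charinjtower}, each $I_\alpha$ is injective in $\acat$ and each transition map $I_{\alpha+1}\to I_\alpha$ is a split epimorphism, so $\varprojlim_{\acat} I_\alpha$ is injective and $D(\varprojlim I_\alpha)$ is defined. Applying Yoneda to the representability property $\pi_{0}\Hom_{\ccat}(-,D(I)) \simeq \Hom_{\acat}(H(-),I)$ promotes $D$ to a functor from injectives in $\acat$ to $\Ho\ccat$, so the compatible cone of projections $\varprojlim_{\acat} I_\beta \twoheadrightarrow I_\alpha$ yields a canonical morphism
\[
\phi\colon D(\textstyle\varprojlim_{\acat} I_\alpha) \to \varprojlim_{\ccat} D(I_\alpha)
\]
in $\Ho\ccat$, unique up to homotopy by Yoneda.

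To verify $\phi$ is an equivalence, I would test it against an arbitrary object $X\in\ccat$. Since a covariant functor preserves split epimorphisms, each structure map $D(I_{\alpha+1})\to D(I_\alpha)$ is split epi in $\Ho\ccat$; in particular the tower of graded abelian groups $\pi_{*}\Map_{\ccat}(X, D(I_\alpha))$ consists of split surjections and is Mittag--Leffler, so the Milnor exact sequence degenerates to give
\[
\pi_{n}\Map_{\ccat}(X,\textstyle\varprojlim_{\ccat} D(I_\alpha)) \simeq \textstyle\varprojlim_{\alpha}\pi_{n}\Map_{\ccat}(X, D(I_\alpha))
\]
for every $n\in\Z$. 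Using the local grading of $H$ together with adaptedness, each term on the right identifies with $\Hom_{\acat}(H(X)[n], I_\alpha)$, and since $\Hom_{\acat}(-,-)$ commutes with limits in the second variable, the limit equals $\Hom_{\acat}(H(X)[n],\textstyle\varprojlim_{\acat} I_\alpha)\simeq \pi_{n}\Map_{\ccat}(X, D(\varprojlim I_\alpha))$. By inspection of the construction this chain of isomorphisms is induced by precomposition with $\phi$, so $\phi$ is an equivalence.

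The main obstacle is controlling the $\varprojlim^{1}$-term in the Milnor sequence, which would a priori obstruct both the surjectivity of the comparison map and its identification with the algebraically-defined limit; this is precisely where the hypothesis that the tower be \emph{injective} enters through \cref{lem:charinjtower}, rather than the weaker hypothesis that each $I_\alpha$ be individually injective.
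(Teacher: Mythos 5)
Your proposal is correct and follows essentially the same route as the paper: construct $\phi$ by lifting the compatible family of maps $D(\varprojlim I_\alpha)\to D(I_\alpha)$ through the Milnor sequence, then check it is an equivalence by Yoneda, using representability to identify $[X,D(\varprojlim I_\alpha)]_*$ with $\varprojlim[X,D(I_\alpha)]_*$ and killing the $\varprojlim^1$-term via the split-epimorphic structure maps supplied by \cref{lem:charinjtower}. The only cosmetic difference is that you deduce the Mittag--Leffler condition from functoriality of $D$ preserving split epimorphisms, whereas the paper reads it off from the isomorphism $[X,D(I_\alpha)]_{*+1}\cong\Hom_{\acat}^{*+1}(H(X),I_\alpha)$; both rest on the same lemma.
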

\begin{proof}
For brevity, let us write $I_{\infty} := \varprojlim_{\acat} I_{\alpha}$; note that this is an injective object of $\acat$. The canonical structure maps $\varprojlim_{\acat)} I_{\alpha} \to I_{\alpha}$ give arrows
\[
D(I_{\infty}) \rightarrow D(I_{\alpha}),
\]
well-defined and compatible up to homotopy. By the Milnor exact sequence, these can be lifted to a homotopy class of maps 
\[
\xymatrix{\phi\colon D(I_{\infty}) \ar[r] & \varprojlim D(I_\alpha),}
\]
in $\ccat$. We claim this is an equivalence, which we will check by verifying that both sides represent the same functor in the homotopy category. 

Indeed, for any $X \in \ccat$, there are isomorphisms
\begin{align*}
\textstyle
[X,D(I_{\infty})]_* & \cong \Hom_{\acat}^*(H(X), I_{\infty}) \\
& \cong \varprojlim \ \Hom_{\acat}^*(H(X), I_\alpha) \\
& \cong \varprojlim \ [X,D(I_\alpha)]_*.
\end{align*}
The Milnor sequence associated to the limit $\varprojlim D(I_{n})$ takes the form 
\begin{equation}\label{eq:milnorseq}
\xymatrix{0 \ar[r] & \varprojlim^1[X,D(I_{n})]_{*+1} \ar[r] & [X,\varprojlim D(I_{\alpha})]_* \ar[r] & \varprojlim[X,D(I_{\alpha})]_{*} \ar[r] & 0.}
\end{equation}
It follows from \cref{lem:charinjtower} that the structure maps in the tower 
\[
([X,D(I_{\alpha})]_{*+1}) \cong (\Hom_{\acat}^{*+1}(H(X),I_{\alpha}))
\]
are split epimorphisms, so the $\varprojlim^1$-term in \eqref{eq:milnorseq} vanishes. Thus, the Yoneda lemma implies that $\phi$ is an equivalence, as claimed. 
\end{proof}

The spectral sequence we construct will involve derived functors of the limit, and so we first establish a consistent notation for these. 
\begin{notation}
If $\acat$ is a Grothendieck abelian category and $F\colon J \rightarrow \acat$ is a diagram, then we write $\textstyle \varprojlim_{\acat} F$ for the limit taken in $\acat$ itself. Formation of limits defines a functor 
\[
\textstyle\varprojlim_{\acat}\colon \Fun(J, \acat) \rightarrow \acat
\]
and we denote its right derived functors by 
\[
\textstyle\varprojlim^{s}_{\acat} F.
\]

Alternatively, $\acat$ has an associated derived $\infty$-category, and we write 
\[
\textstyle\varprojlim_{\dcat(\acat)} F
\]
for the limit of the composite of $F$ with the inclusion with the heart. These two different notions of limits, one in $\acat$ and the other in the derived $\infty$-category, are related by the formula 
\[
\textstyle\varprojlim^{s}_{\acat} F \simeq H_{-s}(\textstyle\varprojlim_{\dcat(\acat)} F),
\]
where on the right hand side we have the homology of an object of the derived $\infty$-category. Thus, $\textstyle\varprojlim_{\dcat(\acat)} F$ can be thought of as the total derived functor of the limit. 
\end{notation}

\begin{theorem}
\label{thm:limss}
Let $H\colon \ccat \to \acat$ be an adapted homology theory. If $(X^{\alpha}) \in \ccat^{\N}$ is a tower in $\ccat$, then there exists a natural spectral sequence in $\acat$ of the form
\begin{equation}\label{eq:generalss}
\textstyle E_2^{s,t} \cong \varprojlim_{\acat}^s H(X^{\alpha})[-t]_{\acat} \implies H(\varprojlim X^{\alpha})[s-t]_{\acat}
\end{equation}
with differentials $d_r^{s,t}\colon E_r^{s,t} \to E_r^{s+r,t+r-1}$.
\end{theorem}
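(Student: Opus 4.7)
The plan is to realize the desired spectral sequence as the one associated to a modified Adams tower for $\varprojlim_\alpha X^\alpha$, built by assembling level-wise modified Adams towers of the $X^\alpha$ by means of \cref{prop:dlim}. To begin, I would pick an injective resolution
\[
0 \to (H(X^\alpha))_\alpha \to (I_0^\alpha)_\alpha \to (I_1^\alpha)_\alpha \to \cdots
\]
of $(H(X^\alpha))_\alpha$ inside the Grothendieck abelian category $\acat^{\N}$ of towers. Using the weak functoriality clause in \cref{lem:modadamstower} and induction on both $\alpha$ and the resolution depth $i$, construct a diagram $(X^\alpha_i)_{\alpha,i}$ in $\ccat$ such that, for each fixed $\alpha$, $(X^\alpha_i)_i$ is a modified Adams tower of $X^\alpha$ based on the injective resolution $(I_i^\alpha)_i$ in $\acat$ obtained by evaluation at $\alpha$, and such that the maps in the $\alpha$-direction intertwine these resolutions.

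Taking $\textstyle\varprojlim_\alpha$ then produces a tower $Y_\bullet$ in $\ccat$ with $Y_0 \simeq \varprojlim_\alpha X^\alpha$. For each $i$, the tower $(I_i^\alpha)_\alpha$ has split surjective transition maps between injective objects by \cref{lem:charinjtower}, so $\varprojlim_\alpha I_i^\alpha$ is injective in $\acat$, and by \cref{prop:dlim} the $i$-th associated graded of $Y_\bullet$ is
\[
\mathrm{cofib}(Y_{i+1} \to Y_i) \simeq \Sigma^{-i}\textstyle\varprojlim_\alpha D(I_i^\alpha) \simeq \Sigma^{-i} D\bigl(\varprojlim_\alpha I_i^\alpha\bigr).
\]
Adaptedness of $H$ then gives $H(D(\varprojlim_\alpha I_i^\alpha)) \cong \varprojlim_\alpha I_i^\alpha$, so applying $H$ to $Y_\bullet$ yields an exact couple whose $E_1$-term is the cochain complex $(\varprojlim_\alpha I_i^\alpha)_i$ with differential induced from the original resolution. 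Its cohomology in degree $s$ is, by definition, the derived functor $\varprojlim^s_\acat H(X^\alpha)$, giving the claimed identification of the $E_2$-page; the abutment is $H(Y_0) = H(\varprojlim_\alpha X^\alpha)$, and the differential bidegrees follow immediately from the exact couple.

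The main technical point is the coherent construction of the bidiagram $(X^\alpha_i)_{\alpha,i}$: the only input available is the weakly natural existence statement of \cref{lem:modadamstower}, so one must lift compatible data in $\acat^{\N}$ to compatible data in $\ccat^{\N}$ at each stage. The key observation is that every coherence condition encountered lives in a group of the form $[X^\alpha_i, \Sigma^{-i} D(I_i^\alpha)] \cong \Hom_\acat(H(X^\alpha_i), I_i^\alpha)[-i]$, where it holds tautologically because $(I_i^\alpha)_\alpha$ is a morphism of towers in $\acat^{\N}$. This makes the inductive construction feasible at the level of the homotopy category, which suffices for producing the exact couple and its associated spectral sequence; naturality in $(X^\alpha)$ then follows by applying the same argument to morphisms of towers.
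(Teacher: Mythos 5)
Your overall architecture---resolve the tower $(H(X^{\alpha}))$ injectively in $\acat^{\N}$, realize the resolution by modified Adams towers, apply $\varprojlim_{\alpha}$, and identify the layers via \cref{lem:charinjtower} and \cref{prop:dlim}---is the same as the paper's, and your identification of the $E_1$- and $E_2$-pages at the end is fine. The gap is in the construction of the bidiagram $(X^{\alpha}_i)_{\alpha,i}$. The weak functoriality clause of \cref{lem:modadamstower} only produces, for each single map $X^{\alpha+1}\to X^{\alpha}$, a non-canonical map of towers in the homotopy category, and your ``tautological vanishing'' argument only shows that each square involving the maps $f_i^{\alpha}\colon X^{\alpha}_i\to\Sigma^{-i}D(I^{\alpha}_i)$ commutes up to homotopy, since both composites are detected in $\Hom_{\acat}(H(X^{\alpha+1}_i),I^{\alpha}_i[-i])$. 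That is a $\pi_0$-statement only. To pass $\varprojlim_{\alpha}$ levelwise through the fibre sequences and conclude $Y_{i+1}\simeq\fib(Y_i\to\Sigma^{-i}\varprojlim_{\alpha}D(I^{\alpha}_i))$, you need the $f^{\alpha}_i$ to assemble into an actual morphism of towers in $\ccat$---indeed a coherent $\N^{\mathrm{op}}\times\N^{\mathrm{op}}$-shaped diagram---not merely levelwise homotopy-commutative squares. Rectifying such a ladder requires compatible choices of the homotopies themselves (and higher coherences); the relevant obstructions live in the higher homotopy of the mapping spaces $\Hom_{\ccat}(X^{\alpha}_i,\Sigma^{-i}D(I^{\alpha}_i))$, whose homotopy groups are $\Hom_{\acat}(H(X^{\alpha}_i)[k],I^{\alpha}_i)$ and do not vanish in general. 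Moreover, without controlling these choices, the induced maps on fibres, and hence the ``towers'' $(X^{\alpha}_{i+1})_{\alpha}$ at the next stage of your induction, are not well defined.

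The paper sidesteps all of this with one observation you are missing: the induced homology theory $H\colon\ccat^{\N}\to\acat^{\N}$ on the functor categories is itself adapted (injectives of $\acat^{\N}$ are characterized by \cref{lem:charinjtower} and lifted using \cref{prop:dlim}). Hence \cref{lem:modadamstower} applies verbatim in $\ccat^{\N}$ and produces the entire coherent bidiagram, with genuine levelwise fibre sequences in $\ccat^{\N}$, in a single step; $\varprojlim_{\alpha}\colon\ccat^{\N}\to\ccat$ then preserves these fibre sequences because it is a limit. If you want to retain your inductive construction, the clean fix is to run it so that it outputs a modified Adams tower in $\ccat^{\N}$ directly---which is exactly the paper's argument.
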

\begin{proof}
If $H$ is adapted, then so is the induced homology theory 
\[
H\colon \ccat^{\N} \rightarrow \acat^{\N}
\]
between the $\infty$-categories of towers \cite[Example 8.24]{patchkoria2021adams}. Thus, we have Adams resolutions of towers and we let $(X_i^{\alpha})_{i \geq 0}$ 
be a modified Adams tower of $(X^{\alpha})$ as in \Cref{lem:modadamstower}. 

Taking limits in the Adams resolution of $X^{\alpha}$ yields a tower in $\ccat$ of the form 
\[
\xymatrix{\varprojlim X_0^{\alpha} \ar[d]_{f_0} & \varprojlim X_1^{\alpha} \ar[l]_-{g_0} \ar[d]_{f_1} & \varprojlim X_2^{\alpha} \ar[l]_-{g_1} \ar[d]_{f_2} & \ldots \ar[l]_-{g_2} \\ 
\varprojlim\Sigma^{0}D(I_0^{\alpha}) \ar@{-->}[ru]_{\delta_0} & \varprojlim\Sigma^{-1}D(I_1^{\alpha}) \ar@{-->}[ru]_{\delta_1} & \varprojlim\Sigma^{-2}D(I_2^{\alpha}) \ar@{-->}[ru]_{\delta_2} & \ldots.}
\]
For brevity, let us write $[k] := [k]_{\acat}: \acat \rightarrow \acat$ for the internal degree shift. Setting $D_1^{s,t} = H(\varprojlim X_{s}^{\alpha})[s-t]$ and $E_1^{s,t}=H(\varprojlim \Sigma^{-s}D(I_s^{\alpha}))[s-t]$, this induces an exact couple of bigraded objects in $\acat$ of the form 
\[
\xymatrix{D \ar[rr]^-{i_1=H(g_i)[s-t]}  && D \ar[ld]^{j_1=H(f_i)[s-t]} \\
& E, \ar[lu]^{k_1=H(\delta_i)[s-t]} }
\] 
where the maps have bidegree $|i_1|=(-1,-1)$, $|j_1|=(0,0)$, and $|k_1|=(1,0)$. The $E_1$-term can be identified using \Cref{prop:dlim} and \Cref{lem:dlift} as
\[
\textstyle 
E_1^{s,t} = \varprojlim \Sigma^{-s}D(I_s^{\alpha})[s-t] \cong D(\varprojlim_{\acat} I_s^{\alpha})[-t] = (\varprojlim_{\acat} I_s^{\alpha})[-t]
\]
and $d_1 = \varprojlim_{\acat} \tau_s^{\alpha}$. This gives the $E_2$-term
\[
\textstyle E_2^{s,t} \cong \varprojlim_{\acat}^s(H_*X^{\alpha})[-t],
\]
as needed. It is clear from this construction and \Cref{lem:modadamstower} that the resulting spectral sequence is natural in the tower $X^{\alpha}$ and that all pages and differentials are in $\acat$. 
\end{proof}

\begin{warning}
Note that since the local grading $[1]_{\acat}\colon \acat \rightarrow \acat$ is an equivalence, it commutes with derived functors of the limit, and in the spectral sequence of \cref{thm:limss} we have 
\[
E_{2}^{s, t} := \textstyle\varprojlim_{\acat}^{s} H(X^{\alpha})[-t]_{\acat} \simeq (\textstyle\varprojlim_{\acat}^{s} H(x^{\alpha}))[-t]_{\acat} \simeq E_{2}^{s}[-t]_{\acat}
\]
Thus, the $t = 0$ line in the spectral sequence already determines the whole $E_{2}$-page. This is similar to the case of the Bockstein spectral sequence, which also has a periodic $E_{2}$-page. 
\end{warning}

\begin{remark}
Suppose that $(X^{\alpha})$ is a tower under $X$. Then, the image 
\[
\mathrm{im}(H(X)) \subseteq \textstyle\varprojlim^{0}_{\acat} H(X^{\alpha})
\]
consists of permanent cycles; that is, it is in the kernel of all the differentials in the spectral sequence of \cref{thm:limss}.

To see this, note that the assumption gives a map from the constant tower on $X$ to $(X^{\alpha})$, inducing a map of spectral sequences
\[
\xymatrix{\varprojlim_{\acat}^sH(X) \ar@{=>}[r] \ar[d] & H(X) \ar[d] \\
\varprojlim^{s}_{\acat} H(X^{\alpha}) \ar@{=>}[r] & H(\varprojlim X^{\alpha})}
\]
The $E_2$-page of the top spectral sequence is concentrated in 
\[
E_2^{0,t} \cong \textstyle\varprojlim_{\acat}^{0}H(X)[-t]_{\acat} \cong H(X)[-t]_{\acat},
\]
so that the spectral sequence collapses. The comparison map is readily identified with the induced  morphism $H(X) \to \varprojlim_{\acat} H(X^{\alpha})$, so the claim follows.
\end{remark}

\subsection{Derived limits of comodules}

We now specialize the above to the case of a homology theory on spectra corresponding to a ring spectrum of Adams type. In order to study the $E_2$-term of the spectral sequence of \cref{thm:limss}, we begin with a general discussion of inverse limits of comodules over suitable flat Hopf algebroids.

Let $(A, \Psi)$ be an Adams Hopf algebroid in the sense of \cite{hovey_htptheory}[1.4.3], so that $\Psi$ is a filtered colimit of dualizable comodules. We will describe an approach to computing limits in the derived $\infty$-category $\dcat(\Psi) = \dcat(\Comod_{\Psi})$ of comodules by reducing to the case of modules. The forgetful functor $\epsilon_*\colon \Comod_{\Psi} \to \Mod(A)$ is \emph{left} adjoint to the extended (or cofree) comodule functor $\epsilon^*$ which sends an $A$-module $M$ to $\Psi \otimes_A M$. These functors are exact and thus give rise to an adjunction:
\[
\epsilon_{*} \dashv \epsilon^{*}\colon \dcat(\Psi) \leftrightarrows \dcat(A).
\]
In terms of chain complexes, both can be computed levelwise.

The corresponding monad $\epsilon^*\epsilon_*$ can be used to construct a resolution of any comodule. Concretely, if $M$ is a $\Psi$-comodule, then we have the associated cobar complex
\[
0 \rightarrow M \rightarrow \Psi \otimes _{A} M \rightarrow \Psi \otimes _{A} \Psi \otimes _{A} M \rightarrow \ldots,
\]
which is a resolution of $M$ by extended comodules, that is, those of the form $\epsilon^{*} N = \Psi \otimes _{A} N$ for an $A$-module $N$. 

One way to phrase that the cobar complex is exact is to say that the augmented cosimplicial object from which it arises, namely
\begin{equation}\label{eq:amitsur}
    M \rightarrow \Psi \otimes _{A} M \rightrightarrows \Psi \otimes _{A} \Psi \otimes_{A} M \Rrightarrow \ldots,
\end{equation}
is a limit diagram in the derived $\infty$-category $\dcat(\Psi)$. Now, since limit diagrams are stable under levelwise limits, it follows that if $(M_{i})$ is a diagram of comodules, then the diagram 
\[
\textstyle\varprojlim _{\dcat(\Psi)} M_{i} \rightarrow \varprojlim _{\dcat(\Psi)} \Psi \otimes _{A} M_{i} \rightrightarrows \varprojlim _{\dcat(\Psi)} \Psi \otimes _{A} \Psi \otimes_{A} M_i \Rrightarrow \ldots,
\]
is also limit, giving an approach to computing $\varprojlim _{\dcat(\Psi)} M_{i}$. To see this, notice that since $\epsilon^{*}$ is a right adjoint and thus preserves limits, we have 
\[
\textstyle\varprojlim _{\dcat(\Psi)} \Psi \otimes _{A} N_{i} \simeq \varprojlim _{\dcat(\Psi)} \epsilon^{*} N_{i} \simeq \epsilon^{*} (\varprojlim _{\dcat(A)} N_{i}) \simeq \Psi \otimes _{A} (\varprojlim _{\dcat(A)} N_{i}).
\]
Applying this to the cosimplicial diagram \eqref{eq:amitsur}, we see that we get a limit diagram of the form 
\[
\textstyle\varprojlim _{\dcat(\Psi)} M_{i} \rightarrow \Psi \otimes _{A} (\varprojlim _{\dcat(A)} M_{i}) \rightrightarrows \Psi \otimes _{A} (\varprojlim _{\dcat(A)} \Psi \otimes_A M_{i}) \Rrightarrow \ldots
\]
which expresses the limit in the derived category of comodules using only limits taken in the category of modules. Passing to homology, which encodes the derived functors of the limit in the categories of $A$-modules and $\Psi$-comodules, we deduce the following. 

\begin{proposition}
\label{prop:spectral_sequence_computing_limits_of_comodules}
If $M_{i}$ is a diagram of comodules, then there exists a spectral sequence of comodules of signature
\[
E_{1}^{s, t} := \textstyle\Psi \otimes_{A} \varprojlim_{A}^s  (\Psi^{\otimes_{A} t} \otimes_{A} M_{i}) \implies  \varprojlim_{\Psi}^{s+t} M_{i}.
\]
computing the derived functors of the limit in comodules. 
\end{proposition}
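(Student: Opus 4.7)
The plan is to combine two facts established in the preceding discussion—that the Amitsur (cobar) resolution exhibits each comodule as a limit in $\dcat(\Psi)$, and that the right adjoint $\epsilon^{*}$ converts limits in $\dcat(A)$ into limits of extended comodules—with the standard totalization spectral sequence associated to a cosimplicial object in a stable $\infty$-category.

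First, since the Amitsur cosimplicial object provides a limit diagram in $\dcat(\Psi)$ for each $M_{i}$, and limits commute with limits, I would rewrite
\[
\textstyle\varprojlim_{\dcat(\Psi)} M_{i} \simeq \Tot\bigl(\varprojlim_{\dcat(\Psi)} \Psi^{\otimes_{A} (t+1)} \otimes_{A} M_{i}\bigr)_{t \geq 0}.
\]
The cosimplicial terms simplify via the fact that $\epsilon^{*}\colon \dcat(A) \to \dcat(\Psi)$ preserves limits (being a right adjoint), yielding
\[
\textstyle\varprojlim_{\dcat(\Psi)} \Psi^{\otimes_{A} (t+1)} \otimes_{A} M_{i} \simeq \Psi \otimes_{A} \bigl(\varprojlim_{\dcat(A)} \Psi^{\otimes_{A} t} \otimes_{A} M_{i}\bigr).
\]
Putting these together exhibits $\varprojlim_{\dcat(\Psi)} M_{i}$ as the totalization, in $\dcat(\Psi)$, of an explicit cosimplicial object whose $t$-th term involves only a limit in $\dcat(A)$.

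Second, the totalization spectral sequence of a cosimplicial object $X^{\bullet}$ in $\dcat(\Psi)$ has the form $E_{1}^{s,t} \cong H_{-s}(X^{t})$, converging to $H_{-s-t}(\Tot X^{\bullet})$, with differentials of the claimed bidegree. Applied to the cosimplicial object constructed above, and using flatness of $\Psi$ over $A$ so that $H_{-s}$ commutes with the outer tensor factor $\Psi \otimes_{A}-$, the $E_{1}$-page is identified with
\[
E_{1}^{s,t} \cong \Psi \otimes_{A} \textstyle\varprojlim_{A}^{s}(\Psi^{\otimes_{A} t} \otimes_{A} M_{i}),
\]
and the abutment is $\varprojlim_{\Psi}^{s+t} M_{i}$, as desired.

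The main point requiring care is that the Amitsur resolution genuinely furnishes a limit diagram in $\dcat(\Psi)$, not merely in $\dcat(A)$; this is precisely what was established in the paragraphs immediately preceding the statement, exploiting that the augmented cosimplicial object becomes split (hence contractible) after applying the exact, conservative forgetful functor $\epsilon_{*}$. The Adams condition on $(A,\Psi)$ is what guarantees that the derived $\infty$-categories of modules and comodules in play are sufficiently well-behaved, and in particular that the flatness needed to commute homology past $\Psi \otimes_{A}-$ in the identification of the $E_{1}$-page is available.
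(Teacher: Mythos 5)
Your argument is correct and follows essentially the same route as the paper: the result there is deduced directly from the preceding discussion, which exhibits $\varprojlim_{\dcat(\Psi)} M_{i}$ as the totalization of the cosimplicial object with terms $\Psi \otimes_{A} (\varprojlim_{\dcat(A)} \Psi^{\otimes_{A} t} \otimes_{A} M_{i})$ by combining the Amitsur limit diagram with the limit-preservation of $\epsilon^{*}$, and then passes to homology. Your additional explicit steps --- invoking the totalization spectral sequence and using flatness of $\Psi$ over $A$ to commute homology past $\Psi \otimes_{A} -$ --- are exactly what the paper leaves implicit in the phrase ``passing to homology.''
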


\begin{remark}
Note that in the particular case of sequential inverse limits, that is, diagrams of the form 
\[
\ldots \rightarrow M_{2} \rightarrow M_{1} \rightarrow M_{0},
\]
this spectral sequence will collapse after at most two pages, since in this case we have $\textstyle\varprojlim^{s} _{A} N_{i} = 0$ for $s \neq 0, -1$ and any diagram $N_{i}$ of $A$-modules.
\end{remark}

\begin{remark}
\label{rem:limit_of_comodules_as_a_chain_complex_under_mittag_leffler}
In an even more specific case, let us assume that we work with sequential inverse limits and that the maps $M_{n+1} \rightarrow M_{n}$ are all epimorphisms. If that is the case, the same is true for $\Psi \otimes _{A } \ldots \otimes _{A} M_{n+1} \rightarrow \Psi \otimes _{A} \ldots \otimes _{A} M_{n}$ and we deduce using the Mittag-Leffler condition that all of the $\textstyle\varprojlim^{1} _{A}$-terms vanish. In this case the spectral sequence of  \cref{prop:spectral_sequence_computing_limits_of_comodules} has only one potentially non-zero differential.

Phrasing it without the use of spectral sequences, we see that in this the diagram of comodules
\[
\textstyle\Psi \otimes _{A} (\varprojlim _{A} M_{i}) \rightarrow \Psi \otimes _{A} (\varprojlim _{A} \Psi \otimes _{A} M_{i}) \rightarrow \ldots,
\]
is an explicit presentation of the derived limit $\varprojlim _{\dcat(\Psi)} M_{i}$ as a chain complex. 
\end{remark}

\begin{example}\label{ex:limss}
Suppose that $R$ is a homotopy commutative ring spectrum of Adams-type. As $R_{*}: \spectra \rightarrow \Comod_{R_{*}R}$ is an adapted homology theory by \cref{lem:dlift}, \cref{thm:limss} specializes to give a spectral sequence
\[
\textstyle E_2^{s,t} \cong \varprojlim_{R_*R}^s(R_{*+t}X^{\alpha}) \implies R_{*+s+t}(\lim_{\alpha} X^{\alpha})
\]
for a tower of spectra $(X^{\alpha})$, where the derived functors of the limit are taken in $R_{*}R$-comodules. 

This is a spectral sequence in $R_{*}R$-comodules, so that in particular the $E_{2}$-page is \emph{trigraded}. Passing to internal degree zero, we obtain a more-pleasant looking bigraded spectral sequence 
\[
\textstyle E_2^{s,t} \cong (\varprojlim_{R_*R}^s R_{*}X^{\alpha})_{t} \implies R_{s+t}(\lim_{\alpha} X^{\alpha})
\]
Note that in general we cannot rewrite the $E_{2}$-page as ``$\textstyle\varprojlim^{s}_{R_{*}R}(R_{t}X^{\alpha})$'', as the derived functors of the limit are computed in $R_{*}R$-comodules, a structure which cannot be restricted to a single degree.
\end{example}

\begin{warning}
Even for quite reasonable $R$, such as $R = H\Q$, convergence of the spectral sequence of \cref{ex:limss} is a subtle problem. For example, let $(M_{p^i})$ be the tower of mod $p^i$ Moore spectra with the canonical structure maps, and suppose $R=H\Q$. Since
\[
\lim(\ldots \longrightarrow M_{p^2} \longrightarrow M_{p}) \simeq S_p^0,
\]
the $p$-complete sphere, the abutment of the spectral sequence is non-trivial, while the $E_2$-term is zero. We will study this question in more detail in the next section in the case of Morava $E$-theory. 
\end{warning}

\begin{remark}
Same methods as those leading to \cref{ex:limss} have been employed by Hovey to set up a spectral sequence computing the $R$-homology of a product of spectra, see \cite{hovey_product}. In \cite[Appendix A]{peterson_coalg}, a different construction of the spectral sequence of \cref{ex:limss} is given, based on Adams resolutions rather than modified Adams resolutions.
\end{remark}

\section{The Morava $E$-homology of inverse limits}
\label{section:morava_e_homology_of_inverse_limits}

The goal of this section is to study the spectral sequence constructed in \cref{thm:limss} in the special case of (uncompleted) Morava $E$-homology. In particular, we will identify the $E_2$-page for the sphere spectrum in terms of the continuous cohomology of the Morava stablizer group and exhibit conditions on the tower that ensure convergence.  

We will work with a particular form of Morava $E$-theory, which we used previously in \S\ref{sec:cohomology}. For convenience of the reader, let us fix our notation for the remainder of the current work. 

We fix a prime $p$ and a height $n > 0$ and we let $\mathbf{G}_{0}$ be the Honda formal group law over $\mathbb{F}_{q} := \mathbb{F}_{p^{n}}$. We write $E$ for the associated Lubin-Tate spectrum, so that we have a non-canonical isomorphism 
\[
E_{*} \simeq W(\mathbb{F}_{p^{n}})\llbracket u_{1}, \ldots, u_{n-1}\rrbracket[u^{\pm 1}].
\]
We write $\mfrak = (p,u_1,\ldots, u_{n-1})$ for the maximal ideal of $E_0$. By Goerss-Hopkins-Miller, the spectrum $E$ is acted on by the extended Morava stabilizer group $\mathbb{G}_{n} := \mathrm{Aut}(\mathbf{G}_{0} / \mathbb{F}_{p}) \rtimes \mathrm{Gal}(\mathbb{F}_{q} / \mathbb{F}_{p})$.

\begin{warning}
We remind the reader that, unless otherwise noted, we work with uncompleted $E$-homology, the uncompleted homology cooperations $E_{*}E := \pi_{*}(E \otimes E)$, and uncompleted $E_*E$-comodules.
\end{warning}

\subsection{Continuous cohomology of filtered colimits} 

In this subsection, we prove that continuous cohomology groups of $\mathbb{G}_n$ commute with certain filtered colimits. To this end, it will be useful for us to consider the following variant on the $\mfrak$-adic topology on an $E_{0}$-module. 

\begin{definition}
Let $M$ be an $E_{0}$-module. The \emph{local $\mfrak$-adic} topology on $M$ is the linear topology in which a submodule $U \subseteq M$ is open if for every finitely generated $E_{0}$-submodule $M^{\prime} \subseteq M$,
\[
U \cap M^{\prime} \subseteq M^{\prime}
\]
is open in the usual $\mfrak$-adic topology on $M^{\prime}$; that is, we have $\mfrak^{n} M^{\prime} \subseteq U \cap M^{\prime}$ for some $n$ depending on $M^{\prime}$. 
\end{definition}

\begin{example}
Suppose that $M$ is finitely generated over $E_{0}$. Then, the local $\mfrak$-adic topology on $M$ coincides with the usual $\mfrak$-adic topology. 
\end{example}

\begin{example}
\label{example:local_m_adic_topology_a_filtered_colimit}
Suppose that $M$ is an arbitrary $E_{0}$-module. Then, we can write $M \simeq \varinjlim M_{\alpha}$ as a filtered colimit of finitely generated $E_0$-modules. The local $\mfrak$-adic topology coincides with the colimit topology if we equip each $M_{\alpha}$ with its usual $\mfrak$-adic topology. 
\end{example}
The importance of the local $\mfrak$-adic topology in our context comes down to the following observation.

\begin{remark}
Let $X$ be a spectrum. Then, $E_{0}X$ is a continuous $\mathbb{G}_{n}$-module with respect to its local $\mfrak$-adic topology. To see this, observe that we can write $X \simeq \varinjlim X_{\alpha}$ as a filtered colimit of finite spectra, so that 
\begin{equation}
\label{equation:homology_a_colimit_of_homologies_of_finite_spectra}
E_{0}X \simeq \varinjlim E_{0} X_{\alpha}. 
\end{equation}
Each of $E_{0} X_{\alpha}$ is a continuous $\mathbb{G}_{n}$-module when considered with its $\mfrak$-adic topology, and $E_{0}X$ becomes a topological $\mathbb{G}_{n}$-module since it has the colimit topology with respect to (\ref{equation:homology_a_colimit_of_homologies_of_finite_spectra})
\end{remark}
We will be interested in the continuous cohomology of $\mathbb{G}_{n}$ with coefficients in an $E_{0}$-module equipped with its local $\mfrak$-adic topology. As we will see, this has an elegant description in terms of cohomologies of its finitely generated submodules. The key is the following lemma. 

\begin{lemma}
\label{lemma:mapping_into_local_madic_module_lands_in_a_fg_submodule}
Let $K$ be a compact Hausdorff topological space and $M$ be an $E_{0}$-module equipped with its local $\mfrak$-adic topology. Then, any continuous map $f\colon K \rightarrow M$ factors through a finitely generated submodule. In other words, we have 
\[
\map_{\cts}(K, M) \simeq \varinjlim \map_{\cts}(K, M_{\alpha}),
\]
where $M_{\alpha}$ is the poset of finitely generated submodules of $M$.
\end{lemma}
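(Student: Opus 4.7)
The map $\varinjlim \map_{cts}(K, M_\alpha) \to \map_{cts}(K, M)$ is injective since each inclusion $M_\alpha \hookrightarrow M$ is injective, so the content is surjectivity: every continuous $f\colon K \to M$ has image contained in some finitely generated submodule, and the resulting factored map $K \to M_\alpha$ remains continuous for the $\mfrak$-adic topology. The second step is routine once the first is known, via Artin--Rees after possibly enlarging $M_\alpha$, so I focus on the set-theoretic factoring, which I would prove by contradiction.

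Assume $f(K)$ is contained in no finitely generated submodule. Then one can inductively choose $k_i \in K$ so that $x_i := f(k_i)$ satisfies $x_{i+1} \notin N_i := \langle x_1, \ldots, x_i\rangle_{E_0}$; in particular the $x_i$, and hence the $k_i$, are pairwise distinct. Compactness of $K$ ensures that the infinite set $\{k_i\}$ has an accumulation point $k \in K$, and since $f$ is injective on $\{k_i\}$, continuity of $f$ promotes $y := f(k)$ to an accumulation point of $\{x_i\}$ in $M$: every open neighborhood of $y$ in $M$ contains infinitely many of the distinct elements $x_i$. I aim to contradict this by exhibiting an open neighborhood of $y$ in $M$ meeting $\{x_i\}$ in at most the single point $y$.

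The crucial input, exploiting Noetherianity of $E_0$, is that every finitely generated $N \subseteq M$ contains only finitely many $x_i$. Indeed, the ascending chain $N \cap N_1 \subseteq N \cap N_2 \subseteq \ldots$ of submodules of $N$ stabilizes at some $N \cap N_{i_0}$; for $i > i_0$, the membership $x_i \in N$ would force $x_i \in N \cap N_i = N \cap N_{i_0} \subseteq N_{i-1}$, contradicting $x_i \notin N_{i-1}$. By Krull's intersection theorem the $\mfrak$-adic topology on every finitely generated $N$ is Hausdorff, so its finite subsets are closed; therefore $\{x_i\} \setminus \{y\}$ meets each finitely generated $N$ in a closed set and is itself closed in the local $\mfrak$-adic topology of $M$. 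Its complement is the promised open neighborhood of $y$, delivering the contradiction. The main obstacle is precisely this passage from compactness in $K$ to a combinatorial closedness assertion in $M$, which is unlocked by combining Noetherianity of $E_0$ with Krull's intersection theorem.
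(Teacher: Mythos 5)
Your argument is correct and follows essentially the same route as the paper's: a proof by contradiction that inductively builds a sequence of values escaping every finitely generated submodule, Noetherianity of $E_0$ to show each finitely generated submodule meets that sequence in only finitely many points, Hausdorffness of finitely generated submodules to conclude the sequence is closed and discrete in $M$, and compactness of $K$ to contradict this. The only cosmetic differences are that you phrase the final contradiction via an accumulation point rather than via the impossibility of an infinite compact discrete subspace, and you invoke the ascending chain condition directly where the paper passes through the union $\bigcup_n M_n$.
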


\begin{proof}
Suppose by contradiction that for every finitely generated submodule $M^{\prime}$, there exists a $k \in K$ such that $f(k) \not\in M^{\prime}$. Proceeding inductively, we produce an increasing sequence 
\[
M_{0} \subseteq M_{1} \subseteq M_{2} \subseteq \ldots 
\]
of finitely generated submodules and points $k_{n} \in K$ such that $f(k_{n}) \in M_{n+1}$, but $f(k_{n}) \notin M_{n}$. 

We claim that an arbitrary finitely generated submodule $N$ contains at most finitely many of the $f(k_{n})$. To see this, let us write $M_{\infty} = \cup M_{n}$ and consider $N \cap M_{\infty}$; the latter is again finitely generated, as $E_{0}$ is noetherian, and its intersection with $\{ f(k_{n}) \}$ is the same as that of $N$. By finite generation, we have $N \cap M_{\infty} = N \cap M_{n}$ for sufficiently large $n$, proving the claim, as the latter contains at most $n$ of the $f(k_{n})$. 

Since each finitely generated submodule is Hausdorff, its finite subspaces are closed and discrete and we deduce that $T = \{ f(k_{n}) \}$ is closed and discrete as a subspace of $M$, as this is true for its intersection with each finitely generated submodule. This is a contradiction, as $T = f(f^{-1}(T))$ must be compact Hausdorff as an image of a closed subspace of $K$, so it cannot be both infinite and discrete. 
\end{proof}

\begin{proposition}
\label{proposition:mapping_out_of_ch_top_space_commutes_with_filtered_colimits_of_modules_with_local_madic_top}
Let $M \simeq \varinjlim M_{\alpha}$ be a filtered colimit diagram of $E_{0}$-modules equipped with their local $\mfrak$-adic topologies and let $K$ be compact Hausdorff. Then, the induced map 
\[
\theta\colon \varinjlim \map_{\cts}(K, M_{\alpha}) \rightarrow \map_{\cts}(K, M)
\]
is an isomorphism of abelian groups.
\end{proposition}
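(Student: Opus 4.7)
The plan is to verify that $\theta$ is an isomorphism by separately establishing surjectivity and injectivity, in both cases using the previous lemma to replace continuous maps into local $\mfrak$-adic modules by continuous maps into finitely generated submodules (which are simply $\mfrak$-adically continuous), and then transferring the resulting data through the colimit by a finite-generation/filteredness argument that is standard for filtered colimits over noetherian rings.

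For surjectivity, let $f\colon K\to M$ be continuous. By \cref{lemma:mapping_into_local_madic_module_lands_in_a_fg_submodule}, $f$ factors through a finitely generated submodule $N\subseteq M$; since $N$ is finitely generated, the local $\mfrak$-adic topology on $M$ restricts to the ordinary $\mfrak$-adic topology on $N$ (this point being the only mildly annoying topological check), so the factored map $\bar f\colon K\to N$ is continuous for the $\mfrak$-adic topology on $N$. Now lift finitely many generators of $N$ to a common $M_\alpha$ by filteredness, obtaining a finitely generated submodule $\widetilde N_\alpha\subseteq M_\alpha$ together with a surjection $\widetilde N_\alpha\twoheadrightarrow N$. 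Its kernel is finitely generated by noetherianity of $E_0$, and all of its generators die in $M$; filteredness again produces $\beta\geq\alpha$ in which this kernel vanishes, giving $\widetilde N_\beta\subseteq M_\beta$ mapped isomorphically onto $N$. Since the isomorphism $\widetilde N_\beta\cong N$ is an $E_0$-module map between finitely generated modules, it is a homeomorphism for $\mfrak$-adic topologies, so $\bar f$ lifts to a continuous map $K\to\widetilde N_\beta\hookrightarrow M_\beta$ representing a preimage under $\theta$.

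For injectivity, suppose $g\colon K\to M_\alpha$ is continuous and its composition to $M$ is zero; we must exhibit $\beta\geq\alpha$ in which $g$ already vanishes. Apply \cref{lemma:mapping_into_local_madic_module_lands_in_a_fg_submodule} to $g$ to factor it through a finitely generated submodule $P_\alpha\subseteq M_\alpha$, and let $Q_\alpha\subseteq P_\alpha$ be the $E_0$-submodule generated by $g(K)$, which is finitely generated by noetherianity. The image of $Q_\alpha$ in $M$ is the submodule generated by $g(K)\subseteq M$, which is zero by hypothesis, so each of the finitely many generators of $Q_\alpha$ dies somewhere in the colimit; a single application of filteredness thus produces $\beta\geq\alpha$ in which all of $Q_\alpha$, hence $g$ itself, becomes zero.

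The main obstacle is the topological point in the surjective step, namely that the subspace topology induced on a finitely generated submodule $N\subseteq M$ from the local $\mfrak$-adic topology on $M$ agrees with the ordinary $\mfrak$-adic topology on $N$; once this is granted, the rest of the argument is a routine application of the filtered colimit / noetherianity / filteredness package together with the factorization lemma. The inclusion ``subspace $\subseteq$ $\mfrak$-adic'' is immediate from the definition (take $M'=N$ in the local $\mfrak$-adic criterion); the reverse inclusion can be obtained either by noting that $M$ carries the colimit topology along $\{N'\hookrightarrow M\}_{N'\text{ f.g.}}$ (so the inclusion of any $N'$ is a topological embedding by the standard property of such colimits) or, more concretely, by extending a basic $\mfrak$-adic neighborhood $\mfrak^k N\subseteq N$ to the local $\mfrak$-adic open $V=\{m\in M : m\in\mfrak^k(N+E_0 m)\}$ of $M$ meeting $N$ in $\mfrak^k N$.
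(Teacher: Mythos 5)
Your argument is correct and follows essentially the same route as the paper's: both reduce to finitely generated submodules via \cref{lemma:mapping_into_local_madic_module_lands_in_a_fg_submodule} and then run the standard compactness argument for finitely presented modules over a noetherian ring (you lift generators and kill the finitely generated kernel at a later stage, whereas the paper splits the pullback $N \times_M M_\alpha \to N$ using finite presentation and exactness of filtered colimits --- the same argument in different clothing). One remark: the topological point you single out, that the factored map $K \to N$ is continuous for the $\mfrak$-adic topology on the finitely generated submodule $N$, is already packaged into the displayed isomorphism of \cref{lemma:mapping_into_local_madic_module_lands_in_a_fg_submodule} and can simply be cited; of your two proposed justifications for it, the explicit set $V$ is not obviously an open submodule and the ``standard property of colimits'' is not automatic for legs of colimits of topological spaces, so an Artin--Rees argument would be the safer route if you insist on reproving it.
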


\begin{proof}
We first show surjectivity of $\theta$. Suppose that $f\colon K \rightarrow M$ is a continuous map, by \cref{lemma:mapping_into_local_madic_module_lands_in_a_fg_submodule} it factors through a finitely generated submodule $N \subseteq M$. If we write $N_{\alpha} = N \times_{M} M_{\alpha}$, then since filtered colimits are exact we have 
\[
N \simeq \varinjlim N_{\alpha}
\]
As $N$ is finitely generated and $E_{0}$ is noetherian, it is finitely presented and we deduce that there exists a section $N \rightarrow N_{\alpha}$ for some $\alpha$. Then, the composite
\[
K \rightarrow N \rightarrow N_{\alpha} \rightarrow M_{\alpha}
\]
determines the needed element of the filtered colimit. 

For injectivity, suppose that we have a continuous map $f\colon K \rightarrow M_{\alpha}$ such that the composite $s_{\alpha} \circ f\colon K \rightarrow M$ where $s_{\alpha}\colon M_{\alpha} \rightarrow M$ is the canonical map, is zero. By another application of \cref{lemma:mapping_into_local_madic_module_lands_in_a_fg_submodule}, $K$ factors through a finitely generated submodule $N_{\alpha} \subseteq M_{\alpha}$ and we necessarily have $N_{\alpha} \subseteq \mathrm{ker}(s_{\alpha})$. As $N_{\alpha}$ is finitely generated, the second conidtion implies that we can find a larger index $\beta$ such that the composite 
\[
N_{\alpha} \rightarrow M_{\alpha} \rightarrow M_{\beta}
\]
is zero. It follows that $f$ determines the zero element of the filtered colimit, as needed. 
\end{proof}

\begin{corollary}
\label{corollary:continuous_cohomology_commutes_with_filtered_colimits}
Let $M \simeq \varinjlim M_{\alpha}$ be a filtered colimit of $E_{0}$-modules equipped with their local $\mfrak$-adic topology and compatible continuous $\mathbb{G}_{n}$-actions. Then, 
\[
\rmH^{*}_{\cts}(\mathbb{G}_{n}, M) \simeq \varinjlim \rmH^{*}_{\cts}(\mathbb{G}_{n}, M_{\alpha}).
\]
In particular, for any $E_{0}$-module $N$, we have
\[
\rmH^{*}_{\cts}(\mathbb{G}_{n}, N) \simeq \varinjlim \rmH^{*}_{\cts}(\mathbb{G}_{n}, N_{\alpha}).
\]
where the colimit is taken over the poset $N_{\alpha}$ of finitely generated submodules. 
\end{corollary}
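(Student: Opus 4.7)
The plan is to reduce the statement to a levelwise application of \cref{proposition:mapping_out_of_ch_top_space_commutes_with_filtered_colimits_of_modules_with_local_madic_top} by writing continuous cohomology as the cohomology of a cochain complex consisting of continuous cochains out of compact Hausdorff spaces. Concretely, $\rmH^{*}_{\cts}(\mathbb{G}_{n}, -)$ is computed by the continuous (inhomogeneous) cochain complex
\[
C^{k}_{\cts}(\mathbb{G}_{n}, M) \;=\; \map_{\cts}(\mathbb{G}_{n}^{\times k}, M),
\]
with the usual differential. Since $\mathbb{G}_{n}$ is a profinite group, each $\mathbb{G}_{n}^{\times k}$ is compact Hausdorff, so \cref{proposition:mapping_out_of_ch_top_space_commutes_with_filtered_colimits_of_modules_with_local_madic_top} applies to each level.

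First, I would check that in our filtered system $M \simeq \varinjlim M_{\alpha}$ each transition map and each structure map into $M$ is $\mathbb{G}_{n}$-equivariant and continuous, so that the induced maps on cochain complexes are morphisms of complexes of abelian groups. Then \cref{proposition:mapping_out_of_ch_top_space_commutes_with_filtered_colimits_of_modules_with_local_madic_top} gives, for every $k \geq 0$, a natural isomorphism
\[
\varinjlim_{\alpha} C^{k}_{\cts}(\mathbb{G}_{n}, M_{\alpha}) \;\xrightarrow{\;\simeq\;}\; C^{k}_{\cts}(\mathbb{G}_{n}, M).
\]
Because filtered colimits of abelian groups are exact, they commute with taking cohomology of a cochain complex, and passing to cohomology yields the desired isomorphism $\varinjlim_{\alpha} \rmH^{*}_{\cts}(\mathbb{G}_{n}, M_{\alpha}) \simeq \rmH^{*}_{\cts}(\mathbb{G}_{n}, M)$.

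For the second statement, given an arbitrary $E_{0}$-module $N$ with continuous $\mathbb{G}_{n}$-action, observe that $N \simeq \varinjlim N_{\alpha}$ where $N_{\alpha}$ runs over its (necessarily $\mathbb{G}_{n}$-stable is not needed, see below) finitely generated submodules; by \cref{example:local_m_adic_topology_a_filtered_colimit} the local $\mfrak$-adic topology on $N$ agrees with the colimit topology from the usual $\mfrak$-adic topologies on the $N_{\alpha}$. The potential subtlety, which I expect to be the only nontrivial point, is that the $N_{\alpha}$ need not themselves be $\mathbb{G}_{n}$-stable, so one cannot directly apply the first part. This is easily dealt with by replacing each $N_{\alpha}$ by the directed system of finitely generated submodules $N_{\alpha}'$ that \emph{are} $\mathbb{G}_{n}$-stable: any finitely generated submodule is contained in such a $\mathbb{G}_{n}$-stable finitely generated submodule because, by \cref{lemma:mapping_into_local_madic_module_lands_in_a_fg_submodule} applied to the continuous action map $\mathbb{G}_{n} \times N_{\alpha} \to N$ (with $K = \mathbb{G}_{n}$ acting on a fixed generator and summing over finitely many generators), the $\mathbb{G}_{n}$-orbit of a finite set of elements is contained in some finitely generated submodule. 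Thus the $\mathbb{G}_{n}$-stable finitely generated submodules are cofinal among all finitely generated submodules, so the first part applies and gives the stated colimit formula.
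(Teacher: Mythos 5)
Your proof is correct and follows essentially the same route as the paper: compute continuous cohomology via the continuous cochain complex, apply \cref{proposition:mapping_out_of_ch_top_space_commutes_with_filtered_colimits_of_modules_with_local_madic_top} levelwise to the compact Hausdorff spaces $\mathbb{G}_{n}^{\times k}$, and use exactness of filtered colimits. Your additional observation that the $\mathbb{G}_{n}$-stable finitely generated submodules are cofinal (via \cref{lemma:mapping_into_local_madic_module_lands_in_a_fg_submodule} applied to orbit maps) addresses a point the paper leaves implicit in the ``in particular'' clause, and is a welcome extra degree of care.
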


\begin{proof}
The continuous cohomology is computed by the standard cochain complex
\[
M \rightarrow \map_{\cts}(\mathbb{G}_{n}, M) \rightarrow \map_{\cts}(\mathbb{G}_{n} \times \mathbb{G}_{n}, M) \rightarrow \ldots 
\]
of continuous cochains. The statement follows from an application of \cref{proposition:mapping_out_of_ch_top_space_commutes_with_filtered_colimits_of_modules_with_local_madic_top} to each term separately, as $\mathbb{G}_{n} \times \ldots \times \mathbb{G}_{n}$ is compact Hausdorff and taking cohomology of cochain complexes commutes with filtered colimits. 
\end{proof}

\subsection{Inverse limits of $E_*E$-comodules and continuous cohomology}\label{ssec:inverselimitsofcomodules}

In this section, we will give a description of derived functors of the limit 
\[
\textstyle\varprojlim_{E_{*}E}^{s} M / \mfrak^{k} M \simeq \rmH_{-s}(\ \textstyle\varprojlim_{\dcat(E_{*}E)} M / \mfrak^{k} M)
\]
in $E_{*}E$-comodules as cohomology of the Morava stabilizer group. The importance of these derived functors stems from the fact that they form the $E_{2}$-page of the spectral sequence constructed in \S\ref{subsection:construction_of_the_spectral_sequence}.

\begin{lemma}
\label{lem:inverselimascontcohom}
If $M$ is a dualizable $E_{*}E$-comodule, then we have a canonical isomorphism 
\[
\Ext_{\dcat(E_{*}E)}^{s}(E_{*}, \textstyle\varprojlim_{\dcat(E_{*}E)} M / \mfrak^{k} M) := \pi_{-s}\Hom_{\dcat(E_{*}E)}(E_{*}, \textstyle\varprojlim_{\dcat(E_{*}E)} M / \mfrak^{k} M) \simeq \rmH_{\cts}^{s}(\mathbb{G}_{n}, M)
\]
for any $s\ge 0$, where $E_{*} / \mfrak^{k}$ is the $\mfrak$-adic tower of $E_{*}$. 
\end{lemma}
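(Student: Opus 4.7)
The plan is to identify both sides with the cohomology of a single explicit cochain complex, namely the continuous cochain complex computing $\rmH^{*}_{\cts}(\mathbb{G}_n, M)$.

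First, I would build a convenient model for the derived limit. The transition maps in the tower $M/\mfrak^{k+1}M \twoheadrightarrow M/\mfrak^k M$ are surjective, and tensoring with the flat left $E_*$-module $E_*E^{\otimes t}$ preserves this surjectivity. Hence the Mittag--Leffler condition holds at every level of the Amitsur cobar diagram for the tower, and \cref{rem:limit_of_comodules_as_a_chain_complex_under_mittag_leffler} presents $\varprojlim_{\dcat(E_*E)} M/\mfrak^k M$ by the explicit cochain complex of extended comodules
\[
\textstyle E_*E \otimes_{E_*} \varprojlim_{E_*} M/\mfrak^k M \to E_*E \otimes_{E_*} \varprojlim_{E_*} (E_*E \otimes_{E_*} M/\mfrak^k M) \to \cdots
\]
with the cobar differentials.

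Second, I would apply $\Hom_{E_*E}(E_*,-)$ termwise. Because each term is extended, the cofree--forgetful adjunction gives $\Hom_{E_*E}(E_*, E_*E \otimes_{E_*} N) \cong N$ with vanishing higher $\Ext$, so the above complex is $\Hom_{E_*E}(E_*,-)$-acyclic and this termwise computation does compute the $\Ext$-groups in the statement. The resulting complex is
\[
\textstyle \varprojlim_{E_*} M/\mfrak^k M \to \varprojlim_{E_*} E_*E \otimes_{E_*} M/\mfrak^k M \to \varprojlim_{E_*} E_*E^{\otimes 2} \otimes_{E_*} M/\mfrak^k M \to \cdots
\]

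Third, I would identify the terms with continuous cochains on $\mathbb{G}_n$. Since $M$ is dualizable, it is finitely generated projective over the complete local Noetherian ring $E_*$, hence already $\mfrak$-adically complete, so the zeroth term is simply $M$. For the higher terms, the standard Strickland-type identification $\varprojlim_k E_*E/\mfrak^k \cong \map_{\cts}(\mathbb{G}_n, E_*)$ extends, by writing $M$ locally as a summand of a free module and using that $E_*E^{\otimes t}$ is flat, to a natural isomorphism
\[
\textstyle \varprojlim_k E_*E^{\otimes t} \otimes_{E_*} M/\mfrak^k M \cong \map_{\cts}(\mathbb{G}_n^t, M).
\]
Under these identifications, the cobar differentials coming from the Hopf-algebroid structure on $E_*E$ correspond, via Strickland's isomorphism, to the standard coface maps on $\map_{\cts}(\mathbb{G}_n^{\bullet}, M)$. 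Consequently, the displayed complex is the continuous cochain complex for $\mathbb{G}_n$ with coefficients in $M$, and taking cohomology gives the desired isomorphism with $\rmH^{s}_{\cts}(\mathbb{G}_n, M)$.

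The main obstacle I foresee is the third step: verifying carefully that the Strickland identification is compatible with all the Hopf-algebroid structure maps (and with tensoring over $E_*$ by a dualizable $M$), and in particular that the cobar differentials translate to the bar differentials for group cohomology. Once that compatibility is established, the argument is a formal combination of \cref{rem:limit_of_comodules_as_a_chain_complex_under_mittag_leffler} with the cofree adjunction.
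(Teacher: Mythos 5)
Your proposal is correct in outline, but it takes a genuinely different route from the paper. The paper builds no cochain model at all: it applies $\Hom_{\dcat(E_{*}E)}(E_{*},-)$ directly to the tower, obtaining a Milnor sequence
\[
0 \to \textstyle\varprojlim^1 \Ext^{s-1}(E_{*}, M/\mfrak^{k}M) \to \Ext^{s}(E_{*}, \textstyle\varprojlim_{\dcat(E_{*}E)} M/\mfrak^{k}M) \to \textstyle\varprojlim \Ext^{s}(E_{*}, M/\mfrak^{k}M) \to 0,
\]
kills the $\varprojlim^1$-term via the degreewise finiteness of $\Ext^{s}(E_{*}, M/\mfrak^{k}M) \cong \rmH^{s}_{\cts}(\mathbb{G}_{n}, M/\mfrak^{k}M)$ (resting on \cref{corollary:coh_finiteness_of_ke_comodules} and induction on $k$), and then identifies $\varprojlim_k \rmH^{s}_{\cts}(\mathbb{G}_{n}, M/\mfrak^{k}M)$ with $\rmH^{s}_{\cts}(\mathbb{G}_{n}, M)$ using that continuous cohomology of a finitely generated profinite group commutes with countable inverse limits of finite modules. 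You instead resolve the derived limit by the explicit complex of extended comodules from \cref{rem:limit_of_comodules_as_a_chain_complex_under_mittag_leffler}, use acyclicity of extended comodules to compute $\Ext$ termwise, and match the result with the continuous cochain complex. Your route buys independence from the finiteness results of Part 1 and from any $\varprojlim^1$ bookkeeping on $\Ext$-groups (the only Mittag--Leffler condition you need is the evident surjectivity of the tower itself); the price is that your third step requires the sharper, cochain-level statement that the $\mfrak$-completed cobar complex of $E_{*}E$ with coefficients in a dualizable $M$ is isomorphic, \emph{differentials included}, to the continuous cochain complex $\map_{\cts}(\mathbb{G}_{n}^{\bullet}, M)$. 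That is the Devinatz--Hopkins/Barthel--Heard identification underlying the $K$-local $E$-based Adams spectral sequence; it is true, but it is a nontrivial external input which the paper only records in the $t=1$, mod-$\mfrak$ form of \cref{lemma:iso_of_hopf_algebroid_of_kn_and_the_gn_one}, so you would need to cite it (e.g.\ \cite{bh_enass}) or verify the compatibility of the coface maps with Strickland's isomorphism and the completed K\"unneth isomorphisms $(E_{*}E^{\otimes_{E_{*}} t})^{\vee}_{\mfrak} \cong \map_{\cts}(\mathbb{G}_{n}^{t}, E_{*})$ by hand. In the end both arguments rest on a Morava change-of-rings statement --- the paper invokes it groupwise for the torsion quotients $M/\mfrak^{k}M$, you invoke it complex-wise after completion --- so neither is strictly more elementary, but yours is self-contained modulo that one citation, whereas the paper's leans on its own finiteness theorems.
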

\begin{proof}
For brevity, if $X \in \dcat(E_{*}E)$, let us write 
\[
\Ext^{s}(X) := \Ext^{s}_{\dcat(E_{*}E)}(E_{*}, X) \simeq \pi_{-s} \Hom_{\dcat(E_{*}E)}(E_{*}, X)
\]
Since we have
\[
\Hom_{\dcat(E_{*}E)}(E, \textstyle\varprojlim _{\dcat(E_{*}E)} M / \mfrak^{k} M) \simeq \varprojlim\Hom_{\dcat(E_{*}E)}(E_{*},  M / \mfrak^{k} M),
\]
there is a Milnor sequence
\[
0 \to \textstyle\varprojlim^1 \Ext^{s-1}(M / \mfrak^{k} M) \to \Ext^{s}(\textstyle\varprojlim_{\dcat(E_{*}E)} M / \mfrak^{k} M) \to \varprojlim \Ext^{s}(M / \mfrak^{k} M) \to 0,
\]
where we have omitted subscripts for simplicity and the derived functors of the limit are taken in abelian groups.

We first claim that the $\varprojlim^1$-term above vanishes. Note that since $M / \mfrak^{k}M$ is finitely generated and $\mfrak$-torsion for any $k \geq 1$, we have a canonical isomorphism 
\[
\Ext(M / \mfrak^{k}M) \simeq \rmH_{\cts}^{s}(\mathbb{G}_{n}, M/\mfrak^{k}M)
\]
with the cohomology of the Morava stabilizer group. For $k = 1$, we have a further isomorphism
\[
\Ext(M / \mfrak M) \simeq \Ext_{E_{*}E}(E_{*}, K_{*} \otimes_{E_{*}} M) \simeq \Ext_{E_{*}K}(K_{*}, K_{*} \otimes_{E_{*}} M)
\]
and the group on the right is degreewise finite by \cref{corollary:coh_finiteness_of_ke_comodules}. It follows by induction that $\Ext(M_{*} / \mfrak^k M)$ is degreewise finite for each $k\ge 1$. Therefore, the groups $\Ext(M / \mfrak^k M)$ satisfy the Mittag-Leffler condition as $k$ varies, so that the corresponding $\varprojlim^1$-term vanishes as claimed.

Consequently, we get a string of isomorphisms
\begin{align*}
    \Ext^{s}(\textstyle\varprojlim_{\dcat(E_{*}E)} M/ \mfrak^{k} M) &  \cong \varprojlim \Ext^{s}(E_{*},  M / \mfrak^{k} M) \\
    & \cong \textstyle\varprojlim \rmH_{\cts}^s(\mathbb{G}_n, M / \mfrak^k M) \\
    & \cong \rmH_{\cts}^s(\mathbb{G}_n, M).
\end{align*}
Here, the last isomorphism uses that $M$ is $\mfrak$-adically complete as a finitely generated $E_{*}$-module, and that taking continuous cohomology of a finitely generated profinite group commutes with taking countable inverse limits of finite modules \cite[\S 7]{neukirch2013cohomology}.
\end{proof}

In order to pass from \cref{lem:inverselimascontcohom} to a description of the derived functors $\textstyle\varprojlim^{s}_{E_{*}E} E_{*} / \mfrak^{k}$, we have to understand the homology groups of $\varprojlim_{\dcat(E_{*}E)} M / \mfrak^{k} M$, rather than its homotopy. To do so, we will make use of the Adams condition, which involves certain filtered colimits. To commute past the latter, we will need a technical lemma stating that homotopy groups  
\[
\Ext^{s} (E_{*}, -) := \pi_{-s} \Hom_{\dcat(E_{*}E)}(E_*,-)
\]
preserve certain filtered colimits of bounded complexes. If $p > n+1$, then the monoidal unit of $\dcat(E_{*}E)$ is compact, so no boundedness hypotheses are required, but this can fail for general $n$. In general, we still have the following result. 

\begin{lemma}\label{lem:boundedcompactness}
Suppose $(L_{\alpha})_{\alpha}$ is a filtered system of complexes of $E_*E$-comodules which are uniformly bounded above in the standard $t$-structure on $\dcat(E_{*}E)$;  that is, there exists an $N$ such that for all $s>N$ and all $\alpha$ we have $H_s(L_{\alpha}) = 0$. Then the canonical comparison map 
\[
\phi\colon \varinjlim_{\alpha} \Ext^{s}(L_{\alpha}) \longrightarrow \Ext^{s}(\varinjlim_{\alpha}L_{\alpha})
\]
is an isomorphism for any $s \in \mathbb{Z}$.
\end{lemma}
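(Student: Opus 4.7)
The plan is to exploit the Adams condition to produce an Amitsur--cobar Bousfield--Kan spectral sequence, verify that its $E_{2}$-page commutes termwise with filtered colimits, and then invoke the uniform boundedness to ensure that this commutation passes to the abutment.

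First, I would recall that, since $(E_{*}, E_{*}E)$ is an Adams Hopf algebroid, the adjunction $\epsilon_{*} \dashv \epsilon^{*} = E_{*}E \otimes_{E_{*}}(-)$ is comonadic at the level of derived $\infty$-categories. Consequently, for any $L \in \dcat(E_{*}E)$ the Amitsur cosimplicial object $[p] \mapsto (E_{*}E)^{\otimes (p+1)} \otimes_{E_{*}} L$ has totalization $L$. Applying $\Hom_{\dcat(E_{*}E)}(E_{*},-)$ and using the identification $\Hom_{\dcat(E_{*}E)}(E_{*}, E_{*}E \otimes_{E_{*}} M) \simeq M$, the associated Bousfield--Kan spectral sequence takes the form
\[
E_{2}^{p,q}(L) = \Ext^{p}_{\Comod_{E_{*}E}}(E_{*}, H_{q}(L)) \Longrightarrow \Ext^{p-q}(L),
\]
where we use $E_{*}$-flatness of $E_{*}E$ to identify $\pi_{q}((E_{*}E)^{\otimes p} \otimes_{E_{*}} L) \cong (E_{*}E)^{\otimes p} \otimes_{E_{*}} H_{q}(L)$.

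Second, I would check that each $E_{2}^{p,q}$ preserves filtered colimits in $L$: the homology functors $H_{q}$ do so by exactness of filtered colimits, while $\Ext^{p}_{\Comod_{E_{*}E}}(E_{*},-)$ on the heart is computed by the cobar complex $(E_{*}E)^{\otimes \bullet} \otimes_{E_{*}} (-)$, whose formation commutes with filtered colimits termwise. By the same exactness, this property propagates inductively to every $E_{r}^{p,q}$, $r \geq 2$, each of which is the homology of the previous page.

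Third, the uniform upper bound $N$ enforces uniform strong convergence. The hypothesis $H_{q}(L_{\alpha}) = 0$ for $q > N$ yields the vanishing $E_{2}^{p,q}(L_{\alpha}) = 0$ for $q > N$ or $p < 0$, independent of $\alpha$. The differentials $d_{r}\colon E_{r}^{p,q} \to E_{r}^{p+r,\, q+r-1}$ then force $E_{r}^{p,q}$ to stabilize once $r > \max(N - q + 1,\, p)$, with the stabilization bound depending only on $(p,q)$. Moreover, for each fixed $s$, only the finitely many bidegrees $(p,q)$ with $p - q = s$, $p \geq 0$, and $q \leq N$ can contribute, so $\Ext^{s}(L_{\alpha})$ acquires a finite filtration with graded pieces $E_{\infty}^{p,q}(L_{\alpha})$. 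Combining the two previous steps,
\[
\varinjlim_{\alpha} E_{r}^{p,q}(L_{\alpha}) \longrightarrow E_{r}^{p,q}(\varinjlim_{\alpha} L_{\alpha})
\]
is an isomorphism for every finite $r$, hence also at $r = \infty$, and passing to the finite filtration on the abutment yields the desired isomorphism $\phi$. The main obstacle—and the precise reason for the uniform boundedness hypothesis—is establishing this uniform strong convergence: without a uniform $N$, the filtration on $\Ext^{s}(\varinjlim L_{\alpha})$ could have unboundedly many non-zero graded pieces, and filtered colimits would no longer interact cleanly with the resulting infinite filtrations.
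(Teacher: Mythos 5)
Your proof is correct, but it takes a genuinely different route from the paper's. The paper argues by d\'evissage: after shifting to reduce to $s=0$, it uses the connectivity of $E_*$ in the standard $t$-structure to replace each $L_\alpha$ by $\tau_{\geq 0}L_\alpha$ (truncation commutes with filtered colimits), then inducts along the resulting finite Postnikov tower via the five-lemma to reduce to a filtered system in the heart, where the claim follows because $\Ext_{E_*E}(E_*,-)$ is computed by the cobar complex, whose formation commutes with filtered colimits. You instead package the cobar resolution into the descent spectral sequence $\Ext^p_{E_*E}(E_*,H_q(L))\Rightarrow \Ext^{p-q}(L)$, observe that its $E_2$-page commutes with filtered colimits (the same cobar-complex observation, plus exactness of filtered colimits), and use the uniform bound $N$ to get uniform strong convergence so the comparison passes to the abutment; your bookkeeping (vanishing region, stabilization of each $E_r^{p,q}$ at a page depending only on $(p,q)$, the finitely many contributions with $q\in[-s,N]$ in each total degree) is right. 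The one step you assert without justification is that the Amitsur cosimplicial object totalizes to $L$ for a general bounded-above complex, i.e.\ derived comonadicity of $\epsilon_*$; this is true (by Barr--Beck--Lurie, since $\epsilon_*$ is conservative and split totalizations are absolute), but it deserves a sentence --- or can be sidestepped by first noting that for fixed $s$ the group $\Ext^s(L_\alpha)$ only depends on a truncation $\tau_{\geq -s-1}L_\alpha$, reducing to uniformly bounded complexes where convergence of the resolution follows by induction from the heart case. On balance, the paper's d\'evissage is shorter and avoids all convergence questions, while your spectral sequence makes the role of the uniform bound more transparent and is reusable elsewhere.
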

\begin{proof}
By shifting the system $(L_{\alpha})_{\alpha}$ if necessary, we may reduce to the case of $s = 0$. In other words, we have to show that  that the canonical map
\[
\varinjlim_{\alpha} \ [E_*,L_{\alpha}] \longrightarrow [E_*,\varinjlim_{\alpha}L_{\alpha}]
\]
between homotopy classes of maps in the derived $\infty$-category, is an isomorphism. 

Since $E_*$ is connective in the standard $t$-structure, we can then replace $L_{\alpha}$ by its connective cover $\tau_{\ge 0}L_{\alpha}$; that is, reduce to the case in which there exists a non-negative integer $N$ such that $L_{\alpha}$ has homology concentrated in degrees $[0,N]$ for all $\alpha$. By induction and the five-lemma, we can reduce further to the case that $L_{\alpha}$ is concentrated in a single non-negative degree $[s,s]$ for all $\alpha$. 

Summarizing these reduction steps, it remains to prove that the canonical map
\[
\varinjlim_{\alpha}\Ext_{E_{*}E}^s(E_*,L_{\alpha}) \longrightarrow \Ext_{E_{*}E}^s(E_*,\varinjlim_{\alpha}L_{\alpha})
\]
is an isomorphism for any filtered system of comodules $L_{\alpha} \in \Comod_{E_*E}$. This is clear, these $\Ext$-groups can be computed by the cobar complex, formation of which commutes with filtered colimits. 
\end{proof}

\begin{proposition}\label{prop:inverselimascontcohom}
For any dualizable $E_{*}E$-comodule $M$, there exists a canonical isomorphism
\[
\textstyle\varprojlim_{E_{*}E}^s M / \mfrak^{k} M \cong \rmH^s_{\cts}(\mathbb{G}_n,E_*E \otimes_{E_{*}} M).
\]
between the derived functors of the limit in comodules and the continuous cohomology of the Morava stabilizer group with coefficients in $E_{*}E \otimes_{E_{*}} M$ equipped with its local $\mfrak$-adic topology. 
\end{proposition}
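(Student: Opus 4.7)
The plan is to use \cref{prop:spectral_sequence_computing_limits_of_comodules} to represent $\varprojlim^s_{E_*E} M/\mfrak^k M$ explicitly as the cohomology of a cochain complex, and then to identify that complex with the standard continuous cochain complex of $\mathbb{G}_n$ with coefficients in $E_*E \otimes_{E_*} M$. Since the tower $(M/\mfrak^{k}M)_{k}$ has surjective structure maps and $E_*E$ is $E_*$-flat, each tower of $E_*$-modules $(E_*E^{\otimes t}\otimes_{E_*}M/\mfrak^{k}M)_{k}$ is likewise surjective and therefore Mittag--Leffler. Consequently all higher $\varprojlim^s_{E_*}$ terms vanish, and the spectral sequence of \cref{prop:spectral_sequence_computing_limits_of_comodules} collapses onto the $s=0$ row, exhibiting
\[
\textstyle\varprojlim_{E_*E}^{t} M/\mfrak^{k}M \;\cong\; \rmH^{t}\!\bigl(\,E_*E \otimes_{E_*} (E_*E^{\otimes\bullet}\otimes_{E_*}M)_{\mfrak}^{\wedge}\,\bigr),
\]
with cofaces inherited from the Amitsur (cobar) resolution.

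Next, I would invoke Strickland's theorem, which for $M$ a finitely generated projective $E_*$-module with its $\mfrak$-adic topology yields, iteratively, a canonical isomorphism
\[
(E_*E^{\otimes t}\otimes_{E_*} M)_{\mfrak}^{\wedge} \;\cong\; \Map_{\cts}(\mathbb{G}_n^t, M),
\]
extending the mod-$\mfrak$ identification of \cref{lemma:iso_of_hopf_algebroid_of_kn_and_the_gn_one}. Combined with \cref{proposition:mapping_out_of_ch_top_space_commutes_with_filtered_colimits_of_modules_with_local_madic_top} applied to the presentation $E_*E = \varinjlim_{\alpha} E_*X_{\alpha}$ arising from the Adams condition---each $E_*X_{\alpha}$ being finitely generated projective, so that $E_*X_\alpha\otimes_{E_*}\Map_{\cts}(\mathbb{G}_n^t,M)\cong \Map_{\cts}(\mathbb{G}_n^t,E_*X_\alpha\otimes_{E_*}M)$---this gives
\[
E_*E \otimes_{E_*}\Map_{\cts}(\mathbb{G}_n^t, M) \;\cong\; \Map_{\cts}\bigl(\mathbb{G}_n^t,\, E_*E \otimes_{E_*} M\bigr),
\]
where on the right $E_*E\otimes_{E_*}M$ carries its local $\mfrak$-adic topology.

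The resulting cochain complex is precisely the standard continuous cochain complex computing $\rmH^{*}_{\cts}(\mathbb{G}_n, E_*E\otimes_{E_*}M)$; the identification of the Amitsur cofaces with the group cohomology cofaces is the same Galois-type comparison underlying \cref{corollary:k_e_cohomology_same_as_morava_stabilizer}, extended from $E_*/\mfrak$-coefficients to arbitrary dualizable $M$ and to the uncompleted cooperations. Taking $\rmH^{s}$ then gives the desired isomorphism, proving the proposition. The main technical hurdle is the iterated Strickland identification of step two together with the verification that it is compatible with the cosimplicial structure; this reduces to flatness of $E_*E$ over $E_*$ and compatibility of $\mfrak$-adic completion with tensoring against finitely generated projective modules.
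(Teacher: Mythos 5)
Your argument is correct in outline, but it takes a genuinely different route from the paper's proof. The paper stays at the derived level: it rewrites $\varprojlim^s_{E_*E} M/\mfrak^k M$ as $\Ext^s(E_*, E_*E\otimes_{E_*}\varprojlim_{\dcat(E_{*}E)}M/\mfrak^k M)$, writes $E_*E=\varinjlim N_\alpha$ as a filtered colimit of dualizable comodules, commutes the colimit past $\Ext^s$ via \cref{lem:boundedcompactness}, pulls each dualizable $N_\alpha$ inside the inverse limit, applies \cref{lem:inverselimascontcohom} (whose proof needs the finiteness of \cref{corollary:coh_finiteness_of_ke_comodules} to kill a Milnor $\varprojlim^1$-term), and reassembles with \cref{corollary:continuous_cohomology_commutes_with_filtered_colimits}. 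You instead work at the cochain level: \cref{prop:spectral_sequence_computing_limits_of_comodules} together with surjectivity of the tower (exactly as in \cref{rem:limit_of_comodules_as_a_chain_complex_under_mittag_leffler}) gives an explicit complex whose terms you identify with continuous mapping spaces. This buys you two things: your Mittag--Leffler argument needs only surjectivity rather than degreewise finiteness, and the appearance of the local $\mfrak$-adic topology becomes transparent through \cref{proposition:mapping_out_of_ch_top_space_commutes_with_filtered_colimits_of_modules_with_local_madic_top}. The price is the input you correctly flag as the main hurdle: the iterated isomorphism $(E_*E^{\otimes t}\otimes_{E_*}M)^{\wedge}_{\mfrak}\cong\Map_{\cts}(\mathbb{G}_n^t,M)$ \emph{together with} the matching of the Amitsur cofaces against the inhomogeneous group-cohomology cofaces. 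Be aware that this compatibility does not reduce to flatness of $E_*E$ and completion formalities, as you suggest; it is the Hopf-algebroid-level content of Strickland's theorem, of which the paper records only the mod-$\mfrak$ shadow (\cref{lemma:iso_of_hopf_algebroid_of_kn_and_the_gn_one}), and you should cite the standard identification of the $K$-local $E$-based Adams $E_2$-page with continuous cohomology for the full statement. Since the paper's own \cref{lem:inverselimascontcohom} quietly relies on the same kind of identification for finitely generated $\mfrak$-torsion comodules, this is a fair trade and your proof is acceptable.
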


\begin{proof}
We have 
\[
\textstyle{\varprojlim_{E_{*}E}^s} M_{*} / \mfrak^{k} M \simeq H_{-s}(\textstyle\varprojlim_{\dcat(E_{*}E)} M_{*} / \mfrak^{k} M)
\]
which we can further rewrite as 
\[
H_{-s}(\textstyle\varprojlim_{\dcat(E_{*}E)} M / \mfrak^{k} M) \simeq \Ext^{s}(E_{*}E \otimes_{E_{*}} \varprojlim_{\dcat(E_{*}E)} M_{*} / \mfrak^{k} M),
\]
where the tensor product is the derived one. Writing $E_{*}E \simeq \varinjlim N_{\alpha}$ as a filtered colimit of dualizable comodules, we have 
\begin{align*}
        \Ext^{s}(E_{*}E \otimes_{E_{*}} \textstyle\varprojlim_{\dcat(E_{*}E)} M / \mfrak^{k} M) & \cong \Ext^{s}(\varinjlim \ (N  \otimes_{E_{*}} \textstyle\varprojlim_{\dcat(E_{*}E)} M / \mfrak^{k} M)) \\
        & \cong \varinjlim \Ext^{s}(N  \otimes_{E_{*}} \textstyle\varprojlim_{\dcat(E_{*}E)} M / \mfrak^{k} M) \\
        & \cong \varinjlim \Ext^{s}(\textstyle\varprojlim_{\dcat(E_{*}E)} N  \otimes_{E_{*}} M / \mfrak^{k}(N  \otimes_{E_{*}}  M))  \\ 
        & \cong \varinjlim \ \rmH_{\cts}^{s}(\mathbb{G}_{n}, N  \otimes_{E_{*}} M)  \\
        & \cong \rmH_{\cts}^{s}(\mathbb{G}_{n}, \varinjlim \ N  \otimes_{E_{*}} M)  \\
        & \cong \rmH^s_{\cts}(\mathbb{G}_n,E_{*}E \otimes_{E_{*}} M).
    \end{align*}
Here, the first isomorphism uses that the derived tensor product commutes with filtered colimits and the second one is \cref{lem:boundedcompactness}. The third isomorphism uses that tensoring with a dualizable object commutes with limit, the fourth one is \cref{lem:inverselimascontcohom} while the fifth one is \cref{corollary:continuous_cohomology_commutes_with_filtered_colimits}.
\end{proof}

\begin{remark}
 The relationship between derived functors of the limit and derived completion in the context of comodules over a flat Hopf algebroid is studied further in \cite{bhv4}. 
\end{remark}

\subsection{The inverse limit spectral sequence and its convergence}

In this subsection, we study the convergence of the inverse limit spectral sequence based on Morava $E$-theory and combine the previous results in a spectral sequence computing the (uncompleted) $E$-homology of the $K$-local sphere spectrum from continuous group cohomology with coefficients in the (uncompleted) cooperations $E_*E$.

\begin{proposition}
\label{prop:limssconvergence}
If $R=E$ is Morava $E$-theory and $(X^{\alpha}) \in \spectra_E^{\N}$ is a tower of $E$-local spectra, then the spectral sequence of \cref{ex:limss} converges conditionally and strongly, and has a horizontal vanishing line at the $E_r$-page for some $r\ge 2$. 
\end{proposition}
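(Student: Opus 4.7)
The strategy is to establish three claims in order: conditional convergence, the existence of a horizontal vanishing line at some page $E_r$ with $r \geq 2$, and strong convergence—the last of which follows formally from the first two by Boardman's criterion.

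Conditional convergence is immediate from the construction: the spectral sequence arises via \cref{thm:limss} from a modified Adams tower $(X^{\alpha}_{s})_s$ of the tower $(X^{\alpha})$, obtained by applying \cref{lem:modadamstower} to the adapted homology theory on towers $E_{*}\colon \spectra_{E}^{\mathbb{N}} \to \Comod_{E_{*}E}^{\mathbb{N}}$. Commuting the $\alpha$-limit past the tower in $s$, one obtains a tower of spectra whose limit is $\varprojlim_{\alpha} X^{\alpha}$, so that the associated exact couple is conditionally convergent in Boardman's sense.

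The central task is to exhibit a horizontal vanishing line. The plan is to apply \cref{prop:spectral_sequence_computing_limits_of_comodules} to the tower of comodules $M_{i} = E_{*} X^{\alpha_{i}}$, which produces an auxiliary spectral sequence with
\[
E_{1}^{s,t} = E_{*}E \otimes_{E_{*}} \textstyle\varprojlim^{s}_{E_{*}}(E_{*}E^{\otimes_{E_{*}} t} \otimes_{E_{*}} M_{i}) \implies \textstyle\varprojlim^{s+t}_{E_{*}E} M_{i}.
\]
Since sequential limits in $E_{*}$-modules have $\textstyle\varprojlim^{s}_{E_{*}} = 0$ for $s \geq 2$, this auxiliary spectral sequence is concentrated in two columns $s \in \{0, 1\}$. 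In the cosimplicial direction $t$, the $E_{1}$-page encodes the cobar complex, whose cohomology computes $\Ext$-groups in $E_{*}E$-comodules applied to the $E_{*}$-level (derived) limits of the tower. By an extension of \cref{prop:inverselimascontcohom} to general towers, obtained via the filtered colimit presentation $E_{*}E \simeq \varinjlim N_{\alpha}$ and \cref{corollary:continuous_cohomology_commutes_with_filtered_colimits}, these $\Ext$-groups are identified with continuous cohomology $\rmH^{*}_{\cts}(\mathbb{G}_{n}, -)$ of the Morava stabilizer group. Since $\mathbb{G}_{n}$ is a $p$-adic analytic Lie group of dimension $n^{2}$ and thus has finite virtual cohomological dimension by \cref{thm:cohomfiniteness}, there is a uniform bound $N$ depending only on $p$ and $n$ above which these cohomology groups vanish. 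This yields $\textstyle\varprojlim^{s}_{E_{*}E}(E_{*}X^{\alpha}) = 0$ for $s > N+1$.

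The principal obstacle arises when $(p-1) \mid n$: then $\mathbb{G}_{n}$ has $p$-torsion, so its integral cohomological dimension is infinite, even though vcd remains finite. In that case one must argue via a Lyndon--Hochschild--Serre spectral sequence for a finite-index torsion-free subgroup to obtain an honest vanishing line, at the possible cost of moving to a later $E_{r}$-page to absorb the periodicity-induced differentials. Combining conditional convergence with the resulting horizontal vanishing line on some $E_{r}$-page, strong convergence then follows from Boardman's criterion, completing the plan.
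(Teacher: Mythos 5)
There is a genuine gap, and it is the central one: your argument never uses the Hopkins--Ravenel smash product theorem, which is the engine of the paper's proof. The paper observes that since each $X^{\alpha}$ is $E$-local, the modified Adams tower $(X_i^{\alpha})$ consists of $E$-local spectra, and the proof of the smash product theorem (via \cite[Theorem 7.5.6]{orangebook} and \cite[Theorem 5.10]{hs_localcohom}) produces a single constant $N$, \emph{independent of $\alpha$}, such that every $N$-fold composite $X_{s+N}^{\alpha} \to X_s^{\alpha}$ is null. Passing to $\varprojlim_{\alpha}$, the $N$-fold composites in the limiting tower are null, and this one fact simultaneously gives the vanishing of $\varprojlim_s$ of the tower (hence conditional convergence by Boardman), the horizontal vanishing line at a finite page, and then strong convergence. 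Your claim that conditional convergence is ``immediate from the construction'' is not correct: conditional convergence requires the inverse limit of the modified Adams tower in the $s$-direction to vanish, which is exactly what needs proof — the paper's own warning (the tower of Moore spectra with $R = H\Q$, where the $E_2$-page is zero but the abutment is $S^0_p$) shows that nothing of the sort comes for free from the construction in \cref{thm:limss}.

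Your route to the vanishing line through the $E_2$-page also does not go through. First, \cref{prop:inverselimascontcohom} identifies $\varprojlim^s_{E_*E}$ with continuous cohomology of $\G_n$ only for the specific towers $M/\mfrak^k M$ with $M$ dualizable; for an arbitrary tower $(E_*X^{\alpha})$ of comodules no such identification is available, and the ``extension to general towers'' you invoke is not something the paper provides or that follows from \cref{corollary:continuous_cohomology_commutes_with_filtered_colimits}. Second, even in the favorable cases, when $(p-1) \mid n$ the group $\G_n$ has $p$-torsion and its continuous cohomology is nonzero in arbitrarily high degrees, so there is no vanishing line on the $E_2$-page at all; finite vcd does not repair this, and passing to a torsion-free finite-index subgroup via Lyndon--Hochschild--Serre gives no mechanism for a vanishing line to appear at a \emph{later} page of the inverse-limit spectral sequence. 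The reason the statement is phrased as a vanishing line ``at the $E_r$-page for some $r \ge 2$'' rather than at $E_2$ is precisely that the vanishing line is a topological phenomenon coming from uniform nilpotency of the tower of spectra, not an algebraic property of the $E_2$-term.
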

\begin{proof}
By \Cref{lem:modadamstower} and assumption on $X^{\alpha}$, the modified Adams tower $(X_i^{\alpha})$ is $E$-local for all $i$. Therefore, the proof of the smash product theorem due to Hopkins and Ravenel~\cite[Theorem 7.5.6]{orangebook} yields a constant $N$ independent of $\alpha$ such that 
\[
\xymatrix{g_s^{\alpha}\circ \ldots \circ g_{s+N-1}^{\alpha}\colon X_{s+N}^{\alpha} \ar[r] & X_s^{\alpha}}
\]
is null for all $\alpha$; see \cite[Theorem 5.10]{hs_localcohom}. Consequently,
\begin{equation}\label{eq:zerocomposite}
\xymatrix{g_s\circ \ldots \circ g_{s+N-1}\colon \varprojlim X_{s+N}^{\alpha} \ar[r] & \varprojlim X_s^{\alpha}}
\end{equation}
is null as well, so \cite{boardman_convergence} shows that the spectral sequence is conditionally convergent. Similarly, we obtain the horizontal vanishing line from \eqref{eq:zerocomposite}, which in turn gives strong convergence.
\end{proof}

\begin{theorem}
If $(X^{\alpha}) \in \spectra_E^{\N}$ is a tower of $E$-local spectra, then there is a strongly convergent spectral sequence 
\[
\xymatrix{E_2^{s,t} \cong (\varprojlim_{E_*E}^s E_*X^{\alpha})_t \implies E_{t-s}(\varprojlim X^{\alpha}),}
\]
with a horizontal vanishing line at the $E_r$-page for some $r\ge 2$. In particular, this spectral sequence specializes to:
\begin{equation}\label{eq:elimssk(n)localsphere}
\xymatrix{\rmH_{\cts}^s(\mathbb{G}_n,E_{t}E) \cong (\varprojlim_{E_*E}^s E_*/\mfrak^k)_t \implies E_{t-s}(L_KS^0).}
\end{equation}
\end{theorem}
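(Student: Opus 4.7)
The plan is to assemble the statement from three pieces already developed in the paper: the general inverse limit spectral sequence, its convergence, and the identification of derived limits with continuous cohomology.

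First, the existence of the spectral sequence for a general tower $(X^{\alpha})$ is essentially formal. Since Morava $E$-theory is Landweber exact and hence of Adams type, the homology functor $E_{*}\colon \spectra \to \Comod_{E_{*}E}$ is adapted by \cref{lem:dlift}. Thus, \cref{thm:limss}, in the form of \cref{ex:limss}, immediately provides a natural spectral sequence of $E_{*}E$-comodules
\[
\textstyle E_{2}^{s,t} \cong \varprojlim^{s}_{E_{*}E}(E_{*}X^{\alpha})[-t] \implies E_{*+s-t}(\varprojlim X^{\alpha}).
\]
Restricting to internal degree zero gives the bigraded formulation appearing in the theorem. Strong convergence together with the existence of a horizontal vanishing line at the $E_{r}$-page for some $r \geq 2$ are exactly the content of \cref{prop:limssconvergence}, whose proof uses the Hopkins--Ravenel smash product theorem to produce a uniform bound on the length of the modified Adams tower, independent of $\alpha$.

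For the specialization to $L_{K}S^{0}$, the plan is to exhibit a tower $(M_{I_{k}})$ of $E$-local spectra with the properties that $E_{*}M_{I_{k}} \cong E_{*}/I_{k}$ for a system of invariant ideals $\{I_{k}\}$ cofinal with $\{\mfrak^{k}\}$, and $\varprojlim M_{I_{k}} \simeq L_{K}S^{0}$. Concretely, one takes $M_{I_{k}}$ to be a cofinal system of generalized Moore spectra of type $n$ with invariants of the form $(p^{i_{0,k}}, v_{1}^{i_{1,k}}, \ldots, v_{n-1}^{i_{n-1,k}})$; their existence, $E$-locality (automatic, as each is finite of type $n$), and the fact that their inverse limit is $L_{K}S^{0}$ are standard chromatic facts going back to Hovey--Strickland \cite{hovey_morava_1999}. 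Applying the general spectral sequence to $(M_{I_{k}})$ yields
\[
\textstyle E_{2}^{s,t} \cong (\varprojlim^{s}_{E_{*}E} E_{*}/I_{k})_{t},
\]
and cofinality of $\{I_{k}\}$ with $\{\mfrak^{k}\}$ rewrites this as $(\varprojlim^{s}_{E_{*}E} E_{*}/\mfrak^{k})_{t}$.

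Finally, to obtain the continuous cohomology description, one specializes \cref{prop:inverselimascontcohom} to $M = E_{*}$, which produces the canonical isomorphism
\[
\textstyle\varprojlim^{s}_{E_{*}E} E_{*}/\mfrak^{k} \cong \rmH^{s}_{\cts}(\mathbb{G}_{n}, E_{*}E),
\]
completing the identification of the $E_{2}$-page. I expect the main obstacle to lie in carefully verifying that the chosen tower of generalized Moore spectra is functorial enough and sufficiently cofinal to justify all of the rewriting above; the remaining steps are direct appeals to results previously established in the excerpt (notably \cref{thm:limss}, \cref{prop:limssconvergence}, and \cref{prop:inverselimascontcohom}) together with standard facts of chromatic homotopy theory.
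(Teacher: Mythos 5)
Your proposal follows essentially the same route as the paper: the spectral sequence comes from \cref{thm:limss} and \cref{ex:limss}, convergence from \cref{prop:limssconvergence}, the specialization from a cofinal tower of generalized type $n$ Moore spectra as in Hovey--Strickland, and the $E_2$-identification from \cref{prop:inverselimascontcohom}. One small correction: a finite spectrum of type $n$ is \emph{not} automatically $E$-local (e.g.\ the mod $p$ Moore spectrum is not $E(1)$-local), so the tower must consist of the $E$-localizations $L_E M_{I_k}$; this costs nothing, since $E$-localization does not change $E$-homology and $\varprojlim L_E M_{I_k} \simeq L_K S^0$ is exactly the standard fact you are already invoking.
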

\begin{proof}
The spectral sequence was constructed in \cref{thm:limss}, see also \cref{ex:limss}, while the convergence properties were established in \cref{prop:limssconvergence}. The final statement follows by applying this spectral sequence to a cofinal tower of $E$-local generalized type $n$ Moore spectra as in \cite[Section 4]{hovey_morava_1999}. The identification of the $E_2$-page in terms of continuous cohomology was proven in \cref{prop:inverselimascontcohom}.
\end{proof}

\begin{remark}
The spectral sequence \eqref{eq:elimssk(n)localsphere} may be thought of as an $E_*$-homology version of the $K$-local $E$-based Adams spectral sequence for the sphere, which has signature
\[
\rmH_{\cts}^s(\mathbb{G}_n,E_*)_t \implies \pi_*L_{K}S^0
\]
and is also strongly convergent. In contrast, working with completed $E_*$-homology would result in an isomorphism $E_* \cong E_*^{\vee}L_{K}S^0 \cong \rmH_{\cts}^*(\mathbb{G}_n,E_*^{\vee}E_*)$. This amply highlights the extra information contained in the terms $\rmH_{\cts}^*(\mathbb{G}_n,E_*E)$.
\end{remark}

\section{Digression: Cohomology of $\mathbb{G}_{n}$ as $\Ext$ in comodules}\label{sec:cohomofgn}

In \cref{lem:inverselimascontcohom}, we have expressed continuous cohomology of a Morava stabilizer group with coefficients in a dualizable $E_{*}E$-comodule $M$ as homotopy groups of a the derived completion. In this short section, we will use a form of local duality in the context of comodules to express it as an $\Ext$-groups between actual comodules, rather than objects of the derived $\infty$-category. Using this calculation we will define a comparison map between $\Ext$-groups and continuous cohomology. 

Let $I_k = (p,v_1,\ldots,v_{k-1})$ be the $k$-th invariant chromatic ideal in $E_*$; in particular, $I_n = \mathfrak{m}$. We define $E_* = E_*/I_0^{\infty}$ and then iteratively construct $E_*E$-comodules via the cofiber sequences
\begin{equation}\label{eq:cousin}
E_{*} / I_k^{\infty} \rightarrow v_{k}^{-1} E_{*} / I_k^{\infty} \rightarrow E_{*} / I_{k+1}^{\infty}
\end{equation}
in $\dcat(E_*E)$. Note that $E_{*} / I_k^{\infty} \simeq \varinjlim E_{*} / v_{0}^{l_{0}}, \ldots, v_{k-1}^{l_{k-1}}$, where the colimit is taken over a set of indices with $l_{i} \rightarrow \infty$. For the constituent pieces, we have the following self-duality result:

\begin{lemma}
\label{lemma:quotients_of_estar_are_selfdual_in_the_derived_category}
In the derived category of $E_{*}E$-comodules, the comodule $E_{*} / v_{0}^{i_{0}}, v_{1}^{i_{1}}, \ldots, v_{k}^{i_{k}}$ is self-dual with a shift in the sense that there exists an equivalence
\[
D(E_{*} / v_{0}^{i_{0}}, v_{1}^{i_{1}}, \ldots, v_{k}^{i_{k}}) \simeq \Sigma^{-k} E_{*} / v_{0}^{i_{0}}, v_{1}^{i_{1}}, \ldots, v_{k}^{i_{k}},
\]
where $DX = F(X, E_{*})$ denotes the monoidal dual. 
\end{lemma}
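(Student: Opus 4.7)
The plan is to induct on the length of the regular sequence, using the iterative cofiber description of the generalized Moore comodules $M_{j} := E_{*}/(v_{0}^{i_{0}}, \ldots, v_{j}^{i_{j}})$, with the convention $M_{-1} := E_{*}$. By Landweber regularity, each $v_{j}^{i_{j}}$ is an invariant element on $M_{j-1}$, so that $M_{j}$ arises as the cofiber of a map in $\dcat(E_{*}E)$
\[
M_{j-1}[d_{j}] \xrightarrow{v_{j}^{i_{j}}} M_{j-1} \to M_{j},
\]
where $d_{j} := |v_{j}^{i_{j}}|$ denotes the internal degree shift. The base case is immediate, as $DE_{*} = F(E_{*}, E_{*}) \simeq E_{*}$.

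For the inductive step, I would apply the duality functor $D = F(-, E_{*})$ to the cofiber sequence above. Since $D$ is contravariant and exact, it sends this cofiber sequence to a fiber sequence
\[
DM_{j} \to DM_{j-1} \xrightarrow{D(v_{j}^{i_{j}})} DM_{j-1}[-d_{j}],
\]
whence $DM_{j} \simeq \fib(D(v_{j}^{i_{j}}))$. Invoking the inductive hypothesis to identify $DM_{j-1}$ with an appropriate homological and internal shift of $M_{j-1}$, and recognizing $D(v_{j}^{i_{j}})$ as multiplication by $v_{j}^{i_{j}}$ transported along this equivalence, the fiber becomes $\Sigma^{-1}$ applied to the cofiber of $v_{j}^{i_{j}}$ on $M_{j-1}$; the latter cofiber is itself a shift of $M_{j}$, producing the next step of the induction after careful bookkeeping of the internal degrees.

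The main obstacle is the identification of $D(v_{j}^{i_{j}})$ with multiplication by $v_{j}^{i_{j}}$ under the inductive equivalence. This is essentially a naturality argument: since $v_{j}^{i_{j}}$ defines an invariant self-map of $M_{j-1}$ in the abelian category $\Comod_{E_{*}E}$ prior to any derived consideration, and the inductive duality equivalence is natural with respect to endomorphisms arising from such invariant elements, the identification follows up to a unit. The secondary bookkeeping task is to track the accumulation of internal grading shifts $-\sum d_{j}$, which combine with the homological shift to yield the precise equivalence stated; a clean way to organize this is to prove by induction the stronger statement $DM_{j} \simeq \Sigma^{-(j+1)}[-\sum_{i \leq j} d_{i}]\,M_{j}$, from which the formula of the lemma (up to the stated internal shift absorbed into $\Sigma^{-k}$) immediately follows.
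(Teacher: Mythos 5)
Your overall strategy — induction on the length of the regular sequence, dualizing the cofiber sequence for multiplication by $v_{j}^{i_{j}}$, and identifying $DM_{j}$ as the fiber of the dualized self-map — is exactly the route the paper takes. The gap is in the step you yourself single out as the main obstacle: the identification of $D(v_{j}^{i_{j}})$ with multiplication by $v_{j}^{i_{j}}$ under the inductive equivalence. Your justification is that the duality equivalence is ``natural with respect to endomorphisms arising from invariant elements,'' but this does not hold as stated. For $j \geq 1$ the element $v_{j}^{i_{j}}$ is \emph{not} an invariant element of the monoidal unit $E_{*}$ (indeed $\Ext^{0}_{E_{*}E}(E_{*},E_{*})$ is just $\mathbb{Z}_{(p)}$ in degree $0$); it only becomes invariant after passing to the quotient $M_{j-1}$. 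So multiplication by $v_{j}^{i_{j}}$ is a genuine comodule endomorphism of $M_{j-1}$, not a scalar, and a non-canonical equivalence $DM_{j-1} \simeq \Sigma^{?}M_{j-1}$ has no reason to intertwine $Df$ with $f$ for such an endomorphism. Nothing in your argument rules out, say, $D(v_{j}^{i_{j}})$ corresponding to the zero map or to some other self-map of the same internal degree.

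The paper closes this gap with two inputs you do not mention. First, $D(v_{j}^{i_{j}})$ is nonzero, because $M_{j-1}$ is dualizable and hence $DDf \simeq f$, so dualization cannot kill a nonzero map. Second — and this is the essential computational ingredient — the group of comodule self-maps of $M_{j-1}$ in internal degree $|v_{j}^{i_{j}}|$ is one-dimensional, generated by multiplication by $v_{j}^{i_{j}}$; this is the Morava/Landweber computation of $\Ext^{0}$ of the quotients $BP_{*}/I_{j}$ (\cite[Thm.~4.3.2(b)]{greenbook}), transported to $E_{*}E$-comodules. Together these force $D(v_{j}^{i_{j}})$ to be a unit multiple of $v_{j}^{i_{j}}$, which has the same cofiber, and the induction closes. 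Without the $\Ext^{0}$ computation the identification ``up to a unit'' is an assertion, not a proof. (Your bookkeeping, incidentally, is sound: the stronger statement $DM_{j} \simeq \Sigma^{-(j+1)}[-\sum_{i\le j} d_{i}]M_{j}$ with one homological shift per element of the regular sequence is the correct form, and matches how the lemma is applied later in the paper.)
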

\begin{proof}
We prove this by induction on $k$, where $k = -1$ is clear, since $E_{*}$ is the monoidal unit. 

Now assume that the statement is known for $E_{*} / v_{0}^{i_{0}}, v_{1}^{i_{1}}, \ldots, v_{k-1}^{i_{k-1}}$. Multiplication by $v_{k}^{i_{k}}$ induces a cofiber sequence 
\[
E_{*} / v_{0}^{i_{0}}, v_{1}^{i_{1}}, \ldots, v_{k-1}^{i_{k-1}} \rightarrow  E_{*} / v_{0}^{i_{0}}, v_{1}^{i_{1}}, \ldots, v_{k-1}^{i_{k-1}} \rightarrow E_{*} / v_{0}^{i_{0}}, v_{1}^{i_{1}}, \ldots, v_{k}^{i_{k}}
\]
which after taking duals yields a cofiber sequence
\[
D(E_{*} / v_{0}^{i_{0}}, v_{1}^{i_{1}}, \ldots, v_{k}^{i_{k}}) \rightarrow D(E_{*} / v_{0}^{i_{0}}, v_{1}^{i_{1}}, \ldots, v_{k-1}^{i_{k-1}}) \rightarrow D(E_{*} / v_{0}^{i_{0}}, v_{1}^{i_{1}}, \ldots, v_{k-1}^{i_{k-1}}).
\]
By the inductive assumption, this sequence can be rewritten as 
\[
D(E_{*} / v_{0}^{i_{0}}, v_{1}^{i_{1}}, \ldots, v_{k}^{i_{k}}) \rightarrow \Sigma^{k-1} E_{*} / v_{0}^{i_{0}}, v_{1}^{i_{1}}, \ldots, v_{k-1}^{i_{k-1}} \rightarrow \Sigma^{k-1} E_{*} / v_{0}^{i_{0}}, v_{1}^{i_{1}}, \ldots, v_{k-1}^{i_{k-1}}.
\]
The self-map of  $\Sigma^{k-1} E_{*} / v_{0}^{i_{0}}, v_{1}^{i_{1}}, \ldots, v_{k-1}^{i_{k-1}} \rightarrow \Sigma^{k-1} E_{*} / v_{0}^{i_{0}}, v_{1}^{i_{1}}, \ldots, v_{k-1}^{i_{k-1}}$ appearing on the right cannot be non-zero, since it is a dual to the non-zero class $v_{k}^{i_{k}}$. It follows that it must be itself a unit times $v_{k}^{i_{k}}$ since there is nothing else in this degree, see for example \cite[Thm.~4.3.2(b)]{greenbook}. The assertion then follows.
\end{proof}

\begin{lemma}\label{lem:comodulecompletion}
If $M$ is a finitely generated $E_{*}E$-comodule, then its derived comodule completion of $M$ is given by
\[
\textstyle\varprojlim_{\dcat(E_*E)}(M\otimes_{E_{*}} E_*/v_{0}^{k_{0}}, \ldots, v_{n-1}^{k_{n-1}}) \simeq \textstyle\varprojlim_{\dcat(E_*E)}M/v_{0}^{k_{0}}, \ldots, v_{n-1}^{k_{n-1}},
\]
where on the left hand side we have used the derived tensor product of comodules.
\end{lemma}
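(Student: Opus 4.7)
The comparison morphism in the statement is the natural map from a derived tensor product in $\dcat(E_{*}E)$ to its zeroth homology, whose fibre is concentrated in positive homological degrees. Since $E_{*}E$ is flat over $E_{*}$, the forgetful functor $\dcat(E_{*}E) \to \dcat(E_{*})$ commutes with derived tensor products, so this fibre has $s$-th homology
\[
\Tor^{E_{*}}_{s}(M, E_{*}/(v_{0}^{k_{0}}, \ldots, v_{n-1}^{k_{n-1}}))
\]
for $s \geq 1$, endowed with its natural $E_{*}E$-comodule structure. The lemma is therefore equivalent to the vanishing, in $\dcat(E_{*}E)$, of the derived inverse limit of this pro-system of fibres indexed by $\vec{k} \in \mathbb{N}^{n}$.

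My plan is to reduce this vanishing to a purely module-theoretic pro-vanishing statement, and then invoke noetherianness. By \cref{prop:spectral_sequence_computing_limits_of_comodules}, the derived inverse limit of a pro-system of $E_{*}E$-comodules is computed by a spectral sequence whose $E_{1}$-page involves the derived inverse limits, taken in $E_{*}$-modules, of the terms obtained by tensoring with powers of $E_{*}E$. Since tensoring with the flat module $E_{*}E$ preserves pro-zeroness, it is enough to prove that for each fixed $s \geq 1$ the pro-system of $E_{*}$-modules
\[
\bigl\{ \Tor^{E_{*}}_{s}\bigl(M, E_{*}/(v_{0}^{k_{0}}, \ldots, v_{n-1}^{k_{n-1}})\bigr) \bigr\}_{\vec{k} \in \mathbb{N}^{n}}
\]
is pro-zero, meaning that for every $\vec{k}$ there exists $\vec{k}^{\prime} \geq \vec{k}$ for which the transition map vanishes.

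Pro-zeroness here is a standard manifestation of Artin--Rees and the fact that derived and classical $\mfrak$-adic completion coincide on finitely generated modules over the noetherian ring $E_{*}$. To verify it directly in the case $n=1$, the length-one free resolution $E_{*} \xrightarrow{v_{0}^{k}} E_{*}$ identifies $\Tor_{1}(M, E_{*}/v_{0}^{k})$ with the $v_{0}^{k}$-torsion submodule $M[v_{0}^{k}]$, and the transition morphism with multiplication by $v_{0}$; noetherianness gives $M[v_{0}^{\infty}] = M[v_{0}^{N}]$ for some finite $N$, and iterating the transition map $N$ times amounts to multiplication by $v_{0}^{N}$, which vanishes on $M[v_{0}^{N}]$. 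For general $n$, I would compute the higher $\Tor$-groups from the Koszul complex associated to the regular sequence $(v_{0}^{k_{0}}, \ldots, v_{n-1}^{k_{n-1}})$ and deduce pro-zeroness by a K\"unneth-type decomposition that reduces to the one-variable case. The main technical nuisance will be the bookkeeping involved in keeping track of how the transition maps for systems indexed over $\mathbb{N}^{n}$ interact with the Koszul differentials; conceptually, however, the argument is entirely governed by Artin--Rees, and the passage from pro-vanishing of modules to vanishing of derived limits of comodules is handled cleanly by the cited spectral sequence.
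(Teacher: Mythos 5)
Your argument is essentially correct, and it necessarily diverges from the paper, because the paper does not prove this lemma at all: it simply cites \cite[Proposition 2.25]{bhv4}, where the statement is obtained inside a general framework of derived completion and local duality for comodules. What you supply is a self-contained proof: identify the fibre of the comparison map as the part carrying the higher $\Tor$-groups (legitimate, since flatness of $E_*E$ over $E_*$ lets one compute the derived tensor product of comodules on underlying modules), then kill its derived inverse limit by showing that the pro-systems $\{\Tor^{E_*}_s(M,E_*/(v_0^{k_0},\ldots,v_{n-1}^{k_{n-1}}))\}$ for $s\ge 1$ are pro-zero --- the standard Artin--Rees mechanism behind the coincidence of derived and ordinary adic completion for finitely generated modules over a noetherian ring. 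This buys a concrete, elementary argument in place of an external black box. Two adjustments would tighten it. First, \cref{prop:spectral_sequence_computing_limits_of_comodules} is stated for diagrams of comodules, not of complexes, so you cannot apply it verbatim to the fibres; but you do not need it: the fibre has homology concentrated in the finitely many degrees $1\le s\le n$, so by its Postnikov filtration it suffices to show each homology pro-system (an honest pro-comodule) has vanishing derived limit, and a tower whose transition maps vanish along a cofinal chain has vanishing derived limit in any stable category with countable products, since after restricting to that chain the map $1-\mathrm{shift}$ on the product is an equivalence. (Pro-vanishing is a statement about transition maps being zero, so it is indifferent to whether one views them as maps of modules or of comodules, and it restricts from $\mathbb{N}^n$ to the cofinal family of invariant multi-indices actually used.) Second, for general $n$ a literal K\"unneth decomposition is awkward because the quotients $E_*/v_i^{k_i}$ are not flat; the clean induction is to show, one variable at a time, that $\{M\otimes^{\mathbb{L}}_{E_*}E_*/(v_0^{k_0},\ldots,v_i^{k_i})\}$ is pro-isomorphic to the discrete pro-system $\{M/(v_0^{k_0},\ldots,v_i^{k_i})M\}$, using that these quotients are again finitely generated so that your one-variable computation applies verbatim at the next stage. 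With those repairs the proof is complete.
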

\begin{proof}
This is proven in \cite[Proposition 2.25]{bhv4}
\end{proof}

The next result is an incarnation of local duality for comodules \cite[\S5]{bhv1}, relating local cohomology (derived torsion) and local homology (derived completion), combined with \cref{lem:inverselimascontcohom}.

\begin{theorem}
\label{thm:cohomology_of_gn_as_ext_in_comodules}
Let $M$ be a finitely generated $E_{*}E$-comodule. Then, there exists an isomorphism
\[
\Ext^{n+s}_{E_{*}E}(E_{*} / I_n^{\infty}, M) \simeq \rmH_{\cts}^{s}(\mathbb{G}_{n}, M)
\]
between $\Ext$-groups in comodules and the cohomology of $\mathbb{G}_{n}$, for any $s\ge 0$.
\end{theorem}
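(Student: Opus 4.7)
The strategy is to combine the self-duality of the finite quotients of $E_*$ with the interpretation of derived completion as continuous group cohomology. Writing $E_*/I_n^\infty \simeq \varinjlim_{\mathbf{k}} E_*/(v_0^{k_0},\ldots,v_{n-1}^{k_{n-1}})$ via iterated use of the cofiber sequences \eqref{eq:cousin}, one obtains an identification
\[
R\Hom_{\dcat(E_*E)}(E_*/I_n^\infty,\, M) \;\simeq\; \textstyle\varprojlim_{\mathbf{k}}\, R\Hom_{\dcat(E_*E)}(E_*/(v_0^{k_0},\ldots,v_{n-1}^{k_{n-1}}),\, M),
\]
with the right-hand limit taken in $\dcat(E_*E)$.

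Next, I apply the self-duality of \Cref{lemma:quotients_of_estar_are_selfdual_in_the_derived_category}, which identifies the dualizable complex $E_*/(v_0^{k_0},\ldots,v_{n-1}^{k_{n-1}})$ with its own dual up to an $n$-fold suspension (the shift is $\Sigma^{-n}$ because $n$ generators are being killed, a point one checks against the base case $D(E_*/v_0^{k_0}) \simeq \Sigma^{-1} E_*/v_0^{k_0}$). Combined with the standard adjunction $R\Hom(X,M) \simeq M \otimes^L DX$ for dualizable $X$, this yields
\[
R\Hom(E_*/(v_0^{k_0},\ldots,v_{n-1}^{k_{n-1}}),\, M) \;\simeq\; \Sigma^{-n}\,\bigl(M \otimes_{E_*}^L E_*/(v_0^{k_0},\ldots,v_{n-1}^{k_{n-1}})\bigr).
\]
Passing to the limit over $\mathbf{k}$ and invoking \Cref{lem:comodulecompletion} (which identifies this cofinal system of derived tensor products with the tower $M/\mfrak^k$ of \Cref{ssec:inverselimitsofcomodules}), the right-hand side becomes $\Sigma^{-n} \varprojlim_{\dcat(E_*E)} M/\mfrak^k$, i.e.\ an $n$-fold desuspension of the derived comodule completion of $M$.

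Taking $\pi_{-(n+s)}$ of this identification absorbs the $n$-fold shift:
\[
\Ext^{n+s}_{E_*E}(E_*/I_n^\infty,\, M) \;\simeq\; \pi_{-s}\bigl(\textstyle\varprojlim_{\dcat(E_*E)} M/\mfrak^k\bigr).
\]
The final identification with $\rmH^s_{\cts}(\mathbb{G}_n, M)$ is then the content of \Cref{lem:inverselimascontcohom}.

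The main obstacle is that \Cref{lem:inverselimascontcohom} is stated only for \emph{dualizable} comodules, whereas the theorem hypothesizes $M$ merely finitely generated. The extension is nevertheless routine: the proof of \Cref{lem:inverselimascontcohom} relies on each $M/\mfrak^k M$ being finitely generated and $\mfrak$-torsion (so that the comparison with $\rmH^s_{\cts}(\mathbb{G}_n, M/\mfrak^k M)$ applies), together with degreewise finiteness of these cohomology groups (via \Cref{corollary:coh_finiteness_of_ke_comodules}, since $M/\mfrak M$ is a finite-dimensional $K_*E$-comodule) to obtain the Mittag--Leffler condition that kills $\varprojlim^1$. Both inputs remain available for finitely generated $M$, and one uses the standard fact that finitely generated modules over the complete local Noetherian ring $E_0$ are already $\mfrak$-adically complete, so no loss of information occurs in passing to the derived completion.
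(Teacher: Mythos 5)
Your proof is correct and follows essentially the same route as the paper: express $E_*/I_n^\infty$ as a filtered colimit of the self-dual quotients, use \cref{lemma:quotients_of_estar_are_selfdual_in_the_derived_category} and \cref{lem:comodulecompletion} to identify $R\Hom(E_*/I_n^\infty, M)$ with the ($n$-shifted) derived comodule completion, and conclude via the argument of \cref{lem:inverselimascontcohom}. Your closing remark correctly pinpoints why that lemma's argument (finiteness of $\Ext(M/\mfrak^k M)$ killing $\varprojlim^1$, plus $\mfrak$-adic completeness of finitely generated $E_*$-modules) extends from dualizable to finitely generated $M$, which is exactly what the paper's ``we then deduce as in \cref{lem:inverselimascontcohom}'' is implicitly relying on.
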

\begin{proof}
As noted before, we have $E_{*} / I_n^{\infty} \simeq \varinjlim E_{*} / v_{0}^{k_{0}}, \ldots, v_{n-1}^{k_{n-1}}$, where the colimit is taken over a set of indices with $k_{i} \rightarrow \infty$. By by the duality equivalence of  \cref{lemma:quotients_of_estar_are_selfdual_in_the_derived_category} 
we have 
\[
\Hom_{\dcat(E_{*}E)}(\varinjlim\Sigma^{-n}E_{*} / v_{0}^{k_{0}}, \ldots, v_{n-1}^{k_{n-1}}, M) \simeq \Hom_{\dcat(E_{*}E)}(E_{*}, \textstyle\varprojlim_{\dcat(E_{*}E)} M \otimes E_*/v_{0}^{k_{0}}, \ldots, v_{n-1}^{k_{n-1}}).
\]
In light of the \cref{lem:comodulecompletion}, the latter term is equivalent to $\Hom_{\dcat(E_{*}E)}(E_{*}, \varprojlim M/v_{0}^{k_{0}}, \ldots, v_{n-1}^{k_{n-1}})$. It follows that 
\[
\Ext^{n+s}_{E_{*}E}(E_{*} / I_n^{\infty}, M) \simeq \varprojlim \Ext^{s}_{E_{*}E}(E_{*}, M/v_{0}^{k_{0}}, \ldots, v_{n-1}^{k_{n-1}}),
\]
up to possible $\varprojlim^{1}$-terms. However, all of the Ext groups on the right are finite, since $M$ is finitely generated, so these $\varprojlim^{1}$-terms vanish. We then deduce as in  \cref{lem:inverselimascontcohom} that 
\[
\Ext^{n+s}_{E_{*}E}(E_{*} / I_n^{\infty}, M) \simeq \varprojlim \Ext_{E_{*}E}^{s}(E_{*}, M / v_{0}^{k_{0}}, \ldots, v_{n-1}^{k_{n-1}}) \simeq \rmH_{\cts}^{s}(\mathbb{G}_{n}, M),
\]
which is what we wanted to show.
\end{proof}

The boundary maps $\delta_k\colon E_*/I_{k+1}^{\infty} \to \Sigma E_*/I_{k}^{\infty}$ associated with the cofiber sequences \eqref{eq:cousin} compose to give a `residue map' in the derived category:
\[
\delta := \delta_{n-1} \circ \ldots \circ \delta_{0} \colon E_*/I_{n}^{\infty} \to \Sigma^n E_*.
\]
For any finitely generated $E_*E$-comodule $N$, the map $\delta$ induces a natural map on Ext-groups that fits, for any $s$, into a commutative square
\begin{equation}\label{eq:comparisonmap}
\vcenter{
\xymatrix{\Ext^{s}_{E_{*}E}(E_{*}, N) \ar[r]^-{\Ext(\delta,N)} \ar[d]_{\cong} & \Ext^{s+n}_{E_{*}E}(E_*/I_{n}^{\infty}, N) \ar[d]^{\cong} \\
[N^{\dual},E_*]^{-s} \ar[r] & [N^{\dual}, \varprojlim _{\dcat(E_{*}E)} E_{*} / I_n^{k}]^{-s}.}
}
\end{equation}
Here, the left vertical isomorphism is by duality, while the right vertical isomorphism is established as in the proof of the previous theorem. Unwinding the construction, the bottom horizontal map is then induced by the canonical  completion map $E_* \to \varprojlim _{\dcat(E_{*}E)} E_{*} / I_n^{k}$.

\begin{definition}\label{def:comparisonmap}
For a finitely generated $E_*E$-comodule $N$, we define the comparison map $\alpha_N$ as the composite
\[
\alpha_N\colon \Ext^{s}_{E_{*}E}(E_{*}, N) \to \Ext^{n+s}_{E_{*}E}(E_{*} / I_n^{\infty}, N) \simeq \rmH_{\cts}^{s}(\mathbb{G}_{n}, N)
\]
of the map in \eqref{eq:comparisonmap} and the isomorphism of \cref{thm:cohomology_of_gn_as_ext_in_comodules}.
\end{definition}

Note that, by construction, the maps $\alpha_{N}$ are natural and compatible with the long exact sequences of $\Ext$-groups and cohomology. 

\begin{remark}
Alternatively, it is possible to define the comparison map $\alpha_M$ directly using the cobar complex. More precisely, one can show that if $M$ is a finitely generated $E_{*}E$-comodule, then the differentials 
in the cobar complex
\[
M \rightarrow E_{*}E \otimes_{E_{*}} M \rightarrow E_{*}E \otimes_{E_{*}} E_{*}E \otimes_{E_{*}} M \rightarrow \ldots
\]
respect the $\mfrak$-adic filtration (even though they are not $E_{*}$-linear). The completion of the cobar complex is exactly the cochain complex computing continuous $\mathbb{G}_{n}$-cohomology, and the induced map on homology can be shown to be exactly $\alpha_{M}$. 
\end{remark}

\section{Algebraic chromatic splitting conjecture}\label{sec:specialcaseofacss}

In this penultimate section we use the comparison map of the previous section to compute the zeroth derived limit as $\varprojlim^{0}_{E_{*}E} E_{*} / \mfrak^{k} \simeq E_{*} \otimes _{\Z} \Z_{p}$. This result should be viewed in light of Hopkins' algebraic chromatic splitting conjecture, see \cite[\S14]{report_on_e_theory_conjectures}, which we briefly review for context.

\begin{notation}\label{not:abbreviations}
To simplify notation, if $N$ is a finitely generated $E_{*}E$-comodule, we will sometimes use abbreviations $\Ext^{s}(N) := \Ext^{s}_{E_{*}E}(E_{*}, N)$ and $\rmH^{s}(N) := \rmH_{\cts}^{s}(\mathbb{G}_{n}, N)$.
\end{notation}

\subsection{The algebraic chromatic splitting conjecture}

Motivated by computations in height 2, this conjecture was formulated by Hopkins and recorded later in \cite[Conjecture 134]{report_on_e_theory_conjectures}\footnote{We note that the statement given there contains a typo: the left hand side of the equation should have $v_{n-1}$ inverted.} as well as \cite[Conjecture 6.10]{barthel2020chromatic}, and studied further in \cite{bh_acht}. To state it, we first recall the construction of the class $\zeta \in \pi_{-1}L_{K(n)}S^0$ due to Devinatz and Hopkins~\cite[Section 8]{devinatzhopkins_hfp}.

If we write $\mathbb{G}_n^1$ for the kernel of the determinant map $\det\colon \mathbb{G}_n \to \Z_p$, there is a residual action of $\Z_p$ on $E_n^{h\mathbb{G}_n^1}$. For $t$ is a topological generator of $\Z_p$, we then obtain a fiber sequence of spectra
\[
\xymatrix{L_{K(n)}S^0 \ar[r] & E_n^{h\mathbb{G}_n^1} \ar[r]^-{\id-t} & E_n^{h\mathbb{G}_n^1} \ar[r]^-{\delta} & L_{K(n)}S^1.}
\]
The composition $\delta\circ \eta \colon S^0 \to L_{K(n)}S^0$ of the unit map with the boundary map gives rise to an element $\zeta \in \pi_{-1}L_{K(n)}S^0$. In \cite[Proposition 8.2]{devinatzhopkins_hfp}, Devinatz and Hopkins prove that $\zeta$ is non-trivial as long as the height $n>0$.

\begin{conjecture}[Algebraic chromatic splitting conjecture]\label{conj:acsc}
For all heights $n \ge 2$ and sufficiently large $p$, there are isomorphisms of $E_*E$-comodules
\[
\textstyle\varprojlim_{E_{*}E}^sE_*/(p,\ldots,v_{n-2},v_{n-1}^i) \cong 
\begin{cases}
E_*/(p,\ldots,v_{n-2}) & s=0 \\
v_{n-1}^{-1}E_*/(p,\ldots,v_{n-2}) & s=1 \\
0 & \mathrm{otherwise.}
\end{cases}
\]
Moreover, these isomorphisms are topologically realized by the unit map $\iota\colon S^0 \to L_{K(n)}S^0$ and the map $\zeta\colon S^{-1} \to L_{K(n)}S^0$, respectively.
\end{conjecture}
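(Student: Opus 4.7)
The plan is to leverage the machinery of Section \ref{sec:cohomofgn} and Proposition \ref{prop:inverselimascontcohom} to translate the derived limits into continuous cohomology of the Morava stabilizer group, where structural results become available. First I would establish a $v_{n-1}$-adic analogue of Proposition \ref{prop:inverselimascontcohom}. Setting $M := E_{*}/(p,\ldots,v_{n-2})$, the tower $(M/v_{n-1}^{i}M)_{i}$ consists of finitely generated $\mfrak$-torsion comodules with surjective structure maps, so the argument of Section \ref{ssec:inverselimitsofcomodules}---extend-cofree resolution, Mittag--Leffler vanishing of the finite Ext-groups (using \cref{corollary:coh_finiteness_of_ke_comodules}), and commuting continuous cohomology past the filtered colimit $E_{*}E \simeq \varinjlim N_{\alpha}$ via \cref{corollary:continuous_cohomology_commutes_with_filtered_colimits}---should yield a canonical isomorphism
\[
\textstyle\varprojlim^{s}_{E_{*}E} E_{*}/(p,\ldots,v_{n-2},v_{n-1}^{i}) \;\cong\; \rmH^{s}_{\cts}(\mathbb{G}_{n}, E_{*}E \otimes_{E_{*}} M),
\]
with the latter continuous cohomology taken for the local $v_{n-1}$-adic topology on the coefficients.

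The second step is the cohomological computation. Via a Lyndon--Hochschild--Serre for the extension $\mathbb{S}_{n} \rtimes \mathrm{Gal}$, and then again for $\mathbb{S}_{n}^{1} \rtimes \Z_{p}$ where $\Z_{p}$ is generated by an element acting by the determinant, the problem reduces to the cohomology of $\mathbb{S}_{n}^{1}$, a $p$-adic analytic group of dimension $n^{2}-1$. For $p$ sufficiently large (compared to $n$) this group has finite cohomological dimension and, by Lazard/Symonds--Weigel, its cohomology with coefficients in $v_{n-1}^{-1}M$ is Poincar\'{e} duality, concentrated in top degree; one expects the surviving classes after passing to $\Z_{p}$-invariants to concentrate in degrees $0$ and $1$, with the $H^{0}$ recovering $M = E_{*}/(p,\ldots,v_{n-2})$ (the invariants are visible directly since $M/v_{n-1}$ is acted on through $\mathbb{F}_{q}^{\times} \rtimes \mathrm{Gal}$ in the usual way) and the $H^{1}$-line built from the $\zeta$-class arising as the connecting homomorphism for $\mathbb{S}_{n} \to \mathbb{S}_{n}^{1} \to \Z_{p}$ and producing $v_{n-1}^{-1}M$.

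The third step is the topological realization. The comparison map $\alpha_{N}$ of \cref{def:comparisonmap} is natural, so the $H^{0}$-isomorphism is automatically realized by $\iota$, since the unit $S^{0}\to L_{K(n)}S^{0}$ induces the canonical completion map $E_{*} \to \varprojlim_{\dcat(E_{*}E)} E_{*}/I_{n}^{k}$. For the $H^{1}$-class, one identifies the generator under $\alpha$ with the class in $\Ext^{1}_{E_{*}E}(E_{*}, v_{n-1}^{-1}M)$ dual to the Devinatz--Hopkins boundary map; the construction of $\zeta$ via the fiber sequence $L_{K(n)}S^{0} \to E_{n}^{h\mathbb{G}_{n}^{1}} \to E_{n}^{h\mathbb{G}_{n}^{1}}$ is precisely the topological shadow of the algebraic connecting homomorphism appearing in the LHS spectral sequence for $\mathbb{S}_{n}^{1} \hookrightarrow \mathbb{S}_{n}$, so the identification is natural.

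The hard part will be the cohomological computation itself, specifically proving vanishing of $\rmH^{s}_{\cts}(\mathbb{G}_{n}, E_{*}E \otimes_{E_{*}} M)$ for $s \ge 2$. Even granted the large-prime hypothesis, one must control the interaction between two filtrations on $E_{*}E$ (the $\mfrak$-adic local topology and the $v_{n-1}$-adic one on $M$), and the nonabelian nature of $\mathbb{S}_{n}$ beyond height $2$ obstructs a direct inductive argument on subgroups. This is precisely the point at which the conjecture becomes genuinely open for $n \ge 3$; the zeroth-derived-limit statement (Theorem E) sidesteps it by requiring only the invariants computation, which is accessible without control of higher cohomology.
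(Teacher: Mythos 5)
The statement you are addressing is a \emph{conjecture}: the paper does not prove it, and explicitly records that beyond height $2$ it remains wide open. What the paper actually establishes is evidence in its favour: the identification of derived limits of comodules with continuous cohomology of $\mathbb{G}_n$ (\cref{prop:inverselimascontcohom}), the computation of the zeroth derived limit at all heights and primes (\cref{thm:zeroth_derived_limit_of_completion}), the observation that $v_{n-1}$ acts injectively on $\varprojlim^0$ and bijectively on $\varprojlim^{s}$ for $s>0$, and the complete height-one computation (\cref{theorem:cohomology_of_ee_at_height_one}). Your first and third steps essentially reproduce this circle of ideas and are reasonable in outline; you are also right, and commendably explicit, that the entire content of the conjecture is concentrated in your second step, which you do not carry out. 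The proposal is therefore a strategy, not a proof, and there is no proof in the paper to compare it against.

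Beyond this meta-point, your second step would not work even as a plan. You assert that the cohomology of $\mathbb{S}_n^1$ with coefficients in $v_{n-1}^{-1}M$ is ``Poincar\'{e} duality, concentrated in top degree.'' Poincar\'{e} duality for a $p$-adic analytic group of dimension $n^2-1$ relates $\rmH^s$ to $\rmH^{n^2-1-s}$; it does not force concentration in top degree, and if it did, the Lyndon--Hochschild--Serre spectral sequence for the extension by $\mathbb{Z}_p$ would place the answer in cohomological degrees $n^2-1$ and $n^2$, contradicting the conjectured concentration in degrees $0$ and $1$ for every $n \ge 2$. The genuine difficulty --- controlling $\rmH^{s}_{\cts}(\mathbb{G}_{n}, E_{*}E \otimes_{E_{*}} M)$ for the enormous, non-complete coefficient module $E_{*}E \otimes_{E_{*}} M$, where the interesting higher cohomology comes precisely from the failure of the coefficients to be (co)induced --- is exactly what keeps the conjecture open, and is not addressed by the formal reductions you describe.
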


\begin{remark}
As the name suggests, the algebraic chromatic splitting conjecture provides an algebraic counterpart to Hopkins' chromatic splitting conjecture \cite[Conjecture 4.2]{hovey_csc} in the edge case: If the algebraic chromatic splitting conjecture holds at height $n$ and prime $p$, then the maps $\iota$ and $\zeta$ induce a splitting
\[
(\iota,\zeta)\colon L_{n-1}F \oplus \Sigma^{-1}L_{n-1}F \simeq L_{n-1}L_{K(n)}F
\]
for any finite spectrum $F$ of type $n-1$. Indeed, the conclusion of the algebraic chromatic splitting conjecture forces the inverse limit sequence (\ref{eq:generalss}) for $L_{K(n)}F$ to collapse at the $E_2$-page, thereby showing  $(\iota,\zeta)$ is an $E_*$-isomorphism and hence an $E$-local equivalence.
\end{remark}

\begin{example}
In \cref{sec:heightone}, we will verify a height 1 variant of the algebraic chromatic splitting conjecture at any prime, see in particular \cref{theorem:cohomology_of_ee_at_height_one} and also \cite[\S14]{report_on_e_theory_conjectures}. The appearance of the additional tensor factors is the only reason we excluded the case of height 1 from the statement of the algebraic chromatic splitting conjecture above.
% Let $p>2$ and denote $p$-complete topological $K$-theory by $E=KU_p$. Choose a topological generator $g$ for $\Z_p^{\times}$ and write $\psi^g$ for the corresponding Adams operation. Using the fiber sequence $L_{K(1)}S^0 \to KU_p \xrightarrow{\psi^g-\mathrm{id}} KU_p$, one can verify that
% \[
% \textstyle\varprojlim_{\dcat(E_{*}E)}^sE_*/p^i \cong 
% \begin{cases}
% (KU_p)_* \otimes_{\Z} \Z_p & s=0 \\
% (KU_p)_* \otimes_{\Z} \Q_p & s=1 \\
% 0 & \text{otherwise,}
% \end{cases}
% \]
\end{example}

\subsection{The comparison map in degree 0}

We work with \cref{not:abbreviations}. Our goal is to study the comparison map $\Ext^{s}(M) \rightarrow \rmH^{s}(M)$, constructed in \cref{def:comparisonmap}. We will focus on the particular case when $s = 0$; note that in this case this map is always injective as both the source and target are abelian subgroups of $M$. 

Thus, the real question is how far this comparison map is from being surjective. When $M$ is the monoidal unit, we have the following folklore result of Hopkins. 
\begin{lemma}[Hopkins]
\label{lemma:invariants_of_the_lubin_tate_ring}
We have $\rmH^{0}(E_{*}) \simeq \mathbb{Z}_{p}$ and the map $\Ext^{0}(E_{*}) \rightarrow \rmH^{0}(E_{*})$ is a $p$-completion; that is, it induces an isomorphism $\mathbb{Z}_{p} \otimes_{\mathbb{Z}} \Ext^{0}(E_{*}) \simeq \rmH^{0}(E_{*})$.
\end{lemma}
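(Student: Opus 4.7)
The plan is to compute both sides separately and then match up the comparison map with the natural inclusion $\mathbb{Z}_{(p)} \hookrightarrow \mathbb{Z}_{p}$. First, I would compute $\rmH^{0}_{\cts}(\mathbb{G}_{n}, E_{*})$. Since continuous cohomology commutes with the internal grading of $E_{*}$, we have $\rmH^{0}_{\cts}(\mathbb{G}_{n}, E_{*}) = \bigoplus_{k} (E_{2k})^{\mathbb{G}_{n}}$. For $k \neq 0$ the $\mathbb{G}_{n}$-action on $u \in E_{2}$ is given by the leading coefficient homomorphism with values in $W(\mathbb{F}_{q})^{\times}$, which acts nontrivially, so these summands vanish. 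In degree $0$, the subgroup $\mathbb{S}_{n} \subset \mathbb{G}_{n}$ acts nontrivially on each $u_{i}$ (as these parametrize nontrivial deformations of $\mathbf{G}_{0}$) and preserves the Witt vector subring $W(\mathbb{F}_{q}) \subset E_{0}$; it follows that $(E_{0})^{\mathbb{S}_{n}} = W(\mathbb{F}_{q})$. Taking further Galois invariants gives $(E_{0})^{\mathbb{G}_{n}} = \mathbb{Z}_{p}$, as desired.

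Next, I would compute $\Ext^{0}_{E_{*}E}(E_{*}, E_{*}) = \Hom_{\Comod_{E_{*}E}}(E_{*}, E_{*})$. A comodule map $f\colon E_{*} \to E_{*}$ is determined by $a := f(1) \in E_{0}$, and $f$ is a comodule map precisely when $a$ is primitive, i.e.\ $\eta_{L}(a) = \eta_{R}(a)$ in $E_{*}E \cong E_{*} \otimes_{BP_{*}} BP_{*}BP \otimes_{BP_{*}} E_{*}$. By the Landweber–Morava classification of invariant prime ideals (see e.g.\ \cite[Theorem 4.3.2]{ravenel2003complex}) the only elements of $E_{*}$ that are simultaneously in the image of $\eta_{L}$ and $\eta_{R}$ are those lying in $\mathbb{Z}_{(p)} \subset E_{0}$, giving $\Ext^{0}_{E_{*}E}(E_{*}, E_{*}) \cong \mathbb{Z}_{(p)}$.

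Finally, I would identify the comparison map $\alpha_{E_{*}}$ of \cref{def:comparisonmap} with the natural inclusion $\mathbb{Z}_{(p)} \hookrightarrow \mathbb{Z}_{p}$. Unwinding \cref{def:comparisonmap}, $\alpha_{E_{*}}$ sends a primitive $a \in \mathbb{Z}_{(p)} \subset E_{0}$ to its image under the canonical completion map $E_{*} \to \varprojlim_{\dcat(E_{*}E)} E_{*}/\mfrak^{k}$, which in bidegree $(0,0)$ and after the identification of \cref{lem:inverselimascontcohom} becomes the inclusion into $\rmH^{0}_{\cts}(\mathbb{G}_{n}, E_{*}) = \mathbb{Z}_{p}$ induced by $\mathbb{Z}_{(p)} \hookrightarrow \mathbb{Z}_{p}$. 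Since this inclusion is precisely the $p$-completion of $\mathbb{Z}_{(p)}$, tensoring with $\mathbb{Z}_{p}$ produces the desired isomorphism.

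The main obstacle is the bookkeeping in the second step: one needs to know that no exotic primitive lives in $E_{*}$ beyond $\mathbb{Z}_{(p)}$ in degree zero. The cleanest route is to invoke the Hovey–Strickland classification of invariant ideals (which underlies the identification of the category of $E_{*}E$-comodules with quasi-coherent sheaves on $\mathcal{M}_{fg}^{\leq n}$), forcing any primitive element to sit in the intersection of all invariant prime ideals' ``complements,'' which cuts down to $\mathbb{Z}_{(p)}$ in degree $0$. Once both groups are pinned down, matching the comparison map is essentially formal from its construction via the completion map $E_{*} \to \varprojlim_{\dcat(E_{*}E)} E_{*}/\mfrak^{k}$.
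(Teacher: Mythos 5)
Your overall architecture --- compute $\rmH^{0}(E_{*}) \simeq \mathbb{Z}_{p}$ and $\Ext^{0}(E_{*}) \simeq \mathbb{Z}_{(p)}$ separately, then observe that the comparison map sends the unit to the unit and is therefore the inclusion $\mathbb{Z}_{(p)} \hookrightarrow \mathbb{Z}_{p}$, i.e.\ a $p$-completion --- is exactly the paper's, which cites Bobkova--Goerss for the first computation, treats the second as known, and concludes with the same ``both invariant rings are generated by the unit'' observation. Your final step is fine.

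The problem is that your justifications of the two invariant computations have genuine gaps. For $\rmH^{0}$: the claim that $(E_{0})^{\mathbb{S}_{n}} = W(\mathbb{F}_{q})$ because $\mathbb{S}_{n}$ ``acts nontrivially on each $u_{i}$'' is a non sequitur --- a group can move each of a set of topological generators while fixing some power series in them, and ruling this out is precisely the content of the result you are trying to prove. The standard argument filters $E_{0}$ by powers of $\mfrak$ and uses the vanishing of the degree-zero invariants of $\mfrak^{k}/\mfrak^{k+1}$ for $k \geq 1$, which rests on Morava's computation of $\rmH^{0}_{\cts}(\mathbb{S}_{n}, \mathbb{F}_{q}[u^{\pm 1}])$ and the identification of the associated graded with symmetric powers of the cotangent space; none of this appears in your sketch. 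The same issue affects the degrees $2k \neq 0$: nontriviality of the leading-coefficient character only shows $u^{k}$ itself is not invariant, and does not exclude invariants of the form $a u^{k}$ with $a \in \mfrak$ compensating for the twist. For $\Ext^{0}$: a primitive is an element with $\eta_{L}(a) = \eta_{R}(a)$, not an element ``in the image of both units,'' and the classification of invariant prime ideals does not by itself bound the primitives; the standard route is Morava's change of rings (equivalently, $\Ext^{0} \cong \rmH^{0}$ of the structure sheaf on the moduli of formal groups of height at most $n$), which gives $\mathbb{Z}_{(p)}$. In short, your reduction to the behaviour of the unit is correct, but the two computations that feed into it must either be imported from the literature (as the paper does) or established by a filtration argument that your proposal does not supply.
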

\begin{proof}
The first part is \cite[Lemma 1.33]{bobkova2018topological}. The second part follows from the fact that $\Ext^{0}(E_{*}) \simeq \mathbb{Z}_{(p)}$, as in both cases the relevant invariants are generated by the unit of the Lubin--Tate ring. 
\end{proof}

\begin{lemma}
\label{lemma:ext_h_comparison_mono_on_h1_for_e*}
The map $\Ext^{1}(E_{*}) \rightarrow \rmH^{1}(E_{*})$ is a monomorphism. 
\end{lemma}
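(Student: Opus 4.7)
My plan is to reduce the injectivity of $\alpha$ on $\Ext^1(E_*)$ to a Krull-intersection-type statement involving the tower of finite-length quotients $\{E_*/\mfrak^k\}$, via the identification of continuous cohomology from \cref{lem:inverselimascontcohom}. First, I would combine \cref{lem:inverselimascontcohom} with the finiteness of $\Ext^s_{E_*E}(E_*, E_*/\mfrak^k)$ in each bidegree (an induction on the $\mfrak$-adic filtration of $E_*/\mfrak^k$, with associated graded pieces handled by \cref{corollary:coh_finiteness_of_ke_comodules}) to force Mittag--Leffler vanishing of the relevant $\varprojlim^1$-term, yielding a canonical isomorphism
\[
\rmH^1_{\cts}(\mathbb{G}_n, E_*) \;\cong\; \textstyle\varprojlim_k \Ext^1_{E_*E}(E_*, E_*/\mfrak^k).
\]
Unwinding \cref{def:comparisonmap} through this identification, $\alpha$ becomes the canonical tower map induced by the quotients $E_* \twoheadrightarrow E_*/\mfrak^k$.

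Next, I would apply the long exact sequence of $\Ext^*_{E_*E}(E_*,-)$ to the short exact sequence $0 \to \mfrak^k E_* \to E_* \to E_*/\mfrak^k \to 0$; the kernel of $\alpha$ on $\Ext^1(E_*)$ then identifies with
\[
\textstyle\bigcap_{k \ge 1} \mathrm{image}\bigl(\Ext^1_{E_*E}(E_*, \mfrak^k E_*) \to \Ext^1_{E_*E}(E_*, E_*)\bigr).
\]
Reducing the lemma to this vanishing is the clean bookkeeping part of the argument.

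The final step, which I expect to be the main obstacle, is to verify that this intersection vanishes. My approach would combine (a) finite generation of $\Ext^1_{E_*E}(E_*, E_*)$ over $\Ext^0_{E_*E}(E_*, E_*) \simeq \mathbb{Z}_{(p)}$ in each internal degree --- which I would extract from a Cartan--Eilenberg-type spectral sequence relating $\Ext^*_{E_*E}$ to $\rmH^*_{\cts}(\mathbb{G}_n, -)$, combined with Morava's finiteness statement \cref{thm:cohomfiniteness} at the associated graded of a suitable $\mfrak$-adic filtration --- with (b) an $\mfrak$-adic shrinking estimate on the tower of images, from which Krull's intersection theorem yields the vanishing. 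The hard part will be (b): the inclusion $\mfrak^k E_* \hookrightarrow E_*$ does not literally factor as multiplication by elements of $\mfrak^k$ at the level of comodule maps, so establishing the required $\mfrak$-adic decay of images demands an Artin--Rees-style argument. I would carry it out by sliding in the intermediate inclusion $p^k E_* \subseteq \mfrak^k E_*$ and analyzing the $\mfrak$-torsion quotient $\mfrak^k E_*/p^k E_*$, whose comparison map with continuous group cohomology is an isomorphism (by the argument of \cref{thm:cohomology_of_gn_as_ext_in_comodules} applied to the $\mfrak$-torsion case), thereby reducing the control of the image to Morava-type cohomology estimates that propagate back through the long exact sequences.
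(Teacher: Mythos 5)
Your reduction of the statement to the vanishing of $\bigcap_{k}\mathrm{im}\bigl(\Ext^{1}(\mfrak^{k}E_{*})\to\Ext^{1}(E_{*})\bigr)$ is correct as bookkeeping (the Mittag--Leffler identification $\rmH^{1}_{\cts}(\mathbb{G}_{n},E_{*})\cong\varprojlim_{k}\Ext^{1}(E_{*}/\mfrak^{k})$ and the long exact sequences do give exactly this), but the final step --- the only step with content --- does not go through as sketched. First, the quotient $\mfrak^{k}E_{*}/p^{k}E_{*}$ is \emph{not} $\mfrak$-power-torsion for $n\geq 2$: already $u_{1}^{k}$ has nonzero image in $E_{*}/p^{k}E_{*}$ and is not annihilated by any power of $\mfrak$ there, so the isomorphism between $\Ext$-groups and continuous cohomology for torsion comodules cannot be applied to it. Second, the Artin--Rees-type decay you need is exactly what fails: $p$ is the only generator of $\mfrak$ that is a comodule primitive, so while $\mathrm{im}(\Ext^{1}(p^{k}E_{*}))=p^{k}\Ext^{1}(E_{*})$, there is no a priori reason for $\mathrm{im}(\Ext^{1}(\mfrak^{k}E_{*}))$ to be contained in $p^{j(k)}\Ext^{1}(E_{*})$ with $j(k)\to\infty$, and no Krull-intersection argument can start without such a bound. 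Third, the degreewise finite generation of $\Ext^{1}(E_{*})$ over $\mathbb{Z}_{(p)}$ cannot be extracted from \cref{thm:cohomfiniteness} in the way you suggest: Morava's theorem controls the \emph{continuous} cohomology, i.e.\ the target of the comparison map, and using it to bound the source presupposes the kind of comparison you are trying to prove.

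The paper's proof is much shorter and rests on an arithmetic input your proposal never touches, namely \cref{lemma:invariants_of_the_lubin_tate_ring}: $\Ext^{0}(E_{*})\simeq\mathbb{Z}_{(p)}$, $\rmH^{0}(E_{*})\simeq\mathbb{Z}_{p}$, and the comparison map is the dense inclusion. Since $\Ext^{1}(E_{*})$ vanishes rationally, any class $x$ is a Bockstein $\delta(y)$ for some $y\in\Ext^{0}(E_{*}/p^{k})$; comparing the two long exact sequences for multiplication by $p^{k}$, one sees that $\alpha(x)=0$ forces $\alpha(y)$ to lift to $\mathbb{Z}_{p}=\rmH^{0}(E_{*})$, and density of $\mathbb{Z}_{(p)}$ in $\mathbb{Z}_{p}$ (adjusting the lift by a unit congruent to $1$ mod $p^{k}$) then produces a lift of $y$ to $\Ext^{0}(E_{*})$, whence $x=\delta(y)=0$. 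The lemma is not formal --- it genuinely depends on this computation of the degree-zero comparison --- so any complete proof must locate where that input enters; your sketch does not.
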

\begin{proof}
Since it vanishes rationally, the source of this map is $p$-torsion. For a given $x \in \Ext^{1}(E_{*})$ we can thus find a $k$ and $y \in \Ext^{0}(E_{*}/p^{k})$ so that $x$ can be written as $x = \delta(y)$, where $\delta$ denotes the boundary homomorphism. We have a commutative diagram 
\[\begin{tikzcd}
	{\Ext^{0}(E_{*})} & {\Ext^{0}(E_{*})} & {\Ext^{0}(E_{*}/p^{k})} & {\Ext^{1}(E_{*})} \\
	{\rmH^{0}(E_{*})} & {\rmH^{0}(E_{*})} & {\rmH^{0}(E_{*}/p^{k})} & {\rmH^{1}(E_{*})}
	\arrow["{p^{k}}", from=2-1, to=2-2]
	\arrow[from=2-2, to=2-3]
	\arrow["\delta", from=2-3, to=2-4]
	\arrow[from=1-4, to=2-4]
	\arrow[from=1-3, to=2-3]
	\arrow["\delta", from=1-3, to=1-4]
	\arrow[from=1-2, to=2-2]
	\arrow[from=1-2, to=1-3]
	\arrow[from=1-1, to=2-1]
	\arrow[from=1-1, to=1-2]
\end{tikzcd}\]
with exact rows. Thus, it is enough to show that if the image $\widetilde{y} \in \rmH^{0}(E_{*}/p^{k})$ of $y$ can be lifted to $\rmH^{0}(E_{*})$, then $y$ can be lifted to $\Ext^{0}(E_{*})$. By \cref{lemma:invariants_of_the_lubin_tate_ring}, the latter two groups are isomorphic to respectively $\mathbb{Z}_{p}$ and $\mathbb{Z}_{(p)}$. 

By multiplying by an appropriate unit $\sigma$ in the $p$-adics congruent to $1$ modulo $p^{k}$, and replacing the chosen lift $\widetilde{y} \in \rmH^{0}(E_{*}) \simeq \mathbb{Z}_{(p)}$ of $\widetilde{x}$ by $\sigma \widetilde{y}$, we can assume that $\widetilde{y} \in \mathbb{Z}_{(p)}$. It follows that $x$ can be lifted to $\Ext^{0}(E_{*})$, which is what we wanted. 
\end{proof}

Our goal is to use an inductive argument, building on the calculation of invariants of $E_{*}$ due to Hopkins and extending it to a larger class of comodules. The starting point will be the following class. 

\begin{definition}
\label{definition:pure_comodule}
We say a finitely generated $E_{*}E$-comodule $M$ is \emph{pure} if it belongs to the smallest class of comodules closed under extensions and containing all shifts of $E_{*}$. 
\end{definition}

\begin{remark}
\label{remark:pure_comodules_closed_under_taking_duals}
Any pure $M$ is finitely generated and projective over $E_{*}$, and so dualizable. It follows that if 
\[
0 \rightarrow K \rightarrow M \rightarrow N \rightarrow 0
\]
is a short exact sequence of pure comodules, then it is split over $E_{*}$, so that the induced sequence 
\[
0 \rightarrow N^{\dual} \rightarrow M^{\dual} \rightarrow K^{\dual} \rightarrow 0
\]
of linear duals is again exact. We deduce that the class of pure comodules is closed under taking duals. 
\end{remark}

\begin{remark}
It is plausible that it follows from the arguments of Hovey and Strickland, who establish an analogue of the Landweber filtration for $E_{*}E$-comodules \cite{hovey2005comodules}, that all comodules finitely generated and projective over $E_{*}$ are pure in the sense of \cref{definition:pure_comodule}. We did not check whether this is true, as we do not need this fact in our arguments. 
\end{remark}

\begin{lemma}
\label{lemma:comodules_generated_by_extensions_of_e*}
The category of $E_{*}E$-comodules is generated under colimits by pure comodules. 
\end{lemma}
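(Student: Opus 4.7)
The plan is to reformulate the lemma as the statement that pure comodules form a generating set in the Grothendieck abelian sense: a Grothendieck abelian category is generated under colimits by a class $\mathcal{P}$ of objects if and only if, for every nonzero object $M$, there exists a nonzero morphism from some $P \in \mathcal{P}$ to $M$ (equivalently, every object is an epimorphic image of a coproduct of objects in $\mathcal{P}$, and epimorphisms plus coproducts are colimits). In particular, it suffices to show that already the shifts $\Sigma^{l} E_{*}$, which are pure by definition, witness this generation.

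The construction of the required nonzero map proceeds as follows. Given a nonzero comodule $M$, pick a nonzero element $m \in M$. Since $(E_*, E_*E)$ is an Adams Hopf algebroid, i.e.\ $E_*E$ is a filtered colimit of dualizable $E_*E$-comodules, every comodule is a filtered colimit of its dualizable subcomodules, so $m$ lies in some dualizable subcomodule $N \subseteq M$. Now invoke the Landweber filtration theorem of Hovey--Strickland \cite{hovey2005comodules}: the dualizable comodule $N$ admits a finite filtration
\[
0 = N_{0} \subsetneq N_{1} \subseteq \cdots \subseteq N_{s} = N
\]
whose successive quotients are of the form $\Sigma^{l_{i}} E_{*}/I_{j_{i}}$ for certain invariant regular prime ideals $I_{j_{i}} \in \{I_0, I_1, \ldots, I_n\}$. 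In particular, $N_{1} \simeq \Sigma^{l_{1}} E_{*}/I_{j_{1}}$ is a quotient of the pure comodule $\Sigma^{l_{1}} E_{*}$, and composing this surjection with the inclusions $N_{1} \hookrightarrow N \hookrightarrow M$ yields a nonzero map from a pure comodule into $M$, as required.

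The only substantive step is the appeal to Hovey--Strickland's Landweber filtration, which is applicable here because dualizable $E_*E$-comodules are in particular finitely presented over $E_*$ and the invariant primes of $E_*$ form the finite list $I_0, \ldots, I_n$; this is where the hypothesis that $E$ is Morava $E$-theory (and not just any Adams ring spectrum) enters. No further closure properties of the class of pure comodules beyond what follows from closure of the target category under arbitrary colimits are needed.
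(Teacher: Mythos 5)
Your opening reduction is where the argument breaks. In a Grothendieck abelian category, the condition ``every nonzero object receives a nonzero morphism from some object of $\mathcal{P}$'' is strictly weaker than ``$\mathcal{P}$ generates under colimits''. Generation requires that for every object $M$ and every \emph{proper subobject} $N \subsetneq M$ there is a map from some $P \in \mathcal{P}$ to $M$ not factoring through $N$ (equivalently, that every object is a quotient of a coproduct of objects of $\mathcal{P}$); your condition is only the special case $N = 0$, and the implication you need fails in general. For instance, over $R = k[x]/(x^{2})$ the simple module $k = R/(x)$ admits a nonzero map to every nonzero $R$-module (every nonzero module has nonzero socle), yet $k$ does not generate, since every map $\bigoplus k \to R$ lands in the proper submodule $(x)$. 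Your construction is vulnerable to exactly this failure mode: the maps you produce have image in the bottom stage $N_{1}$ of a Landweber filtration, and nothing rules out that all such images lie in a fixed proper subcomodule of $M$. Note also that if the argument did go through, it would show that the shifts of $E_{*}$ alone generate $\Comod_{E_{*}E}$ --- a far stronger statement than the lemma, whose analogue for $BP_{*}BP$-comodules is a well-known open question; this should be a warning sign.

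There is a secondary error as well: a dualizable $E_{*}E$-comodule is finitely generated and \emph{projective} over $E_{*}$, so a torsion comodule such as $E_{*}/p$ has no nonzero dualizable subcomodules at all. The Adams condition exhibits every comodule as a filtered colimit of dualizable comodules, not as the union of its dualizable subcomodules, so ``$m$ lies in some dualizable subcomodule'' is false. (This part is repairable: every element lies in a finitely generated subcomodule, and the Hovey--Strickland Landweber filtration applies to those.) The paper's proof sidesteps all of this: it invokes Hovey's result that for an Adams Hopf algebroid the comodules $E_{*}E_{\alpha}$, where $E \simeq \varinjlim E_{\alpha}$ is a filtered colimit of finite spectra, generate the category under colimits, and then observes that since the $E_{\alpha}$ may be taken to have only even cells, each $E_{*}E_{\alpha}$ is an iterated extension of shifts of $E_{*}$ and hence pure. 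If you want to salvage your approach, you would need to upgrade your construction so that, for a given proper subcomodule $N \subsetneq M$, it produces a map from a pure comodule not factoring through $N$; the Landweber filtration by itself does not do this.
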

\begin{proof}
Since $E \simeq \varinjlim E_{\alpha}$ is a filtered colimit of finite spectra, it follows formally that the category of $E_{*}E$-comodules is generated under colimits by shifts of comodules of the form $E_{*} E_{\alpha}$ \cite[Proposition 1.4.4]{hovey_htptheory}. 

Furthermore, we claim that since the $E_{\alpha}$ only have even cells, the resulting comodules $E_{*} E_{\alpha}$ are pure. Using a cell decomposition of $E_{\alpha}$ and the associated long exact sequences in homology, which will be short exact here, we deduce that each $E_{*}E_{\alpha}$ is a comodule which can be obtained using iterated extensions of shifts of $E_{*}$ and so is pure. 
\end{proof}

Recall that a $E_{*}E$-comodule is dualizable if and only if the underlying $E_*$-module is finitely generated and projective.

\begin{theorem}
\label{theorem:ext_0_coincides_with_cohomology_after_p_completion_for_fp_comodules}
If $N$ is a dualizable $E_{*}E$-comodule, then
\begin{enumerate}
    \item $\Ext^{0}(N)$ is a finitely generated $\mathbb{Z}_{(p)}$-module and 
    \item the map $\alpha_{N}\colon \Ext^{0}(N) \rightarrow \rmH^{0}(N)$ is a $p$-completion, that is, it induces an isomorphism $\Ext^{0}(N) \otimes _{\mathbb{Z}} \mathbb{Z}_{p} \simeq \rmH^{0}(N)$.
\end{enumerate}
\end{theorem}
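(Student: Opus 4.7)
The plan is to reduce to the case of pure comodules and argue there by induction on the extension length, using the two preceding lemmas as the only substantive inputs.

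First, I would argue that every dualizable $N$ is pure. By the Hovey--Strickland analogue of the Landweber filtration theorem, any finitely generated $E_*E$-comodule admits a finite filtration $0 = N_0 \subset N_1 \subset \cdots \subset N_m = N$ by subcomodules with successive quotients $N_i/N_{i-1}$ of the form $E_*/I_{k_i}[\ell_i]$. Dualizability forces $N$ to be finitely generated projective, hence free, over the local ring $E_*$, and in particular $p$-torsion-free. After refining the filtration to be saturated in $N$, every $N_i$ is a pure summand-level subcomodule, so all $k_i$ must vanish and $N$ is an iterated extension of shifts of $E_*$.

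Next, for pure $N$ I would proceed by induction on the extension length, establishing simultaneously the three properties: (i) $\Ext^{0}(N)$ is finitely generated over $\mathbb{Z}_{(p)}$, (ii) the comparison map induces an isomorphism $\Ext^{0}(N) \otimes_{\mathbb{Z}} \mathbb{Z}_{p} \simeq \rmH^{0}(N)$, and (iii) $\Ext^{1}(N)$ is $p$-torsion. The base case $N = E_{*}[k]$ is furnished by \cref{lemma:invariants_of_the_lubin_tate_ring} for (i) and (ii), and by the rational vanishing argument inside the proof of \cref{lemma:ext_h_comparison_mono_on_h1_for_e*} for (iii). For the inductive step, given a short exact sequence
\[
0 \to E_{*}[k] \to M \to Q \to 0
\]
with $Q$ pure of strictly smaller length, I would consider the commutative ladder whose rows are the long exact sequences of $\Ext^{\bullet}_{E_{*}E}(E_{*}, -)$ and $\rmH^{\bullet}_{\cts}(\mathbb{G}_{n}, -)$, connected by the natural transformation $\alpha$. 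Tensoring the top row with $\mathbb{Z}_{p}$ is exact by flatness, so by (ii) for $E_{*}[k]$ and $Q$ the outer comparison maps on $\Ext^{0}$ become isomorphisms, while (iii) for $E_{*}[k]$ means $\Ext^{1}(E_{*}[k]) \otimes \mathbb{Z}_{p} \simeq \Ext^{1}(E_{*}[k])$, so \cref{lemma:ext_h_comparison_mono_on_h1_for_e*} upgrades to the injectivity of $\alpha^{1}_{E_{*}[k]} \otimes \mathbb{Z}_{p}$. The five lemma then gives (ii) for $M$, while (i) is inherited from the exact sequence and noetherianness of $\mathbb{Z}_{(p)}$, and (iii) follows by realizing $\Ext^{1}(M)$ as an extension of a subgroup of $\Ext^{1}(Q)$ by a quotient of $\Ext^{1}(E_{*}[k])$, both $p$-torsion.

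The main obstacle is the first step, the passage from dualizable to pure: the successive quotients in the Hovey--Strickland filtration need not themselves be $p$-torsion-free even when $N$ is, so one cannot directly conclude all $k_{i}$ vanish without first arranging the filtration to be saturated. Once this is handled, the inductive argument itself is routine, combining only \cref{lemma:invariants_of_the_lubin_tate_ring} and \cref{lemma:ext_h_comparison_mono_on_h1_for_e*} with the exactness of $- \otimes_{\mathbb{Z}} \mathbb{Z}_{p}$.
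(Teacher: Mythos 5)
Your inductive argument for pure comodules is essentially the paper's: the same ladder of long exact sequences, the same use of \cref{lemma:invariants_of_the_lubin_tate_ring} for the base case and of \cref{lemma:ext_h_comparison_mono_on_h1_for_e*} (together with the observation that $\Ext^{1}(E_{*})$ is $p$-torsion, so tensoring with $\mathbb{Z}_{p}$ does not change it) to feed the five lemma. That part is fine.

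The genuine gap is your first step, the claim that every dualizable comodule is pure. This is precisely the assertion the authors explicitly decline to make: the remark following \cref{definition:pure_comodule} says it is ``plausible'' that all comodules finitely generated and projective over $E_{*}$ are pure via the Hovey--Strickland filtration, but that they did not check it and arranged the proof so as not to need it. Your sketch does not close this: the Landweber-type filtration has subquotients $E_{*}/I_{k_i}$, and as you yourself note these can be torsion even when $N$ is free; but after you replace the $N_i$ by their saturations you lose all control over the subquotients --- there is no reason the saturated filtration still has invariant-prime-ideal quotients, and even a rank-one torsion-free subquotient of a comodule need not be a shift of $E_{*}$ (it could a priori be a nontrivial invariant ideal). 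So ``all $k_i$ must vanish'' does not follow. The paper's actual route avoids this entirely: using \cref{lemma:comodules_generated_by_extensions_of_e*} one finds a surjection from a pure comodule onto $N^{\vee}$, dualizes (pure comodules are closed under duals by \cref{remark:pure_comodules_closed_under_taking_duals}) to obtain an embedding $N \hookrightarrow C$ with $C$ pure and cokernel $C^{\prime}$, deduces finite generation of $\Ext^{0}(N)$ and $\rmH^{0}(N)$ as submodules of the corresponding groups for $C$, and then runs a four-lemma argument in which the only input on $C^{\prime}$ is the injectivity of the degree-zero comparison map --- which holds for trivial reasons since both sides are subgroups of $C^{\prime}$. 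You should either adopt that duality-plus-four-lemma reduction or supply an actual proof that dualizable implies pure; as written the proposal rests on an unproved (and possibly false) claim.
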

\begin{proof}
For $N = E_{*}$ or its shifts, this is \cref{lemma:invariants_of_the_lubin_tate_ring}. 

We first claim that the result is true for comodules $C$ which are pure in the sense of \cref{definition:pure_comodule}. By induction on rank, we can assume that we have a short exact sequence
\[
0 \rightarrow E_{*} \rightarrow C \rightarrow C^{\prime} \rightarrow 0
\]
and that the result already holds for $C^{\prime}$. The long exact sequence of $\Ext$-groups implies that $\Ext^{0}(C)$ is a finitely generated module over $\mathbb{Z}_{(p)}$. Thus, we have a commutative diagram 
\[\begin{tikzcd}
	0 & {\mathbb{Z}_{p} \otimes \Ext^{0}(E_{*})} & {\mathbb{Z}_{p} \otimes \Ext^{0}(C)} & {\mathbb{Z}_{p} \otimes \Ext^{0}(C^{\prime})} & {\mathbb{Z}_{p} \otimes \Ext^{1}(E_{*})} \\
	0 & {\rmH^{0}(E_{*})} & {\rmH^{0}(C)} & {\rmH^{0}(C^{\prime})} & {\rmH^{1}(E_{*}),}
	\arrow[from=1-5, to=2-5]
	\arrow[from=1-4, to=2-4]
	\arrow[from=1-3, to=2-3]
	\arrow[from=1-2, to=2-2]
	\arrow[from=1-1, to=1-2]
	\arrow[from=2-1, to=2-2]
	\arrow[from=1-2, to=1-3]
	\arrow[from=2-2, to=2-3]
	\arrow[from=1-3, to=1-4]
	\arrow[from=2-3, to=2-4]
	\arrow[from=1-4, to=1-5]
	\arrow[from=2-4, to=2-5]
\end{tikzcd}
\]
where the top row is again exact, since $\mathbb{Z}_{p}$ is flat. By induction on rank, counting from the left, the first and third vertical maps are isomorphisms. By \cref{lemma:ext_h_comparison_mono_on_h1_for_e*}, the fourth one is a monomorphism, as $\mathbb{Z}_{p} \otimes \Ext^{1}(E_{*}) \simeq \Ext^{1}(E_{*})$ because the latter group is $p$-torsion. We deduce that the second one is an isomorphism using the five-lemma, which was our claim. 

Now suppose that $N$ is an arbitrary dualizable comodule; by \cref{lemma:comodules_generated_by_extensions_of_e*} we can find a surjection $C \rightarrow N$ from a pure comodule. By taking duals, we can assume that we instead have a monomorphism $N \hookrightarrow C$, where $C$ is again pure by \cref{remark:pure_comodules_closed_under_taking_duals}. Since we already know $\Ext^{0}(C)$ and $\rmH^{0}(C)$ are free finitely generated over respectively $\mathbb{Z}_{(p)}$ and $\mathbb{Z}_{p}$, we deduce that the same is true for their submodules $\Ext^{0}(N)$ and $\rmH^{0}(N)$, proving $(1)$.

Let us denote the cokernel of $N \hookrightarrow C$ by $C^{\prime}$. We then have another commutative diagram
\[
\xymatrix{0 \ar[r] & \mathbb{Z}_{p} \otimes \Ext^{0}(N)  \ar[r] \ar[d] &  \mathbb{Z}_{p} \otimes \Ext^{0}(C)  \ar[r] \ar[d] & \mathbb{Z}_{p} \otimes \Ext^{0}(C^{\prime}) \ar[d] \\
0 \ar[r] & \rmH^{0}(N) \ar[r] & \rmH^{0}(C) \ar[r] & \rmH^{0}(C^{\prime}).}
\]
By what we have proven above, the middle map is an isomorphism as well. Since the right one is injective, we deduce from the four-lemma that the left one is surjective, ending the argument. 
\end{proof}

\begin{remark}
One would like to prove that  \cref{theorem:ext_0_coincides_with_cohomology_after_p_completion_for_fp_comodules} holds for all finitely generated comodules, rather than only the finite  projective ones. It is possible this more general result can be deduced formally from the projective case, but we were not able to do so. 

The key ingredient in the above proof is the calculation of $\rmH_{\cts}^{0}(\mathbb{G}_{n}, E_{*})$; it seems likely that the version for arbitrary comodules would follow from a more general calculation of $\rmH_{\cts}^{0}(\mathbb{G}_{n}, E_{*} / I_{k})$ by Landweber filtration arguments. 
\end{remark}

\subsection{The comodule completion of Morava $E$-theory}

We now come to the main theorem of this section. One may view the computation of this inverse limit at all heights and primes as uniform evidence for the validity for the algebraic chromatic splitting conjecture and hence of the chromatic splitting conjecture itself.

\begin{theorem}
\label{thm:zeroth_derived_limit_of_completion}
We have 
\[
\textstyle\varprojlim_{E_{*}E} E_{*} / \mfrak^{k} \simeq \rmH_{\cts}^0(\mathbb{G}_n,E_*E) \simeq E_{*} \otimes _{\mathbb{Z}} \mathbb{Z}_{p},
\]
where the left hand side is the limit in the category of $E_{*}E$-comodules.
\end{theorem}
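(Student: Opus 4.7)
The plan is to combine \cref{prop:inverselimascontcohom}, which identifies the derived limits with continuous cohomology, with \cref{theorem:ext_0_coincides_with_cohomology_after_p_completion_for_fp_comodules}, which relates $H^{0}_{\cts}$ of a dualizable comodule to its primitives after $p$-completion. The first isomorphism of the theorem is immediate: taking $M = E_{*}$ in \cref{prop:inverselimascontcohom} and using $E_{*}E \otimes_{E_{*}} E_{*} \simeq E_{*}E$ yields
\[
\textstyle\varprojlim_{E_{*}E} E_{*}/\mfrak^{k} \;\cong\; \rmH^{0}_{\cts}(\mathbb{G}_{n}, E_{*}E).
\]
So the substantive task is to identify the right-hand side with $E_{*} \otimes_{\mathbb{Z}} \mathbb{Z}_{p}$.

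For this I would write $E_{*}E \simeq \varinjlim_{\alpha} N_{\alpha}$ as a filtered colimit of dualizable $E_{*}E$-comodules, which is possible since $E$ is of Adams type. Equipping $E_{*}E$ with its local $\mfrak$-adic topology and applying \cref{corollary:continuous_cohomology_commutes_with_filtered_colimits} gives
\[
\rmH^{0}_{\cts}(\mathbb{G}_{n}, E_{*}E) \;\cong\; \varinjlim_{\alpha} \rmH^{0}_{\cts}(\mathbb{G}_{n}, N_{\alpha}).
\]
Now \cref{theorem:ext_0_coincides_with_cohomology_after_p_completion_for_fp_comodules} applies to each dualizable $N_{\alpha}$ and exhibits the comparison map of \cref{def:comparisonmap} as a $p$-completion, so
\[
\rmH^{0}_{\cts}(\mathbb{G}_{n}, N_{\alpha}) \;\cong\; \Ext^{0}_{E_{*}E}(E_{*}, N_{\alpha}) \otimes_{\mathbb{Z}} \mathbb{Z}_{p}.
\]
Since tensor products and colimits commute, the right hand side, after taking the filtered colimit in $\alpha$, becomes $\bigl(\varinjlim_{\alpha} \Ext^{0}_{E_{*}E}(E_{*}, N_{\alpha})\bigr) \otimes_{\mathbb{Z}} \mathbb{Z}_{p}$.

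It remains to identify this colimit. Since $\Ext^{0}_{E_{*}E}(E_{*}, -)$ is the functor of comodule primitives, and filtered colimits are exact in $\Comod_{E_{*}E}$ (so in particular commute with the kernel defining primitives), one has
\[
\varinjlim_{\alpha}\Ext^{0}_{E_{*}E}(E_{*}, N_{\alpha}) \;\cong\; \Ext^{0}_{E_{*}E}(E_{*}, E_{*}E).
\]
Finally $E_{*}E$ is the cofree left $E_{*}E$-comodule on $E_{*}$, so by the cofree--forgetful adjunction $\Ext^{0}_{E_{*}E}(E_{*}, E_{*}E) \simeq \Hom_{E_{*}}(E_{*}, E_{*}) \simeq E_{*}$. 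Assembling the isomorphisms produces the claimed $E_{*} \otimes_{\mathbb{Z}} \mathbb{Z}_{p}$.

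The main subtlety is the interchange of $\rmH^{0}_{\cts}(\mathbb{G}_{n}, -)$ with the filtered colimit defining $E_{*}E$: the ordinary $\mfrak$-adic topology on the (non-finitely-generated) module $E_{*}E$ is too coarse for this, and one really needs the local $\mfrak$-adic topology together with \cref{lemma:mapping_into_local_madic_module_lands_in_a_fg_submodule} to force continuous cochains to factor through one of the dualizable $N_{\alpha}$. Everything else is formal once that point is in hand.
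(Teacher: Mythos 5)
Your proposal is correct and follows essentially the same route as the paper's proof: the first isomorphism via \cref{prop:inverselimascontcohom}, then writing $E_{*}E$ as a filtered colimit of dualizable comodules and combining \cref{corollary:continuous_cohomology_commutes_with_filtered_colimits} with \cref{theorem:ext_0_coincides_with_cohomology_after_p_completion_for_fp_comodules}. The only difference is that you spell out the final identification $\varinjlim_{\alpha}\Ext^{0}_{E_{*}E}(E_{*},N_{\alpha}) \cong \Ext^{0}_{E_{*}E}(E_{*},E_{*}E) \cong E_{*}$ via exactness of filtered colimits and cofreeness, a step the paper leaves implicit.
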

\begin{proof}
The first isomorphism is an instance of \cref{prop:inverselimascontcohom}. By \cref{corollary:continuous_cohomology_commutes_with_filtered_colimits}, if $M_{\alpha}$ is a filtered diagram of dualizable $E_{*}E$-comodules such that $\varinjlim M_{\alpha} \simeq E_{*}E$, then 
\[
\rmH_{\cts}^0(\mathbb{G}_n,E_*E) \simeq \varinjlim \rmH_{\cts}^0(\mathbb{G}_n, M_{\alpha}) \simeq \Ext^{0}_{E_{*}E}(E_{*}, M_{\alpha}) \otimes_{\mathbb{Z}} \mathbb{Z}_{p} \simeq E_{*} \otimes_{\mathbb{Z}} \mathbb{Z}_{p}
\]
where the second isomorphism is \cref{theorem:ext_0_coincides_with_cohomology_after_p_completion_for_fp_comodules}. This ends the argument. 
\end{proof}

\begin{remark}
Since $E_{*} S^{0}_{p} \simeq E_{*} \otimes _{MU_{*}} MU_{*}S^{0}_{p} \simeq E_{*} \otimes \mathbb{Z}_{p}$, one can rephrase  \cref{thm:zeroth_derived_limit_of_completion} as saying that $\varprojlim^{0} E_{*} / \mfrak^{k} \simeq E_{*}S^{0}_{p}$, which relates this result to the topological chromatic splitting conjecture. 
\end{remark}

\begin{remark}
There is a short exact sequence of $E_*E$-modules
\[
\xymatrix{0 \ar[r] & \Sigma^{|v_{n-1}|}E_*/I_{n-1} \ar[r]^-{v_{n-1}} & E_*/I_{n-1} \ar[r] & E_*/I_{n} \ar[r] & 0.}
\]
Applying the functor $\varprojlim_{\dcat(E_{*}E)}E_*/I_n^k\otimes-$ and considering the induced long exact sequence in cohomology, we deduce that $v_{n-1}$ acts on $\varprojlim_{\dcat(E_{*}E)}^sE_*/(p,\ldots,v_{n-2},v_{n-1}^i)$ injectively for $s=0$ and bijectively for $s>0$. This provides further evidence for \cref{conj:acsc}. 
\end{remark}

\section{Cooperations at height one}\label{sec:heightone}

In the previous section, we discussed the importance of the derived functors of the limit in the category of comodules and its relation to the chromatic splitting conjecture. In \cref{prop:inverselimascontcohom}, we gave an explicit formula for these in the form of an isomorphism
\[
\textstyle\varprojlim^{s}_{E_{*}E} E_{*} / \mfrak^{k}E_{*} \simeq \rmH^{s}_{\cts}(\mathbb{G}_{n}, E_{*}E),
\]
where on the left hand side we have the $s$-th derived functor of the limit in $E_{*}E$-comodules and  the right hand side we have continuous cohomology of the Morava stabilizer group. To make use of this, we would like to understand $E_{*}E$ as a $\mathbb{G}_{n}$-representation; in this section, we will do so for $n = 1$.

\begin{warning}
Let us warn the reader again that we are interested in the uncompleted cooperations; that is, $E_{*}E = \pi_{*}(E \otimes E)$. The description of the completed cooperations 
\[
E^{\vee}_{*}E := \pi_{*}(L_{K}(E \otimes E)) \simeq (E_{*}E)^{\vee}_{\mfrak}
\]
is well-known, as we have $E^{\vee}_*E \simeq \map_{\cts}(\mathbb{G}_{n}, E_{*})$ at any height by a result of Strickland \cite{strickland2000gross} or \cite{hovey_operations}.
\end{warning}

\begin{notation}
Instead of the Honda formal group law, it will be convenient to work with the multiplicative formal group law over $\mathbb{F}_{p}$, which we will denote by $\Gamma$. This is of height one, so that the Lubin--Tate ring is isomorphic to the $p$-adics, with the universal deformation the multiplicative formal group law over $\mathbb{Z}_{p}$. 

By uniqueness part of the Goerss--Hopkins--Miller theorem, the Lubin--Tate spectrum associated to this universal deformation is given by
\[
E := KU^{\vee}_{p},
\]
the $p$-complete complex $K$-theory. The action of $\mathbb{G}_{1}$ is given by Adams operations. 
\end{notation}

\begin{lemma}
\label{lemma:ee_at_height_one_in_relation_to_eku}
There is an isomorphism 
\[
E_{*}E \simeq E_{*}KU \otimes_{\mathbb{Z}} \mathbb{Z}_{p}
\]
of $\mathbb{G}_{1}$-representations, where on the right hand side the action is trivial on the $\mathbb{Z}_{p}$-factor. 
\end{lemma}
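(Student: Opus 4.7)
The plan is to leverage the Landweber exactness of both $E$ and $KU$. Both theories are complex-oriented, with formal group given by the multiplicative formal group (resp.\ its universal deformation) over a torsion-free base, hence both are Landweber exact $MU$-algebras. For any two Landweber exact theories $A$ and $B$, iterating the Landweber formula $A_{*}Y \cong A_{*} \otimes_{MU_{*}} MU_{*}Y$ (using flatness of $A_{*}$ and $B_{*}$ over $MU_{*}$) yields the standard identification
\[
A_{*}B \cong A_{*} \otimes_{MU_{*}} MU_{*}MU \otimes_{MU_{*}} B_{*}.
\]
I would apply this with $(A,B) = (E,E)$ and with $(A,B) = (E,KU)$ to obtain parallel descriptions of $E_{*}E$ and $E_{*}KU$ in terms of the Hopf algebroid $(MU_{*}, MU_{*}MU)$.

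Next I would use the canonical identification $E_{*} \cong KU_{*} \otimes_{\mathbb{Z}} \mathbb{Z}_{p}$, which is immediate from $E_{*} = \mathbb{Z}_{p}[u^{\pm 1}]$, $KU_{*} = \mathbb{Z}[u^{\pm 1}]$, and flatness of $\mathbb{Z}_{p}$ over $\mathbb{Z}$, with the $p$-completion map $KU \to E$ realizing the inclusion on homotopy. Substituting this into the Landweber description of the \emph{right} $E_{*}$ factor of $E_{*}E$ and pulling the $\mathbb{Z}_{p}$ past the $MU_{*}$-tensor products (legitimate because $\mathbb{Z}_{p}$ is $\mathbb{Z}$-flat and the factors there are already $MU_{*}$-tensors) then yields
\[
E_{*}E \cong E_{*} \otimes_{MU_{*}} MU_{*}MU \otimes_{MU_{*}} (KU_{*} \otimes_{\mathbb{Z}} \mathbb{Z}_{p}) \cong E_{*}KU \otimes_{\mathbb{Z}} \mathbb{Z}_{p},
\]
which is the required isomorphism on the level of abelian groups.

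For the $\mathbb{G}_{1}$-equivariance, by convention $\mathbb{G}_{1}$ acts on $E_{*}E = \pi_{*}(E \otimes E)$ through the left tensor factor only. In the Landweber decomposition this concentrates the action on the leftmost $E_{*}$, with both $MU_{*}MU$ and the right $E_{*}$ being $\mathbb{G}_{1}$-trivial; accordingly, the $\mathbb{Z}_{p}$ in the final identification, which arises from rewriting the right $E_{*}$ as $KU_{*} \otimes_{\mathbb{Z}} \mathbb{Z}_{p}$, inherits the trivial $\mathbb{G}_{1}$-action, matching the claim. The main thing to verify carefully is that the iterated Landweber formula respects the correct left/right $MU_{*}$-module structures on $MU_{*}MU$; no genuine obstacle should arise, since the whole argument is a direct unpacking of Landweber exactness combined with the simple observation that replacing one of the $E$-factors in $E \otimes E$ with $KU$ corresponds to undoing $p$-completion on one side, which on cooperations is effected by the single external tensor with $\mathbb{Z}_{p}$.
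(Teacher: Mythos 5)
Your argument is correct in substance but routes through $MU$ rather than through $KU$ as the paper does. The paper's proof exploits that $E = KU^{\vee}_{p}$ is a $KU$-algebra with $E_{*} \cong KU_{*} \otimes_{\mathbb{Z}} \mathbb{Z}_{p}$ flat over $KU_{*}$: the equivalence $(E \otimes KU) \otimes_{KU} E \simeq E \otimes E$ together with the collapsing K\"{u}nneth spectral sequence in $KU$-modules gives $E_{*}E \cong E_{*}KU \otimes_{KU_{*}} E_{*} \cong E_{*}KU \otimes_{\mathbb{Z}} \mathbb{Z}_{p}$ in one step, and equivariance is read off from the fact that the comparison map only touches the left $E$-factor. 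Your version instead invokes the iterated Landweber formula $A_{*}B \cong A_{*} \otimes_{MU_{*}} MU_{*}MU \otimes_{MU_{*}} B_{*}$ twice and then substitutes $E_{*} \cong KU_{*} \otimes_{\mathbb{Z}} \mathbb{Z}_{p}$ into the rightmost slot; this is a standard identity (the paper itself uses the $BP$-version in \cref{sec:moravaktheories}) and the manipulation is legitimate, so your proof goes through. What the paper's route buys is that it never mentions $MU_{*}MU$ and makes the equivariance statement essentially tautological; what yours buys is that it is a purely formal consequence of Landweber exactness, with no need to know that $E$ is a $KU$-algebra. Two small imprecisions worth fixing: (i) the parenthetical ``flatness of $A_{*}$ and $B_{*}$ over $MU_{*}$'' is not the right justification --- $KU_{*}$ and $E_{*}$ are not flat over $MU_{*}$; what you are really using is Landweber exactness itself (applied once to compute $A_{*}B$ from $MU_{*}B$ and once to compute $MU_{*}B \cong B_{*}MU$); and (ii) the image of $MU_{*}MU$ in $E_{*}E$ is \emph{not} pointwise fixed by $\mathbb{G}_{1}$ (an automorphism $g$ of $E$ does not commute with the orientation $MU \to E$, since it changes the coordinate on the formal group), so the action does not literally concentrate on the leftmost $E_{*}$ tensor factor. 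Fortunately your conclusion only needs the weaker and true statement that the right unit $E_{*} \to E_{*}E$ is fixed under the left action, which is exactly what puts the trivial action on the $\mathbb{Z}_{p}$-factor.
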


\begin{proof}
We have $KU_{*} \simeq \mathbb{Z}[u^{\pm 1}]$ and $E_{*} \simeq KU_{*} \otimes \mathbb{Z}_{p}$, which is flat over $KU_*$. It then follows from flatness that we have an isomorphism 
\[
E_{*}E \simeq E_{*}KU \otimes_{\mathbb{Z}} \mathbb{Z}_{p}
\]
induced by the multiplication map 
\[
(E \otimes KU) \otimes_{KU} E \rightarrow E \otimes E
\]
Since this multiplication map is $\mathbb{G}_{1}$-equivariant with respect to the action on the left $E$ factor, this is in fact an isomorphism of representations.
\end{proof}

\begin{corollary}
\label{corollary:coh_of_ee_is_coh_of_eku_otimes_zp}
There is an isomorphism 
\[
\rmH^{s}_{\cts}(\mathbb{G}_{1}, E_{*}E) \simeq \rmH^{s}_{\cts}(\mathbb{G}_{1}, E_{*}KU) \otimes_{\mathbb{Z}} \mathbb{Z}_{p}.
\]
\end{corollary}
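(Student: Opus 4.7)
By \cref{lemma:ee_at_height_one_in_relation_to_eku}, we have an isomorphism $E_{*}E \simeq E_{*}KU \otimes_{\mathbb{Z}} \mathbb{Z}_{p}$ of $\mathbb{G}_{1}$-representations, with $\mathbb{G}_{1}$ acting trivially on the $\mathbb{Z}_{p}$-factor. My plan is therefore to show that continuous cohomology of $\mathbb{G}_{1}$ commutes with tensoring over $\mathbb{Z}$ by a trivial $\mathbb{Z}_{p}$-module; once established, the corollary is immediate.

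The key input is Lazard's theorem: since $\mathbb{Z}_{p}$ is torsion-free and hence flat over $\mathbb{Z}$, it can be written as a filtered colimit of finitely generated free $\mathbb{Z}$-modules, $\mathbb{Z}_{p} \simeq \varinjlim_{\alpha} F_{\alpha}$ with $F_{\alpha} \cong \mathbb{Z}^{n_{\alpha}}$. Tensoring with $E_{*}KU$ yields a filtered colimit presentation
\[
E_{*}E \simeq E_{*}KU \otimes_{\mathbb{Z}} \mathbb{Z}_{p} \simeq \varinjlim_{\alpha}\,(E_{*}KU)^{n_{\alpha}}
\]
of $E_{0}$-modules, each equipped with its local $\mfrak$-adic topology and its continuous diagonal $\mathbb{G}_{1}$-action inherited from that on $E_{*}KU$. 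The transition maps are base-changes of the inclusions $F_{\alpha} \hookrightarrow F_{\beta}$, so they are continuous and $\mathbb{G}_{1}$-equivariant because $\mathbb{G}_{1}$ acts trivially on the free $\mathbb{Z}$-factor.

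Now \cref{corollary:continuous_cohomology_commutes_with_filtered_colimits} applies to this colimit and gives
\[
\rmH_{\cts}^{s}(\mathbb{G}_{1}, E_{*}E) \simeq \varinjlim_{\alpha}\, \rmH_{\cts}^{s}(\mathbb{G}_{1}, (E_{*}KU)^{n_{\alpha}}) \simeq \varinjlim_{\alpha}\, \rmH_{\cts}^{s}(\mathbb{G}_{1}, E_{*}KU) \otimes_{\mathbb{Z}} F_{\alpha},
\]
where the second isomorphism uses that continuous cohomology commutes with finite direct sums. The outer colimit is purely algebraic and reassembles to $\rmH_{\cts}^{s}(\mathbb{G}_{1}, E_{*}KU) \otimes_{\mathbb{Z}} \mathbb{Z}_{p}$, which is the claim. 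The only step requiring a second glance is verifying that the hypotheses of \cref{corollary:continuous_cohomology_commutes_with_filtered_colimits} are met for this presentation of $E_{*}E$; this is really just bookkeeping given the explicit description of the local $\mfrak$-adic topology in \cref{example:local_m_adic_topology_a_filtered_colimit}, so I do not anticipate any serious obstacle.
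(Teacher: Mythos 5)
Your proof is correct and follows essentially the same route as the paper: both use Lazard's theorem to write $\mathbb{Z}_{p}$ as a filtered colimit of finitely generated free abelian groups, apply \cref{corollary:continuous_cohomology_commutes_with_filtered_colimits} to pull the colimit out of continuous cohomology, and then use additivity to conclude. The only cosmetic difference is that the paper writes the terms as $E_{*}KU \otimes_{\mathbb{Z}} F_{\alpha}$ rather than $(E_{*}KU)^{n_{\alpha}}$, which is the same thing.
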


\begin{proof}
Let us write $\rmH^{*}(M) := \rmH^{*}_{\cts}(\mathbb{G}_{1}, M)$ for brevity. As $\mathbb{Z}_{p}$ is flat, it can be written as a filtered colimit $\mathbb{Z}_{p} \simeq F_{\alpha}$ of finitely generated, free abelian groups by Lazard's theorem. We then have 
\[
\rmH^{s}(E_{*}E) \simeq \rmH^{s}(E_{*}KU \otimes_{\mathbb{Z}} \mathbb{Z}_{p}) \simeq \rmH^{s}(\varinjlim E_{*}KU \otimes_{\mathbb{Z}} F_{\alpha}) \simeq \varinjlim \rmH^{s}(E_{*}KU \otimes_{\mathbb{Z}} F_{\alpha}), 
\]
where the last equivalence used \cref{corollary:continuous_cohomology_commutes_with_filtered_colimits}, and further 
\[
\varinjlim \rmH^{s}(E_{*}KU \otimes_{\mathbb{Z}} F_{\alpha}) \simeq \varinjlim \rmH^{s}(E_{*}KU) \otimes_{\mathbb{Z}} F_{\alpha} \simeq \rmH^{s}(E_{*}KU) \otimes_{\mathbb{Z}} \mathbb{Z}_{p},
\]
where the first equivalence used additivity of cohomology.
\end{proof}

It follows from \cref{corollary:coh_of_ee_is_coh_of_eku_otimes_zp} that to understand the cohomology with coefficients in $E_{*}E$, it is enough to understand $E_{*}KU$. 

As we are working with coefficients in the homology of a spectrum rather than an arbitrary $\mathbb{G}_{1}$-module, we have a canonical family of invariants. To be more precise, we have a Hurewicz homomorphism
\[
KU_{*} \rightarrow E_{*}KU
\]
whose image necessarily consists of $\mathbb{G}_{1}$-invariants. It follows that the cohomology groups 
\[
\rmH^{*}_{\cts}(\mathbb{G}_{1}, E_{*}KU)
\]
are $2$-periodic. As they're also even, as $E_{*}KU$ is, it is enough to determine the cohomology with coefficients in $E_{0}KU$. 

\begin{lemma}
\label{lemma:completion_map_on_e0ku_is_injective}
The composite
\[
E_{0}KU \rightarrow  E_{0}^{\vee}KU \simeq E_{0}^{\vee}E \simeq \map_{\cts}(\mathbb{G}_{1}, \mathbb{Z}_{p})
\]
is injective. 
\end{lemma}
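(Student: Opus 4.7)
The plan is to factor the composite through a Laurent polynomial ring in one variable, where injectivity of evaluation at $\Z_p^\times$ is elementary.

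First, by Landweber exactness of $E$ over $KU$ and the K\"unneth theorem (using that $E_* \simeq KU_* \otimes_\Z \Z_p$ is flat over $KU_*$), we have $E_0KU \simeq \Z_p \otimes_\Z KU_0KU$. Rationally, $KU \otimes KU$ is a wedge of even Eilenberg--MacLane spectra, and so $KU_0KU \otimes_\Z \Q \simeq \Q[w^{\pm 1}]$, where $w = \eta_R(u)/\eta_L(u)$ is the ratio of the right and left units on the periodicity generator. Since $KU_0KU$ is torsion-free and $\Z_p$ is $\Z$-flat, tensoring yields an inclusion
\[
E_0KU \simeq \Z_p \otimes_\Z KU_0KU \hookrightarrow \Z_p \otimes_\Z \Q[w^{\pm 1}] \simeq \Q_p[w^{\pm 1}].
\]

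Next, I would show that the composite of the lemma, followed by the inclusion $\Z_p \hookrightarrow \Q_p$ on the target, agrees with the Laurent polynomial evaluation map $\Q_p[w^{\pm 1}] \to \map_{\cts}(\Z_p^\times,\Q_p)$ sending $f \mapsto (\lambda \mapsto f(\lambda))$. It suffices to check the agreement after restricting to the dense subset $\Z_{(p)}^\times \subset \Z_p^\times$. Using Strickland's description of $E_0^\vee E \simeq \map_{\cts}(\G_1,\Z_p)$ as ``apply $\psi^\lambda$ to the right $E$-factor, then multiply'', together with the compatibility $\psi_E^k \circ j = j \circ \psi_{KU}^k$ for integer $k$ prime to $p$, the composite becomes ``apply the Adams operation $\psi^k$ to the right $KU$-factor of $\tilde{x} \in KU_0KU$ and then use the $KU$-algebra structure on $E$''. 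On the rationalization $\Q[w^{\pm 1}]$ this is exactly the substitution $w \mapsto k$, matching evaluation. Continuity in $\lambda$ then extends the identification to all of $\Z_p^\times$.

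The evaluation map $\Q_p[w^{\pm 1}] \to \map_{\cts}(\Z_p^\times,\Q_p)$ is injective because a nonzero Laurent polynomial in one variable over a field has only finitely many roots, while $\Z_p^\times$ is infinite. Combined with the injection $E_0KU \hookrightarrow \Q_p[w^{\pm 1}]$ established above, this forces the lemma's composite to be injective.

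The main obstacle is the naturality verification in the second paragraph: carefully tracing Strickland's isomorphism through the map $j\colon KU \to E$ to match the two ``$\psi$-evaluation'' descriptions. Once this compatibility is in hand, the remainder of the argument reduces to the elementary fact about Laurent polynomials.
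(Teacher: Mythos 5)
Your proof is correct, but it takes a genuinely different route from the paper's. The paper identifies the composite with the $p$-completion map $\mathbb{Z}_{p}\otimes_{\mathbb{Z}}KU_{0}KU \rightarrow (KU_{0}KU)^{\vee}_{p}$ using \cref{proposition:collapse_of_k_based_ass_for_flat_e_modules}, and then invokes the Adams--Harris--Switzer theorem that $KU_{0}KU$ is \emph{free} as an abelian group to conclude that this completion map is injective; no explicit formula for the map is needed. You instead use only torsion-freeness of $KU_{0}KU$ to embed $E_{0}KU$ into $\mathbb{Q}_{p}[w^{\pm 1}]$, identify the composite with evaluation of Laurent polynomials on $\mathbb{Z}_{p}^{\times}$, and finish with the elementary fact that a nonzero Laurent polynomial has finitely many roots. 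The trade-off is that your argument front-loads the naturality check identifying the map with ``evaluation at Adams operations'' --- which is exactly the computation the paper performs immediately afterwards (\cref{proposition:e0ku_as_a_subalgebra_generated_by_certain_functions}, \cref{proposition:e0ku_as_subalgebra_generated_by_binomial_coefficients}) in order to describe the \emph{image} of the injection --- whereas the paper proves injectivity abstractly first and only then unwinds the formula. Your route uses weaker input about $KU_{0}KU$ (flatness over $\mathbb{Z}$ rather than freeness) and avoids \cref{proposition:collapse_of_k_based_ass_for_flat_e_modules}, at the cost of the equivariance verification you correctly flag as the main obstacle; that verification goes through as you sketch it, via density of the prime-to-$p$ integers in $\mathbb{Z}_{p}^{\times}$ and the compatibility of Adams operations with the unit $KU \rightarrow KU^{\vee}_{p}$. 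There is no circularity, since the rational identification $KU_{0}KU\otimes\mathbb{Q}\simeq\mathbb{Q}[w^{\pm 1}]$ is independent of the lemma.
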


\begin{proof}
Note that by Landweber exactness we have 
\[
E_{0}KU \simeq \mathbb{Z}_{p} \otimes _{\mathbb{Z}} KU_{0}KU.
\]
This is flat over $\mathbb{Z}_{p}$ and we deduce from \cref{proposition:collapse_of_k_based_ass_for_flat_e_modules} that the above composite can be identified with a $p$-completion map. By a result of Adams, Harris, and Switzer, $KU_{0}KU$ is free as an abelian group \cite{adams1971hopf}. It follows that the $p$-completion map 
\[
\mathbb{Z}_{p} \otimes_{\mathbb{Z}} KU_{0}KU \rightarrow (\mathbb{Z}_{p} \otimes_{\mathbb{Z}} KU_{0}KU)^{\vee}_{p} \simeq (KU_{0}KU)^{\vee}_{p} \simeq E_{0}^{\vee}E
\]
is injective as needed.
\end{proof}
The completion map of \cref{lemma:completion_map_on_e0ku_is_injective} is $\mathbb{G}_{1}$-equivariant, and we deduce that $E_{0}KU$ is naturally a subrepresentation of $\map_{\cts}(\mathbb{G}_{1}, \mathbb{Z}_{p})$. On the latter, $\mathbb{G}_{1}$ acts by right multiplication on the source. Our goal is to describe the given subrepresentation.

By unwrapping the definitions, the map of  \cref{lemma:completion_map_on_e0ku_is_injective} can be described explicitly. By standard results about weakly periodic Landweber exact homology theories, $KU_{0}KU$ is the algebra classifying automorphisms of the multiplicative formal group law. If $g \in \mathbb{G}_{1}$ is an automorphisms over the $p$-adics, then we can write 
\[
g(x) = \sum_{i} b_{i}(g) x^{i},
\]
where the $b_{i}$ depend continuously on $g$. The above expression defines an automorphism of the multiplicative formal group law over $\map_{\cts}(\mathbb{G}_{1}, \mathbb{Z}_{p})$; this is the automorphism classified by the injective map 
\[
KU_{0}KU \rightarrow \map_{\cts}(\mathbb{G}_{1}, \mathbb{Z}_{p}).
\]
Combined with \cref{lemma:completion_map_on_e0ku_is_injective}, this proves the following. 

\begin{proposition}
\label{proposition:e0ku_as_a_subalgebra_generated_by_certain_functions}
The ring $E_{0}KU$ is the $\mathbb{Z}_{p}$-subalgebra of $\map_{\cts}(\mathbb{G}_{1}, \mathbb{Z}_{p})$ generated by the functions $b_{i}\colon \mathbb{G}_{1} \rightarrow \mathbb{Z}_{p}$ for $i \geq 1$ as well as $b_{1}^{-1}$. 
\end{proposition}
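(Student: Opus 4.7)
The plan is to combine three ingredients: an explicit description of $E_{0}KU$ as a polynomial ring via Landweber exactness, the identification of the generators with the coefficient functions $b_{i}$, and the injectivity already established in \cref{lemma:completion_map_on_e0ku_is_injective}.

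First, I would make $E_{0}KU$ concrete. By Landweber exactness we have $E_{0}KU \simeq \mathbb{Z}_{p} \otimes_{\mathbb{Z}} KU_{0}KU$. Since $KU$ is a weakly periodic Landweber exact homology theory whose associated formal group is $\widehat{\mathbb{G}}_{m}$, the Hopf algebroid $KU_{0}KU$ corepresents automorphisms of $\widehat{\mathbb{G}}_{m}$: a ring map $KU_{0}KU \to R$ is the same datum as a power series $f(x) = \sum_{i \geq 1} c_{i} x^{i} \in R[\![x]\!]$ with $c_{1} \in R^{\times}$ defining an automorphism of $\widehat{\mathbb{G}}_{m}$ over $R$. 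Equivalently, $KU_{0}KU \simeq \mathbb{Z}[b_{1}^{\pm 1}, b_{2}, b_{3}, \ldots]$, where $b_{i}$ is the coefficient function on the universal automorphism. Consequently, $E_{0}KU \simeq \mathbb{Z}_{p}[b_{1}^{\pm 1}, b_{2}, b_{3}, \ldots]$.

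Second, I would identify the images of the generators under the map of \cref{lemma:completion_map_on_e0ku_is_injective} using the universal property. The ring $\map_{\cts}(\mathbb{G}_{1}, \mathbb{Z}_{p})$ carries a tautological automorphism of $\widehat{\mathbb{G}}_{m}$, namely the power series $\sum_{i \geq 1} b_{i} x^{i}$ where $b_{i} \colon \mathbb{G}_{1} \to \mathbb{Z}_{p}$ picks out the $i$-th coefficient of an automorphism $g$; continuity follows from continuity of the $\mathbb{G}_{1}$-action on $E_{0}\widehat{\mathbb{G}}_{m}$. By the universal property, this tautological automorphism corresponds precisely to the classifying map $KU_{0}KU \to \map_{\cts}(\mathbb{G}_{1}, \mathbb{Z}_{p})$ from the paragraph preceding the proposition, and in particular the algebra generators $b_{i} \in E_{0}KU$ are sent to the coefficient functions $b_{i}$ in the target.

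Finally, by \cref{lemma:completion_map_on_e0ku_is_injective} the map is injective, so $E_{0}KU$ is identified with its image, which is exactly the $\mathbb{Z}_{p}$-subalgebra generated by the $b_{i}$ for $i \geq 1$ together with $b_{1}^{-1}$. The only genuinely non-trivial step is the identification $KU_{0}KU \simeq \mathbb{Z}[b_{1}^{\pm 1}, b_{2}, \ldots]$ classifying automorphisms of $\widehat{\mathbb{G}}_{m}$; this is standard and follows either from the general theory of cooperations for complex-orientable, weakly periodic, Landweber exact theories, or by reducing to the classical Adams--Harris--Switzer description of $KU_{0}KU$ and interpreting the generators via the formal group law.
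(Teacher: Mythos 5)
Your argument is essentially the paper's: the text preceding the proposition identifies $KU_{0}KU$ as the ring corepresenting automorphisms of the multiplicative formal group, observes that the map to $\map_{\cts}(\mathbb{G}_{1}, \mathbb{Z}_{p})$ classifies the tautological automorphism $\sum b_{i}(g)x^{i}$, and concludes by the injectivity of \cref{lemma:completion_map_on_e0ku_is_injective}. One correction: your intermediate claim that $KU_{0}KU \simeq \mathbb{Z}[b_{1}^{\pm 1}, b_{2}, b_{3}, \ldots]$ is false --- the $b_{i}$ satisfy the relations forced by $\sum b_{i}x^{i}$ being a homomorphism of formal groups (rationally, for instance, $KU_{0}KU \otimes \mathbb{Q} \simeq \mathbb{Q}[b_{1}^{\pm 1}]$), so $KU_{0}KU$ is only a quotient of that polynomial ring. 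This does not damage the proof, since all you use is that $KU_{0}KU$ is \emph{generated} as a ring by the $b_{i}$ and $b_{1}^{-1}$, which does follow from corepresentability of the automorphism functor.
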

As we work with the multiplicative formal group law, these functions $b_{i}$ can be made explicit. Assume first that $a \in \mathbb{Z}$, so that the corresponding multiple of the identity of the multiplicative formal group law is given by 
\[
[a](x) = (1+x)^{a}-1 = \sum^{\infty}_{k > 0} {a \choose k} x^{k}.
\]
The last expression makes sense even if $a$ is a $p$-adic integer, where we take 
\[
{a \choose k} = \frac{a(a-1)\ldots(a-k+1)}{k!},
\]
and the sum is now possibly infinite. One can check that these are automorphisms of the multiplicative formal group law even if $a$ is $p$-adic, so that the identification 
\[
\mathbb{Z}_{p}^{\times} \simeq \mathbb{G}_{1}
\]
is given by $a \mapsto [a](x) = \sum^{\infty}_{k > 0} {a \choose k} x^{k}$. Thus, \cref{proposition:e0ku_as_a_subalgebra_generated_by_certain_functions} can be stated in the following more explicit form. 

\begin{proposition}
\label{proposition:e0ku_as_subalgebra_generated_by_binomial_coefficients}
The algebra $E_{0}KU$ is the $\mathbb{Z}_{p}$-subalgebra of $\map_{\cts}(\mathbb{G}_{1}, \mathbb{Z}_{p})$ generated by the functions $b_{i}(a) = {a \choose i}$ for $i \geq 1$ as well as $b_{1}^{-1}(a) = \frac{1}{a}$.
\end{proposition}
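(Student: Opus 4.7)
The plan is essentially to read off the statement from \cref{proposition:e0ku_as_a_subalgebra_generated_by_certain_functions} by unwinding the explicit identification $\mathbb{Z}_p^{\times} \simeq \mathbb{G}_1$. Concretely, I would first recall that \cref{proposition:e0ku_as_a_subalgebra_generated_by_certain_functions} already identifies $E_0KU \subseteq \map_{\cts}(\mathbb{G}_1,\mathbb{Z}_p)$ as the subalgebra generated by $b_1^{\pm 1}$ and the $b_i$ for $i\ge 2$, where the $b_i$ are defined by the formal power series expansion $g(x) = \sum_i b_i(g) x^i$ of $g \in \mathbb{G}_1$, viewed as an automorphism of the multiplicative formal group law over $\mathbb{Z}_p$. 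Since the proof is entirely a matter of computing what the $b_i$ become under a specific isomorphism $\mathbb{Z}_p^{\times} \simeq \mathbb{G}_1$, there is no real conceptual obstacle; the task is purely bookkeeping.

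Next, I would justify the isomorphism $\mathbb{Z}_p^{\times} \simeq \mathbb{G}_1$ given by $a \mapsto [a](x)$. For $a \in \mathbb{Z}$ this is clear from the multiplicative group law $F(x,y) = x+y+xy$, which satisfies $[a](x) = (1+x)^a - 1 = \sum_{k\ge 1} \binom{a}{k} x^k$. For a general $a \in \mathbb{Z}_p$, the binomial coefficients $\binom{a}{k}$ still lie in $\mathbb{Z}_p$ (by continuity of $\binom{-}{k}\colon \mathbb{Z}\to\mathbb{Z}$ in the $p$-adic topology, together with density of $\mathbb{Z}$ in $\mathbb{Z}_p$), and one checks either directly or by continuity of the functor of points that $[a](x) := \sum_{k\ge 1}\binom{a}{k}x^k$ again defines an automorphism of the multiplicative formal group. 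The map $a \mapsto [a]$ is a continuous group homomorphism $\mathbb{Z}_p^{\times}\to \mathbb{G}_1$, and it is an isomorphism because $\mathbb{G}_1$ is the group of units of the endomorphism ring $\mathrm{End}(\widehat{\mathbb{G}}_m/\mathbb{Z}_p) \simeq \mathbb{Z}_p$ of the multiplicative formal group, which is a classical calculation.

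Finally, I would plug this in: under the identification $\mathbb{Z}_p^{\times} \simeq \mathbb{G}_1$, the function $b_i$ on $\mathbb{G}_1$ corresponds to the function on $\mathbb{Z}_p^{\times}$ extracting the $i$-th coefficient of $[a](x)$, that is $a \mapsto \binom{a}{i}$. In particular $b_1(a) = a$, so $b_1^{-1}(a) = 1/a$, which is genuinely a continuous $\mathbb{Z}_p$-valued function on $\mathbb{Z}_p^{\times}$. Substituting these identifications into \cref{proposition:e0ku_as_a_subalgebra_generated_by_certain_functions} then yields the desired description of $E_0KU$ as the $\mathbb{Z}_p$-subalgebra of $\map_{\cts}(\mathbb{G}_1,\mathbb{Z}_p)$ generated by the functions $\binom{-}{i}$ and $1/(-)$. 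The only point where one might want to be careful is to emphasize that even though the sum defining $[a](x)$ is infinite for generic $a \in \mathbb{Z}_p$, each individual coefficient $\binom{a}{i}$ depends continuously on $a$, which is exactly what is needed for $b_i$ to be a continuous function on $\mathbb{G}_1$ in the first place.
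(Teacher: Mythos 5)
Your proposal is correct and follows essentially the same route as the paper: the paper also deduces this proposition directly from \cref{proposition:e0ku_as_a_subalgebra_generated_by_certain_functions} by making the identification $\mathbb{Z}_p^{\times}\simeq\mathbb{G}_1$, $a\mapsto [a](x)=(1+x)^a-1=\sum_{k>0}\binom{a}{k}x^k$, explicit and reading off $b_i(a)=\binom{a}{i}$. The extra justifications you supply (integrality and continuity of $\binom{a}{k}$ in $a$, and the identification of $\mathbb{G}_1$ with the units of $\mathrm{End}(\widehat{\mathbb{G}}_m/\mathbb{Z}_p)\simeq\mathbb{Z}_p$) are exactly the points the paper leaves as ``one can check.''
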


\begin{remark}
The functions $b_{i}$ appearing in \cref{proposition:e0ku_as_subalgebra_generated_by_binomial_coefficients} are well-known to $p$-adic analysts. By a fundamental theorem of Mahler, any continuous $p$-adic function on $\mathbb{Z}_{p}$ can be written as an infinite convergent sum of the $b_{i}$-s in a unique way \cite{mahler1958interpolation}.
\end{remark}

This immediately implies the following calculation. 

\begin{proposition}
\label{proposition:h0_of_eku}
We have $\rmH^{0}_{\cts}(\mathbb{G}_{1}, E_{0}KU) \simeq \mathbb{Z}_{p}$. 
\end{proposition}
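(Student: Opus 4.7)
The plan is to extract this computation directly from the embedding of \cref{proposition:e0ku_as_subalgebra_generated_by_binomial_coefficients} together with the fact that this embedding is $\mathbb{G}_1$-equivariant. The desired isomorphism should be immediate once these two pieces are combined, since the invariants of the ambient algebra under right translation are manifestly the constants.

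More concretely, I would first observe that the injection $E_0KU \hookrightarrow E_0^{\vee}E \simeq \map_{\cts}(\mathbb{G}_1,\mathbb{Z}_p)$ is $\mathbb{G}_1$-equivariant, as it was realized as a $p$-completion map in the proof of \cref{lemma:completion_map_on_e0ku_is_injective} and both $\mathbb{G}_1$-actions ultimately come from the action on the second tensor factor of $E \otimes E$. Under the Strickland identification recalled in \cref{lemma:iso_of_hopf_algebroid_of_kn_and_the_gn_one}, this $\mathbb{G}_1$-action on $\map_{\cts}(\mathbb{G}_1,\mathbb{Z}_p)$ is given by right translation, i.e.\ $(g\cdot f)(a) = f(ag)$.

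Next, I would note that the $\mathbb{G}_1$-invariants of $\map_{\cts}(\mathbb{G}_1,\mathbb{Z}_p)$ under right translation are precisely the constant functions: evaluating the equation $f(ag)=f(a)$ at $a=1$ forces $f(g)=f(1)$ for every $g \in \mathbb{G}_1$. This gives an identification $\map_{\cts}(\mathbb{G}_1,\mathbb{Z}_p)^{\mathbb{G}_1} \simeq \mathbb{Z}_p$, realized by the constant subalgebra.

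Finally, taking invariants is left exact, so the $\mathbb{G}_1$-equivariant inclusion $E_0KU \hookrightarrow \map_{\cts}(\mathbb{G}_1,\mathbb{Z}_p)$ restricts to an inclusion of invariants, which must land in the constants $\mathbb{Z}_p$. Since the subalgebra $\mathbb{Z}_p \subseteq E_0KU$ of scalars is contained in $E_0KU^{\mathbb{G}_1}$, this inclusion is an equality, yielding $\rmH^{0}_{\cts}(\mathbb{G}_1, E_0KU) \simeq \mathbb{Z}_p$. There is no real obstacle here; the one point requiring care is matching up the two $\mathbb{G}_1$-actions so that the embedding really is equivariant, which ultimately reduces to the fact that both actions are induced from the $\mathbb{G}_1$-action on the rightmost $E$-factor.
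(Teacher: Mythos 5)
Your proposal is correct and follows the same route as the paper: the paper's proof is exactly the observation that, via \cref{proposition:e0ku_as_subalgebra_generated_by_binomial_coefficients}, $E_0KU$ sits equivariantly inside $\map_{\cts}(\mathbb{G}_1,\mathbb{Z}_p)$ with $\mathbb{G}_1$ acting by translation, whose only invariant functions are the constants. You simply spell out the equivariance and the left-exactness of invariants, which the paper leaves implicit.
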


\begin{proof}
This is immediate from \cref{proposition:e0ku_as_subalgebra_generated_by_binomial_coefficients}, as the only functions $\mathbb{G}_{1} \rightarrow \mathbb{Z}_{p}$ invariant under translation are the constants. 
\end{proof}
To compute higher cohomology groups, it is convenient to observe that they are necessarily rational. 

\begin{lemma}
\label{lemma:higher_coh_groups_of_eku_are_rational}
The groups $\rmH^{i}_{\cts}(\mathbb{G}_{1}, E_{0}KU)$ are rational vector spaces for $i > 0$. 
\end{lemma}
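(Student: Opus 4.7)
The plan is to use a Bockstein argument, reducing the claim to a mod-$p$ cohomology calculation that we identify as the cohomology of a coinduced module.

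Consider the short exact sequence of continuous $\mathbb{G}_{1}$-modules
\[
0 \rightarrow E_{0}KU \xrightarrow{p} E_{0}KU \rightarrow E_{0}KU/p \rightarrow 0.
\]
Since $E_{0}KU$ is a $\mathbb{Z}_{p}$-module, the claim that $\rmH^{i}_{\cts}(\mathbb{G}_{1}, E_{0}KU)$ is a $\mathbb{Q}_{p}$-vector space for $i>0$ is equivalent to saying that multiplication by $p$ acts invertibly there, and the associated long exact sequence in continuous cohomology reduces this to a computation of $\rmH^{*}_{\cts}(\mathbb{G}_{1}, E_{0}KU/p)$.

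To identify this mod-$p$ reduction, I would first combine \cref{lemma:ee_at_height_one_in_relation_to_eku} with \cref{lemma:k_localization_of_flat_e_module_is_completion}, using that at height one $\mfrak=(p)$, to see that $E_{0}^{\vee}E$ is the $p$-adic completion of $E_{0}KU$. Since $E_{0}KU$ is $p$-torsion-free by \cref{lemma:completion_map_on_e0ku_is_injective}, the natural map $E_{0}KU/p\rightarrow E_{0}^{\vee}E/p$ is an isomorphism, and combined with the Strickland identification $E_{0}^{\vee}E\cong\map_{\cts}(\mathbb{G}_{1},\mathbb{Z}_{p})$ with the translation action this yields an isomorphism $E_{0}KU/p\cong\map_{\cts}(\mathbb{G}_{1},\mathbb{F}_{p})$ of continuous $\mathbb{G}_{1}$-modules. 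A continuous Shapiro's lemma, obtained by writing $\map_{\cts}(\mathbb{G}_{1},\mathbb{F}_{p})=\varinjlim_{\mathcal{U}}\map(\mathbb{G}_{1}/\mathcal{U},\mathbb{F}_{p})$ over open normal subgroups $\mathcal{U}$ and observing that each term is free over the corresponding finite quotient, then gives $\rmH^{i}_{\cts}(\mathbb{G}_{1},E_{0}KU/p)=0$ for $i>0$ and $\mathbb{F}_{p}$ for $i=0$.

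I would then read off the conclusion from the long exact sequence. For $i\geq 2$, both adjacent cohomology groups of $E_{0}KU/p$ vanish, giving invertibility of $p$ on $\rmH^{i}_{\cts}(\mathbb{G}_{1},E_{0}KU)$. For $i=1$, surjectivity of $p$ uses the vanishing of $\rmH^{1}$ of the mod-$p$ reduction, while injectivity reduces to the vanishing of the connecting map $\delta\colon\mathbb{F}_{p}=\rmH^{0}_{\cts}(\mathbb{G}_{1},E_{0}KU/p)\rightarrow\rmH^{1}_{\cts}(\mathbb{G}_{1},E_{0}KU)$; by exactness, $\ker\delta$ is the image of the reduction map $\mathbb{Z}_{p}=\rmH^{0}_{\cts}(\mathbb{G}_{1},E_{0}KU)\rightarrow\mathbb{F}_{p}$, which by \cref{proposition:h0_of_eku} is the canonical surjection, so $\delta=0$.

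The main technical point is justifying that the Bockstein short exact sequence gives rise to a long exact sequence in continuous cohomology in the sense used in the paper. This is routine given \cref{corollary:continuous_cohomology_commutes_with_filtered_colimits}: writing $E_{0}KU$ as a filtered colimit of $E$-homologies of finite subcomplexes of $KU$, one reduces to the standard long exact sequence for finitely generated $\mathbb{Z}_{p}$-modules with continuous $\mathbb{G}_{1}$-action, and then passes back to the colimit.
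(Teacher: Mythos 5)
Your argument is essentially the paper's own proof: reduce mod $p$, identify $E_{0}KU/p$ with $\map_{\cts}(\mathbb{G}_{1},\F_{p})$ via the $p$-completion isomorphism from \cref{lemma:completion_map_on_e0ku_is_injective}, observe that this module has no higher continuous cohomology, and run the Bockstein long exact sequence. You are in fact more careful than the paper on the one point where care is genuinely needed, namely that making $p$ invertible (not merely surjective) on $\rmH^{1}$ requires the connecting map out of $\rmH^{0}_{\cts}(\mathbb{G}_{1},E_{0}KU/p)\cong\F_{p}$ to vanish, which you correctly deduce from the surjectivity of $\Z_{p}\cong\rmH^{0}_{\cts}(\mathbb{G}_{1},E_{0}KU)\rightarrow\F_{p}$ using \cref{proposition:h0_of_eku}. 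One small inaccuracy in your justification of the acyclicity: the terms $\map(\mathbb{G}_{1}/\mathcal{U},\F_{p})$ are coinduced from $\mathcal{U}$ rather than from the trivial subgroup, so by Shapiro $\rmH^{i}_{\cts}(\mathbb{G}_{1},\map(\mathbb{G}_{1}/\mathcal{U},\F_{p}))\cong\rmH^{i}(\mathcal{U},\F_{p})$, which need not vanish for $i>0$; the vanishing only appears after passing to the colimit, where the transition maps are restrictions along $\mathcal{U}'\subseteq\mathcal{U}$ and eventually annihilate any positive-degree class --- equivalently, $\map_{\cts}(\mathbb{G}_{1},\F_{p})$ is coinduced from the trivial subgroup and such modules are acyclic for continuous cohomology, which is the standard fact the paper silently invokes.
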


\begin{proof}
Since the inclusion $E_{0}KU \hookrightarrow \map_{\cts}(\mathbb{G}_{1}, \mathbb{Z}_{p})$ is an isomorphism after $p$-completion, as we observed in the proof of \cref{lemma:completion_map_on_e0ku_is_injective}, we deduce that it is an isomorphism modulo $p$. It follows that 
\[
\rmH^{i}_{\cts}(\mathbb{G}_{1}, E_{0}KU \otimes_{\mathbb{Z}_{p}} \mathbb{F}_{p}) \simeq \rmH^{i}_{\cts}(\mathbb{G}_{1}, \map_{\cts}(\mathbb{G}_{1}, \mathbb{F}_{p}))
\]
and the right hand side vanishes for $i > 0$. The long exact sequence of cohomology associated to 
\[
0 \rightarrow E_{0}KU \rightarrow E_{0}KU \rightarrow E_{0}KU \otimes_{\mathbb{Z}_{p}} \mathbb{F}_{p} \rightarrow 0,
\]
where the first map is multiplication by $p$, gives the required result. 
\end{proof}

\begin{proposition}
\label{proposition:h1_coh_of_eku}
We have 
\[
\rmH^{1}_{\cts}(\mathbb{G}_{1}, E_{0}KU) \simeq \mathbb{Q}_{p}
\]
and the higher cohomology groups vanish. 
\end{proposition}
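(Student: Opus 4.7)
The plan is to reduce to a rational computation and compute via a weight-space decomposition of $E_{0}KU\otimes\mathbb{Q}$.

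First I would use \cref{lemma:higher_coh_groups_of_eku_are_rational} to observe that the groups $\rmH_{\cts}^{i}(\mathbb{G}_{1}, E_{0}KU)$ for $i \geq 1$ are already $\mathbb{Q}$-vector spaces, so they coincide with their rationalizations. Writing $E_{0}KU \otimes_{\mathbb{Z}} \mathbb{Q} = \varinjlim_{k} p^{-k} E_{0}KU$ as a filtered colimit of copies of the finitely generated $\mathbb{Z}_{p}$-module $E_{0}KU$ (with its local $\mfrak$-adic topology), \cref{corollary:continuous_cohomology_commutes_with_filtered_colimits} gives
\[
\rmH_{\cts}^{i}(\mathbb{G}_{1}, E_{0}KU) \cong \rmH_{\cts}^{i}(\mathbb{G}_{1}, E_{0}KU \otimes_{\mathbb{Z}} \mathbb{Q}) \qquad (i \geq 1),
\]
so it suffices to compute the right-hand side.

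Next I would identify $E_{0}KU\otimes\mathbb{Q}$ explicitly. By \cref{proposition:e0ku_as_subalgebra_generated_by_binomial_coefficients} the ring $E_{0}KU$ is generated over $\mathbb{Z}_{p}$ by the Mahler functions $b_{i}(a) = \binom{a}{i}$ together with $b_{1}^{-1} = 1/a$; rationally the identity $\binom{a}{n} = a(a-1)\cdots(a-n+1)/n!$ expresses these in $\mathbb{Q}_{p}[a]$, yielding
\[
E_{0}KU \otimes \mathbb{Q} \cong \mathbb{Q}_{p}[a^{\pm 1}] \subset \map_{\cts}(\mathbb{Z}_{p}^{\times}, \mathbb{Q}_{p}).
\]
The $\mathbb{Z}_{p}^{\times}$-action preserves the weight decomposition $\mathbb{Q}_{p}[a^{\pm 1}] = \bigoplus_{n \in \mathbb{Z}} \mathbb{Q}_{p} \cdot a^{n}$, where $g$ acts on $a^{n}$ by $g^{\pm n}$ (the sign is convention-dependent and immaterial below). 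Presenting this direct sum as the filtered colimit of its finite partial sums $\bigoplus_{|n|\le N}\mathbb{Z}_{p}\cdot a^{n}$, each a finitely generated $E_{0}$-module with its $\mfrak$-adic topology, a second application of \cref{corollary:continuous_cohomology_commutes_with_filtered_colimits} gives
\[
\rmH_{\cts}^{*}(\mathbb{Z}_{p}^{\times}, \mathbb{Q}_{p}[a^{\pm 1}]) \cong \bigoplus_{n \in \mathbb{Z}} \rmH_{\cts}^{*}(\mathbb{Z}_{p}^{\times}, \mathbb{Q}_{p}(n)),
\]
where $\mathbb{Q}_{p}(n)$ denotes the weight-$n$ summand.

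Finally I would evaluate each summand via Lyndon--Hochschild--Serre for the product decomposition $\mathbb{Z}_{p}^{\times} = F \times U$ with $F$ finite and $U = 1 + p\mathbb{Z}_{p}$ (or $1 + 4\mathbb{Z}_{2}$ at $p=2$) procyclic pro-$p$. The order of $F$ is invertible in $\mathbb{Q}_{p}$, so $\rmH_{\cts}^{*}(\mathbb{Z}_{p}^{\times}, \mathbb{Q}_{p}(n)) \cong \rmH_{\cts}^{*}(U, \mathbb{Q}_{p}(n))^{F}$; since $U \cong \mathbb{Z}_{p}$ has cohomological dimension one, vanishing in degrees $\geq 2$ is immediate. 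For $n \neq 0$ a topological generator $\psi$ of $U$ acts on $\mathbb{Q}_{p}(n)$ by $\psi^{\pm n}$, and because $\psi$ has infinite order in $\mathbb{Z}_{p}^{\times}$, the element $\psi^{\pm n} - 1$ is a nonzero element of $\mathbb{Z}_{p}$, hence a unit in $\mathbb{Q}_{p}$, so multiplication by it is an isomorphism and both $\rmH^{0}$ and $\rmH^{1}$ vanish. Only $n = 0$ survives, contributing $\rmH_{\cts}^{0}(\mathbb{Z}_{p}^{\times}, \mathbb{Q}_{p}) \cong \mathbb{Q}_{p}$ and $\rmH_{\cts}^{1}(\mathbb{Z}_{p}^{\times}, \mathbb{Q}_{p}) \cong \mathbb{Q}_{p}$ (continuous characters of $U$). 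Combining the three steps yields $\rmH_{\cts}^{1}(\mathbb{G}_{1}, E_{0}KU) \cong \mathbb{Q}_{p}$ and vanishing in degrees $\geq 2$. The main point requiring care is the interchange of continuous cohomology with the weight-space direct sum: this is handled by presenting $\mathbb{Q}_{p}[a^{\pm 1}]$ as a filtered colimit of finitely generated lattices $\bigoplus_{|n|\le N}\mathbb{Z}_{p}\cdot a^{n}$ so that \cref{corollary:continuous_cohomology_commutes_with_filtered_colimits} applies, with the additional verification that the induced local $\mfrak$-adic topology on these lattices agrees with the one they inherit from $E_{0}KU$.
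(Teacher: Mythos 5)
Your proof is correct and follows essentially the same route as the paper: reduce to rational coefficients via \cref{lemma:higher_coh_groups_of_eku_are_rational}, identify $E_0KU\otimes\mathbb{Q}$ with $\mathbb{Q}_p[a^{\pm1}]$ via \cref{proposition:e0ku_as_subalgebra_generated_by_binomial_coefficients}, decompose into weight spaces, and compute each character's cohomology by restricting to a procyclic open subgroup. Your extra care in justifying the interchange of cohomology with the colimit and the direct sum via \cref{corollary:continuous_cohomology_commutes_with_filtered_colimits} is a welcome refinement of a step the paper treats more tersely, but it does not change the argument.
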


\begin{proof}
By \cref{lemma:higher_coh_groups_of_eku_are_rational}, all cohomology groups of positive degree are rational, so that we can replace $E_{0}KU$ by $\mathbb{Q}_{p} \otimes_{\mathbb{Z}_{p}} E_{0}KU$ without changing these. The cohomology with the coefficients in the latter can be computed directly.

As each binomial coefficient $b_{i}(a) = {a \choose i}$ is a polynomial in $a$ with rational coefficients, we deduce from \cref{proposition:e0ku_as_subalgebra_generated_by_binomial_coefficients} that $\mathbb{Q}_{p} \otimes_{\mathbb{Z}_{p}} E_{0}KU$ can be identified with the $\mathbb{Q}_{p}$-subalgebra of $\map_{\cts}(\mathbb{G}_{1}, \mathbb{Q}_{p})$ generated by $b_{1}(a) = a$ and its inverse. 

It follows that there is an isomorphism of representations 
\[
\mathbb{Q}_{p} \otimes_{\mathbb{Z}_{p}} E_{0}KU \simeq \mathbb{Q}_{p}[b^{\pm 1}],
\]
where $b = b_{1}$ and the action is determined by $g \cdot b = gb$ for $g \in \mathbb{G}_{1}$. This decomposes as a direct sum of character representations
\[
\mathbb{Q}_{p}[b^{\pm 1}] \simeq \bigoplus _{k \in \mathbb{Z}} \mathbb{Q}_{p}(k),
\]
where we identify $\mathbb{Q}_{p}(k)$ with the subrepresentation generated by $b^{k}$. It follows that 
\[
\rmH^{i}_{\cts}(\mathbb{G}_{1}, \mathbb{Q}_{p}[b^{\pm 1}]) \simeq \bigoplus_{k \in \mathbb{Z}} \rmH^{i}_{\cts}(\mathbb{G}_{1}, \mathbb{Q}_{p}(k)).
\]

Let $U \subseteq \mathbb{G}_{1} \simeq \mathbb{Z}_{p}^{\times}$ be a normal subgroup of finite index such that $U \simeq \mathbb{Z}_{p}$. As finite groups have no rational cohomology, Lyndon--Hochschild--Serre spectral sequence collapses and gives an isomorphism 
\[
\rmH^{i}(\mathbb{G}_{1}, \mathbb{Q}_{p}(k)) \simeq \rmH^{i}(U, \mathbb{Q}_{p}(k))^{\mathbb{G}_{1}/U}.
\]
It follows that the cohomology groups vanish above degree $1$, as $U \simeq \mathbb{Z}_{p}$ is of cohomological dimension one. 

As $U \simeq \mathbb{Z}_{p}$ is cyclic as a profinite group, its cohomology can be computed using a two-step cochain complex, as we did in \cref{sec:somecomputations}. It follows that the only one-dimensional representation of $U$ over $\mathbb{Q}_{p}$ with non-trivial cohomology is the trivial one. As the characters $\mathbb{Q}_{p}(k)$ restrict to non-trivial representations for $k \neq 0$, we deduce that 
\[
\rmH^{1}_{\cts}(\mathbb{G}_{1}, \mathbb{Q}_{p}[b^{\pm 1}]) \simeq  \rmH^{1}_{\cts}(\mathbb{G}_{1}, \mathbb{Q}_{p}) \simeq \mathbb{Q}_{p}
\]
as claimed.
\end{proof}

\begin{theorem}
\label{theorem:cohomology_of_ee_at_height_one}
At height $n=1$ and any prime, we have 
\[
\rmH_{\cts}^{s}(\mathbb{G}_{1}, E_{*}E) \simeq \begin{cases}
E_{*} \otimes_{\mathbb{Z}} \mathbb{Z}_{p} & \mbox{when } $s=0$;\\
E_{*} \otimes_{\mathbb{Z}} \mathbb{Q}_{p}& \mbox{when } $s=1$;\\
0 & \mbox{otherwise}.
\end{cases}
\]
\end{theorem}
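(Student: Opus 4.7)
The plan is to assemble this from the structural results established immediately beforehand, which already do the bulk of the work; the task is mostly bookkeeping. First I would invoke \cref{corollary:coh_of_ee_is_coh_of_eku_otimes_zp}, which gives a natural isomorphism
\[
\rmH_{\cts}^{s}(\mathbb{G}_{1}, E_{*}E) \simeq \rmH_{\cts}^{s}(\mathbb{G}_{1}, E_{*}KU) \otimes_{\mathbb{Z}} \mathbb{Z}_{p}.
\]
Thus it suffices to pin down $\rmH_{\cts}^{s}(\mathbb{G}_{1}, E_{*}KU)$ and then tensor up. Note that tensoring with $\mathbb{Z}_p$ does not change the $s=0$ term (it is already $p$-complete by construction) and turns $\mathbb{Q}_p$-vector spaces into themselves, so this tensoring step is harmless modulo a clean description of what happens in each degree.

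Next I would use $2$-periodicity. Since $E_{*}KU \simeq E_{0}KU[u^{\pm 1}]$ as $\mathbb{G}_{1}$-representations, with $u$ a $\mathbb{G}_1$-invariant generator coming from the Hurewicz image of $\pi_2 KU$, we obtain
\[
\rmH_{\cts}^{s}(\mathbb{G}_{1}, E_{*}KU) \simeq \rmH_{\cts}^{s}(\mathbb{G}_{1}, E_{0}KU) \otimes_{\mathbb{Z}_{p}} \pi_{*}E.
\]
This reduces everything to computing the cohomology of $E_0 KU$, which is precisely what \cref{proposition:h0_of_eku} and \cref{proposition:h1_coh_of_eku} provide: the former gives $\rmH^{0}_{\cts}(\mathbb{G}_{1}, E_{0}KU) \simeq \mathbb{Z}_{p}$, while the latter yields $\rmH^{1}_{\cts}(\mathbb{G}_{1}, E_{0}KU) \simeq \mathbb{Q}_{p}$ and vanishing for $s \geq 2$.

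Combining these three ingredients, the only remaining step is an unpacking: in degree $0$ we get $\mathbb{Z}_p \otimes_{\mathbb{Z}_p} E_{*} \otimes_{\mathbb{Z}} \mathbb{Z}_p \simeq E_{*} \otimes_{\mathbb{Z}} \mathbb{Z}_p$ (using $E_{*} \simeq KU_{*} \otimes_{\mathbb{Z}} \mathbb{Z}_p$), and in degree $1$ we get $\mathbb{Q}_p \otimes_{\mathbb{Z}_p} E_{*} \otimes_{\mathbb{Z}} \mathbb{Z}_p \simeq E_{*} \otimes_{\mathbb{Z}} \mathbb{Q}_p$, with all other degrees zero. There is no real obstacle here since all the nontrivial input (namely the identification of $E_0 KU$ as a subalgebra of $\map_{\cts}(\mathbb{Z}_p^\times, \mathbb{Z}_p)$ generated by binomial-coefficient functions, and the character decomposition argument of \cref{proposition:h1_coh_of_eku}) has already been carried out; the only minor care needed is to verify that the $2$-periodicity isomorphism is genuinely $\mathbb{G}_1$-equivariant, which follows from $u \in \pi_2 E$ being fixed by $\mathbb{G}_1$ acting through Adams operations up to a unit that is a permanent cycle, and this discrepancy disappears after the degree shift.
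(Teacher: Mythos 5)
Your proof follows the paper's argument exactly: reduce via \cref{corollary:coh_of_ee_is_coh_of_eku_otimes_zp} to $E_{*}KU$, use evenness and $2$-periodicity to reduce to internal degree zero, and then quote \cref{proposition:h0_of_eku} and \cref{proposition:h1_coh_of_eku}. One small correction to your final remark: the $\mathbb{G}_1$-equivariance of the periodicity isomorphism comes from the fact that the Hurewicz image of $\pi_2 KU$ lands in the \emph{right-hand} tensor factor of $E \otimes KU$, on which $\mathbb{G}_1$ does not act at all --- the class $u \in \pi_2 E$ itself is \emph{not} fixed by the Adams operations ($\psi^a$ scales it by $a$), so the justification should rest on the right factor rather than on any invariance of $u \in \pi_2 E$.
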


\begin{proof}
As we observed previously that the cohomology with coefficients in $E_{*}E$ is concentrated in even internal degrees and $2$-periodic, it is enough to establish this in internal degree zero, where this is a combination of \cref{corollary:coh_of_ee_is_coh_of_eku_otimes_zp}, \cref{proposition:h0_of_eku} and \cref{proposition:h1_coh_of_eku}.
\end{proof}

\begin{remark}
As we observed at the beginning, the cohomology groups of \cref{theorem:cohomology_of_ee_at_height_one} coincide with derived functors of completion in comodules by \cref{prop:inverselimascontcohom}. Using the latter perspective, a sketch of calculation of the rationalization of the above groups at $p > 2$ appears in a talk by Hopkins, recorded in \cite[Talk 14]{report_on_e_theory_conjectures}. We were independently informed by Hopkins that he first made this calculation in the 1990s and that it was part of the motivation towards the algebraic chromatic splitting conjecture. 
\end{remark}

\addtocontents{toc}{\vspace{5mm}}
\biblio
\bibliography{bibliography}\bibliographystyle{alpha}
\end{document}